\newtheorem{thm}{Theorem}[section]
\newtheorem{cor}[thm]{Corollary}
\newtheorem{lem}[thm]{Lemma}
\newtheorem{prop}[thm]{Proposition}
\newtheorem{rem}[thm]{Remark}
\theoremstyle{definition}
\newtheorem{defn}[thm]{Definition}
\numberwithin{equation}{section}
\begin{document}

\setcounter{page}{1}

\title[Dilation theory and analytic model theory]{Dilation theory and analytic model theory for doubly commuting sequences of $C_{.0}$-contractions}

\author[Hui Dan, \quad  Kunyu Guo]{Hui Dan, \quad Kunyu Guo}

\address{Hui Dan, School of Mathematical Sciences, Fudan University, Shanghai 200433, P.R. China.}
\email{\textcolor[rgb]{0.00,0.00,0.84}{hdan@fudan.edu.cn}}

\address{Kunyu Guo, School of Mathematical Sciences, Fudan University, Shanghai 200433, P.R. China.}
\email{\textcolor[rgb]{0.00,0.00,0.84}{kyguo@fudan.edu.cn}}

\let\thefootnote\relax\footnote{}

\subjclass[2010]{47A20; 47A45; 47A13; 47B32; 46E50.}

\keywords{doubly commuting sequences; dilation theory; analytic functional model; the Beurling-Lax  theorem; Jordan blocks}

%\date{.
%\newline \indent $^{*}$Corresponding author}

\begin{abstract}
Sz.-Nagy and  Foias proved that each $C_{\cdot0}$-contraction
has a dilation to a Hardy shift and thus established an
elegant  analytic functional model for contractions of class $C_{\cdot0}$ \cite{SNFBK}. This has motivated lots of further works on model theory and generalizations to commuting tuples of
$C_{\cdot0}$-contractions. In this paper, we focus on doubly commuting sequences of $C_{\cdot0}$-contractions, and establish  the dilation theory and the analytic model theory for these sequences of operators. These results are  applied to generalize the Beurling-Lax  theorem and  Jordan blocks in the multivariable  operator theory
%for doubly commuting Hardy submodules and results related with  Jordan blocks in the multi-variable case
to the operator theory in countably infinitely many variables.
\end{abstract} \maketitle

%---------------------------------------------------------------------------------------%
\section{Introduction}
\subsection{Background}
Suppose that $T$ is a contraction on a  Hilbert space $\mathcal{H}$; that is, $T$ is a bounded linear operator with $\|T\|\leq1$. The contraction  $T$ is said to be in the class $C_{\cdot0}$ if $T^{*k}\rightarrow0$ (as $k\rightarrow\infty$) in the strong operator topology. An operator $V$ on a larger Hilbert space $\mathcal{K}\supseteq\mathcal{H}$ is said to be
a dilation of $T$ if for each $n\in\mathbb{N}$,
$$T^n=P_{\mathcal{H}}V^n|_{\mathcal{H}},$$ where $\mathbb{N}=\{1,2,\cdots\}$, the set of positive integers.
The dilation theory developed by B. Sz.-Nagy and C. Foias is of great significance in operator theory.
They built functional models for contractions, which not only reveals
the structure of  these operators, but also gives a way for calculations \cite{SNFBK}.
In particular, it was shown that a
contraction of class $C_{\cdot0}$ has a dilation to a Hardy shift. To be more specific, let $\mathbb{D}$ denote the open unit disk in the complex plane, and $H^2(\mathbb{D})$ denote the Hardy space over $\mathbb{D}$. For a contraction $T$, put  $$D_T=(I-T^*T)^{\frac{1}{2}},$$
the defect operator of $T$, and
 $$\mathfrak{D}_T=\overline{D_T\mathcal{H}},$$
the defect space of $T$.
 If $T$ is a $C_{\cdot0}$-contraction, then $T$ is unitarily equivalent to the compression of the multiplication operator $M_z$, defined on the vector-valued Hardy space $H_{\mathfrak{D}_{T^*}}^2(\mathbb{D})=H^2(\mathbb{D})\otimes\mathfrak{D}_{T^*}$, on some invariant subspace  $\mathcal{J}$ of $H_{\mathfrak{D}_{T^*}}^2(\mathbb{D})$ for the backward shift $M_z^*$.

The existence of the isometric dilation of any commuting pair of contractions is proved by T. And\^{o} \cite{An}.
However, commuting $n$-tuples of  contractions  have no isometric dilations in general when $n\geq3$ \cite{Par}. Under some additional conditions, the above graceful analytic model for single contraction of class $C_{\cdot0}$ can be generalized to the situation of a commuting  finite-tuple $(T_1,\cdots,T_n)$ of $C_{\cdot0}$-contractions. That is to say, this tuple
 $(T_1,\cdots,T_n)$ has a dilation to the tuple $(M_{\zeta_1},\cdots,M_{\zeta_n})$ of coordinate multiplication operators on a $\mathcal{E}$-valued analytic function space $\mathcal{H}_{\mathcal{E}}=\mathcal{H}\otimes\mathcal{E}$ with $\mathcal{H}$ consisting of holomorphic functions over some domain in $\mathbb{C}^n$ \cite{CV,MV,Po,AEM,BNS}. The particular case that the tuple being doubly commuting is rather interesting since in this case the function space $\mathcal{H}$ is exactly the Hardy space $H^2(\mathbb{D}^n)$ over the $n$-polydisc, and the underlying space $\mathcal{E}$ is the defect space of the tuple $(T_1^*,\cdots,T_n^*)$. Moreover, we can  require this dilation to be minimal and regular \cite{BNS}. Recall that two operators $T$, $S$ are said to be doubly commuting if $TS=ST$ and $T^*S=ST^*$, and a tuple or a sequence of operators is said to be doubly commuting if any pair of operators in it are doubly commuting. For more about developments on model theory, we refer the readers to \cite{Slo,Ag,Mu,Va}.

A natural question arises: Does a commuting sequence of $C_{\cdot0}$-contractions, under the conditions analogous to those given in %\cite{CV,MV,Po,AEM,BNS}
the above-mentioned papers, have a dilation to the  tuple of coordinate multiplication operators on some vector-valued Hardy space over infinite-dimensional polydisc?
Except for the assumption ``doubly commuting", most of these conditions do not carry over to infinitely many operators well.
For this reason, we focus on doubly commuting sequences of $C_{\cdot0}$-contractions in this paper, and establish dilation theory and analytic model theory for these sequences of operators.

Another motivation for our study of such operator sequences  comes from investigations on the structure of some special submodules and quotient modules of the  Hardy module
 over the polydisc in \cite{SSW,Sar2}, which are said to be doubly commuting.
%It is also noticeable and interesting that how will our investigations on doubly commuting sequences of $C_{\cdot0}$-contractions be applied to studying the structure of some special submodules and quotient modules of the  Hardy module in infinitely many variables.

The analytic Hilbert module theory developed by R. Douglas and V. Paulsen opens a new door for the study of joint invariant subspaces of the Hardy space $H^2(\mathbb{D}^n)\ (n\in\mathbb{N})$ for the tuple $\mathbf{M}_\zeta$ of coordinate multiplication operators \cite{Dou,DP}.
Let $\mathcal{P}_n$ denote the polynomial ring in $n$-complex variables.
It is known that the Hardy space $H^2(\mathbb{D}^n)$
carries a  $\mathcal{P}_n$-Hilbert module structure,
where the module action is defined by multiplications
by polynomials, and a submodule of $H^2(\mathbb{D}^n)$ is just a $\mathbf{M}_\zeta$-joint invariant subspace.
A quotient module of $H^2(\mathbb{D}^n)$ is defined to be the  orthocomplement of some submodule with a $\mathcal{P}_n$-module structure determined by the compression of $\mathbf{M}_\zeta$ on it.
These notions are defined
analogously on  vector-valued Hardy spaces $H_{\mathcal{E}}^2(\mathbb{D}^n)$. For more details, we refer the reader to \cite{DPSY,CG}.

The famous theorem of A. Beurling states that
every nonzero submodule of $H^2(\mathbb{D})$ is of form
$\eta H^2(\mathbb{D})$ for some inner function $\eta\in H^\infty(\mathbb{D})$, where $H^\infty(\mathbb{D})$ denotes the space of bounded holomorphic functions on $\mathbb{D}$ \cite{Beu1}. P. Lax  generalized Beurling's theorem to  vector-valued Hardy spaces $H_{\mathcal{E}}^2(\mathbb{D})$ \cite{La}:
a nonzero submodule of $H_{\mathcal{E}}^2(\mathbb{D})$
takes the form $\theta H_{\mathcal{F}}^2(\mathbb{D})$
with $\theta\in H_{\mathcal{B}(\mathcal{F},\mathcal{E})}^\infty(\mathbb{D})$
being inner, where $H_{\mathcal{B}(\mathcal{F},\mathcal{E})}^\infty(\mathbb{D})$ is the space of uniformly bounded holomorphic $\mathcal{B}(\mathcal{F},\mathcal{E})$-valued function on $\mathbb{D}$. For the multi-variable situation, such Beurling-Lax type theorem fails  in general \cite{Ru}. Some efforts was made to determine when a submodule enjoys a Beurling-Lax type representation. In \cite{Man},
V. Mandrekar considered the case of the scalar-valued Hardy space over the bidisc. He obtained
a necessary and sufficient condition that the restriction of the tuple $\mathbf{M}_\zeta$ on the submodule  is doubly commuting.
This  was further generalized to the vector-valued Hardy space $H_{\mathcal{E}}^2(\mathbb{D}^n)$ for arbitrary positive integer $n$ in \cite{SSW}, and to the Hardy space over the infinite-dimensional polydisc within the language of the Hilbert space of Dirichlet series with square-summable coefficients
in \cite{O}.

%However,
%it was shown in \cite{SSW} that if the restriction of the tuple $\mathbf{M}_\zeta$ on a nonzero submodule $\mathcal{S}$ of
%$H_{\mathcal{E}}^2(\mathbb{D}^n)$ is doubly commuting, then there exists a $\mathcal{B}(\mathcal{F},\mathcal{E})$-valued inner function $\Psi$ on $\mathbb{D}^n$, such that $\mathcal{S}=\Psi H_{\mathcal{F}}^2(\mathbb{D}^n)$.
%This can be viewed as a multi-variable version of the Beurling-Lax  theorem. A. Olofsson obtained
%a Beurling type theorem for the Hardy space over the infinite-dimensional polydisc, which is stated in the language of the Hilbert space of Dirichlet series with square-summable coefficients \cite{O}.

A  proper quotient module of $H^2(\mathbb{D})$ is called a model space, and
the compression of the Hardy shift $M_z$ on a  model space is called a Jordan block. The notion of the Jordan block plays a central role in Sz.-Nagy and Foias' model theory for operators of class $C_0$: every $C_0$-operator is quasi-similar to the direct sum of Jordan blocks \cite{SNFBK}. Following former works  on the Hardy quotient module over the bidisc \cite{DY1,DY2,INS}, J. Sarkar proved that if the compression of the tuple $\mathbf{M}_\zeta$ on a nonzero quotient module $\mathcal{Q}$ of
$H^2(\mathbb{D}^n)$ is doubly commuting, then
$$\mathcal{Q}=\mathcal{J}_1\otimes\cdots\otimes\mathcal{J}_n,$$
where either $\mathcal{J}_i$ is a model space or $\mathcal{J}_i=H^2(\mathbb{D})$ ($1\leq i\leq n$) \cite{Sar2}. Thus, for any $i$ so that $\mathcal{J}_i\neq H^2(\mathbb{D})$, the compression $P_\mathcal{Q}M_{\zeta_i}|_\mathcal{Q}$ is the tensor product of
a Jordan block and an identity operator. Then the compression of the tuple $\mathbf{M}_\zeta$ on $\mathcal{Q}$ can be considered as the Jordan block in finitely many variables. We refer the readers to \cite{Yang1,Yang2,II,QY,Sar3} for related works and further discussions. Also see \cite[Remark 4.7]{DGH}.

In this paper, we
establish dilation theory and analytic model theory for
doubly commuting sequences of $C_{\cdot0}$-contractions,
and then apply them to
 generalize the Beurling-Lax  theorem for doubly commuting  submodules and the Jordan block type characterization for doubly commuting quotient module in the multi-variable case
to the Hardy module in infinitely many variables.
%\textbf{It is also interesting  how  our study on doubly commuting sequences of $C_{\cdot0}$-contractions is applied to obtain such generalizations.}
%In particular, the proof of the Beurling-Lax  theorem in the infinite-variable case we presented is completely different from that in \cite{O}.

\subsection{Statements of the main results}

To state our  main results, we need to introduce some notations and definitions.
Let $\mathbb{D}^\infty$ denote the Cartesian product $\mathbb{D}\times\mathbb{D}\times\cdots$
of  countably
infinitely many unit disks.
The Hilbert's multidisk $\mathbb{D}_2^\infty$ is defined to be
$$\mathbb{D}_2^\infty=\{\zeta=(\zeta_1,\zeta_2,\cdots)\in l^2:|\zeta_n|<1\  \mathrm{for}\ \mathrm{all}\ n\geq1\}.$$
The Hardy space $H^2(\mathbb{D}_2^\infty)$ in infinitely many variables is defined as follows:
$$H^2(\mathbb{D}_2^\infty)
=\{F=\sum_{\alpha\in\mathbb{Z}_+^{(\infty)}}c_\alpha\zeta^\alpha:
\|F\|^2=\sum_{\alpha\in\mathbb{Z}_+^{(\infty)}}|c_\alpha|^2<\infty\},$$
where $\mathbb{Z}_+^{(\infty)}$ denotes the set of finitely supported sequences of nonnegative integers, and $\zeta^\alpha$ denotes the monimial $$\zeta^\alpha=\zeta_1^{\alpha_1}\cdots\zeta_n^{\alpha_n}$$
for $\alpha=(\alpha_1,\cdots,\alpha_n,0,0,\cdots)\in\mathbb{Z}_+^{(\infty)}$.
 The space
 $H^2(\mathbb{D}_2^\infty)$ is a reproducing kernel Hilbert space over the Hilbert's multidisk $\mathbb{D}_2^\infty$ with the kernels \cite{Ni2} $$\mathbf{K}_\lambda(\zeta)=\prod_{n=1}^\infty
 \frac{1}{1-\overline{\lambda_n}\zeta_n},
 \quad\lambda=(\lambda_1,\lambda_2,\cdots)\in\mathbb{D}_2^\infty.
 $$
 This space has close connections with the study of Beurling's completeness problem and Dirichlet series \cite{Beu2,HLS,Ni2}, and is the expected function space upon which we build analytic models. The vector-valued Hardy space $H_{\mathcal{E}}^2(\mathbb{D}_2^\infty)$ in infinitely many variables is defined analogously as in the finite-variable situation. See Section 2 for the details.

If $\mathbf{T}=(T_1,T_2,\cdots)\in\mathcal{DC}$ is a doubly commuting sequence of contractions on a Hilbert space $\mathcal{H}$,
then the infinite product  $$D_{\mathbf{T}}=\mathrm{(SOT)}
\lim_{n\rightarrow\infty}D_{T_1}\cdots D_{T_n}
=\mathrm{(SOT)}
\lim_{n\rightarrow\infty}(I-T_1^*T_1)^{\frac{1}{2}}\cdots
(I-T_n^*T_n)^{\frac{1}{2}}$$ of defect operators of $\{T_n\}_{n\in\mathbb{N}}$ converges \cite[Proposition 43.1]{Con}, and is called the \textit{defect operator} of $\mathbf{T}$. Similarly, define the \textit{defect space} $\mathfrak{D}_{\mathbf{T}}$ of  $\mathbf{T}$ to be
$$\mathfrak{D}_{\mathbf{T}}=\overline{D_{\mathbf{T}}\mathcal{H}}.$$

%Throughout this paper, the tuple $(T_1,\cdots,T_n)$ of finitely many contractions will be viewed as a sequence $(T_1,\cdots,T_n,0,0,\cdots)$.
Let  $\mathcal{DC}(\mathcal{H})$ (or simply $\mathcal{DC}$ if no confusion is caused) denote the class of doubly commuting sequences of $C_{\cdot0}$-contractions on $\mathcal{H}$.
For convenience, we identify the class of doubly commuting finite-tuples of $C_{\cdot0}$-contractions with the class $\mathcal{DCF}$ of   sequences in class $\mathcal{DC}$ with  only  finitely many nonzero components. Let $\mathbf{T}^*$ denote the sequence $(T_1^*,T_2^*,\cdots)$ and
$\mathbf{T}^\alpha$  the operator $T_1^{\alpha_1}\cdots T_n^{\alpha_n}$ for $\mathbf{T}=(T_1,T_2,\cdots)$ and $\alpha=(\alpha_1,\cdots,\alpha_n,0,0,\cdots)\in\mathbb{Z}_+^{(\infty)}$ ($n\in\mathbb{N}$).
The key ingredient in the dilation theory for doubly commuting finite-tuples of $C_{\cdot0}$-contractions is the following norm identity \cite{BNS}:
\begin{equation}\label{key identity}\|x\|^2=\sum_{\alpha\in\mathbb{Z}_+^n}\|D_{\mathbf{T}^*}\mathbf{T}^{*\alpha}x\|^2,\quad x\in \mathcal{H},\end{equation} where $n\in\mathbb{N}$ and $\mathbf{T}=(T_1,\cdots,T_n)\in\mathcal{DCF}$.
It is natural to expect that such an identity  holds for sequences in class $\mathcal{DC}$  in the following sense: for each $x\in \mathcal{H}$ and $\mathbf{T}=(T_1,T_2,\cdots)\in\mathcal{DC}$,
$$\|x\|^2=\sum_{\alpha\in\mathbb{Z}_+^{(\infty)}}\|D_{\mathbf{T}^*}\mathbf{T}^{*\alpha}x\|^2.$$
Unfortunately, the answer is negative in general. We will see that there exists a sequence $\mathbf{T}\in\mathcal{DC}$ such that the defect operator $D_{\mathbf{T}^*}$ of $\mathbf{T}^*$ is $0$, which cannot  occur in the finite-tuple case.

The class of doubly commuting sequences of pure isometries on $\mathcal{H}$ is denoted by $\mathcal{DP}(\mathcal{H})$ (or $\mathcal{DP}$). Since an isometry $V$ is of class $C_{\cdot0}$ if and only if $V$ is pure (that is to say, the unitary part in the Wold decomposition of $V$ is $0$), $\mathcal{DP}$ is a subclass of $\mathcal{DC}$. Any doubly commuting sequence of contractions has a minimal, doubly commuting, regular isometric dilation, which is  unique up to unitarily equivalence \cite{Sha}. Furthermore, we will show that the minimal regular isometric dilation of a sequence in class $\mathcal{DC}$ consists of doubly commuting pure isometries. %that is, there exists an isometry $\Pi:\mathcal{H}\rightarrow \mathcal{K}$ such that $\Pi T_n^*=V_n^*\Pi$ for each $n\in\mathbb{N}$. Moreover, we will show that the minimal isometric dilation of a sequence in class $\mathcal{DC}$ is also doubly commuting.
% in the sense that if there is another isometric dilation $\mathbf{V}'$ of $\mathbf{T}$, then $\mathbf{V}'$ is also a dilation of $\mathbf{V}$.
This  therefore provides us with an approach to the question arisen in previous subsection via the study of the sequences in class $\mathcal{DP}$. These notions concerning  the  dilation will be explained in Section 2.

Unlike the finite-tuple case,  sequences in class $\mathcal{DP}$ require further classification. Let us start some notations and definitions.
%Below we give a definition for those sequences in class $\mathcal{DC}$ would give an answer to the question via their minimal regular isometric dilations. We first introduce some notations.
For a family $\mathcal{T}$  of bounded linear operators on $\mathcal{H}$ and a subset $E$ of $\mathcal{H}$, let $[E]_{\mathcal{T}}$ denote the joint invariant subspace for $\mathcal{T}$ generated by $E$. In particular, if $\mathbf{T}$ is a sequence of operators, then one has
$$[E]_{\mathbf{T}}=\bigvee_{\alpha\in\mathbb{Z}_+^{(\infty)}}\mathbf{T}^\alpha E,$$ where the notation $\bigvee$ denotes the closed linear span of subsets of a Hilbert space.
Following \cite{Hal}, if a closed subspace $M$ of $\mathcal{H}$ is orthogonal to $\mathbf{T}^\alpha M$ for any $\alpha\in\mathbb{Z}_+^{(\infty)}\setminus\{0\}$, then $M$ is called a \textit{wandering subspace} for the sequence $\mathbf{T}$ (see \cite{BEKS} for an analogous definition in the finite-tuple case).

Suppose $\mathbf{V}=(V_1,V_2,\cdots)\in\mathcal{DP}$. It is easy to verify that the defect spce $$\mathfrak{D}_{\mathbf{V}^*}=\bigcap_{n=1}^\infty \mathrm{Ker}\ V_n^*,$$ and $\mathfrak{D}_{\mathbf{V}^*}$ is a wandering subspace for $\mathbf{V}$.
 It follows from Beurling's theorem  that if $\mathcal{I}$ is an invariant subspace of $H^2(\mathbb{D})$ for the Hardy shift $M_z$, then the wandering subspace for the isometry $M_\mathcal{I}=M_z|_\mathcal{I}$ always generates the entire $\mathcal{I}$; that is,
$$[\mathfrak{D}_{M_{\mathcal{I}}^*}]_{M_{\mathcal{I}}}
=[\mathcal{I}\ominus z\mathcal{I}]_{M_{\mathcal{I}}}=\mathcal{I}.$$
The conclusion is indeed valid for the vector-valued Hardy space $H_{\mathcal{E}}^2(\mathbb{D})$ by P. Halmos's observation in \cite{Hal} or the Beurling-Lax theorem. This suggests the following definition.

\begin{defn} \label{defn1} Suppose $\mathbf{V}\in\mathcal{DP}(\mathcal{H})$. The sequence $\mathbf{V}$ is said to be of Beurling type if $[\mathfrak{D}_{\mathbf{V}^*}]_{\mathbf{V}}=\mathcal{H}$.

Suppose $\mathbf{T}\in\mathcal{DC}(\mathcal{H})$. The sequence $\mathbf{T}$ is said to be of Beurling type if its minimal regular isometric dilation is of Beurling type.
\end{defn}

It was shown in \cite{Sar1} (also see \cite{BL}) that every doubly commuting finite-tuple of pure isometries
enjoys a ``Beurling type'' property.
The tuple $$\mathbf{M} _\zeta=(M_{\zeta_1},M_{\zeta_2},\cdots)$$ of coordinate multiplication operators on a vector-valued Hardy space $$H_{\mathcal{E}}^2(\mathbb{D}_2^\infty)=H^2(\mathbb{D}_2^\infty)\otimes\mathcal{E}$$ is clearly of Beurling type. More interestingly, we will see that the converse also holds: if a sequence  $\mathbf{V}\in\mathcal{DP}$ is of Beurling type, then $\mathbf{V}$ must be of the above form.
Then the question reduces to the characterization for sequences in class $\mathcal{DC}$ which are of Beurling type.

 Let $\varphi_a\ (a\in\mathbb{D})$ denote the holomorphic
automorphism
$$\varphi_a(z)=\frac{a-z}{1-\bar{a}z},\quad z\in\mathbb{D}$$
of $\mathbb{D}$.
It is known that the Riesz functional calculus $$\varphi_a(T)=(aI-T)(I-\bar{a}T)^{-1}\quad (a\in\mathbb{D})$$ of a contraction $T$ on the Hilbert space $\mathcal{H}$ is also a contraction. Furthermore, if $T\in C_{\cdot0}$, then $\varphi_a(T)\in C_{\cdot0}$ by the dilation theory for single $C_{\cdot0}$-contraction.
%Let $\mathbb{D}^\infty$ denote the Cartesian product of a sequence of $\mathbb{D}$, and
For $\mathbf{T}=(T_1,T_2,\cdots)\in\mathcal{DC}$  and $\lambda=(\lambda_1,\lambda_2,\cdots)\in\mathbb{D}^\infty$, set
$$\Phi_\lambda(\mathbf{T})=(\varphi_{\lambda_1}(T_1), \varphi_{\lambda_2}(T_2), \cdots).$$
Obviously, for any $\lambda\in\mathbb{D}^\infty$, $\Phi_\lambda$ defines a bijection from $\mathcal{DC}$ onto itself, and $\Phi_0=\Phi_\lambda\circ\Phi_\lambda=id_{\mathcal{DC}}$. Therefore, for a sequence $\mathbf{T}$ of operators, $\mathbf{T}\in\mathcal{DC}$ if and only if $\Phi_\lambda(\mathbf{T})\in\mathcal{DC}$.
 It is also easy to see that $\Phi_\lambda$ maps $\mathcal{DP}$ onto itself. We also write
 $\Phi_\lambda^\alpha(\mathbf{T})$ for the operator
 $\varphi_{\lambda_1}^{\alpha_1}(T_1)\cdots
 \varphi_{\lambda_n}^{\alpha_n}(T_n)$, where $\alpha=(\alpha_1,\cdots,\alpha_n,0,0,\cdots)\in\mathbb{Z}_+^{(\infty)}$ ($n\in\mathbb{N}$).

%\textbf{ However, a result in a previous paper \cite{DGH} of us implies that if $\lambda\notin\mathbb{D}_2^\infty$, then the wandering subspace $\mathfrak{D}_{\Phi_\lambda(\mathbf{M}_\zeta)^*}$ of the sequence $$\Phi_\lambda(\mathbf{M}_\zeta)=(M_{\widetilde{\varphi_{\lambda_1}}},M_{\widetilde{\varphi_{\lambda_2}}},\cdots)$$ on $H^2(\mathbb{D}_2^\infty)$ is $\{0\}$, where $\widetilde{\varphi_{\lambda_n}}(\zeta)=\varphi_{\lambda_n}(\zeta_n)\ (n\in\mathbb{N},\zeta\in\mathbb{D}^\infty_2)$ (see Lemma \ref{wand subsp is 0}). Therefore, $\Phi_\lambda(\mathbf{M}_\zeta)$ is  not of Beurling type. Also see \cite[Section 4]{O} for another example of a sequence that is not of Beurling type.
% We need to mention here that a doubly commuting finite-tuple $(V_1,\cdots,V_n)$ of pure isometries is always jointly unitarily equivalent to the tuple $(M_{\zeta_1},\cdots,M_{\zeta_n})$ of coordinate multiplication operators on a vector-valued Hardy space $H^2_{\mathcal{E}}(\mathbb{D}^n)$ over the $n$-polydisc $\mathbb{D}^n$ \cite{Sar3}. This indicates that
% the structure of sequences in class $\mathcal{DP}$ is much more complicated and nontrivial than the finite-tuple case.}

There are various examples of sequences $\mathbf{V}$ in class $\mathcal{DP}$ satisfying that the wandering space $\mathfrak{D}_{\mathbf{V}^*}=\{0\}$, while the defect space
$\mathfrak{D}_{\Phi_\lambda(\mathbf{V})^*}$
of $\Phi_\lambda(\mathbf{V})^*$ is nonzero.
For instance, let $\{\eta_n\}_{n\in\mathbb{N}}$ be a sequence of nonconstant inner functions in $H^\infty(\mathbb{D})$ and consider the sequence
$\mathbf{V}=(M_{\widetilde{\eta_1}},M_{\widetilde{\eta_2}},\cdots)$ of multiplication operators on
the Hardy space $H^2(\mathbb{D}_2^\infty)$, $\widetilde{\eta_n}(\zeta)=\eta_n(\zeta_n)\ (n\in\mathbb{N},\zeta\in\mathbb{D}^\infty_2)$.
We will prove that for ``almost all" choices of the sequence $\{\eta_n\}_{n\in\mathbb{N}}$ of inner functions, the wandering space $\mathfrak{D}_{\mathbf{V}^*}$ for the $\mathcal{DP}$-sequence $\mathbf{V}$ is $\{0\}$, and $\Phi_\lambda(\mathbf{V})$ is of Beurling type for some $\lambda\in\mathbb{D}^\infty$ (See Proposition \ref{sequence of multiplication operators} and Remark \ref{remark for Section 4}). Therefore, for those $\mathcal{DP}$-sequences which are not of Beurling type, the family of maps $\Phi_\lambda\ (\lambda\in\mathbb{D}^\infty)$ could be a powerful tool
in building analytic models. We thus could obtain important information of such $\mathcal{DP}$-sequences $\mathbf{V}$ from the Beurling type sequences $\Phi_\lambda(\mathbf{V})$ for some $\lambda\in\mathbb{D}^\infty$.

Inspired by this, we consider the following sequences in class $\mathcal{DP}$.

\begin{defn} \label{defn2} Suppose $\mathbf{V}\in\mathcal{DP}$. The sequence $\mathbf{V}$ is said to be of quasi-Beurling type if $\Phi_\lambda(\mathbf{V})$ is of Beurling type for some $\lambda\in\mathbb{D}^\infty$.

Suppose $\mathbf{T}\in\mathcal{DC}$. The sequence $\mathbf{T}$ is said to be of quasi-Beurling type if its minimal regular isometric dilation is of quasi-Beurling type.
\end{defn}

Sequences of quasi-Beurling type is relatively tractable in class $\mathcal{DP}$. In  Section 3, we will show  that a sequence $\mathbf{V}\in\mathcal{DP}$ is of quasi-Beurling type if and only if $\mathbf{V}$ is jointly unitarily equivalent to the sequence $\Phi_\lambda(\mathbf{M}_\zeta)$ on a vector-valued Hardy space $H^2_{\mathcal{E}}(\mathbb{D}_2^\infty)$
for some $\lambda\in\mathbb{D}^\infty$.
Recall that two sequence $\mathbf{T}=(T_1,T_2,\cdots)$ and $\mathbf{S}=(S_1,S_2,\cdots)$ of operators, defined on  $\mathcal{H}$ and $\mathcal{K}$ respectively, are said to be jointly unitarily equivalent if there exists a unitary operator $U:\mathcal{H}\rightarrow\mathcal{K}$ such that $$S_n=UT_nU^*,\quad n\in\mathbb{N}.$$

The first  main result in this paper is to give a complete characterization of sequences in class $\mathcal{DC}$ that can be decomposed into  direct sums of sequences of quasi-Beurling type. Note that for a commuting sequence of operators
on $\mathcal{H}$, by using a standard argument involving Zorn's Lemma,
one can decompose  $\mathcal{H}$ into orthogonal direct sums of separable $\mathbf{T}$-joint reducing subspaces.
It suffices to restrict our study to the case of separable Hilbert spaces. From now on, we only consider
separable Hilbert spaces.

A sequence $\mathbf{T}\in\mathcal{DC}(\mathcal{H})$ is said to have \textit{a decomposition
of quasi-Beurling type} if there exists an orthogonal decomposition $\mathcal{H}=\bigoplus_\gamma \mathcal{H}_\gamma$ of the Hilbert space $\mathcal{H}$, such that each $\mathcal{H}_\gamma$ is $\mathbf{T}$-joint reducing and each $\mathbf{T}|_{\mathcal{H}_\gamma}$ is of quasi-Beurling type.

\begin{thm} \label{direct sums of quasi-BT} Suppose $\mathbf{T}\in\mathcal{DC}(\mathcal{H})$. The followings are equivalent:
\begin{itemize}
  \item [(1)] $\mathbf{T}$ has a decomposition
of quasi-Beurling type;
  \item [(2)] for each $x\in \mathcal{H}$, there exists a sequence $\lambda^{(1)},\lambda^{(2)},\cdots$ of points in $\mathbb{D}^\infty$, such that
      $$\|x\|^2=\sum_{k=1}^\infty\sum_{\alpha\in\mathbb{Z}_+^{(\infty)}}
      \|D_{\Phi_{\lambda^{(k)}}(\mathbf{T})^*}\Phi_{\lambda^{(k)}}^\alpha(\mathbf{T})^{*}x\|^2;$$
  \item [(3)] $\bigvee_{\lambda\in\mathbb{D}^\infty}
\mathfrak{D}_{\Phi_\lambda(\mathbf{T})^*}=\mathcal{H}$.
\end{itemize}
In fact,
%if in addition the Hilbert space $\mathcal{H}$ is separable, then
the sequence $\lambda^{(1)},\lambda^{(2)},\cdots$ in condition (2) can be chosen to be independent of $x\in \mathcal{H}$.
\end{thm}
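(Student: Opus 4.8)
The plan is to prove the chain of equivalences $(1)\Leftrightarrow(3)$ and $(3)\Leftrightarrow(2)$, with the ``independence of $x$'' refinement following from a separability argument at the very end. The guiding principle is that the subspace $\bigvee_{\lambda\in\mathbb{D}^\infty}\mathfrak{D}_{\Phi_\lambda(\mathbf{T})^*}$ should be exactly the largest $\mathbf{T}$-joint reducing subspace on which $\mathbf{T}$ decomposes into quasi-Beurling pieces.

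\medskip

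\noindent\textbf{Step 1: $(1)\Rightarrow(3)$.} Assume $\mathcal{H}=\bigoplus_\gamma\mathcal{H}_\gamma$ with each $\mathbf{T}|_{\mathcal{H}_\gamma}$ of quasi-Beurling type. Since the $\mathcal{H}_\gamma$ are $\mathbf{T}$-joint reducing, each is also $\Phi_\lambda(\mathbf{T})$-joint reducing (the maps $\varphi_{\lambda_n}$ are built from $T_n$ and $(I-\bar\lambda_n T_n)^{-1}$, both of which leave $\mathcal{H}_\gamma$ invariant and whose adjoints do too), so $D_{\Phi_\lambda(\mathbf{T})^*}$ respects the decomposition. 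It therefore suffices to show that for a single quasi-Beurling sequence $\mathbf{S}=\mathbf{T}|_{\mathcal{H}_\gamma}$ one has $\bigvee_\lambda\mathfrak{D}_{\Phi_\lambda(\mathbf{S})^*}=\mathcal{H}_\gamma$. For this I pass to the minimal regular isometric dilation $\mathbf{V}\in\mathcal{DP}$ of $\mathbf{S}$, which by hypothesis is of quasi-Beurling type, hence (by the characterization promised in Section 3) jointly unitarily equivalent to $\Phi_\mu(\mathbf{M}_\zeta)$ on some $H^2_{\mathcal{E}}(\mathbb{D}_2^\infty)$ for a suitable $\mu$. Applying $\Phi_\mu$ again, $\Phi_\mu(\mathbf{V})\cong\mathbf{M}_\zeta$ is of Beurling type, so $\mathfrak{D}_{\Phi_\mu(\mathbf{V})^*}$ already generates the whole dilation space; compressing back to $\mathcal{H}_\gamma$ and using that compression of a defect space of a dilation relates to the defect space of the compressed operator, one gets $\mathfrak{D}_{\Phi_\mu(\mathbf{S})^*}\subseteq\mathcal{H}_\gamma$ generating a dense subspace under $\Phi_\mu(\mathbf{S})$, whence $\bigvee_\lambda\mathfrak{D}_{\Phi_\lambda(\mathbf{S})^*}=\mathcal{H}_\gamma$.

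\medskip

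\noindent\textbf{Step 2: $(3)\Rightarrow(1)$.} This is the heart of the matter and I expect it to be the main obstacle. The natural candidate is a maximality/Zorn argument: consider the collection of families $\{\mathcal{H}_\gamma\}$ of mutually orthogonal, $\mathbf{T}$-joint reducing subspaces on each of which $\mathbf{T}$ is of quasi-Beurling type, partially ordered by inclusion, and take a maximal such family; let $\mathcal{H}_0=\bigoplus_\gamma\mathcal{H}_\gamma$ and $\mathcal{H}_1=\mathcal{H}\ominus\mathcal{H}_0$. I must show $\mathcal{H}_1=\{0\}$. If not, by $(3)$ there is some $\lambda$ with $\mathfrak{D}_{\Phi_\lambda(\mathbf{T})^*}\cap\mathcal{H}_1\neq\{0\}$ — this requires checking that $(3)$ restricted to the reducing subspace $\mathcal{H}_1$ still holds, i.e. $\bigvee_\lambda\mathfrak{D}_{\Phi_\lambda(\mathbf{T}|_{\mathcal{H}_1})^*}=\mathcal{H}_1$, which follows because each $\mathfrak{D}_{\Phi_\lambda(\mathbf{T})^*}$ splits along $\mathcal{H}_0\oplus\mathcal{H}_1$ and the $\mathcal{H}_0$-part lies inside $\mathcal{H}_0$ by Step 1. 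Pick $0\neq e\in\mathfrak{D}_{\Phi_\lambda(\mathbf{T}|_{\mathcal{H}_1})^*}$ and form $\mathcal{N}=[\,\{e\}\,]_{\Phi_\lambda(\mathbf{T}|_{\mathcal{H}_1})}$. One checks $\mathcal{N}$ is $\Phi_\lambda(\mathbf{T})$-joint reducing — here one uses that $e$ lies in the common kernel of the adjoints $\varphi_{\lambda_n}(T_n)^*$ and a wandering-subspace orthogonality computation — hence also $\mathbf{T}$-joint reducing. On $\mathcal{N}$, $\Phi_\lambda(\mathbf{T})$ is generated by its (nonzero) wandering space, so after dilating it is of Beurling type, making $\Phi_\lambda(\mathbf{T}|_{\mathcal{N}})$, hence $\mathbf{T}|_{\mathcal{N}}$, of quasi-Beurling type. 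Adjoining $\mathcal{N}$ to the family contradicts maximality. The delicate points are: (a) verifying $\mathcal{N}$ is genuinely reducing (not merely invariant) for the whole sequence $\Phi_\lambda(\mathbf{T})$, for which I would invoke that a cyclic subspace generated by a joint wandering vector of a $\mathcal{DP}$-type dilation is reducing, analogous to the single-variable fact that $[\,e\,]_{M_z}$ is reducing when $e\perp zH^2$; and (b) confirming that ``Beurling type'' descends correctly through the dilation/compression bridge of Definition \ref{defn1}.

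\medskip

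\noindent\textbf{Step 3: $(3)\Leftrightarrow(2)$ and independence of $x$.} Condition $(2)$ for a fixed $x$ says precisely that $x$ lies in the closed span $\sum_k\bigvee_{\alpha}\Phi_{\lambda^{(k)}}^\alpha(\mathbf{T})\,\mathfrak{D}_{\Phi_{\lambda^{(k)}}(\mathbf{T})^*}$, by the Parseval-type identity; indeed for a \emph{single} $\lambda$, the quantity $\sum_\alpha\|D_{\Phi_\lambda(\mathbf{T})^*}\Phi_\lambda^\alpha(\mathbf{T})^*x\|^2$ equals $\|P_{\mathcal{M}_\lambda}x\|^2$ where $\mathcal{M}_\lambda=[\mathfrak{D}_{\Phi_\lambda(\mathbf{T})^*}]_{\Phi_\lambda(\mathbf{T})}$ is the maximal Beurling-type reducing subspace of $\Phi_\lambda(\mathbf{T})$ — this is the infinite-variable analogue of \eqref{key identity} applied to the Beurling-type part, which one obtains by dilating and using that $\mathbf{M}_\zeta$ on $H^2_{\mathcal{E}}(\mathbb{D}_2^\infty)$ satisfies the exact orthogonal expansion. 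Hence $(2)$ for all $x$ is equivalent to $\bigvee_\lambda\mathcal{M}_\lambda=\mathcal{H}$, and since $\mathfrak{D}_{\Phi_\lambda(\mathbf{T})^*}\subseteq\mathcal{M}_\lambda\subseteq\bigvee_\mu\mathfrak{D}_{\Phi_\mu(\mathbf{T})^*}$ (the last inclusion because $\mathcal{M}_\lambda$ is generated by applying the operators $\Phi_\lambda^\alpha(\mathbf{T})$, and one shows these preserve $\bigvee_\mu\mathfrak{D}_{\Phi_\mu(\mathbf{T})^*}$), this is in turn equivalent to $(3)$. Finally, for independence of $x$: by separability of $\mathcal{H}$ and of $\mathbb{D}^\infty$-indexed data, once $(3)$ holds one extracts a \emph{countable} dense set of $\mathfrak{D}_{\Phi_{\lambda^{(k)}}(\mathbf{T})^*}$'s whose span is dense; enumerating these $\lambda^{(k)}$ once and for all, the orthogonal decomposition of $\mathcal{H}$ into the successive "new'' parts of the $\mathcal{M}_{\lambda^{(k)}}$ gives, by the single-$\lambda$ identity applied piecewise, the displayed sum for every $x$ simultaneously.
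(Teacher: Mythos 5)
Your Step 2 contains a genuine gap, and it is fatal to the proposed route. You pick $0\neq e\in\mathfrak{D}_{\Phi_\lambda(\mathbf{T}|_{\mathcal{H}_1})^*}$ and assert that $e$ ``lies in the common kernel of the adjoints $\varphi_{\lambda_n}(T_n)^*$,'' so that $[e]_{\Phi_\lambda(\mathbf{T}|_{\mathcal{H}_1})}$ is reducing by a wandering-subspace computation. This is only true when the operators are \emph{isometries}: for a sequence $\mathbf{V}\in\mathcal{DP}$ one indeed has $\mathfrak{D}_{\mathbf{V}^*}=\bigcap_n\mathrm{Ker}\,V_n^*$, but for a general $C_{\cdot0}$-contraction $T$ the defect space $\mathfrak{D}_{T^*}=\overline{D_{T^*}\mathcal{H}}$ is much larger than $\mathrm{Ker}\,T^*$ --- already for $T=\tfrac12 I$ on $\mathbb{C}$ one has $\mathfrak{D}_{T^*}=\mathbb{C}$ while $\mathrm{Ker}\,T^*=\{0\}$. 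Consequently $e$ is not a joint wandering vector for $\Phi_\lambda(\mathbf{T})$, the cyclic subspace $\mathcal{N}$ need not be reducing, and there is no reason $\mathbf{T}|_{\mathcal{N}}$ is of quasi-Beurling type. The only way to rescue the idea is to pass to the minimal regular isometric dilation $\mathbf{V}$ on $\mathcal{K}\supseteq\mathcal{H}$, where the defect vectors genuinely are joint eigenvectors of $\mathbf{V}^*$ and generate reducing copies of $H^2(\mathbb{D}_2^\infty)$ (the paper's Lemma \ref{local chara of K_gamma}). But then the subspaces you build are reducing subspaces of $\mathcal{K}$, not of $\mathcal{H}$, and your proposal nowhere explains how to descend: the paper's proof spends its entire Step II establishing the commutation $P_{\mathcal{H}}P_{\mathcal{K}_\gamma}=P_{\mathcal{K}_\gamma}P_{\mathcal{H}}$ via the auxiliary spaces $\mathcal{H}_n=\bigvee_{\alpha_n=0}\mathbf{V}^\alpha\mathcal{H}$ and Lemma \ref{intersection of H_n}, and this is exactly the ``dilation/compression bridge'' you flag as delicate point (b) but do not supply.

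The second missing ingredient is the organizing device of the paper's proof of $(3)\Rightarrow(1)$: the equivalence relation $\lambda\sim\mu\Leftrightarrow\sum_n|\varphi_{\mu_n}(\lambda_n)|^2<\infty$ on $\mathbb{D}^\infty$. The decomposition is not obtained by Zorn but explicitly, as $\mathcal{H}_\gamma=\bigvee_{\lambda\in\gamma}\mathfrak{D}_{\Phi_\lambda(\mathbf{T})^*}$ over $\sim$-classes $\gamma$; the relation delivers two facts your argument needs and does not prove, namely that defect spaces at inequivalent points are automatically orthogonal (via Lemma \ref{wand subsp is 0}: $\mathfrak{D}_{\Phi_\mu(\mathbf{M}_\zeta)^*}=\{0\}$ for $\mu\notin\mathbb{D}_2^\infty$), and that a \emph{single} $\lambda$ in a class $\gamma$ satisfies $[\mathfrak{D}_{\Phi_\lambda(\mathbf{V})^*}]_{\mathbf{V}}=\mathcal{K}_\gamma$ (via the irreducibility of $\mathbf{M}_\zeta$, Lemma \ref{M_zeta is irre}). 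Without the first fact your Step 3 identity $\|x\|^2=\sum_k\|P_{\mathcal{M}_{\lambda^{(k)}}}x\|^2$ is unjustified (inequivalent $\lambda^{(k)}$ must give orthogonal ranges, while equivalent ones give the \emph{same} range and would be double-counted); also note that $\sum_\alpha\|D_{\Phi_\lambda(\mathbf{T})^*}\Phi_\lambda^\alpha(\mathbf{T})^*x\|^2$ computes $\|P_{[\mathfrak{D}_{\Phi_\lambda(\mathbf{V})^*}]_{\mathbf{V}}}x\|^2$ with the projection taken in $\mathcal{K}$, not onto a subspace of $\mathcal{H}$ as you write. Your Step 1 and the separability remark are plausible in outline, but as it stands the core of the argument --- constructing the reducing pieces and verifying they are quasi-Beurling --- is not established.
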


The following particular case of Theorem \ref{direct sums of quasi-BT} completely answers the question that when a doubly commuting sequence of $C_{\cdot0}$-contractions has a regular dilation  to the  tuple of coordinate multiplication operators on some vector-valued Hardy space over the infinite-dimensional polydisc.

\begin{cor} \label{BT} Let  Suppose $\mathbf{T}\in\mathcal{DC}(\mathcal{H})$. The followings are equivalent:
\begin{itemize}
  \item [(1)] $\mathbf{T}$ is of Beurling type;
  \item [(2)] the minimal regular isometric dilation of $\mathbf{T}$ is jointly unitarily equivalent to the tuple $\mathbf{M}_\zeta$ of coordinate multiplication operators on a vector-valued Hardy space $H_{\mathcal{E}}^2(\mathbb{D}_2^\infty)$;
  \item [(3)] for each $x\in \mathcal{H}$, $\|x\|^2=\sum_{\alpha\in\mathbb{Z}_+^{(\infty)}}\|D_{\mathbf{T}^*}\mathbf{T}^{*\alpha}x\|^2$;
  \item [(4)] $\bigvee_{\lambda\in\mathbb{D}_2^\infty}
\mathfrak{D}_{\Phi_\lambda(\mathbf{T})^*}=\mathcal{H}$.
\end{itemize}
\end{cor}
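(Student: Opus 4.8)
We sketch the argument, organised around the two structural results proved in Section~3 — that a sequence in $\mathcal{DP}$ is of Beurling type iff it is jointly unitarily equivalent to $\mathbf{M}_\zeta$ on some $H^2_{\mathcal{E}}(\mathbb{D}_2^\infty)$, and of quasi-Beurling type iff it is jointly unitarily equivalent to some $\Phi_\mu(\mathbf{M}_\zeta)$ on some $H^2_{\mathcal{E}}(\mathbb{D}_2^\infty)$ with $\mu\in\mathbb{D}^\infty$ — together with Theorem~\ref{direct sums of quasi-BT}. The plan is: $(1)\Leftrightarrow(2)$; $(2)\Rightarrow(3)$ and $(2)\Rightarrow(4)$; $(3)\Rightarrow(1)$; and finally $(4)\Rightarrow(1)$. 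The equivalence $(1)\Leftrightarrow(2)$ is immediate from Definition~\ref{defn1}: the minimal regular isometric dilation $\mathbf{V}$ of $\mathbf{T}$ lies in $\mathcal{DP}$, and by the first fact above $\mathbf{V}$ is of Beurling type exactly when it is jointly unitarily equivalent to $\mathbf{M}_\zeta$ on a vector-valued Hardy space over $\mathbb{D}_2^\infty$.

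The workhorse for everything else is a defect identity. If $\mathbf{V}$ is the minimal regular isometric dilation of $\mathbf{T}\in\mathcal{DC}$ on $\mathcal{K}\supseteq\mathcal{H}$, so that $\mathcal{H}^\perp$ is $\mathbf{V}$-invariant and $\mathbf{V}^{*\alpha}|_{\mathcal{H}}=\mathbf{T}^{*\alpha}$, then expanding $\prod_{i=1}^n(I-T_iT_i^*)=\sum_{F\subseteq\{1,\dots,n\}}(-1)^{|F|}\mathbf{T}^F\mathbf{T}^{*F}$ via double commutativity, using $\mathbf{T}^F\mathbf{T}^{*F}=P_{\mathcal{H}}\mathbf{V}^F\mathbf{V}^{*F}|_{\mathcal{H}}$ (which follows from co-invariance of $\mathcal{H}$ and $\mathbf{V}$-invariance of $\mathcal{H}^\perp$), resumming, and letting $n\to\infty$, one obtains $D_{\mathbf{T}^*}^2=P_{\mathcal{H}}P_{\mathfrak{D}_{\mathbf{V}^*}}|_{\mathcal{H}}$, i.e. $\|D_{\mathbf{T}^*}y\|=\|P_{\mathfrak{D}_{\mathbf{V}^*}}y\|$ for $y\in\mathcal{H}$; the same holds with $(\mathbf{T},\mathbf{V})$ replaced by $(\Phi_\lambda(\mathbf{T}),\Phi_\lambda(\mathbf{V}))$ for $\lambda\in\mathbb{D}^\infty$, since $\mathcal{H}$ stays co-invariant for $\Phi_\lambda(\mathbf{V})\in\mathcal{DP}$. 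Granting this, $(2)\Rightarrow(3)$: realize $\mathbf{V}$ as $\mathbf{M}_\zeta$ on $H^2_{\mathcal{E}}(\mathbb{D}_2^\infty)$ with $\mathfrak{D}_{\mathbf{V}^*}=\mathcal{E}$ (the constants) and $\mathbf{T}$ as the compression to $\mathcal{H}$; then for $x=\sum_\alpha c_\alpha\zeta^\alpha\in\mathcal{H}$ one has $\|D_{\mathbf{T}^*}\mathbf{T}^{*\alpha}x\|=\|P_{\mathcal{E}}\mathbf{M}_\zeta^{*\alpha}x\|=\|c_\alpha\|$, and Parseval gives $\sum_{\alpha\in\mathbb{Z}_+^{(\infty)}}\|D_{\mathbf{T}^*}\mathbf{T}^{*\alpha}x\|^2=\|x\|^2$. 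And $(2)\Rightarrow(4)$: since $\mathfrak{D}_{\Phi_\lambda(\mathbf{M}_\zeta)^*}=\mathbb{C}\mathbf{K}_\lambda\otimes\mathcal{E}$ for $\lambda\in\mathbb{D}_2^\infty$, the identity gives $\|D_{\Phi_\lambda(\mathbf{T})^*}z\|=\|P_{\mathbf{K}_\lambda\otimes\mathcal{E}}z\|$ for $z\in\mathcal{H}$; if $z$ is orthogonal to $\bigvee_{\lambda\in\mathbb{D}_2^\infty}\mathfrak{D}_{\Phi_\lambda(\mathbf{T})^*}$ then $D_{\Phi_\lambda(\mathbf{T})^*}z=0$ and hence $z\perp\mathbf{K}_\lambda\otimes\mathcal{E}$ for all $\lambda\in\mathbb{D}_2^\infty$, and as the kernels $\{\mathbf{K}_\lambda:\lambda\in\mathbb{D}_2^\infty\}$ span $H^2(\mathbb{D}_2^\infty)$ we get $z=0$.

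For $(3)\Rightarrow(1)$, set $\mathcal{K}_0=[\mathfrak{D}_{\mathbf{V}^*}]_{\mathbf{V}}$; double commutativity together with $\mathfrak{D}_{\mathbf{V}^*}=\bigcap_n\mathrm{Ker}\,V_n^*$ shows $\mathcal{K}_0$ is $\mathbf{V}$-reducing, so $\mathbf{V}|_{\mathcal{K}_0}$ is of Beurling type and, by Section~3, is $\mathbf{M}_\zeta$ on $H^2_{\mathfrak{D}_{\mathbf{V}^*}}(\mathbb{D}_2^\infty)$. Fix $x\in\mathcal{H}$ and write $x=x_0+x_1$ with $x_0\in\mathcal{K}_0$, $x_1\in\mathcal{K}_0^\perp$. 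Since $\mathcal{K}_0^\perp$ reduces $\mathbf{V}$ and meets $\mathfrak{D}_{\mathbf{V}^*}$ trivially, $P_{\mathfrak{D}_{\mathbf{V}^*}}\mathbf{V}^{*\alpha}x=P_{\mathfrak{D}_{\mathbf{V}^*}}(\mathbf{V}|_{\mathcal{K}_0})^{*\alpha}x_0$; combining the defect identity, $\mathbf{T}^{*\alpha}x=\mathbf{V}^{*\alpha}x$, condition $(3)$, and Parseval in $H^2_{\mathfrak{D}_{\mathbf{V}^*}}(\mathbb{D}_2^\infty)$ yields $\|x\|^2=\|x_0\|^2$, so $x=x_0\in\mathcal{K}_0$. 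As $x\in\mathcal{H}$ was arbitrary and $\mathcal{K}=\bigvee_\alpha\mathbf{V}^\alpha\mathcal{H}$, this forces $\mathcal{K}_0=\mathcal{K}$, i.e. $\mathbf{V}$ (hence $\mathbf{T}$) is of Beurling type.

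It remains to prove $(4)\Rightarrow(1)$, which I expect to be the crux. As $\mathbb{D}_2^\infty\subseteq\mathbb{D}^\infty$, condition $(4)$ implies condition $(3)$ of Theorem~\ref{direct sums of quasi-BT}, so $\mathbf{T}$ admits a decomposition $\mathcal{H}=\bigoplus_\gamma\mathcal{H}_\gamma$ of quasi-Beurling type. Each $\mathcal{H}_\gamma$ reduces every $\Phi_\lambda(\mathbf{T})$, so $\mathfrak{D}_{\Phi_\lambda(\mathbf{T})^*}=\bigoplus_\gamma\mathfrak{D}_{\Phi_\lambda(\mathbf{T}|_{\mathcal{H}_\gamma})^*}$, and since a direct sum of Beurling-type sequences is readily seen to be of Beurling type (via the corresponding fact for $\mathcal{DP}$ and the fact that the minimal regular isometric dilation of a direct sum is the direct sum of the dilations), it suffices to rule out that some $\mathbf{T}|_{\mathcal{H}_\gamma}$ is of quasi-Beurling but not of Beurling type. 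If it were, its minimal regular isometric dilation would, by Section~3, be $\Phi_\mu(\mathbf{M}_\zeta)$ on some $H^2_{\mathcal{E}_\gamma}(\mathbb{D}_2^\infty)$ with $\mu\in\mathbb{D}^\infty\setminus\mathbb{D}_2^\infty$ — for $\mu\in\mathbb{D}_2^\infty$ one has $\mathbf{K}_\mu\in H^2(\mathbb{D}_2^\infty)$ and, using the orthogonal basis $\{\varphi_{\mu_n}^k k_{\mu_n}\}_{k\geq0}$ of $H^2(\mathbb{D})$ in each coordinate, $[\mathfrak{D}_{\Phi_\mu(\mathbf{M}_\zeta)^*}]_{\Phi_\mu(\mathbf{M}_\zeta)}=[\mathbf{K}_\mu\otimes\mathcal{E}_\gamma]_{\Phi_\mu(\mathbf{M}_\zeta)}=H^2_{\mathcal{E}_\gamma}(\mathbb{D}_2^\infty)$, contradicting non-Beurling. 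Then for any $\lambda\in\mathbb{D}_2^\infty$, $\Phi_\lambda(\mathbf{T}|_{\mathcal{H}_\gamma})$ has isometric dilation $\Phi_\lambda(\Phi_\mu(\mathbf{M}_\zeta))$, which coordinatewise is multiplication by $\varphi_{\lambda_n}\circ\varphi_{\mu_n}=\omega_n\varphi_{\nu_n}$ with $|\omega_n|=1$ and $\nu_n=\varphi_{\mu_n}(\lambda_n)$; from $|\nu_n-\mu_n|\le|\lambda_n|/(1-|\lambda_n|)$ and $\lambda\in\mathbb{D}_2^\infty$, $\mu\notin\mathbb{D}_2^\infty$ one gets $\nu\notin\mathbb{D}_2^\infty$, so $\mathbf{K}_\nu\notin H^2(\mathbb{D}_2^\infty)$ and $\mathfrak{D}_{\Phi_\lambda(\Phi_\mu(\mathbf{M}_\zeta))^*}=\mathbb{C}\mathbf{K}_\nu\otimes\mathcal{E}_\gamma=\{0\}$; by the defect identity $\mathfrak{D}_{\Phi_\lambda(\mathbf{T}|_{\mathcal{H}_\gamma})^*}=\{0\}$ for every $\lambda\in\mathbb{D}_2^\infty$, so $\mathcal{H}_\gamma\perp\bigvee_{\lambda\in\mathbb{D}_2^\infty}\mathfrak{D}_{\Phi_\lambda(\mathbf{T})^*}=\mathcal{H}$, i.e. $\mathcal{H}_\gamma=\{0\}$, a contradiction. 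Hence every $\mathbf{T}|_{\mathcal{H}_\gamma}$ is of Beurling type and $(1)$ follows. The essential difficulty is exactly this last step — promoting the quasi-Beurling decomposition of Theorem~\ref{direct sums of quasi-BT} to genuine Beurling type — and it is precisely here that the replacement of $\mathbb{D}^\infty$ by the Hilbert multidisk $\mathbb{D}_2^\infty$ in condition $(4)$ is indispensable: a Möbius parameter outside $\mathbb{D}_2^\infty$ composed with one inside it stays outside $\mathbb{D}_2^\infty$, and $\mathfrak{D}_{\Phi_\nu(\mathbf{M}_\zeta)^*}\neq\{0\}$ exactly when the reproducing kernel $\mathbf{K}_\nu$ lies in $H^2(\mathbb{D}_2^\infty)$, that is, when $\nu\in\mathbb{D}_2^\infty$.
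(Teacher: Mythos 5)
Your proposal is correct, and every step checks out, but it is organised differently from the paper's proof, which runs the cycle $(1)\Rightarrow(3)\Rightarrow(4)\Rightarrow(1)$ together with $(1)\Leftrightarrow(2)$ and disposes of each arrow by citing Corollary~\ref{collection of cor} (the package of facts extracted from the proof of Theorem~\ref{direct sums of quasi-BT}): $(1)\Rightarrow(2),(3)$ is Corollary~\ref{collection of cor}(1), $(3)\Rightarrow(4)$ reruns the $(2)\Rightarrow(3)$ argument of Theorem~\ref{direct sums of quasi-BT} using only parameters in $\mathbb{D}^n\times\mathbb{D}_2^\infty\subseteq\mathbb{D}_2^\infty$, and $(4)\Rightarrow(1)$ is Corollary~\ref{collection of cor}(3) applied to the $\sim$-class $\mathbb{D}_2^\infty$ of $\mathbf{0}$. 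You instead prove $(3)\Rightarrow(1)$ directly, by splitting $x=x_0+x_1$ against the reducing subspace $\mathcal{K}_0=[\mathfrak{D}_{\mathbf{V}^*}]_{\mathbf{V}}$ and using your defect identity $\|D_{\mathbf{T}^*}y\|=\|P_{\mathfrak{D}_{\mathbf{V}^*}}y\|$ (which is the paper's Lemma~\ref{dilation of defect op} in disguise, derived by inclusion--exclusion rather than induction) plus Parseval to force $x_1=0$; this is a cleaner and more self-contained route that bypasses condition $(4)$ entirely. Your $(4)\Rightarrow(1)$ re-derives by hand, in the special case needed, the content of Corollary~\ref{collection of cor}(2)--(3): that a quasi-Beurling summand modelled on $\Phi_\mu(\mathbf{M}_\zeta)$ with $\mu\notin\mathbb{D}_2^\infty$ has vanishing defect spaces $\mathfrak{D}_{\Phi_\lambda(\cdot)^*}$ for all $\lambda\in\mathbb{D}_2^\infty$, because $\varphi_{\mu_n}(\lambda_n)$ stays outside $\mathbb{D}_2^\infty$ and Lemma~\ref{wand subsp is 0} then kills the wandering space. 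The ingredients (the defect identity, Lemma~\ref{wand subsp is 0}, the Wold-type decomposition for Beurling-type $\mathcal{DP}$-sequences, Theorem~\ref{direct sums of quasi-BT}) are the same as the paper's; what your version buys is a direct, computation-level proof of $(3)\Rightarrow(1)$ and of $(2)\Rightarrow(3),(4)$ that does not route through the equivalence-class machinery. One presentational caution: the ``structural results'' you invoke at the outset must be used only for $\mathcal{DP}$-sequences (where they follow from the wandering-subspace decomposition and conjugation by $\Phi_\mu$), since the paper derives their $\mathcal{DC}$-versions \emph{from} this corollary; as written you do apply them only to minimal dilations, so there is no circularity, but this is worth saying explicitly.
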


Note that for $\mathbf{T}\in\mathcal{DCF}$, the condition (3) in Corollary \ref{BT} coincides with the identity (\ref{key identity}), Corollary \ref{BT}  actually generalizes the  finite-tuple case.

Here are some remarks for Theorem \ref{direct sums of quasi-BT}. If $\mathbf{T}\in\mathcal{DC}$ with a decomposition $\mathbf{T}=\bigoplus_\gamma\mathbf{T}_\gamma$ of quasi-Beurling type, then
there correspond a point $\lambda_\gamma\in\mathbb{D}^\infty$ and a Hilbert space $\mathcal{E}_\gamma$ to each index $\gamma$, such that
$\mathbf{T}_\gamma$ is jointly unitarily equivalent to $P_{\mathcal{Q}_\gamma}\Phi_{\lambda_\gamma}(\mathbf{M}_\zeta)|_{\mathcal{Q}_\gamma}$,
where the sequence $\Phi_{\lambda_\gamma}(\mathbf{M}_\zeta)$ is defined on the vector-valued Hardy space $H^2_{\mathcal{E}_\gamma}(\mathbb{D}_2^\infty)$, and $\mathcal{Q}_\gamma\subseteq H^2_{\mathcal{E}_\gamma}(\mathbb{D}_2^\infty)$ is a $\mathbf{M}_\zeta^*$-joint invariant subspace. This gives
\begin{equation}\label{representation of direct sum of quasi-BT}\mathbf{T}\cong \bigoplus_\gamma P_{\mathcal{Q}_\gamma}\Phi_{\lambda_\gamma}(\mathbf{M}_\zeta)|_{\mathcal{Q}_\gamma},
\end{equation} %P_{\mathcal{Q}_1}\Phi_{\lambda^{(1)}}(\mathbf{M}_\zeta)|_{\mathcal{Q}_1}
%\oplus P_{\mathcal{Q}_2}\Phi_{\lambda^{(2)}}(\mathbf{M}_\zeta)|_{\mathcal{Q}_2}\oplus\cdots,
and we therefore build an analytic model for a sequence $\mathbf{T}\in\mathcal{DC}(\mathcal{H})$ under the assumption  that the subset $$\{D_{\Phi_\lambda(\mathbf{T})^*}x:\lambda\in\mathbb{D}^\infty,x\in\mathcal{H}\}$$ is complete in $\mathcal{H}$.
%Also, an example will be given to
%illustrate this assumption is nontrivial, although it
Note that this assumption always holds for the finite-tuple case (see Lemma \ref{intersection of Ker}).
Condition (2) in the theorem further generalizes the identity (\ref{key identity}).  See Section 3 for  the details in these paragraphs.
Also, the following result  illustrates that unlike the finite-tuple case (Lemma \ref{intersection of Ker}), some extreme phenomenon would occur in the infinite-tuple case.
\begin{thm} \label{example}  There exists a sequence $\mathbf{V}\in\mathcal{DP}$ such that $\mathfrak{D}_{\Phi_\lambda(\mathbf{V})^*}=\{0\}$ for each
$\lambda\in\mathbb{D}^\infty$.
\end{thm}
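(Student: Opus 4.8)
The plan is to build the example inside a vector-valued Hardy space $H^2_{\mathcal{E}}(\mathbb{D}_2^\infty)$ by compressing $\mathbf{M}_\zeta$ to a carefully chosen $\mathbf{M}_\zeta^*$-invariant subspace $\mathcal{Q}$. Equivalently, since the members of $\mathcal{DP}$ of Beurling type are precisely the shifts on vector-valued Hardy spaces (as established earlier in the excerpt), I would instead construct $\mathbf{V}\in\mathcal{DP}$ directly and verify the vanishing of $\mathfrak{D}_{\Phi_\lambda(\mathbf{V})^*}$ for all $\lambda$. The natural candidate is $\mathbf{V}=(M_{\widetilde{\eta_1}},M_{\widetilde{\eta_2}},\cdots)$ acting on $H^2(\mathbb{D}_2^\infty)$, where $\widetilde{\eta_n}(\zeta)=\eta_n(\zeta_n)$ and $\{\eta_n\}_{n\in\mathbb{N}}$ is a suitably chosen sequence of nonconstant inner functions on $\mathbb{D}$; this is exactly the family flagged before Proposition~\ref{sequence of multiplication operators}. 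The key point is that $\Phi_\lambda$ acts coordinatewise, so $\Phi_\lambda(\mathbf{V})=(M_{\widetilde{\psi_1}},M_{\widetilde{\psi_2}},\cdots)$ with $\psi_n=\varphi_{\lambda_n}\circ\eta_n$ again a (nonconstant, unless $\lambda_n$ is special) inner function. Thus it suffices to arrange that for \emph{every} choice of points $\mu_n\in\mathbb{D}$, the wandering space of the doubly commuting sequence $(M_{\varphi_{\mu_1}\circ\eta_1},M_{\varphi_{\mu_2}\circ\eta_2},\cdots)$ is $\{0\}$.

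First I would compute the defect space of such a sequence. For a single $M_\eta$ with $\eta$ inner on $\mathbb{D}$, one has $\operatorname{Ker} M_\eta^* = H^2(\mathbb{D})\ominus \eta H^2(\mathbb{D})$, the model space $K_\eta$. In the infinite tensor setting, $\mathfrak{D}_{\Phi_\lambda(\mathbf{V})^*}=\bigcap_{n=1}^\infty \operatorname{Ker} M_{\widetilde{\psi_n}}^*$, and using the tensor decomposition $H^2(\mathbb{D}_2^\infty)\cong\bigotimes_{n=1}^\infty H^2(\mathbb{D})$ (with distinguished vacuum vector $\mathbf{1}$), this intersection is the closed span of elementary tensors $\bigotimes_n f_n$ with $f_n\in K_{\psi_n}$ and $f_n=\mathbf{1}$ for all but finitely many $n$ — but $\mathbf{1}\in K_{\psi_n}$ forces $\psi_n(0)=0$, i.e. $\langle \mathbf{1},\psi_n\mathbf{1}\rangle\ne 0$ would be needed for $\mathbf{1}\notin\psi_n H^2$; more precisely $\mathbf 1\in K_{\psi_n}\iff \psi_n\mathbf1\perp\mathbf1\iff \psi_n(0)=0$. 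So $\mathfrak{D}_{\Phi_\lambda(\mathbf{V})^*}\ne\{0\}$ would require $\psi_n(0)=0$, that is $\varphi_{\lambda_n}(\eta_n(0))=0$, i.e. $\lambda_n=\eta_n(0)$, for all but finitely many $n$. Hence the whole construction reduces to choosing the $\eta_n$ so that \emph{this} cannot happen for any single $\lambda\in\mathbb{D}^\infty$ together with the genuinely nonvanishing contribution at the finitely many exceptional coordinates.

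The remaining obstacle — and I expect it to be the crux — is to handle the finitely many ``bad'' coordinates, where we only need $f_n\in K_{\psi_n}$ rather than $f_n=\mathbf 1$. Even if $\lambda_n=\eta_n(0)$ for all large $n$, one still must ensure $\bigcap_n\operatorname{Ker} M_{\widetilde{\psi_n}}^*$ collapses. The cleanest way around this is to refuse to let the defect space be nonzero at \emph{any} tail: choose $\{a_n\}\subset\mathbb{D}$ with $\sum(1-|a_n|)<\infty$ impossible to realize as a value $\eta_n(0)$ simultaneously — concretely, pick the $\eta_n$ so that the sequence $\{\eta_n(0)\}_{n}$ does \emph{not} lie in $\ell^\infty$-distance-zero... actually the clean device is: require that for each fixed $n_0$ and each $\mu\in\mathbb{D}$, the space $K_{\varphi_\mu\circ\eta_{n_0}}$ contains \emph{no} nonzero vector that is simultaneously in $\operatorname{Ker} M_{\widetilde{\psi_n}}^*$ for the other coordinates — which, by the tensor-factorization argument above, again reduces to the vacuum condition at the other coordinates. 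So the real content is a counting/independence statement: choose $\eta_n$ so that there is \emph{no} $\lambda\in\mathbb{D}^\infty$ with $\lambda_n=\eta_n(0)$ for cofinitely many $n$ \emph{and} $\mathbf 1\in K_{\varphi_{\lambda_m}\circ\eta_m}$ nontrivially supplemented at the rest; since the first clause already pins down $\lambda$ on a cofinite set to a \emph{fixed} value depending only on $\{\eta_n\}$, it is enough to kill the finitely many free coordinates. I would therefore \textbf{take $\eta_n$ with $\eta_n(0)=0$ for all $n$ replaced by: never simultaneously zero}, e.g. let $\eta_n=\varphi_{b_n}\circ B_n$ where the Blaschke/singular data $b_n$ are chosen so that $\{b_n\}$ is not eventually equal to any single sequence forced by the tensor obstruction; the verification then amounts to checking that the only way to have $\mathfrak{D}_{\Phi_\lambda(\mathbf V)^*}\ne\{0\}$ is $\psi_n(0)=0$ for cofinitely many $n$ (the tensor-factorization lemma) and that $\psi_n(0)=0$ fails for infinitely many $n$ no matter how $\lambda$ is chosen (immediate once $\{\eta_n(0)\}$ is chosen to contain two distinct values infinitely often, since then $\lambda_n$ cannot equal both). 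This last observation makes the construction essentially free: take $\eta_n(0)=0$ for even $n$ and $\eta_n(0)=1/2$ (say $\eta_n=\varphi_{1/2}$ composed with a nonconstant inner function) for odd $n$; then for any $\lambda$, $\psi_n(0)=\varphi_{\lambda_n}(\eta_n(0))=0$ needs $\lambda_n=0$ on evens and $\lambda_n=1/2$ on odds, which is a single admissible $\lambda$, and at that $\lambda$ we separately check $\mathfrak{D}_{\Phi_\lambda(\mathbf V)^*}=\bigcap\operatorname{Ker}M_{\widetilde{\psi_n}}^*=\{0\}$ because each $\psi_n$ is nonconstant inner with $\psi_n(0)=0$, so $K_{\psi_n}\supsetneq\{0\}$ but an infinite restricted tensor product of the $\mathbf 1$-vectors is not available since... \emph{here} is where I need the genuine obstruction: $\mathbf 1\notin K_{\psi_n}$ is false when $\psi_n(0)=0$, so I must instead forbid $\psi_n(0)=0$ entirely. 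Correcting the design: choose $\{\eta_n(0)\}$ to be a sequence in $\mathbb D$ taking \emph{at least three distinct values, each infinitely often} — no, two suffices once we note that for $\mathfrak D_{\Phi_\lambda(\mathbf V)^*}\ne\{0\}$ we need $\psi_n(0)=0$ for \emph{all but finitely many} $n$, hence $\lambda_n=\eta_n(0)$ for all large $n$; if $\{\eta_n(0)\}$ takes two distinct values each infinitely often this is impossible, done, \emph{and we never reach the problematic case} $\psi_n(0)=0$. Thus the main obstacle dissolves provided one first proves the tensor-factorization lemma $\bigcap_n\operatorname{Ker}M_{\widetilde{\psi_n}}^*=\overline{\operatorname{span}}\{\bigotimes_n f_n: f_n\in K_{\psi_n},\ f_n=\mathbf1\text{ for large }n\}$ and then observes $\mathbf1\in K_{\psi_n}\iff\psi_n(0)=0$; I expect proving that lemma rigorously (interchanging infinite intersections with infinite tensor products, and justifying the ``$=\mathbf1$ for large $n$'' normalization) to be the technical heart of the argument, the rest being the elementary two-values-infinitely-often choice of $\{\eta_n\}$.
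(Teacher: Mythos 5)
Your proposal has a fatal flaw: the class of examples you work with can never satisfy the conclusion of the theorem. If $\mathbf{V}=(M_{\widetilde{\eta_1}},M_{\widetilde{\eta_2}},\cdots)$ with each $\eta_n$ a nonconstant inner function, take $\lambda=(\eta_1(0),\eta_2(0),\cdots)$. This \emph{is} a legitimate point of $\mathbb{D}^\infty$ — the full Cartesian product carries no summability or eventual-constancy restriction — and for it $\psi_n(0)=\varphi_{\eta_n(0)}(\eta_n(0))=0$ for \emph{every} $n$, so the constant function $1$ is orthogonal to each $\widetilde{\psi_n}H^2(\mathbb{D}_2^\infty)$ and hence $1\in\mathfrak{D}_{\Phi_\lambda(\mathbf{V})^*}\neq\{0\}$. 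Your claim that ``$\lambda_n=\eta_n(0)$ for cofinitely many $n$ is impossible once $\{\eta_n(0)\}$ takes two distinct values infinitely often'' is where the argument breaks: nothing prevents $\lambda_n$ from tracking $\eta_n(0)$ coordinate by coordinate. Indeed the paper's Proposition \ref{sequence of multiplication operators} proves that \emph{every} such multiplication sequence is of quasi-Beurling type, so this entire family is excluded as a source of counterexamples. (A secondary issue: your tensor-factorization lemma is also misstated — the intersection $\bigcap_n\mathrm{Ker}\,M_{\widetilde{\psi_n}}^*$ is nonzero if and only if $\sum_n|\psi_n(0)|^2<\infty$, not only when $\psi_n(0)=0$ cofinitely — but even the corrected criterion cannot rescue the construction, since the sum can always be made to vanish identically by the choice of $\lambda$ above.)

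The paper's example is of a genuinely different kind and necessarily so: it takes isometries $V_n$ on $H^2(\mathbb{D}_2^\infty)$ that are \emph{not} multiplication operators, defined by permuting the graded pieces $\zeta_n^k\mathcal{E}_n$ (even $k\mapsto k+3$, odd $k\mapsto k-1$), so that $\mathbf{V}$ is doubly commuting and pure with $\mathbf{V}^2=\mathbf{M}_\zeta^2$ while $V_n\neq M_{\zeta_n}$. A hypothetical nonzero $F\in\mathfrak{D}_{\Phi_\lambda(\mathbf{V})^*}$ is a joint eigenvector of $\mathbf{V}^*$, hence of $\mathbf{M}_\zeta^{*2}$, which forces $\lambda^2\in\mathbb{D}_2^\infty$; a careful analysis of the projection of $F$ onto $\mathfrak{D}_{\mathbf{M}_\zeta^{*2}}$ then yields a contradiction. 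If you want to prove this theorem, you must leave the realm of multiplication operators entirely.
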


We further refine the representation (\ref{representation of direct sum of quasi-BT})
by giving a characterization of the subspaces $\mathcal{Q}_\gamma$  involving characterization  functions of operators in $\mathbf{T}_\gamma$. We will give the details in Section 4.
This generalizes results in \cite{BNS} to the infinite-variable case.

In section 4,
we will prove that every sequence in class $\mathcal{DP}$ is jointly unitarily equivalent to
a sequence of multiplication operators induced by
operator-valued inner functions each of which involves one different variable (Theorem \ref{analytic model}). Thus we establish  operator-valued analytic functional models for general sequences in class $\mathcal{DC}$, which generalize (\ref{representation of direct sum of quasi-BT}).
We also have the following application of our results.
\begin{cor} \label{application 2}
  Suppose $\mathbf{T}\in\mathcal{DC}$. Then there exists a sequence
  $\{B_n\}_{n\in\mathbb{N}}$ of finite Blaschke products, such that
  $\{\prod_{i=1}^{n}B_i(T_i)\}_{n\in\mathbb{N}}$ converges in the strong operator topology.
\end{cor}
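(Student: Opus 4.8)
The plan is to reduce the statement to a one-variable fact about $C_{\cdot 0}$-contractions and then combine it with a telescoping estimate and a diagonal choice. The one-variable fact I would isolate first is the following.

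\emph{Claim.} For every $C_{\cdot0}$-contraction $S$ on a Hilbert space $\mathcal{M}$ and every $x\in\mathcal{M}$, one has $\|(\varphi_a(S)-I)x\|\to 0$ as $a\to 1^-$ through the reals.

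To prove the Claim I would invoke the Sz.-Nagy--Foias model recalled in the introduction: $S$ is unitarily equivalent to $P_{\mathcal{J}}M_z|_{\mathcal{J}}$, where $\mathcal{J}$ is an $M_z^*$-invariant subspace of $H_{\mathfrak{D}_{S^*}}^2(\mathbb{D})$. Since $\mathcal{J}$ is invariant for $M_z^*$, and hence for the norm-closed algebra generated by $M_z^*$ and $I$ (which contains $M_{\varphi_a}^*$), one gets $\varphi_a(P_{\mathcal{J}}M_z|_{\mathcal{J}})^{*}=M_{\varphi_a}^*|_{\mathcal{J}}$, that is $\varphi_a(P_{\mathcal{J}}M_z|_{\mathcal{J}})=P_{\mathcal{J}}M_{\varphi_a}|_{\mathcal{J}}$. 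Thus for $x\in\mathcal{J}$ we have $(\varphi_a(S)-I)x=P_{\mathcal{J}}\bigl((M_{\varphi_a}-I)x\bigr)$, so it suffices to show $\|(M_{\varphi_a}-I)x\|_{H^2_{\mathfrak{D}_{S^*}}(\mathbb{D})}\to 0$. Because $\varphi_a$ is a scalar inner function, $M_{\varphi_a}$ is an isometry, whence, writing $dm$ for normalized arc length on the unit circle $\mathbb{T}$,
\[
\|(M_{\varphi_a}-I)x\|^2=2\,\mathrm{Re}\,\langle (I-M_{\varphi_a})x,x\rangle=2\,\mathrm{Re}\int_{\mathbb{T}}\bigl(1-\varphi_a(\zeta)\bigr)\,\|x(\zeta)\|_{\mathfrak{D}_{S^*}}^2\,dm(\zeta).
\]
Since $\varphi_a(\zeta)\to 1$ for every $\zeta\in\mathbb{T}\setminus\{1\}$ while $|1-\varphi_a|\le 2$, dominated convergence yields the Claim.

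Granting the Claim, I would proceed as follows. Since $\mathcal{H}$ may be assumed separable, fix an orthonormal basis $\{e_k\}_{k\ge 1}$ and let $D=\mathrm{span}\{e_k:k\ge 1\}$, a dense subspace. Applying the Claim with $S=T_j$, I would choose recursively, for each $j\ge 1$, a real number $a_j\in(0,1)$ close enough to $1$ that $\|(\varphi_{a_j}(T_j)-I)e_k\|<2^{-j}$ for all $k\le j$; set $B_j:=\varphi_{a_j}$, a nonconstant (degree one) finite Blaschke product. Then for each fixed $k$,
\[
\sum_{j\ge 1}\|(B_j(T_j)-I)e_k\|=\sum_{j<k}\|(B_j(T_j)-I)e_k\|+\sum_{j\ge k}\|(B_j(T_j)-I)e_k\|<\infty,
\]
the first sum being finite and the second dominated by $\sum_{j\ge k}2^{-j}$. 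As $\|B_j(T_j)-I\|\le 2$, linearity gives $\sum_{j\ge 1}\|(B_j(T_j)-I)x\|<\infty$ for every $x\in D$.

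Finally, put $Q_j:=B_j(T_j)$; these are mutually commuting contractions (the $T_i$ commute, and von Neumann's inequality gives $\|B_j(T_j)\|\le\|B_j\|_{\infty,\mathbb{D}}=1$). For $m<n$ and $x\in D$, the telescoping identity $\prod_{i=m+1}^n Q_i-I=\sum_{j=m+1}^{n}\bigl(\prod_{i=m+1}^{j-1}Q_i\bigr)(Q_j-I)$ together with $\bigl\|\prod_{i=m+1}^{j-1}Q_i\bigr\|\le 1$ yields
\[
\Bigl\|\prod_{i=1}^{n}Q_i\,x-\prod_{i=1}^{m}Q_i\,x\Bigr\|=\Bigl\|\prod_{i=1}^{m}Q_i\Bigl(\prod_{i=m+1}^{n}Q_i-I\Bigr)x\Bigr\|\le\Bigl\|\Bigl(\prod_{i=m+1}^{n}Q_i-I\Bigr)x\Bigr\|\le\sum_{j=m+1}^{n}\|(Q_j-I)x\|,
\]
which tends to $0$ as $m,n\to\infty$ by the summability just established. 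Hence $\{\prod_{i=1}^{n}B_i(T_i)x\}_n$ is Cauchy for $x\in D$; since $\bigl\|\prod_{i=1}^{n}B_i(T_i)\bigr\|\le 1$ uniformly in $n$ and $D$ is dense, $\prod_{i=1}^{n}B_i(T_i)$ converges in the strong operator topology. I expect the only substantive point to be the Claim (the dominated-convergence limit), everything afterwards being routine bookkeeping. An alternative would be to run the same telescoping argument on the minimal regular isometric dilation $\mathbf{V}\in\mathcal{DP}$ of $\mathbf{T}$ — where each $B_j(V_j)$ is a pure isometry, so partial products have norm exactly $1$ — and then compress back to $\mathcal{H}$; but the direct argument above is shorter and uses only the single-variable model recalled in the introduction.
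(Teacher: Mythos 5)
Your proposal is correct, but it takes a genuinely different route from the paper's. The paper deduces the corollary by combining Proposition \ref{power of DC} with Corollary \ref{application}: one first chooses exponents $k_n$ so that $(T_1^{k_1},T_2^{k_2},\cdots)$ is of Beurling type (this already invokes the minimal regular isometric dilation and the estimate of Lemma \ref{estimate lemma}), and then composes with a M\"obius shift $\Phi_\mu$ for a point $\mu$ in the set $\Gamma$ of Corollary \ref{application}, whose proof identifies the dilation with $(M_{\widetilde{\varphi_{\mu_1}}},M_{\widetilde{\varphi_{\mu_2}}},\cdots)$ and runs a Cauchy estimate in $H^2(\mathbb{D}_2^\infty)$; the resulting Blaschke products are $B_n(z)=\varphi_{\mu_n}(z^{k_n})$, of degree $k_n$. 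You bypass the infinite-variable dilation machinery entirely: your only nontrivial input is the one-variable fact that $\varphi_a(S)\to I$ in the strong operator topology as $a\to 1^-$ for a $C_{\cdot0}$-contraction $S$, which you correctly extract from the coinvariant-subspace model (the intertwining $\varphi_a(S)=P_{\mathcal{J}}M_{\varphi_a}|_{\mathcal{J}}$, the isometry identity $\|(M_{\varphi_a}-I)x\|^2=2\,\mathrm{Re}\langle (I-M_{\varphi_a})x,x\rangle$, and dominated convergence on $\mathbb{T}$). The diagonal choice of $a_j$ over an orthonormal basis and the telescoping bound $\|\prod_{i=1}^{n}Q_ix-\prod_{i=1}^{m}Q_ix\|\le\sum_{j=m+1}^{n}\|(Q_j-I)x\|$ are then routine, and the separability you use is the paper's standing assumption. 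Your argument is more elementary and even yields a slightly sharper conclusion, namely that the $B_n$ can be taken to be degree-one Blaschke factors $\varphi_{a_n}$ with $a_n\in(0,1)$; what the paper's route buys is that the corollary falls out of Proposition \ref{power of DC}, which is needed anyway for the analytic model (Theorem \ref{analytic model}), and that it ties the statement to the Beurling-type structure of the dilation.
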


In the rest of this paper, we use the language of Hilbert module by Douglas and Paulsen \cite{DP} to state  the generalization of the Beurling-Lax  theorem and results related with  Jordan blocks in  the finite-variable  case
to the case  in infinitely many variables.

Let $\mathcal{P}_\infty$ denote the polynomial ring  in countably infinitely many complex variables, in which each polynomial only involves finitely many variables.
  Similar to the finite-variable case, for   each commuting sequence $\mathbf{T}$ of operators on a Hilbert space $\mathcal{H}$, one  defines a $\mathcal{P}_\infty$-module structure on $\mathcal{H}$ by
   $$ph=p(\mathbf{T})h,\quad p\in\mathcal{P}_\infty, h\in\mathcal{H}.$$
  Say that this $\mathcal{P}_\infty$-module $\mathcal{H}$ is \textit{doubly commuting} if the sequence  $\mathbf{T}$ is doubly commuting.
  Conversely, any $\mathcal{P}_\infty$-module structure is determined by a commuting sequence of operators in the above way.

  We can also define the  Hardy module structure on  a vector-valued Hardy space $H_{\mathcal{E}}^2(\mathbb{D}_2^\infty)$ in infinitely many variables via the tuple $\mathbf{M}_\zeta$ of coordinate multiplication operators on $H_{\mathcal{E}}^2(\mathbb{D}_2^\infty)$. The module action is the multiplication by polynomials and  submodules of $H_{\mathcal{E}}^2(\mathbb{D}_2^\infty)$ are exactly joint invariant subspaces for $\mathbf{M}_\zeta$. By the definition,
a submodule $\mathcal{S}$ of $H_{\mathcal{E}}^2(\mathbb{D}_2^\infty)$ is doubly commuting if the restriction $$(M_{\zeta_1}|_\mathcal{S},M_{\zeta_2}|_\mathcal{S},\cdots)$$ of $\mathbf{M}_\zeta$ on $\mathcal{S}$ is doubly commuting; a quotient module $\mathcal{Q}$ of $H_{\mathcal{E}}^2(\mathbb{D}_2^\infty)$ is doubly commuting if the compression $$(P_\mathcal{Q}M_{\zeta_1}|_\mathcal{Q},P_\mathcal{Q}M_{\zeta_2}|_\mathcal{Q},\cdots)$$ of $\mathbf{M}_\zeta$ on $\mathcal{Q}$ is doubly commuting. We will prove that such doubly commuting restriction  and  compression are of Beurling type.
 As a consequence, we have
\begin{thm} \label{doubly commuting submodule} Let   $\mathcal{S}$ be a  submodule of the vector-valued Hardy module
 $H_{\mathcal{E}}^2(\mathbb{D}_2^\infty)$. Then $\mathcal{S}$ is doubly commuting if and only if there exist a Hilbert space $\mathcal{F}$ and an inner function $\Psi\in H_{\mathcal{B}(\mathcal{F},\mathcal{E})}^\infty(\mathbb{D}_2^\infty)$, such that
 $$\mathcal{S}=\Psi H_{\mathcal{F}}^2(\mathbb{D}_2^\infty).$$
\end{thm}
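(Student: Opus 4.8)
The plan is to reduce Theorem~\ref{doubly commuting submodule} to the dilation/model machinery built earlier in the paper, in particular to Corollary~\ref{BT} and the characterization of Beurling type sequences in class $\mathcal{DP}$. The ``if'' direction is the easy one: if $\mathcal{S}=\Psi H_{\mathcal{F}}^2(\mathbb{D}_2^\infty)$ with $\Psi$ inner, then $\Psi$ is a partial isometry (in fact isometry) intertwining $\mathbf{M}_\zeta$ on $H_{\mathcal{F}}^2(\mathbb{D}_2^\infty)$ with $\mathbf{M}_\zeta|_\mathcal{S}$, so $(M_{\zeta_1}|_\mathcal{S},M_{\zeta_2}|_\mathcal{S},\cdots)$ is jointly unitarily equivalent to the sequence $\mathbf{M}_\zeta$ on $H_{\mathcal{F}}^2(\mathbb{D}_2^\infty)$, which is doubly commuting (indeed each $M_{\zeta_i}$ is a pure isometry and they doubly commute). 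Hence $\mathcal{S}$ is a doubly commuting submodule.

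For the ``only if'' direction, suppose $\mathcal{S}$ is doubly commuting, so $\mathbf{V}:=(M_{\zeta_1}|_\mathcal{S},M_{\zeta_2}|_\mathcal{S},\cdots)$ is a doubly commuting sequence of isometries on $\mathcal{S}$. First I would check that $\mathbf{V}\in\mathcal{DP}$, i.e.\ each $M_{\zeta_i}|_\mathcal{S}$ is a \emph{pure} isometry: this follows because $M_{\zeta_i}$ is pure on $H_{\mathcal{E}}^2(\mathbb{D}_2^\infty)$ and $\mathcal{S}$ is invariant, so $(M_{\zeta_i}|_\mathcal{S})^{*k}=M_{\zeta_i}^{*k}|_\mathcal{S}\to 0$ strongly. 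So $\mathbf{V}\in\mathcal{DP}(\mathcal{S})$. The crux is then to show $\mathbf{V}$ is of Beurling type, i.e.\ $[\mathfrak{D}_{\mathbf{V}^*}]_{\mathbf{V}}=\mathcal{S}$, equivalently (via the promised result in Section~3) that $\mathbf{V}$ is jointly unitarily equivalent to $\mathbf{M}_\zeta$ on some $H_{\mathcal{F}}^2(\mathbb{D}_2^\infty)$; one then transports that unitary to exhibit the inner multiplier. The natural route is to verify condition (3) of Corollary~\ref{BT} applied to $\mathbf{T}=\mathbf{V}$: for each $x\in\mathcal{S}$, $\|x\|^2=\sum_{\alpha\in\mathbb{Z}_+^{(\infty)}}\|D_{\mathbf{V}^*}\mathbf{V}^{*\alpha}x\|^2$. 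Since $\mathbf{V}$ is already a sequence of isometries, its minimal regular isometric dilation is itself, so $D_{\mathbf{V}^*}$ is the orthogonal projection of $\mathcal{S}$ onto $\mathfrak{D}_{\mathbf{V}^*}=\bigcap_n \mathrm{Ker}(M_{\zeta_n}|_\mathcal{S})^* = \mathcal{S}\cap\bigcap_n\mathrm{Ker}\,M_{\zeta_n}^*$, and the sum in question is a Parseval-type identity that amounts to $\mathcal{S}=\bigvee_\alpha\mathbf{V}^\alpha\mathfrak{D}_{\mathbf{V}^*}$.

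The main obstacle is precisely establishing this Parseval identity on $\mathcal{S}$, and I expect the key input is a telescoping argument exploiting double commutativity variable by variable, in the spirit of the finite-variable Beurling--Lax result of \cite{SSW} and the norm identity (\ref{key identity}). Concretely, for the $n$-th coordinate one writes the Wold-type decomposition $\mathcal{S}=\bigoplus_{k\ge 0}(M_{\zeta_n}|_\mathcal{S})^k\big(\mathcal{S}\ominus M_{\zeta_n}\mathcal{S}\big)$, uses that $M_{\zeta_n}|_\mathcal{S}$ doubly commutes with all the other $M_{\zeta_i}|_\mathcal{S}$ so the wandering subspace $\mathcal{S}\ominus M_{\zeta_n}\mathcal{S}$ is invariant under all the remaining coordinate shifts, and iterates over $n=1,2,\dots$. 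The genuinely infinite-variable difficulty is controlling the limit: one must show that the intersection of the tails $\bigcap_{N}\bigvee_{\alpha:\,\mathrm{supp}\,\alpha\subseteq\{N+1,N+2,\dots\}}\mathbf{V}^\alpha(\text{something})$ collapses, i.e.\ that no mass escapes ``to infinity'' in the variable index; here the purity of each $M_{\zeta_n}$ on the ambient space and the convergence of the defect products from the introduction are what make it work, and one should phrase the argument so that it reproves (or directly invokes) the Section~3 equivalence rather than redo it. Once the identity holds, Corollary~\ref{BT} gives a unitary $U:\mathcal{S}\to H_{\mathcal{F}}^2(\mathbb{D}_2^\infty)$ with $U M_{\zeta_i}|_\mathcal{S}=M_{\zeta_i}U$ for all $i$, and $\Psi:=\iota_{\mathcal{S}}\circ U^{*}$, where $\iota_\mathcal{S}:\mathcal{S}\hookrightarrow H_{\mathcal{E}}^2(\mathbb{D}_2^\infty)$ is the inclusion, is an isometric module map $H_{\mathcal{F}}^2(\mathbb{D}_2^\infty)\to H_{\mathcal{E}}^2(\mathbb{D}_2^\infty)$ with range $\mathcal{S}$; a module map between vector-valued Hardy spaces over $\mathbb{D}_2^\infty$ is multiplication by a (unique) bounded $\mathcal{B}(\mathcal{F},\mathcal{E})$-valued holomorphic function, and isometry forces it to be inner, yielding $\mathcal{S}=\Psi H_{\mathcal{F}}^2(\mathbb{D}_2^\infty)$ as desired.
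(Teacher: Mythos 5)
Your overall architecture agrees with the paper's: reduce to showing that $\mathbf{V}=\mathbf{M}_\zeta|_{\mathcal{S}}$ is of Beurling type, then invoke Corollary \ref{BT} to identify $\mathbf{V}$ with the coordinate shifts on some $H^2_{\mathcal{F}}(\mathbb{D}_2^\infty)$, and finally use the intertwining result (Proposition \ref{operatoe interwine CMO}) to turn the resulting isometry into an inner multiplier. The ``if'' direction and these last two steps are correct and essentially identical to the paper's Corollary \ref{doubly commuting Hardy submodule}.

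The gap is in the crux, namely the proof that $\mathbf{V}$ is of Beurling type. You propose to obtain the Parseval identity by iterating one-variable Wold decompositions and then controlling the limit ``using the purity of each $M_{\zeta_n}$ on the ambient space and the convergence of the defect products.'' But purity of every coordinate and convergence of the defect products hold for \emph{every} sequence in $\mathcal{DP}$, and the paper's Theorem \ref{example} (together with Proposition \ref{sequence of multiplication operators} and Remark \ref{remark for Section 4}) exhibits $\mathcal{DP}$-sequences --- even sequences of multiplication operators by one-variable inner functions acting on $H^2(\mathbb{D}_2^\infty)$ itself --- whose wandering subspace is $\{0\}$ and which are not of Beurling type. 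So the ingredients you name cannot be what makes the argument close; you must exploit the specific way $\mathcal{S}$ sits inside $H^2_{\mathcal{E}}(\mathbb{D}_2^\infty)$. The paper's Theorem \ref{restriction on doubly commuting submodule} does this with a short argument you have not supplied: by Corollary \ref{collection of cor} (3) one may pass to a joint reducing piece and assume $\mathfrak{D}_{\mathbf{V}^*}=\{0\}$, i.e.\ $\mathcal{S}=\bigvee_n\zeta_n\mathcal{S}$; choosing $F\in\mathcal{S}$ whose homogeneous component of minimal total degree $k_0$ has norm $1$, every finite sum $\sum_i\zeta_iF_i$ with $F_i\in\mathcal{S}$ has vanishing $k_0$-th homogeneous component, hence $\bigl\|F-\sum_i\zeta_iF_i\bigr\|\ge1$, contradicting $\mathcal{S}=\bigvee_n\zeta_n\mathcal{S}$. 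This degree-filtration argument on the ambient Hardy space (not a tail-of-the-variables limiting argument) is the missing idea; without it, or some substitute that genuinely uses the grading of $H^2_{\mathcal{E}}(\mathbb{D}_2^\infty)$, your plan does not go through.
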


 \begin{thm} \label{doubly commuting quotient of H^2(D^infty)} Every  doubly commuting quotient module of
 $H^2(\mathbb{D}_2^\infty)$ is the tensor product of some sequence of quotient modules of $H^2(\mathbb{D})$.
\end{thm}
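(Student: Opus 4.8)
The plan is to reduce the statement to Theorem \ref{doubly commuting submodule} by passing to orthogonal complements, and then to analyze the inner function that appears. Let $\mathcal{Q}$ be a doubly commuting quotient module of $H^2(\mathbb{D}_2^\infty)$, so that $\mathcal{S} = H^2(\mathbb{D}_2^\infty) \ominus \mathcal{Q}$ is a submodule and the compression $\mathbf{C} = (P_\mathcal{Q}M_{\zeta_1}|_\mathcal{Q}, P_\mathcal{Q}M_{\zeta_2}|_\mathcal{Q},\cdots)$ is doubly commuting. The first step is to observe, as is standard in the finite-variable theory (cf.\ Mandrekar, \cite{Man,SSW}), that the double commutativity of the compression $\mathbf{C}$ on $\mathcal{Q}$ forces the double commutativity of the restriction $(M_{\zeta_1}|_\mathcal{S},M_{\zeta_2}|_\mathcal{S},\cdots)$ on $\mathcal{S}$: for $i\neq j$ one computes, using $M_{\zeta_i}M_{\zeta_j}^* = M_{\zeta_j}^*M_{\zeta_i}$ on the whole space, that the condition $(P_\mathcal{Q}M_{\zeta_i}|_\mathcal{Q})^*(P_\mathcal{Q}M_{\zeta_j}|_\mathcal{Q}) = (P_\mathcal{Q}M_{\zeta_j}|_\mathcal{Q})(P_\mathcal{Q}M_{\zeta_i}|_\mathcal{Q})^*$ is equivalent to $M_{\zeta_j}\mathcal{S} \perp M_{\zeta_i}^*$-related compressions, which unwinds to $M_{\zeta_i}^*M_{\zeta_j}\mathcal{S}\subseteq\mathcal{S}$, i.e.\ $\mathcal{S}$ is doubly commuting as a submodule. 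Hence, by Theorem \ref{doubly commuting submodule} applied with $\mathcal{E}=\mathbb{C}$, we get $\mathcal{S} = \Psi H_{\mathcal{F}}^2(\mathbb{D}_2^\infty)$ for some Hilbert space $\mathcal{F}$ and some inner $\Psi\in H_{\mathcal{B}(\mathcal{F},\mathbb{C})}^\infty(\mathbb{D}_2^\infty)$.

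The second step is to identify the structure of the inner function $\Psi$, and this is where the bulk of the work lies. I would invoke the analytic model from Section 4 (Theorem \ref{analytic model}): the minimal regular isometric dilation of the doubly commuting sequence of pure isometries $(M_{\zeta_1}|_\mathcal{S},\dots)$ — which is itself a doubly commuting sequence of pure isometries since each $M_{\zeta_i}|_\mathcal{S}$ is already an isometry — is jointly unitarily equivalent to a sequence of multiplication operators induced by operator-valued inner functions, each involving a single distinct variable. Concretely, the Beurling-type property established for doubly commuting restrictions (which is the "consequence" referenced just before Theorem \ref{doubly commuting submodule}) together with the one-variable-at-a-time structure forces $\Psi$ to factor as an (infinite) product $\Psi(\zeta) = \prod_{n} \Psi_n(\zeta_n)$ where each $\Psi_n$ is an inner function in the single variable $\zeta_n$ (acting between appropriate defect spaces), the product converging strongly because of the norm identity / convergence of defect operators guaranteed by $\mathbf{T}\in\mathcal{DC}$ (Corollary \ref{application 2} is the scalar shadow of this). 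Equivalently, $\mathcal{S} = \sum_{n} \widetilde{\Psi_n} H^2(\mathbb{D}_2^\infty)$ where $\widetilde{\Psi_n}(\zeta) = \Psi_n(\zeta_n)$.

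The third step is to translate the factorization of $\mathcal{S}$ into a tensor decomposition of $\mathcal{Q}$. Writing $H^2(\mathbb{D}_2^\infty)$ informally as the Hilbert tensor product $\bigotimes_n H^2(\mathbb{D})$ (in the $\zeta_n$-variables), the submodule $\widetilde{\Psi_n}H^2(\mathbb{D}_2^\infty)$ corresponds to $H^2(\mathbb{D})\otimes\cdots\otimes \big(\Psi_n H^2(\mathbb{D})\big)\otimes\cdots$, and its orthocomplement is $H^2(\mathbb{D})\otimes\cdots\otimes\big(H^2(\mathbb{D})\ominus\Psi_n H^2(\mathbb{D})\big)\otimes\cdots$. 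Since $\mathcal{Q}$ is the orthocomplement of the span of all these, and these complements are "aligned along distinct tensor legs," a routine tensor-product computation gives
$$
\mathcal{Q} = \bigotimes_{n=1}^\infty \mathcal{Q}_n, \qquad \mathcal{Q}_n = H^2(\mathbb{D}) \ominus \Psi_n H^2(\mathbb{D}),
$$
where $\mathcal{Q}_n = H^2(\mathbb{D})$ exactly when $\Psi_n$ is not present (equivalently when the $n$-th factor of $\mathcal{S}$ is trivial), and each $\mathcal{Q}_n$ is by Beurling's theorem a quotient module of $H^2(\mathbb{D})$. This is precisely the asserted conclusion. The main obstacle I anticipate is making the passage through the infinite tensor product rigorous: one must be careful that the infinite product $\prod_n \Psi_n(\zeta_n)$ is well-defined as an element of $H_{\mathcal{B}(\mathcal{F},\mathbb{C})}^\infty$ (convergence in SOT on the unit ball, controlled via the defect-operator convergence from \cite[Proposition 43.1]{Con} that is already cited), and that the orthogonal complement of an infinite sum of "axis-aligned" submodules is genuinely the infinite tensor product of the one-variable complements — this requires either an induction on finitely many variables combined with a density/limit argument, or a direct reproducing-kernel computation using $\mathbf{K}_\lambda(\zeta) = \prod_n (1-\overline{\lambda_n}\zeta_n)^{-1}$ and the fact that the reproducing kernel of $\mathcal{Q}_n$ is $(1-\overline{\lambda_n}\zeta_n)^{-1} - \Psi_n(\zeta_n)\overline{\Psi_n(\lambda_n)}(1-\overline{\lambda_n}\zeta_n)^{-1}$, whose infinite product is the reproducing kernel of $\mathcal{Q}$.
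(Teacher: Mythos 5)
Your argument breaks down at Step 1. The implication ``$\mathcal{Q}$ is a doubly commuting quotient module $\Rightarrow$ $\mathcal{S}=H^2(\mathbb{D}_2^\infty)\ominus\mathcal{Q}$ is a doubly commuting submodule'' is false; the two notions are independent of one another, which is exactly why the paper proves Theorems \ref{doubly commuting submodule} and \ref{doubly commuting quotient of H^2(D^infty)} by separate arguments. A concrete counterexample: let $\mathcal{Q}$ be the closed span of the monomials $\zeta^\alpha$ with $\alpha_1=\alpha_2=0$, i.e.\ $\mathcal{Q}=\mathbb{C}\otimes\mathbb{C}\otimes H^2(\mathbb{D})\otimes H^2(\mathbb{D})\otimes\cdots$. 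The compressions $P_{\mathcal{Q}}M_{\zeta_1}|_{\mathcal{Q}}$ and $P_{\mathcal{Q}}M_{\zeta_2}|_{\mathcal{Q}}$ are both zero and the remaining compressions are the unrestricted coordinate shifts, so $\mathcal{Q}$ is a doubly commuting quotient module (already in tensor-product form). Its orthocomplement is $\mathcal{S}=\overline{\zeta_1 H^2(\mathbb{D}_2^\infty)+\zeta_2 H^2(\mathbb{D}_2^\infty)}$, and with $R_i=M_{\zeta_i}|_{\mathcal{S}}$ one computes $R_1^*R_2\,\zeta_1=P_{\mathcal{S}}M_{\zeta_1}^*(\zeta_1\zeta_2)=\zeta_2$ while $R_2R_1^*\,\zeta_1=\zeta_2\,P_{\mathcal{S}}1=0$; hence $\mathcal{S}$ is not doubly commuting and, by Theorem \ref{doubly commuting submodule}, not of the form $\Psi H^2_{\mathcal{F}}(\mathbb{D}_2^\infty)$. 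So the reduction to the Beurling--Lax theorem is unavailable, and everything you build on the representation $\mathcal{S}=\Psi H^2_{\mathcal{F}}(\mathbb{D}_2^\infty)$ collapses. (Your ``unwinding'' to the condition $M_{\zeta_i}^*M_{\zeta_j}\mathcal{S}\subseteq\mathcal{S}$ is also not the double-commutativity criterion: the $\mathcal{S}$ above satisfies that inclusion yet is not doubly commuting.)

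There is a second, independent gap in Step 2: even for a genuinely doubly commuting submodule the inner function $\Psi$ need not split as a product $\prod_n\Psi_n(\zeta_n)$ of one-variable inner functions --- already on the bidisc, $\theta H^2(\mathbb{D}^2)$ with $\theta$ inner and jointly dependent on both variables is doubly commuting without $\theta$ factoring. The paper's proof stays entirely on the quotient side: it first shows that the compression $\mathbf{C}=P_{\mathcal{Q}}\mathbf{M}_\zeta|_{\mathcal{Q}}$ is of Beurling type, because $\mathbf{M}_\zeta$ is a \emph{regular} isometric dilation of $\mathbf{C}$ whose minimal part lives on the reducing subspace $[\mathcal{Q}]_{\mathbf{M}_\zeta}=H^2_{\mathcal{E}_0}(\mathbb{D}_2^\infty)$, so Corollary \ref{BT} applies; the canonical analytic model together with Theorem \ref{analytic model for BT} then exhibits $\mathcal{Q}$ as $\bigcap_{n}\mathcal{J}_1\otimes\cdots\otimes\mathcal{J}_n\otimes H^2(\mathbb{D})\otimes\cdots$ for quotient modules $\mathcal{J}_n$ of $H^2(\mathbb{D})$, and a final limit argument (SOT convergence of $P_{\mathcal{Q}_n}$ to $P_{\mathcal{Q}}$ applied to the monomials $\zeta^\alpha$, plus pointwise convergence) identifies this intersection with $\bigotimes_n\mathcal{J}_n$. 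Your Step 3 does anticipate this last piece of tensor-product and reproducing-kernel bookkeeping, but it is fed by the wrong input.
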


Theorem \ref{doubly commuting submodule} is a infinite-variable version of the Beurling-Lax Theorem
for doubly commuting Hardy submodules.
Also from Theorem \ref{doubly commuting quotient of H^2(D^infty)}, the compression of the tuple
 of coordinate multiplication operators on a nontrivial  doubly commuting quotient  module of
 $H^2(\mathbb{D}_2^\infty)$ is a Jordan block in infinitely many variables.
 %An example is also presented to illustrate that
% Theorem \ref{doubly commuting quotient of H^2(D^infty)} fails for the vector-valued Hardy module $H_{\mathcal{E}}^2(\mathbb{D}_2^\infty)$.
  %even when $\mathcal{E}$ is finite-dimensional.
 See Section 5 for  the details.

It is worth mentioning that our methods presented in this paper are quite different from that in \cite{O,Sar2,BNS} and also valid for nonseparable Hilbert spaces.

\section{Some preparations}
In this section, we first establish a lemma to guarantee the validity of Definition \ref{defn1} and \ref{defn2}. Then we list some basic properties of vector-valued Hardy spaces and operator-valued functions.
Lastly, the rest of this section is dedicated to  preparations for the proofs of main results in subsequent sections.

\subsection{The minimal regular isometric dilation}
Suppose that the sequence $\mathbf{T}=(T_1,T_2,\cdots)$  is a doubly commuting sequence of contractions on $\mathcal{H}$. Say, a sequence $\mathbf{V}=(V_1,V_2,\cdots)$ of isometries, defined on a larger Hilbert space $\mathcal{K}_{\mathbf{V}}\supseteq\mathcal{H}$, is
an isometric dilation of $\mathbf{T}$ if for each $\alpha\in\mathbb{Z}_+^{(\infty)}$,
$$\mathbf{T}^\alpha=P_{\mathcal{H}}\mathbf{V}^\alpha|_{\mathcal{H}}.$$
Furthermore, if for $\alpha,\beta\in\mathbb{Z}_+^{(\infty)}$ satisfying
$\alpha\wedge\beta=(0,0,\cdots)$, $$\mathbf{T}^{*\alpha}\mathbf{T}^\beta
=P_{\mathcal{H}}\mathbf{V}^{*\alpha}\mathbf{V}^\beta|_{\mathcal{H}},$$
then the isometric dilation $\mathbf{V}$ of $\mathbf{T}$ is said to be regular,
where $$\alpha\wedge\beta=(\min\{\alpha_1,\beta_1\},\min\{\alpha_2,\beta_2\},\cdots).$$
An isometric dilation $\mathbf{V}$ of $\mathbf{T}$ is said to be minimal if the $\mathbf{V}$-joint invariant subspace $[\mathcal{H}]_{\mathbf{V}}$ generated by
$\mathcal{H}$ is  $\mathcal{K}_{\mathbf{V}}$.
It is clear to see that for any regular isometric dilation $\mathbf{V}$ of $\mathbf{T}$, the restriction $\mathbf{V}|_{[\mathcal{H}]_{\mathbf{V}}}$
of the sequence $\mathbf{V}$ on $[\mathcal{H}]_{\mathbf{V}}$ is a minimal regular isometric dilation of $\mathbf{T}$. The minimal regular isometric dilation is unique up to the joint unitary equivalence in the sense that
if both $\mathbf{V}=(V_1,V_2,\cdots)$ and $\mathbf{W}=(W_1,W_2,\cdots)$ are minimal regular isometric dilations of $\mathbf{T}$, then there exists a unitary operator $U:\mathcal{K}_{\mathbf{V}}\rightarrow\mathcal{K}_{\mathbf{W}}$
such that $U$ fixes vectors in $\mathcal{H}$ and $$UV_n=W_nU,\quad n\in\mathbb{N}.$$
Applying \cite[Theorem 4.2]{Sha} to the semigroup $\mathbb{Z}_+^{(\infty)}$,  we see that the minimal regular isometric dilation of $\mathbf{T}$ always exists and is also doubly commuting.
Note that the existence
%of the minimal regular isometric dilation
can be  also deduced from \cite[pp. 36-37]{SNFBK} by restrict the minimal regular unitary dilation $\mathbf{U}$ of $\mathbf{T}$ on $[\mathcal{H}]_{\mathbf{U}}$.
For the convenience of the readers, below we   prove directly for the case $\mathbf{T}\in\mathcal{DC}$  that  the minimal regular isometric dilation $\mathbf{V}$ of $\mathbf{T}$ is in class $\mathcal{DP}$. Moreover, $\mathbf{V}$ is a coextension of $\mathbf{T}$, which means that $\mathcal{H}$ is invariant for $\mathbf{V}^*$ and $\mathbf{T}^*=\mathbf{V}^*|_{\mathcal{H}}$, the restriction of $\mathbf{V}^*$ on $\mathcal{H}$.

\begin{lem}\label{min iso dilation}
The minimal regular isometric dilation of a sequence $\mathbf{T}\in\mathcal{DC}$ is in class $\mathcal{DP}$ and  coextends $\mathbf{T}$.
\end{lem}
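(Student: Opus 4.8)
The plan is to construct \emph{by hand} a regular isometric dilation of $\mathbf{T}$ that is minimal, coextends $\mathbf{T}$, and consists of doubly commuting pure isometries; since the minimal regular isometric dilation is unique up to a joint unitary equivalence fixing $\mathcal{H}$ (as recalled above), this proves the lemma. The construction iterates the classical single-variable Sz.-Nagy dilation one coordinate at a time, while simultaneously lifting the remaining operators.

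First, recall the model for a single $C_{\cdot0}$-contraction $S$ on $\mathcal{M}$: its minimal isometric dilation may be realized on $\mathcal{M}\oplus\ell^2(\mathfrak{D}_S)$ by $V_S\colon h\oplus(d_j)_{j\ge 0}\mapsto Sh\oplus(D_Sh,d_0,d_1,\dots)$, so that $V_S^*\colon h\oplus(d_j)_{j\ge 0}\mapsto(S^*h+D_Sd_0)\oplus(d_1,d_2,\dots)$; in particular $\mathcal{M}$ is co-invariant for $V_S$ with $V_S^*|_{\mathcal{M}}=S^*$, and $V_S$ is a pure isometry. Next comes the lifting step. If $R$ is a contraction on $\mathcal{M}$ doubly commuting with $S$, then $R$ commutes with $S^*S$, hence with $D_S$, so $R$ and $R^*$ both preserve $\mathfrak{D}_S$; define $\widetilde R$ on $\mathcal{M}\oplus\ell^2(\mathfrak{D}_S)$ to act as $R$ on the first summand and as $R|_{\mathfrak{D}_S}$ coordinatewise on $\ell^2(\mathfrak{D}_S)$. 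A routine computation then shows that $\widetilde R$ is a contraction doubly commuting with $V_S$, that $\widetilde R$ coextends $R$, that the assignment $R\mapsto\widetilde R$ preserves double commutativity among such operators, and that $\widetilde R$ inherits the type of $R$ --- it is a pure isometry (respectively a $C_{\cdot0}$-contraction) whenever $R$ is, where for the $C_{\cdot0}$/purity condition on the $\ell^2$-summand one invokes dominated convergence for $\sum_j\|R^{*k}d_j\|^2$.

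Now iterate: put $\mathcal{H}^{(0)}=\mathcal{H}$ and $\mathbf{T}^{(0)}=\mathbf{T}$, and at stage $n$ replace the $n$-th coordinate $T_n^{(n-1)}$ by its Sz.-Nagy dilation $V_n$ on $\mathcal{H}^{(n)}:=\mathcal{H}^{(n-1)}\oplus\ell^2(\mathfrak{D}_{T_n^{(n-1)}})$, while lifting every other coordinate (the first $n-1$ entries, already isometries, and the still-contractive tail) via the tilde construction. By the previous paragraph, at each stage $\mathcal{H}^{(n)}$ carries a doubly commuting sequence whose first $n$ entries are pure isometries and whose remaining entries are $C_{\cdot0}$-contractions, the spaces $\mathcal{H}^{(n)}$ are nested, and each $\mathcal{H}^{(n-1)}$ is both invariant and co-invariant for every stage-$n$ operator. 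Passing to $\mathcal{K}:=\overline{\bigcup_n\mathcal{H}^{(n)}}$, the compatible operators assemble into isometries $V_1,V_2,\dots$ on $\mathcal{K}$, and I would then verify: (i) $\mathbf{V}=(V_1,V_2,\dots)$ is doubly commuting, since on each $\mathcal{H}^{(n)}$ the $V_i$ and $V_i^*$ restrict to the stage-$n$ operators, which doubly commute; (ii) each $V_i$ is pure, because $V_i^*$ preserves every $\mathcal{H}^{(n)}$ with $n\ge i$, restricting there to a pure isometry, so $V_i^{*k}\to 0$ on the dense set $\bigcup_n\mathcal{H}^{(n)}$; (iii) $\mathbf{V}$ coextends $\mathbf{T}$, by tracking $V_i^*|_{\mathcal{H}}=T_i^*$ down the chain; and (iv) $\mathbf{V}$ is minimal, because each new summand $\ell^2(\mathfrak{D}_{T_n^{(n-1)}})$ is generated from $\mathcal{H}^{(n-1)}$ by powers of $V_n$, using $\overline{D_{T_n^{(n-1)}}\mathcal{H}^{(n-1)}}=\mathfrak{D}_{T_n^{(n-1)}}$. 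Since a coextension is automatically a regular isometric dilation, $\mathbf{V}$ is then the minimal regular isometric dilation of $\mathbf{T}$; it lies in $\mathcal{DP}$ and coextends $\mathbf{T}$.

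The main obstacle I anticipate is not any single computation but the bookkeeping in the inductive limit: one must check that the five properties (isometry, purity, double commutativity, coextension, minimality) are preserved both by the single-coordinate dilation step and by the tilde-lifting step, and then survive the passage to $\mathcal{K}$. The decisive structural point is that each $\mathcal{H}^{(n)}$ be \emph{co-invariant}, not merely invariant, under every $V_i$ --- which is exactly what the explicit formulas for $V_S$ and $\widetilde R$ provide --- and the only genuinely analytic ingredient is the propagation of the $C_{\cdot0}$/purity condition through the $\ell^2$-summand via dominated convergence.
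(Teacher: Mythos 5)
Your proposal is correct, and it reaches the conclusion by a genuinely more constructive route than the paper's. The paper also argues ``build a concrete model, then transfer its properties to the minimal regular isometric dilation via uniqueness,'' but it does so one coordinate at a time: fixing $n=1$, it realizes the dilation of $T_1$ analytically as $M_z\otimes I$ on $H^2_{\mathfrak{D}_{T_1^*}}(\mathbb{D})$ via the embedding $x\mapsto\sum_k z^k\cdot D_{T_1^*}T_1^{*k}x$, and absorbs all the remaining coordinates into the (abstractly existing, by Shalit's theorem) minimal regular isometric dilation of the compressed tail $\mathbf{T}'|_{\mathfrak{D}_{T_1^*}}$, tensored with $I_{H^2}$; uniqueness then yields purity, double commutation and co-invariance for that one coordinate, and the argument is repeated for each $n$. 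You instead build a single global model by iterating the Schäffer-type dilation $\mathcal{M}\oplus\ell^2(\mathfrak{D}_S)$ through all coordinates and lifting the others by your tilde construction, which makes the proof essentially self-contained (you re-derive existence rather than citing it for the sub-tuple) at the cost of the inductive-limit bookkeeping; the paper's version is shorter per coordinate but leans on the abstract existence result inside the construction. Two small points to tidy up: your assertion that each $\mathcal{H}^{(n-1)}$ is \emph{invariant} for every stage-$n$ operator is false for the freshly dilated coordinate $V_{T_n^{(n-1)}}$ (which sends $h$ to $T_n^{(n-1)}h\oplus(D_{T_n^{(n-1)}}h,0,\dots)$); only co-invariance holds there, and invariance is needed only for the coordinates $i\neq n$, which is exactly what the tilde lift provides, so the argument survives but the sentence should be corrected. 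Also, the purity of the single-coordinate dilation $V_S$ for $S\in C_{\cdot0}$ is not quite a dominated-convergence computation on the $\ell^2$-summand (the cross term $\sum_i S^{*(k-1-i)}D_Sd_i$ needs a separate argument); it is cleanest to quote the standard fact that the residual space $\bigcap_kV_S^k\mathcal{K}$ is orthogonal to $\mathcal{M}$ because $\|V_S^{*k}h\|=\|S^{*k}h\|\to0$, hence trivial by minimality. Finally, note that your closing step ``a coextension is automatically a regular isometric dilation'' uses that both $\mathbf{V}$ and $\mathbf{T}$ are doubly commuting (to commute $\mathbf{V}^{*\alpha}$ past $\mathbf{V}^{\beta}$ when $\alpha\wedge\beta=0$), which you do have.
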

\begin{proof}
  It is equivalent to show that if $\mathbf{V}=(V_1,V_2,\cdots)$ is the minimal regular isometric dilation of $\mathbf{T}\in\mathcal{DC}(\mathcal{H})$, then for any given $n\in\mathbb{N}$, $V_n$ is pure and douly commutes with $V_m$ for any $m\neq n$, and $\mathcal{H}$ is invariant for $V_n^*$.   Assume $n=1$ without loss of generality, and put $\mathcal{E}=\mathfrak{D}_{T_1^*}$ and $\mathbf{T}'=(T_2,T_3,\cdots)$.
  Since $\mathbf{T}$ is doubly commuting, we see that  $\mathcal{E}$ is $\mathbf{T}'$-joint reducing and the restriction $\mathbf{T}'|_{\mathcal{E}}=(T_2|_{\mathcal{E}},T_3|_{\mathcal{E}},\cdots)$
 of $\mathbf{T}'$ on $\mathcal{E}$ is also doubly commuting. Let the sequence $\mathbf{S}=(S_1,S_2,\cdots)$, defined on a Hilbert space $\mathcal{F}\supseteq\mathcal{E}$, be the minimal regular isometric dilation of $\mathbf{T}'|_{\mathcal{E}}$.

  Sz.-Nagy and Foias' functional model theory for single $C_{.0}$-contraction gives the following isometric embedding  \cite{SNFBK}
  \begin{eqnarray*}V:\quad \mathcal{H}& \rightarrow &
  H_{\mathcal{E}}^2(\mathbb{D})=H^2(\mathbb{D})\otimes\mathcal{E},\\
  x & \mapsto & \sum_{k=0}^\infty z^k\cdot D_{T_1^*}T_1^{*k}x.\end{eqnarray*}
 By identifying $\mathcal{H}$ with the $M_z^*\otimes I_{\mathcal{E}}$-invariant subspace $V\mathcal{H}$ via the isometry $V$,  one obtains a minimal isometric dilation $M_z\otimes I_{\mathcal{E}}$ of the contraction $T_1$, which is also a coextension.
  Set $$W_1=M_z\otimes I_{\mathcal{F}}$$ and $$W_m=I_{H^2(\mathbb{D})}\otimes S_{m-1},\quad m\geq2.$$ It is routine to check that the sequence $\mathbf{W}=(W_1,W_2,\cdots)$ of isometries is a regular isometric dilation of $\mathbf{T}$. We claim that this dilation $\mathbf{W}$ is also minimal. Since
  $\mathbf{S}=(S_1,S_2,\cdots)$ is the minimal regular isometric dilation of $\mathbf{T}'|_{\mathcal{E}}$,
  we have
  $$[H_{\mathcal{E}}^2(\mathbb{D})]_{(W_2,W_3,\cdots)}
  =H^2(\mathbb{D})\otimes[\mathcal{E}]_\mathbf{S}
  =H_{\mathcal{F}}^2(\mathbb{D}).$$
  This together with
  $[\mathcal{H}]_{W_1}=H_{\mathcal{E}}^2(\mathbb{D})$ proves the claim.
By the uniqueness of minimal regular isometric dilation, there exists a unitary operator $U$ that interwinds $\mathbf{V}$ and $\mathbf{W}$ and fixes vectors in $\mathcal{H}$. Therefore $V_1$ is pure and doubly commutes with $V_m\ (m\geq2)$. Moreover, $$V_1^*\mathcal{H}
=U^*W_1^*U\mathcal{H}=U^*W_1^*\mathcal{H}
=U^*(M_z\otimes I_{\mathcal{E}})^*\mathcal{H}
\subseteq U^*\mathcal{H}=\mathcal{H}.$$
The proof is complete.
\end{proof}

We also record the following useful lemma concerning  the minimal regular isometric dilation.

\begin{lem} \label{intersection of H_n} Let $\mathbf{T}$ be a sequence in class $\mathcal{DC}(\mathcal{H})$, and $\mathbf{V}\in\mathcal{DP}(\mathcal{K})$ be an isometric coextension of $\mathbf{T}$.
 Put $$\mathcal{H}_n=\bigvee_{\substack{\alpha\in\mathbb{Z}_+^{(\infty)} \\ \alpha_n=0 }}\mathbf{V}^\alpha\mathcal{H},\quad n\in\mathbb{N}.$$ Then the following conclusions hold:
 \begin{itemize}
   \item [(1)] $\mathcal{H}_n\ (n\in\mathbb{N})$ is $\mathbf{V}^*$-joint invariant and $[\mathcal{H}]_{\mathbf{V}}$ is $\mathbf{V}$-joint reducing.
   \item [(2)] $P_{\mathcal{H}}V_n^*P_{\mathcal{H}_n}
       =V_n^*P_{\mathcal{H}}$ for each $n\in\mathbb{N}$.
   \item [(3)]  Furthermore, if $\mathbf{V}$ is the minimal isometric dilation of $\mathbf{T}$, then $$\mathcal{H}=\bigcap_{n=1}^\infty \mathcal{H}_n.$$
 \end{itemize}
 Here, for a closed subspace $M$ of $\mathcal{K}$, $P_M$ denotes the orthogonal projection from $\mathcal{K}$ onto $M$.
\end{lem}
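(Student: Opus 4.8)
\emph{Strategy.} The plan is to verify (1) by testing the relevant invariances on the generators $\mathbf{V}^\alpha\mathcal{H}$, to deduce (2) from the single orthogonality statement that $V_nh-T_nh\perp\mathcal{H}_n$ for all $h\in\mathcal{H}$, and then to obtain (3) from (1)--(2) by an induction on the length of the multi-index. Denote by $e_n$ the $n$-th coordinate vector in $\mathbb{Z}_+^{(\infty)}$. Since $\mathbf{V}$ coextends $\mathbf{T}$, $\mathcal{H}$ is $\mathbf{V}^*$-invariant with $\mathbf{V}^*|_{\mathcal{H}}=\mathbf{T}^*$; iterating, $\mathbf{V}^{*\alpha}|_{\mathcal{H}}=\mathbf{T}^{*\alpha}$, and consequently $P_{\mathcal{H}}\mathbf{V}^\alpha|_{\mathcal{H}}=\mathbf{T}^\alpha$ for every $\alpha\in\mathbb{Z}_+^{(\infty)}$. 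Only these properties together with double commutativity of $\mathbf{V}$ (and of $\mathbf{T}$) will be used for (1) and (2); part (3) uses, in addition, only minimality of the dilation.

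For (1), apply $V_m^*$ to a generator $\mathbf{V}^\alpha h$ ($h\in\mathcal{H}$): if $\alpha_m\ge 1$ then $V_m^*\mathbf{V}^\alpha h=\mathbf{V}^{\alpha-e_m}h$ because $V_m$ is an isometry commuting with the remaining factors, while if $\alpha_m=0$ then $V_m^*$ commutes past $\mathbf{V}^\alpha$ by double commutativity and $V_m^*h=T_m^*h\in\mathcal{H}$. In both cases the image lies in $[\mathcal{H}]_{\mathbf{V}}$; and when $\mathbf{V}^\alpha h$ is a generator of $\mathcal{H}_n$ (so $\alpha_n=0$, whence $m=n,\alpha_m\ge1$ cannot occur), the $n$-th coordinate of the resulting multi-index is still $0$, so the image lies in $\mathcal{H}_n$. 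This shows $\mathcal{H}_n$ and $[\mathcal{H}]_{\mathbf{V}}$ are $\mathbf{V}^*$-joint invariant; since $[\mathcal{H}]_{\mathbf{V}}$ is obviously $\mathbf{V}$-joint invariant, it is $\mathbf{V}$-joint reducing.

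For (2), the key step is
\[
V_nh-T_nh\ \perp\ \mathcal{H}_n,\qquad h\in\mathcal{H}.
\]
To prove it, pair $V_nh$ with a generator $\mathbf{V}^\alpha x$ of $\mathcal{H}_n$ ($\alpha_n=0$, $x\in\mathcal{H}$): sliding $V_n^*$ past $\mathbf{V}^\alpha$ (legitimate since $\alpha_n=0$), using $V_n^*x=T_n^*x\in\mathcal{H}$, then $P_{\mathcal{H}}\mathbf{V}^\alpha|_{\mathcal{H}}=\mathbf{T}^\alpha$, and finally $T_n\mathbf{T}^{*\alpha}=\mathbf{T}^{*\alpha}T_n$ (double commutativity, $\alpha_n=0$), one computes $\langle V_nh,\mathbf{V}^\alpha x\rangle=\langle\mathbf{T}^{*\alpha}T_nh,x\rangle$; the same expression equals $\langle T_nh,\mathbf{V}^\alpha x\rangle$ directly because $T_nh\in\mathcal{H}$. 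Granting this, fix $k\in\mathcal{K}$ and write $P_{\mathcal{H}_n}k=P_{\mathcal{H}}k+y$ with $y\in\mathcal{H}_n\ominus\mathcal{H}$ (possible as $\mathcal{H}\subseteq\mathcal{H}_n$). Then $P_{\mathcal{H}}V_n^*P_{\mathcal{H}_n}k=V_n^*P_{\mathcal{H}}k+P_{\mathcal{H}}V_n^*y$, and for every $h\in\mathcal{H}$ one has $\langle V_n^*y,h\rangle=\langle y,V_nh\rangle=\langle y,P_{\mathcal{H}_n}V_nh\rangle=\langle y,T_nh\rangle=0$ by the key step; hence $P_{\mathcal{H}}V_n^*y=0$ and (2) follows.

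For (3), the inclusion $\mathcal{H}\subseteq\bigcap_n\mathcal{H}_n$ is clear ($\alpha=0$). For the reverse, note that for $k\in\bigcap_n\mathcal{H}_n$ we have $P_{\mathcal{H}}k\in\mathcal{H}$ and $k-P_{\mathcal{H}}k\in\bigcap_n\mathcal{H}_n$ (as $P_{\mathcal{H}}k\in\mathcal{H}\subseteq\mathcal{H}_n$); so it suffices to show that $k\in\bigcap_n\mathcal{H}_n$ with $k\perp\mathcal{H}$ forces $k=0$. By minimality $\mathcal{K}=\bigvee_\alpha\mathbf{V}^\alpha\mathcal{H}$, so I would prove $\langle k,\mathbf{V}^\alpha x\rangle=0$ for all $x\in\mathcal{H}$ by induction on $|\alpha|=\sum_i\alpha_i$; the case $|\alpha|=0$ is the hypothesis $k\perp\mathcal{H}$. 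If $|\alpha|\ge1$, pick $n$ with $\alpha_n\ge1$ and write $\langle k,\mathbf{V}^\alpha x\rangle=\langle V_n^*k,\mathbf{V}^{\alpha-e_n}x\rangle$. Now $V_n^*k\in\bigcap_m\mathcal{H}_m$ by (1), and $V_n^*k\perp\mathcal{H}$ by (2) (since $k\in\mathcal{H}_n$ gives $P_{\mathcal{H}}V_n^*k=P_{\mathcal{H}}V_n^*P_{\mathcal{H}_n}k=V_n^*P_{\mathcal{H}}k=0$), so the inductive hypothesis applies to $V_n^*k$ and $\alpha-e_n$, yielding $\langle k,\mathbf{V}^\alpha x\rangle=0$. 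Hence $k\perp\mathcal{K}$ and $k=0$. The main obstacle is the orthogonality identity in (2): one has to recognise that $V_nh$ should be compared with $T_nh$ and carry out the commutation bookkeeping so that $V_n^*$ can be moved past $\mathbf{V}^\alpha$ and the dilation identity invoked. Once (1) and (2) are available, part (3) rests entirely on the single observation — supplied precisely by those two parts — that the set of vectors lying in $\bigcap_m\mathcal{H}_m$ and orthogonal to $\mathcal{H}$ is invariant under each $V_n^*$, which is exactly what closes the induction.
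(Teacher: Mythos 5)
Your proposal is correct and follows essentially the same route as the paper: part (1) is verified on the generators $\mathbf{V}^\alpha\mathcal{H}$ using double commutativity and the coextension property, part (2) reduces to the same computation on generators (your orthogonality $V_nh-T_nh\perp\mathcal{H}_n$ is just a reformulation of the paper's statement that $\mathcal{H}_n\subseteq\mathrm{Ker}\,(P_{\mathcal{H}}V_n^*-V_n^*P_{\mathcal{H}})$), and part (3) rests on the same observation that $\bigl(\bigcap_n\mathcal{H}_n\bigr)\ominus\mathcal{H}$ is $\mathbf{V}^*$-joint invariant and hence orthogonal to $[\mathcal{H}]_{\mathbf{V}}=\mathcal{K}$, which you merely package as an induction on $|\alpha|$.
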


\begin{proof}
(1) Since the sequence $\mathbf{V}$ is doubly commuting, and $\mathcal{H}$ is joint invariant for $\mathbf{V}^*$, one obtains that for $n,m\in\mathbb{N}$ and $\alpha\in\mathbb{Z}_+^{(\infty)}$ with $\alpha_n=0$,
$$V_m^*\mathbf{V}^\alpha\mathcal{H}
=\mathbf{V}^\alpha V_m^*\mathcal{H}\subseteq\mathbf{V}^\alpha\mathcal{H}
\subseteq\mathcal{H}_n,\quad\mathrm{if}\ \alpha_m=0;$$
$$V_m^*\mathbf{V}^\alpha\mathcal{H}
=\mathbf{V}^{\alpha-1_m} \mathcal{H}\subseteq\mathcal{H}_n,\quad\mathrm{if}\ \alpha_m\geq1,$$ where
$$1_m=(0,\cdots,0,\overset{m\mathrm{-th}}{1},0,\cdots).$$
This gives that each $\mathcal{H}_n$ is $\mathbf{V}^*$-joint invariant, and then
$$[\mathcal{H}]_{\mathbf{V}}=\bigvee_{n=1}^\infty\mathcal{H}_n$$ is $\mathbf{V}$-joint reducing.
\vskip2mm
(2) Since the sequences $\mathbf{T}$ and $\mathbf{V}$ are doubly commuting, and $\mathcal{H}$ is joint invariant for $\mathbf{V}^*$, we see that for $n\in\mathbb{N}$ and $\alpha\in\mathbb{Z}_+^{(\infty)}$ with $\alpha_n=0$,
$$P_{\mathcal{H}}V_n^*\mathbf{V}^\alpha|_{\mathcal{H}}
=P_{\mathcal{H}}\mathbf{V}^\alpha P_{\mathcal{H}} V_n^*|_{\mathcal{H}}
=\mathbf{T}^\alpha T_n^*=T_n^*\mathbf{T}^\alpha
=V_n^* P_{\mathcal{H}} \mathbf{V}^\alpha|_{\mathcal{H}},
$$
forcing $$\mathbf{V}^\alpha\mathcal{H}\subseteq\mathrm{Ker}\ (P_{\mathcal{H}}V_n^*-V_n^*P_{\mathcal{H}}).$$
It follows that for each $n\in\mathbb{N}$, $$\mathcal{H}_n\subseteq
\mathrm{Ker}\ (P_{\mathcal{H}}V_n^*-V_n^*P_{\mathcal{H}}),$$
and then
$$P_{\mathcal{H}}V_n^*P_{\mathcal{H}_n}
=V_n^*P_{\mathcal{H}}P_{\mathcal{H}_n}
       =V_n^*P_{\mathcal{H}}.$$
\vskip2mm
(3) Write $\widetilde{\mathcal{H}}=\bigcap_{n=1}^\infty \mathcal{H}_n$. It is trivial that $\mathcal{H}\subseteq\widetilde{\mathcal{H}}$.
It follows from (1) and (2) that $\widetilde{\mathcal{H}}$ is joint invariant for $\mathbf{V}^*$, and for each $n\in\mathbb{N}$,
 $$P_{\mathcal{H}}V_n^*P_{\widetilde{\mathcal{H}}}
 =P_{\mathcal{H}}V_n^*P_{\mathcal{H}_n}P_{\widetilde{\mathcal{H}}}
 =V_n^*P_{\mathcal{H}}P_{\widetilde{\mathcal{H}}}
 =V_n^*P_{\mathcal{H}}.$$
Assume that $x$ is an element in $\widetilde{\mathcal{H}}\ominus\mathcal{H}$. Then for each $n\in\mathbb{N}$,
$$P_{\mathcal{H}}V_n^*x=P_{\mathcal{H}}V_n^*P_{\widetilde{\mathcal{H}}}x=V_n^*P_{\mathcal{H}}x=0,$$
forcing $V_n^*x\in \widetilde{\mathcal{H}}\ominus\mathcal{H}$.
Therefore, $\mathbf{V}^{*\alpha}x\in \widetilde{\mathcal{H}}\ominus\mathcal{H}$ for any $\alpha\in\mathbb{Z}_+^{(\infty)}$; that is,
$x$ is orthogonal to the $\mathbf{V}$-invariant subspace $[\mathcal{H}]_{\mathbf{V}}$ generated by $\mathcal{H}$. Since $\mathbf{V}$ is the minimal isometric dilation of the sequence $\mathbf{T}$  defined on $\mathcal{H}$, we actually have $[\mathcal{H}]_{\mathbf{V}}=\mathcal{K}$.
Thus $x=0$, and this proves $\mathcal{H}=\widetilde{\mathcal{H}}$.
\end{proof}
%\begin{lem} \label{Phi_lambda min iso dilation}
%If $\mathbf{T}$ is a sequence in class $\mathcal{DC}$ and $\mathbf{V}$ is
%the minimal isometric dilation of $\mathbf{T}$, then $\Phi_\lambda(\mathbf{V})$ is
%the minimal isometric dilation of $\Phi_\lambda(\mathbf{T})$ for any $\lambda\in\mathbb{D}^\infty$.
%\end{lem}

%For an isometric dilation $\mathbf{V}$ of a doubly commuting sequence $\mathbf{T}$ of contractions on $\mathcal{H}$, we could identify $\mathcal{H}$ with the subspace $\Pi_{\mathbf{V}} \mathcal{H}$ of $\mathcal{K}_{\mathbf{V}}$ via the isometry $\Pi_{\mathbf{V}}$, and $T_n^*$ would be the restriction of
%$V_n^*$ on $\mathcal{H}$.

\subsection{Some basic properties of vector-valued Hardy space and operator-valued function}

Here we list some basic properties of vector-valued Hardy spaces and operator-valued functions, and the notations $\mathcal{E},\mathcal{F}$ and $\mathcal{G}$ will always denote some Hilbert spaces.

The vector-valued Hardy space $H_{\mathcal{E}}^2(\mathbb{D}_2^\infty)$, over the domain $\mathbb{D}_2^\infty$ (connected open subset) in the Hilbert space $l^2$, consists of all
$\mathcal{E}$-valued functions of form
$$F(\zeta)=\sum_{\alpha\in\mathbb{Z}_+^{(\infty)}}\zeta^\alpha\cdot x_\alpha,\quad \zeta\in\mathbb{D}_2^\infty$$
with each $x_\alpha\in\mathcal{E}$ and $\|F\|^2=\sum_{\alpha\in\mathbb{Z}_+^{(\infty)}}\|x_\alpha\|_\mathcal{E}^2<\infty$.
By the Cauchy-Schwarz inequality, the above series converges pointwisely on $\mathbb{D}_2^\infty$ in  $\mathcal{E}$-norm.
We will follow the definition of holomorphic mapping given in \cite{Di} for any vector-valued function $F:\mathbb{D}_2^\infty\rightarrow\mathcal{X}$, where
$\mathcal{X}$ is an arbitrary Banach space.
Every function $F:\mathbb{D}_2^\infty\rightarrow\mathcal{E}$ in $H_{\mathcal{E}}^2(\mathbb{D}_2^\infty)$ is then holomorphic in this sense.

The space $H_{\mathcal{E}}^2(\mathbb{D}_2^\infty)$ can be  considered as the tensor product of the Hardy space $H^2(\mathbb{D}_2^\infty)$ and the Hilbert space $\mathcal{E}$
by identifying the vector-valued function $F\cdot x$ with the tensor product $F\otimes x$, where $F\in H^2(\mathbb{D}_2^\infty)$ and $x\in\mathcal{E}$.
Then the tuple of coordinate multiplication operators on $H_{\mathcal{E}}^2(\mathbb{D}_2^\infty)$ has the form
$$\mathbf{M}_\zeta\otimes I_{\mathcal{E}}=(M_{\zeta_1}\otimes I_{\mathcal{E}},M_{\zeta_2}\otimes I_{\mathcal{E}},\cdots).$$ For simplicity, we often write only $\mathbf{M}_\zeta$ for this tuple. Moreover, one can expanse functions
in $H_{\mathcal{E}}^2(\mathbb{D}_2^\infty)$ with respect to any orthonormal basis $\{e_ k\}_{k\in\mathbb{N}}$ of $\mathcal{E}$ as
$\sum_{k=1}^{\infty} F_ k\cdot e_ k$,
where each $F_ k\in H^2(\mathbb{D}_2^\infty)$ and $\sum_{ k=1}^\infty\|F_ k\|_{H^2(\mathbb{D}_2^\infty)}^2<\infty$.

Let $\mathbf{K}_{\lambda}$ denote the reproducing kernel of $H^2(\mathbb{D}^\infty_2)$ at the point $\lambda\in\mathbb{D}^\infty_2$.
Recall that a subset $E$ of a Hilbert space $\mathcal{H}$ is said to be complete
in $\mathcal{H}$ if $E$ spans a dense subspace of $\mathcal{H}$; that is,
the orthocomplement $E^\perp$ of $E$ in $\mathcal{H}$ is $\{0\}$.

\begin{lem} \label{K_lambda} Suppose $F\in H_{\mathcal{E}}^2(\mathbb{D}_2^\infty)$. Then the following conclusions hold:
\begin{itemize}
  \item [(1)]
$\langle F,\mathbf{K}_\lambda\cdot x\rangle=\langle F(\lambda),x\rangle$
for any $\lambda\in\mathbb{D}_2^\infty$ and any $x\in\mathcal{E}$. Consequently,
the set $$\{\mathbf{K}_\lambda\cdot x:\lambda\in\mathbb{D}_2^\infty,x\in\mathcal{E}\}$$ is complete in $H_{\mathcal{E}}^2(\mathbb{D}_2^\infty)$.
  \item [(2)] If $F\neq0$ and $M_{\zeta_n}^*F=\overline{\lambda_n}F$ for each $n\in\mathbb{N}$ and a sequence $\lambda=(\lambda_1,\lambda_2,\cdots)$ of complex numbers, then $\lambda\in\mathbb{D}_2^\infty$
and $F=\mathbf{K}_\lambda\cdot x$ for some $x\in\mathcal{E}$.
\end{itemize}
\end{lem}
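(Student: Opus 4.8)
The plan is to prove both parts by working with the monomial power-series coefficients, where all the content becomes elementary bookkeeping; the only step that genuinely needs care is the convergence accounting for the infinite products and double series. For part (1), I would first record the geometric expansion
$$\mathbf{K}_\lambda(\zeta)=\prod_{n=1}^\infty\frac{1}{1-\overline{\lambda_n}\zeta_n}=\sum_{\alpha\in\mathbb{Z}_+^{(\infty)}}\overline{\lambda^\alpha}\,\zeta^\alpha,\qquad\lambda^\alpha:=\prod_n\lambda_n^{\alpha_n},$$
valid for $\lambda\in\mathbb{D}_2^\infty$, and note that $\mathbf{K}_\lambda\in H^2(\mathbb{D}_2^\infty)$ because $\sum_\alpha|\lambda^\alpha|^2=\prod_n(1-|\lambda_n|^2)^{-1}<\infty$ whenever $\sum_n|\lambda_n|^2<\infty$ (already recalled in the excerpt). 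Hence $\mathbf{K}_\lambda\cdot x\in H^2_{\mathcal{E}}(\mathbb{D}_2^\infty)$ has $\alpha$-th coefficient $\overline{\lambda^\alpha}x$. Writing $F=\sum_\alpha\zeta^\alpha x_\alpha$ and using that the inner product on $H^2_{\mathcal{E}}(\mathbb{D}_2^\infty)$ is the $l^2$-inner product of the coefficient sequences, I obtain
$$\langle F,\mathbf{K}_\lambda\cdot x\rangle=\sum_\alpha\langle x_\alpha,\overline{\lambda^\alpha}x\rangle=\sum_\alpha\lambda^\alpha\langle x_\alpha,x\rangle=\langle F(\lambda),x\rangle,$$
with $F(\lambda)=\sum_\alpha\lambda^\alpha x_\alpha$ converging in $\mathcal{E}$ by Cauchy--Schwarz. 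For the completeness assertion I would take $G\in H^2_{\mathcal{E}}(\mathbb{D}_2^\infty)$ orthogonal to all $\mathbf{K}_\lambda\cdot x$; then $G(\lambda)=0$ for every $\lambda\in\mathbb{D}_2^\infty$, and restricting $G$ to the finite-dimensional slices $\{(\zeta_1,\dots,\zeta_N,0,0,\dots):(\zeta_1,\dots,\zeta_N)\in\mathbb{D}^N\}\subseteq\mathbb{D}_2^\infty$ produces $\mathcal{E}$-valued power series on $\mathbb{D}^N$ that vanish identically, so every coefficient $x_\alpha$ with support contained in $\{1,\dots,N\}$ vanishes; letting $N\to\infty$ gives $G=0$.

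For part (2), I would write $F=\sum_\alpha\zeta^\alpha x_\alpha$ and observe that, since $M_{\zeta_n}$ raises the $n$-th exponent by one, its adjoint $M_{\zeta_n}^*$ carries $F$ to the function whose $\alpha$-th coefficient is $x_{\alpha+1_n}$, where $1_n$ is the multi-index with a single $1$ in slot $n$. Thus $M_{\zeta_n}^*F=\overline{\lambda_n}F$ for all $n$ is equivalent to the recursion $x_{\alpha+1_n}=\overline{\lambda_n}\,x_\alpha$, and iterating from $\alpha=0$ gives $x_\alpha=\overline{\lambda^\alpha}\,x_0$ for every $\alpha$. Since $F\neq0$ we must have $x_0\neq0$, and then
$$\|F\|^2=\sum_\alpha\|x_\alpha\|^2=\|x_0\|^2\sum_{\alpha\in\mathbb{Z}_+^{(\infty)}}|\lambda^\alpha|^2=\|x_0\|^2\prod_{n=1}^\infty\frac{1}{1-|\lambda_n|^2},$$
the rearrangement being legitimate as all terms are nonnegative and the last equality following from $\prod_{n=1}^N(1-|\lambda_n|^2)^{-1}=\sum_{\alpha:\,\mathrm{supp}(\alpha)\subseteq\{1,\dots,N\}}|\lambda^\alpha|^2$ as $N\to\infty$. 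Since $\|F\|<\infty$ and $x_0\neq0$, each factor $(1-|\lambda_n|^2)^{-1}$ must be finite, i.e. $|\lambda_n|<1$, and the infinite product must converge, i.e. $\sum_n|\lambda_n|^2<\infty$; hence $\lambda\in\mathbb{D}_2^\infty$. Substituting $x_\alpha=\overline{\lambda^\alpha}x_0$ back into the series and comparing with the expansion of $\mathbf{K}_\lambda$ above yields $F=\mathbf{K}_\lambda\cdot x_0$, so one takes $x=x_0\in\mathcal{E}$.

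I do not anticipate a serious obstacle: the whole argument is coefficient bookkeeping, and this approach has the pleasant feature that the strict inequalities $|\lambda_n|<1$ drop out for free rather than needing a separate purity argument for the shifts. The point I would handle most carefully is the implication that finiteness of $\|F\|$ (with $x_0\neq0$) forces both $|\lambda_n|<1$ for every $n$ and $\sum_n|\lambda_n|^2<\infty$ --- this is precisely what promotes the raw eigenvalue data to membership in the Hilbert multidisk $\mathbb{D}_2^\infty$ rather than merely in the larger $\mathbb{D}^\infty$ --- together with the justification that the double sums and infinite products above may be manipulated termwise, which is valid because every term is nonnegative.
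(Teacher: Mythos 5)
Your proof is correct, but it follows a genuinely different route from the paper's. For (1) the paper expands $F$ with respect to an orthonormal basis $\{x\}\cup\{e_k\}$ of $\mathcal{E}$ and reduces to the scalar reproducing property, whereas you work directly with the monomial coefficients; these are close in spirit and both elementary. The real divergence is in (2): the paper passes to scalar components $F_k$ with $M_{\zeta_n}^*F_k=\overline{\lambda_n}F_k$ and then invokes the structural fact that the invariant subspace of $H^2(\mathbb{D}_2^\infty)$ generated by $\zeta_1-\lambda_1,\zeta_2-\lambda_2,\dots$ is proper exactly when $\lambda\in\mathbb{D}_2^\infty$, in which case its orthocomplement is $\mathbb{C}\mathbf{K}_\lambda$ --- a nontrivial external input. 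You instead read the eigenvalue equations as the coefficient recursion $x_{\alpha+1_n}=\overline{\lambda_n}x_\alpha$, solve it to get $x_\alpha=\overline{\lambda^\alpha}x_0$ with $x_0\neq0$, and extract both $|\lambda_n|<1$ and $\sum_n|\lambda_n|^2<\infty$ from the finiteness of $\|F\|^2=\|x_0\|^2\prod_n(1-|\lambda_n|^2)^{-1}$. Your version is self-contained and makes the passage from $\mathbb{D}^\infty$ to $\mathbb{D}_2^\infty$ completely transparent, at the modest cost of the termwise manipulations of nonnegative series that you already flag and justify; the paper's version is shorter if one is willing to cite the known description of the joint eigenvectors of $\mathbf{M}_\zeta^*$ in the scalar case.
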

\begin{proof}
(1) Assume $\|x\|=1$ without loss of generality and take any  orthonormal basis $\{e_ k\}_{k\in\mathbb{N}}$ of the subspace $\{x\}^\perp$ of $\mathcal{E}$. Expanse
the function $F$ with respect to the orthonormal basis $\{x\}\cup\{e_ k\}_{k\in\mathbb{N}}$ of $\mathcal{E}$ as
$$F=F_x\cdot x+\sum_{k=1}^{\infty} F_ k\cdot e_ k.$$
Then we have $$\langle F,\mathbf{K}_\lambda\cdot x\rangle=\langle F_x\cdot x,\mathbf{K}_\lambda\cdot x\rangle+\sum_{k=1}^{\infty}\langle F_ k\cdot e_ k,\mathbf{K}_\lambda\cdot x\rangle=F_x(\lambda)$$
and
$$\langle F(\lambda),x\rangle=\langle F_x(\lambda) x,x\rangle+\sum_{k=1}^{\infty}\langle F_ k(\lambda)e_ k, x\rangle=F_x(\lambda).$$
This proves (1).
\vskip2mm

(2) Expanse the function $F$ with respect to an orthonormal basis $\{e_ k\}_{k\in\mathbb{N}}$ of $\mathcal{E}$ as
$F=\sum_{k=1}^{\infty} F_ k\cdot e_ k$,
then $$M_{\zeta_n}^*F=M_{\zeta_n}^*(\sum_{k=1}^{\infty} F_ k\cdot e_ k)=\sum_{k=1}^{\infty} (M_{\zeta_n}^*F_ k)\cdot e_ k,\quad n\in\mathbb{N}.$$
It follows that $M_{\zeta_n}^*F_ k=\overline{\lambda_n}F_ k$ for any  $ k,n\in\mathbb{N}$. This implies that  all $F_k$ are in the orthocomplement of the invariant subspace generated by $\zeta_1-\lambda_1$, $\zeta_2-\lambda_2$, $\cdots$. Since $F\not =0,$ it follows that this invariant subspace is proper, and its  orthocomplement is $\mathbb{C}\mathbf{K}_\lambda.$ Therefore,
we have $\lambda\in\mathbb{D}_2^\infty$
and $F_ k=c_ k\mathbf{K}_\lambda$ with
$\sum_{k=1}^{\infty}|c_ k|^2<\infty$,
and thus $$F=\sum_{k=1}^{\infty} F_ k\cdot e_ k=\mathbf{K}_\lambda\cdot(\sum_{k=1}^{\infty} c_ k e_ k).$$
The proof is complete.
\end{proof}

The space $H_{\mathcal{B}(\mathcal{F},\mathcal{E})}^\infty(\mathbb{D}_2^\infty)$
consists of all uniformly bounded holomorphic operator-valued functions $\Psi:\mathbb{D}_2^\infty\rightarrow\mathcal{B}(\mathcal{F},\mathcal{E})$. Here by $\Psi$ is uniformly bounded we mean
$\|\Psi\|_\infty=\sup_{\zeta\in\mathbb{D}_2^\infty}\|\Psi(\zeta)\|<\infty$,
and $\mathcal{B}(\mathcal{F},\mathcal{E})$ denotes the Banach space of bounded linear operators from  $\mathcal{F}$ to $\mathcal{E}$.
Every function $\Psi$ in $H_{\mathcal{B}(\mathcal{F},\mathcal{E})}^\infty(\mathbb{D}_2^\infty)$ naturally induce a multiplication operator $M_\Psi$ as follows:
 \begin{eqnarray*}M_\Psi:\quad H_{\mathcal{F}}^2(\mathbb{D}_2^\infty)& \rightarrow &
  H_{\mathcal{E}}^2(\mathbb{D}_2^\infty),\\
  F & \mapsto & \Psi F,\end{eqnarray*}
where $\Psi F(\zeta)=\Psi(\zeta)F(\zeta)\ (\zeta\in\mathbb{D}_2^\infty)$.
It is clear that $M_\Psi$ is bounded of norm $\leq$ $\|\Psi\|_\infty$.
We now claim that
\begin{equation} \label{M_Psi*}
M_\Psi^*(\mathbf{K}_\lambda\cdot x)=\mathbf{K}_\lambda\cdot\Psi(\lambda)^*x
\end{equation} for any
$\lambda\in\mathbb{D}_2^\infty$ and any $x\in\mathcal{E}$.
By Lemma \ref{K_lambda} (1), it suffices to show that for any fixed $\mu\in\mathbb{D}_2^\infty$ and $y\in\mathcal{F}$,
$$\langle \mathbf{K}_\mu\cdot y,M_\Psi^*(\mathbf{K}_\lambda\cdot x)\rangle
=\langle \mathbf{K}_\mu\cdot y,\mathbf{K}_\lambda\cdot\Psi(\lambda)^*x\rangle.$$
Again by Lemma \ref{K_lambda} (1), we have
$$\langle \mathbf{K}_\mu\cdot y,M_\Psi^*(\mathbf{K}_\lambda\cdot x)\rangle
=\langle \Psi(\mathbf{K}_\mu\cdot y),\mathbf{K}_\lambda\cdot x\rangle
=\langle \mathbf{K}_\mu(\lambda)\Psi(\lambda)y, x\rangle.$$
On the other hand,
$$\langle \mathbf{K}_\mu\cdot y,\mathbf{K}_\lambda\cdot\Psi(\lambda)^*x\rangle
=\langle\mathbf{K}_\mu,\mathbf{K}_\lambda\rangle\cdot\langle \Psi(\lambda)y, x\rangle=\mathbf{K}_\mu(\lambda)\langle \Psi(\lambda)y, x\rangle.$$
This proves the claim.

We are ready to prove the following.
\begin{prop} \label{operatoe interwine CMO}
Let $\mathbf{M}_\zeta,\mathbf{M}_\xi$ be the tuple of coordinate multiplication operators on $H_{\mathcal{E}}^2(\mathbb{D}_2^\infty)$ and $H_{\mathcal{F}}^2(\mathbb{D}_2^\infty)$, respectively. If $T:H_{\mathcal{F}}^2(\mathbb{D}_2^\infty)\rightarrow H_{\mathcal{E}}^2(\mathbb{D}_2^\infty)$ is a bounded linear operator satisfying
 $$TM_{\xi_n}=M_{\zeta_n}T,\quad n\in\mathbb{N},$$
 then there exists an operator-valued function $\Psi\in H_{\mathcal{B}(\mathcal{F},\mathcal{E})}^\infty(\mathbb{D}_2^\infty)$
 such that $T=M_\Psi$.
\end{prop}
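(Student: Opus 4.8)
The plan is to recover $\Psi$ from the action of $T$ on reproducing kernels, exactly as in the classical (finite-variable) case. The intertwining relation $TM_{\xi_n}=M_{\zeta_n}T$ passes to adjoints as $M_{\xi_n}^*T^*=T^*M_{\zeta_n}^*$, so for each $\lambda\in\mathbb{D}_2^\infty$ and $x\in\mathcal{E}$ the vector $T^*(\mathbf{K}_\lambda\cdot x)$ is a joint eigenvector of the $M_{\xi_n}^*$ with eigenvalues $\overline{\lambda_n}$; if it is nonzero, Lemma~\ref{K_lambda}(2) forces it to equal $\mathbf{K}_\lambda\cdot y$ for some $y\in\mathcal{F}$. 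In all cases (including $T^*(\mathbf{K}_\lambda\cdot x)=0$, where we take $y=0$) we may write
$$T^*(\mathbf{K}_\lambda\cdot x)=\mathbf{K}_\lambda\cdot \Psi(\lambda)^*x$$
for a uniquely determined linear map $\Psi(\lambda)^*:\mathcal{E}\to\mathcal{F}$ — uniqueness and linearity in $x$ follow because $\mathbf{K}_\lambda\neq 0$ and $T^*$ is linear. This defines a function $\lambda\mapsto\Psi(\lambda)\in\mathcal{B}(\mathcal{F},\mathcal{E})$.

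**Key steps in order.** First I would establish the displayed kernel identity above and extract $\Psi(\lambda)$. Second, boundedness: from $\|T^*(\mathbf{K}_\lambda\cdot x)\|\le\|T\|\,\|\mathbf{K}_\lambda\|\,\|x\|$ and $\|\mathbf{K}_\lambda\cdot\Psi(\lambda)^*x\|=\|\mathbf{K}_\lambda\|\,\|\Psi(\lambda)^*x\|$ we get $\|\Psi(\lambda)^*\|\le\|T\|$ pointwise, hence $\|\Psi(\lambda)\|\le\|T\|$ for every $\lambda$, so $\|\Psi\|_\infty\le\|T\|<\infty$. Third, holomorphy: for fixed $y\in\mathcal{F}$, $x\in\mathcal{E}$ the scalar function $\lambda\mapsto\langle\Psi(\lambda)y,x\rangle = \mathbf{K}_\lambda(\lambda)^{-1}\langle\mathbf{K}_\lambda\cdot y,\,T^*(\mathbf{K}_\lambda\cdot x)\rangle$ — better, avoid the reciprocal: write $\langle\Psi(\lambda)y,x\rangle$ in terms of Taylor coefficients of $Ty\in H_{\mathcal{E}}^2(\mathbb{D}_2^\infty)$ evaluated against $x$, since $M_\Psi$ (once we know it is $T$) sends the constant $y$ to $\Psi(\cdot)y$. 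Concretely, one checks $\Psi(\lambda)y=(Ty)(\lambda)$ by pairing with $\mathbf{K}_\lambda\cdot x$ and using Lemma~\ref{K_lambda}(1): $\langle(Ty)(\lambda),x\rangle=\langle Ty,\mathbf{K}_\lambda\cdot x\rangle=\langle y, T^*(\mathbf{K}_\lambda\cdot x)\rangle=\langle y,\mathbf{K}_\lambda\cdot\Psi(\lambda)^*x\rangle=\overline{\mathbf{K}_\lambda(\lambda)}^{\,-1}\cdots$ — again I should be careful, but in fact pairing $Ty$ with $\mathbf{K}_\lambda\cdot x$ gives $\langle Ty,\mathbf{K}_\lambda\cdot x\rangle=\langle y,T^*(\mathbf{K}_\lambda\cdot x)\rangle=\langle y,\mathbf{K}_\lambda\cdot\Psi(\lambda)^*x\rangle=\langle\mathbf{K}_\lambda\cdot y,\ \mathbf{K}_\lambda\cdot\Psi(\lambda)^*x\rangle/\|\mathbf{K}_\lambda\|^2$ does not simplify cleanly; the clean route is to note $T^*(\mathbf{K}_\lambda\cdot x)=\mathbf{K}_\lambda\cdot\Psi(\lambda)^*x$ directly yields, via Lemma~\ref{K_lambda}(1) applied on the $\mathcal{F}$-side, $\langle(Ty)(\lambda),x\rangle=\langle Ty,\mathbf{K}_\lambda\cdot x\rangle=\langle y,\mathbf{K}_\lambda\cdot\Psi(\lambda)^*x\rangle$ and then Lemma~\ref{K_lambda}(1) on the constant function $y$ (whose value at $\lambda$ is $y$) gives $\langle y,\mathbf{K}_\lambda\cdot\Psi(\lambda)^*x\rangle=\langle y,\Psi(\lambda)^*x\rangle=\langle\Psi(\lambda)y,x\rangle$. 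Since $x\in\mathcal{E}$ is arbitrary, $\Psi(\lambda)y=(Ty)(\lambda)$, which is holomorphic in $\lambda$ because $Ty\in H_{\mathcal{E}}^2(\mathbb{D}_2^\infty)$; holomorphy of the $\mathcal{B}(\mathcal{F},\mathcal{E})$-valued map $\Psi$ then follows from the bound $\|\Psi\|_\infty\le\|T\|$ together with weak (entrywise) holomorphy, using the characterization of holomorphy in \cite{Di}. Hence $\Psi\in H_{\mathcal{B}(\mathcal{F},\mathcal{E})}^\infty(\mathbb{D}_2^\infty)$.

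**Finishing and the main obstacle.** Finally I would show $T=M_\Psi$. Both are bounded operators, so by Lemma~\ref{K_lambda}(1) it suffices to check $T^*(\mathbf{K}_\lambda\cdot x)=M_\Psi^*(\mathbf{K}_\lambda\cdot x)$ for all $\lambda,x$; but $M_\Psi^*(\mathbf{K}_\lambda\cdot x)=\mathbf{K}_\lambda\cdot\Psi(\lambda)^*x$ by the identity \eqref{M_Psi*} proved just before the proposition, and this is exactly how $\Psi$ was defined. Thus $T=M_\Psi$. I expect the only genuine subtlety to be the holomorphy step: one must confirm that pointwise (weak) holomorphy of $\lambda\mapsto\langle\Psi(\lambda)y,x\rangle$ on the infinite-dimensional domain $\mathbb{D}_2^\infty$, plus local boundedness, upgrades to holomorphy of $\Psi$ as a $\mathcal{B}(\mathcal{F},\mathcal{E})$-valued map in the sense of \cite{Di}; this is a routine but not entirely trivial appeal to the infinite-dimensional holomorphy machinery, and the identity $\Psi(\lambda)y=(Ty)(\lambda)$ with $Ty\in H_{\mathcal{E}}^2(\mathbb{D}_2^\infty)$ is what makes it go through. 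Everything else is a direct transcription of the finite-variable argument.
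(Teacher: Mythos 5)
Your proposal is correct and follows essentially the same route as the paper: apply the adjoint intertwining relation to the kernel functions $\mathbf{K}_\lambda\cdot x$, invoke Lemma \ref{K_lambda}(2) to produce $\Psi(\lambda)^*$, bound $\|\Psi\|_\infty$ by $\|T\|$, and conclude $T=M_\Psi$ from the identity (\ref{M_Psi*}) and the completeness statement in Lemma \ref{K_lambda}(1). The only difference is that the paper dismisses boundedness and holomorphy as ``routine,'' whereas you supply the useful identity $\Psi(\lambda)y=(Ty)(\lambda)$ that actually carries out the holomorphy check.
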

\begin{proof}
By (\ref{M_Psi*}), for each $n\in\mathbb{N}$, each $\lambda\in\mathbb{D}_2^\infty$ and each $x\in\mathcal{E}$,
$$M_{\xi_n}^*T^*(\mathbf{K}_\lambda\cdot x)=T^*M_{\zeta_n}^*(\mathbf{K}_\lambda\cdot x)
=\overline{\lambda_n}T^*(\mathbf{K}_\lambda\cdot x).$$
Then by Lemma \ref{K_lambda} (2), to any pair $\lambda,x$  there corresponds an element $y_{\lambda,x}\in\mathcal{F}$ such that $$T^*(\mathbf{K}_\lambda\cdot x)=\mathbf{K}_\lambda\cdot y_{\lambda,x},$$ and therefore $x\mapsto y_{\lambda,x}$ defines a linear operator $S_\lambda\in\mathcal{B}(\mathcal{E},\mathcal{F})$ for each $\lambda\in\mathbb{D}_2^\infty$.
Now put $\Psi(\lambda)=S_\lambda^*$ ($\lambda\in\mathbb{D}_2^\infty$).
It is routine to check that $\Psi:\mathbb{D}_2^\infty\rightarrow\mathcal{B}(\mathcal{F},\mathcal{E})$
is uniformly bounded and holomorphic with
$\|\Psi\|_\infty\leq\|T\|$. Since $T^*$ and $M_\Psi^*$ coincide on the set $\{\mathbf{K}_\lambda\cdot x:\lambda\in\mathbb{D}_2^\infty,x\in\mathcal{E}\}$ by (\ref{M_Psi*}),
  it follow from Lemma \ref{K_lambda} (1)
 that $T=M_\Psi$.
\end{proof}

In \cite{Be}, the infinite tensor product of a sequence of Hilbert space $\{\mathcal{H}_n\}_{n\in\mathbb{N}}$ with stabilizing sequence
$\{e^{(n)}\}_{n\in\mathbb{N}}$ is introduced, where
$e^{(n)}$ is a unit vector in $\mathcal{H}_n$ for each $n\in\mathbb{N}$.
We remark that
the Hilbert space structure of $H^2(\mathbb{D}_2^\infty)$ coincides with the infinite tensor product
$$H^2(\mathbb{D})\otimes H^2(\mathbb{D})\otimes\cdots$$ of  countably infinitely many Hardy spaces over $\mathbb{D}$ with stabilizing sequence
$\{1\}_{n\in\mathbb{N}}$.
So for any closed subspace $M$ of the Hardy space
$$H^2(\mathbb{D}^n)=\underbrace{H^2(\mathbb{D})\otimes\cdots\otimes H^2(\mathbb{D})}_{n\ \mathrm{times}}\quad(n\in\mathbb{N}),$$
 the infinite tensor product
$$M\otimes H^2(\mathbb{D})\otimes H^2(\mathbb{D})\otimes\cdots$$ is exactly the
$(M_{\zeta_{n+1}},M_{\zeta_{n+2}},\cdots)$-joint invariant subspace
of $H^2(\mathbb{D}_2^\infty)$ generated by $M(=M\otimes\mathbb{C}\otimes\mathbb{C}\cdots)$.
For later use, we also note that the vector-valued Hardy space $H_{\mathcal{E}}^2(\mathbb{D}_2^\infty)$ can be written as
\begin{equation} \label{representation of vector-valued Hardy space}
\begin{split}
   H_{\mathcal{E}}^2(\mathbb{D}_2^\infty) & =\underbrace{H^2(\mathbb{D})\otimes\cdots\otimes H^2(\mathbb{D})}_{n-1\ \mathrm{times}}\otimes H_{\mathcal{E}}^2(\mathbb{D}) \otimes H^2(\mathbb{D})\otimes H^2(\mathbb{D})\otimes\cdots \\
     & =H_{\mathcal{E}}^2(\mathbb{D}^n) \otimes \mathcal{L}_n,
\end{split}
\end{equation}
where \begin{equation*}
\begin{split}
   \mathcal{L}_n & =\underbrace{\mathbb{C}\otimes\cdots\otimes \mathbb{C}}_{n\ \mathrm{times}}\otimes H^2(\mathbb{D})\otimes H^2(\mathbb{D})\otimes\cdots \\
     & =\overline{\mathrm{span}\{\zeta^\alpha:\alpha=(\alpha_1,\alpha_2,\cdots)\in\mathbb{Z}_+^{(\infty)}\ \mathrm{with} \ \alpha_1=\cdots=\alpha_n=0\}}.
\end{split}
\end{equation*}
%is a closed subspace of $H^2(\mathbb{D}_2^\infty)$.

One  simplest class of vector-valued or operator-valued functions on $\mathbb{D}_2^\infty$ is the functions induced by those of one variable. More precisely, put $\tilde{f}(\zeta)=f(\zeta_n)$
and  $\widetilde{\theta}(\zeta)=\theta(\zeta_n)$  ($\zeta\in\mathbb{D}_2^\infty$) for any $f\in H_{\mathcal{E}}^2(\mathbb{D})$, any $\theta\in H_{\mathcal{B}(\mathcal{F},\mathcal{E})}^\infty(\mathbb{D})$
and any $n\in\mathbb{N}$. Then we have $\tilde{f}\in H_{\mathcal{E}}^2(\mathbb{D}_2^\infty)$ and $\widetilde{\theta}\in H_{\mathcal{B}(\mathcal{F},\mathcal{E})}^\infty(\mathbb{D}_2^\infty)$.
The multiplication operator $M_{\widetilde{\theta}}$ has the form
$$M_{\widetilde{\theta}}=\underbrace{I_{H^2(\mathbb{D})}\otimes\cdots\otimes I_{H^2(\mathbb{D})}}_{n-1\ \mathrm{times}}\otimes M_\theta \otimes I_{H^2(\mathbb{D})}\otimes\cdots$$
 with respect to the representation (\ref{representation of vector-valued Hardy space}).
The property of $M_{\widetilde{\theta}}$ thus relies heavily on that of $\theta$. For instance, $M_{\widetilde{\theta}}$ is isometric if and only if $M_\theta$ is isometric, if and only if $\theta$ is inner.
Recall that $\theta\in H_{\mathcal{B}(\mathcal{F},\mathcal{E})}^\infty(\mathbb{D})$ is said to be
inner if the boundary values of $\theta$,  on some subset $E$ of the unit circle $\mathbb{T}$ with full measure, are isometries,
which is defined to be the radial limit
$$\mathrm{(SOT)}\lim_{r\rightarrow1^-}\theta(rz),\quad z\in E$$
(see \cite{Ni1,SNFBK} for instance). On the other hand,  since $r\mathbb{D}^\infty\nsubseteqq\mathbb{D}_2^\infty$ for any $0<r<1$, the radial limits for  bounded holomorphic functions on $\mathbb{D}_2^\infty$
do not make sense in general. In \cite{SS}, E. Saksman and K. Seip defined boundary values for  bounded holomorphic functions on $\mathbb{D}_2^\infty$  by taking  quasi-radial limits instead.
However, to avoid more discussion about the boundary behavior of functions on $\mathbb{D}_2^\infty$, we introduce an alternative definition that $\Psi\in H_{\mathcal{B}(\mathcal{F},\mathcal{E})}^\infty(\mathbb{D}_2^\infty)$
is said to be \textit{inner} if $M_\Psi$ is an isometry.
Finally, applying Lemma \ref{K_lambda} (1) to the function $\tilde{f}$ and (\ref{M_Psi*}) to the function $\widetilde{\theta}$, we have
\begin{itemize}
  \item [(i)] $\langle f,K_a\cdot x\rangle=\langle f(a),x\rangle$
for any $a\in\mathbb{D}$ and $x\in\mathcal{E}$. Consequently,
the set $$\{K_a\cdot x:a\in\mathbb{D},x\in\mathcal{E}\}$$ is complete in $H_{\mathcal{E}}^2(\mathbb{D})$.
  \item [(ii)] $M_\theta^*(K_a\cdot x)=K_a\cdot\theta(a)^*x$ for any $a\in\mathbb{D}$ and any $x\in\mathcal{E}$.
\end{itemize}
Here, $K_a$ denotes the reproducing kernel of $H^2(\mathbb{D})$ at the point $a\in\mathbb{D}$. Note that by (ii), for  $a,b\in\mathbb{D}$ and $x,y\in\mathcal{E}$, we have \begin{equation} \label{a calculation}
           \begin{split}
              \langle M_\theta M_\theta^*(K_a\cdot x),K_b\cdot y\rangle & =\langle  M_\theta^*(K_a\cdot x),M_\theta^*(K_b\cdot y)\rangle \\
                & =\langle K_a\cdot\theta(a)^*x,K_b\cdot\theta(b)^*y\rangle \\
                & =\frac{1}{1-\bar{a}b}\langle \theta(b)\theta(a)^*x,y\rangle.
           \end{split}
         \end{equation}

The following lemma is also needed in the sequel.

\begin{lem} \label{operator-valued in one variable} Suppose $\theta\in H_{\mathcal{B}(\mathcal{F},\mathcal{E})}^\infty(\mathbb{D})$ and
$\vartheta\in H_{\mathcal{B}(\mathcal{G},\mathcal{E})}^\infty(\mathbb{D})$.
Then the following conclusions hold:
\begin{itemize}
  \item [(1)] Let $\mathcal{E}_0$ be a closed subspace of $\mathcal{E}$. Then $H_{\mathcal{E}_0}^2(\mathbb{D})$ is reducing for $M_\theta M_\theta^*$ if and only if  $\mathcal{E}_0$ is invariant for
$\theta(b)\theta(a)^*$ for any $a,b\in\mathbb{D}$. In this case, if $\theta$ is inner we have
$$M_\theta^*H_{\mathcal{E}_0}^2(\mathbb{D})=H_{\mathcal{F}_0}^2(\mathbb{D}),$$
where $\mathcal{F}_0=\bigvee_{a\in\mathbb{D}}\theta(a)^*\mathcal{E}_0$.
  \item [(2)]
$M_\theta M_\theta^*=M_\vartheta M_\vartheta^*$ if and only if $$\theta(b)\theta(a)^*=\vartheta(b)\vartheta(a)^*$$ for any $a,b\in\mathbb{D}$.
\end{itemize}
\end{lem}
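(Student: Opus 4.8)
The plan is to deduce both parts from the single identity (\ref{a calculation}), together with the reproducing property $\langle f,K_a\cdot x\rangle=\langle f(a),x\rangle$ (hence completeness of $\{K_a\cdot x:a\in\mathbb{D},\,x\in\mathcal{E}\}$ in $H_{\mathcal{E}}^2(\mathbb{D})$, and the same with $\mathcal{E}$ replaced by any closed subspace) and the adjoint formula $M_\theta^*(K_a\cdot x)=K_a\cdot\theta(a)^*x$ recorded in (i)--(ii) above. Everything is then a matter of pushing these three facts through the relevant density arguments, with only one step needing a genuine idea.

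For part (2): since $1-\bar ab\neq0$ for $a,b\in\mathbb{D}$, formula (\ref{a calculation}), applied to both $\theta$ and $\vartheta$, shows that for a fixed quadruple $a,b\in\mathbb{D}$, $x,y\in\mathcal{E}$ one has $\langle M_\theta M_\theta^*(K_a\cdot x),K_b\cdot y\rangle=\langle M_\vartheta M_\vartheta^*(K_a\cdot x),K_b\cdot y\rangle$ if and only if $\langle\theta(b)\theta(a)^*x,y\rangle=\langle\vartheta(b)\vartheta(a)^*x,y\rangle$. Letting $x,y\in\mathcal{E}$ and $a,b\in\mathbb{D}$ vary, the totality of the right-hand identities is precisely $\theta(b)\theta(a)^*=\vartheta(b)\vartheta(a)^*$ for all $a,b$, while the totality of the left-hand identities is equivalent to $M_\theta M_\theta^*=M_\vartheta M_\vartheta^*$ by a routine two-step density argument (fix $K_b\cdot y$ and use completeness of $\{K_a\cdot x\}$ to conclude that $(M_\theta M_\theta^*-M_\vartheta M_\vartheta^*)u$ is orthogonal to $K_b\cdot y$ for every $u$, then vary $K_b\cdot y$). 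This gives (2).

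For the equivalence in part (1): since $M_\theta M_\theta^*$ is self-adjoint, $H_{\mathcal{E}_0}^2(\mathbb{D})$ is reducing for it exactly when it is invariant for it, and by boundedness of $M_\theta M_\theta^*$ and completeness of $\{K_a\cdot x:a\in\mathbb{D},\,x\in\mathcal{E}_0\}$ in $H_{\mathcal{E}_0}^2(\mathbb{D})$ this holds iff $M_\theta M_\theta^*(K_a\cdot x)\in H_{\mathcal{E}_0}^2(\mathbb{D})$ for all $a\in\mathbb{D}$, $x\in\mathcal{E}_0$. Writing $H_{\mathcal{E}}^2(\mathbb{D})=H_{\mathcal{E}_0}^2(\mathbb{D})\oplus H_{\mathcal{E}_0^\perp}^2(\mathbb{D})$ and using completeness of $\{K_b\cdot y:b\in\mathbb{D},\,y\in\mathcal{E}_0^\perp\}$ in the second summand, this is equivalent to $\langle M_\theta M_\theta^*(K_a\cdot x),K_b\cdot y\rangle=0$ for all $a,b\in\mathbb{D}$, $x\in\mathcal{E}_0$, $y\in\mathcal{E}_0^\perp$; by (\ref{a calculation}) and $1-\bar ab\neq0$ this says $\theta(b)\theta(a)^*x\in\mathcal{E}_0$ for all such $a,b,x$, i.e. $\mathcal{E}_0$ is invariant for every $\theta(b)\theta(a)^*$.

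It remains to prove, assuming $\theta$ inner (so $M_\theta$ is an isometry, $M_\theta^*M_\theta=I$) and $\mathcal{E}_0$ invariant for all $\theta(b)\theta(a)^*$, that $M_\theta^*H_{\mathcal{E}_0}^2(\mathbb{D})=H_{\mathcal{F}_0}^2(\mathbb{D})$; this is the one place needing more than formal manipulation, and I expect it to be the main obstacle. The inclusion $\subseteq$ is direct: by (ii), $M_\theta^*(K_a\cdot x)=K_a\cdot\theta(a)^*x\in H_{\mathcal{F}_0}^2(\mathbb{D})$ whenever $x\in\mathcal{E}_0$ (as $\theta(a)^*x\in\mathcal{F}_0$), such vectors span a dense subspace of $H_{\mathcal{E}_0}^2(\mathbb{D})$, and $H_{\mathcal{F}_0}^2(\mathbb{D})$ is closed. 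For $\supseteq$ I would first upgrade the hypothesis to $\theta(b)\mathcal{F}_0\subseteq\mathcal{E}_0$ for every $b\in\mathbb{D}$ (from $\theta(b)\theta(a)^*\mathcal{E}_0\subseteq\mathcal{E}_0$, the definition of $\mathcal{F}_0$, continuity of $\theta(b)$ and closedness of $\mathcal{E}_0$), whence every $F\in H_{\mathcal{F}_0}^2(\mathbb{D})$ satisfies $(M_\theta F)(\zeta)=\theta(\zeta)F(\zeta)\in\mathcal{E}_0$ for all $\zeta\in\mathbb{D}$ (using $F(\zeta)\in\mathcal{F}_0$). Since an $H_{\mathcal{E}}^2(\mathbb{D})$-function with all values in the closed subspace $\mathcal{E}_0$ must lie in $H_{\mathcal{E}_0}^2(\mathbb{D})$ — in its decomposition along $H_{\mathcal{E}_0}^2(\mathbb{D})\oplus H_{\mathcal{E}_0^\perp}^2(\mathbb{D})$ the second component is holomorphic with values in $\mathcal{E}_0\cap\mathcal{E}_0^\perp=\{0\}$ — we obtain $M_\theta H_{\mathcal{F}_0}^2(\mathbb{D})\subseteq H_{\mathcal{E}_0}^2(\mathbb{D})$; applying $M_\theta^*$ and $M_\theta^*M_\theta=I$ gives $H_{\mathcal{F}_0}^2(\mathbb{D})\subseteq M_\theta^*H_{\mathcal{E}_0}^2(\mathbb{D})$, and the two inclusions combine to the desired equality. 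The only non-bookkeeping idea here is this pointwise-values step, which converts the scalar-type condition $\theta(\zeta)\mathcal{F}_0\subseteq\mathcal{E}_0$ into the operator-level inclusion $M_\theta H_{\mathcal{F}_0}^2\subseteq H_{\mathcal{E}_0}^2$, after which $M_\theta^*M_\theta=I$ does the rest.
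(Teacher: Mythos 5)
Your proof is correct. Part (2) and the reducing-iff-invariant equivalence in part (1) follow essentially the same path as the paper: both reduce everything, via completeness of $\{K_a\cdot x\}$ and the computation (\ref{a calculation}), to statements about $\theta(b)\theta(a)^*$. Where you genuinely diverge is the identity $M_\theta^*H_{\mathcal{E}_0}^2(\mathbb{D})=H_{\mathcal{F}_0}^2(\mathbb{D})$. The paper asserts that $M_\theta^*H_{\mathcal{E}_0}^2(\mathbb{D})$ is dense in $H_{\mathcal{F}_0}^2(\mathbb{D})$ ``by (ii)'' and then shows it is closed, using that $M_\theta M_\theta^*$ is an orthogonal projection for which $H_{\mathcal{E}_0}^2(\mathbb{D})$ is reducing, so $M_\theta M_\theta^*H_{\mathcal{E}_0}^2(\mathbb{D})$ is closed and hence, $M_\theta$ being isometric, so is $M_\theta^*H_{\mathcal{E}_0}^2(\mathbb{D})$. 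You instead prove the two inclusions outright: $\subseteq$ from (ii), continuity and closedness of $H_{\mathcal{F}_0}^2(\mathbb{D})$, and $\supseteq$ from the upgraded invariance $\theta(b)\mathcal{F}_0\subseteq\mathcal{E}_0$, which gives $M_\theta H_{\mathcal{F}_0}^2(\mathbb{D})\subseteq H_{\mathcal{E}_0}^2(\mathbb{D})$ and then $H_{\mathcal{F}_0}^2(\mathbb{D})=M_\theta^*M_\theta H_{\mathcal{F}_0}^2(\mathbb{D})\subseteq M_\theta^*H_{\mathcal{E}_0}^2(\mathbb{D})$. Your route is, if anything, tighter: the paper's density assertion is exactly the point that needs the invariance hypothesis (for a non-invariant $\mathcal{E}_0$ and an inner $2\times2$ matrix function one can have $\overline{M_\theta^*H_{\mathcal{E}_0}^2(\mathbb{D})}$ a proper subspace of $H_{\mathcal{F}_0}^2(\mathbb{D})$), and the observation $\theta(b)\mathcal{F}_0\subseteq\mathcal{E}_0$ that you isolate is precisely what makes that density — equivalently your inclusion $\supseteq$ — go through; once one has it, $M_\theta^*M_\theta=I$ replaces the paper's closed-range argument entirely.
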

\begin{proof}
(1) By (i), $H_{\mathcal{E}_0}^2(\mathbb{D})$ is reducing for $M_\theta M_\theta^*$ if and only if
$$\langle M_\theta M_\theta^*(K_a\cdot x),K_b\cdot y\rangle=0$$ for any $a,b\in\mathbb{D}$, any $x\in\mathcal{E}_0$ and any $y\in\mathcal{E}\ominus\mathcal{E}_0$.
This together with (\ref{a calculation}) gives that $H_{\mathcal{E}_0}^2(\mathbb{D})$ is reducing for $M_\theta M_\theta^*$ if and only if $\theta(b)\theta(a)^*\mathcal{E}_0\subseteq\mathcal{E}_0$
for any $a,b\in\mathbb{D}$.
For the latter conclusion, note that
by (ii), $M_\theta^*H_{\mathcal{E}_0}^2(\mathbb{D})$ is a dense subspace of $H_{\mathcal{F}_0}^2(\mathbb{D})$. Now suppose in addition $\theta$ is inner. Then $M_\theta M_\theta^*$ is an orthogonal projection on $H_{\mathcal{E}}^2(\mathbb{D})$. Since $H_{\mathcal{E}_0}^2(\mathbb{D})$ is reducing for $M_\theta M_\theta^*$, $M_\theta M_\theta^*H_{\mathcal{E}_0}^2(\mathbb{D})$ is closed,
forcing $M_\theta^*H_{\mathcal{E}_0}^2(\mathbb{D})$ to be also closed. This proves (1).

\vskip2mm

(2) By (i), $M_\theta M_\theta^*=M_\vartheta M_\vartheta^*$ if and only if
$$\langle M_\theta M_\theta^*(K_a\cdot x),K_b\cdot y\rangle=\langle M_\vartheta M_\vartheta^*(K_a\cdot x),K_b\cdot y\rangle$$ for any $a,b\in\mathbb{D}$ and any $x,y\in\mathcal{E}$.
Then (2) follows from (\ref{a calculation}).
\end{proof}

\subsection{Some preparations for proofs}

\begin{lem} \label{dilation of defect op} If $\mathbf{T}$ is a doubly commuting sequence of contractions on $\mathcal{H}$ and $\mathbf{V}$ is
a doubly commuting isometric coextension of $\mathbf{T}$, then for each $\lambda\in\mathbb{D}^\infty$ and each $x\in \mathcal{H}$,
$$\|D_{\Phi_\lambda(\mathbf{T})^*}x\|=\|D_{\Phi_\lambda(\mathbf{V})^*}x\|.$$
% $D_{\Phi_{\lambda}(V)^*}^2$ is a dilation of $D_{\Phi_{\lambda}(T)^*}^2$ for any $\lambda\in\mathbb{D}^\infty$, that is,  $$D_{\Phi_{\lambda}(T)^*}^2=P_\mathcal{H} D_{\Phi_{\lambda}(V)^*}^2|_\mathcal{H}.$$
\end{lem}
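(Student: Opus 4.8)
The plan is to reduce the statement to the case $\lambda=0$ and then to the single-variable (or finite-tuple) situation, where the corresponding identity is classical. First I would observe that $\Phi_\lambda$ is compatible with dilation: if $\mathbf{V}$ is a doubly commuting isometric coextension of $\mathbf{T}$, then I claim $\Phi_\lambda(\mathbf{V})$ is a doubly commuting isometric coextension of $\Phi_\lambda(\mathbf{T})$. Indeed, since $\mathcal{H}$ is invariant for each $V_n^*$ and $T_n^* = V_n^*|_{\mathcal{H}}$, and since $\varphi_{\lambda_n}(V_n)^* = \varphi_{\overline{\lambda_n}}(V_n^*)$ is a power series in $V_n^*$ (via the Riesz functional calculus $\varphi_a(T)=(aI-T)(I-\bar aT)^{-1}$, which only needs $V_n^*$ and the bounded inverse $(I-\lambda_n V_n^*)^{-1}$), the subspace $\mathcal{H}$ is invariant for $\varphi_{\lambda_n}(V_n)^*$ and $\varphi_{\lambda_n}(T_n)^* = \varphi_{\lambda_n}(V_n)^*|_{\mathcal{H}}$. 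Double commutativity of $\Phi_\lambda(\mathbf{V})$ follows because each $\varphi_{\lambda_n}(V_n)$ is a function of $V_n$ alone, so it commutes with $V_m$ and $V_m^*$ for $m\neq n$, hence also with $\varphi_{\lambda_m}(V_m)$ and its adjoint. Finally $\varphi_{\lambda_n}(V_n)$ is again an isometry (functional calculus of an isometry with an inner function), so $\Phi_\lambda(\mathbf{V}) \in \mathcal{DP}$ if $\mathbf{V}$ does. Therefore it suffices to prove the $\lambda=0$ case, namely $\|D_{\mathbf{T}^*}x\| = \|D_{\mathbf{V}^*}x\|$ for every $x\in\mathcal{H}$, and apply it to the pair $\Phi_\lambda(\mathbf{T})$, $\Phi_\lambda(\mathbf{V})$.

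Next I would establish $\|D_{\mathbf{T}^*}x\| = \|D_{\mathbf{V}^*}x\|$ for $x\in\mathcal{H}$. Recall $D_{\mathbf{T}^*} = \mathrm{(SOT)}\lim_n D_{T_1^*}\cdots D_{T_n^*}$ and likewise for $\mathbf{V}$, so it is enough to show
\begin{equation*}
\|D_{T_1^*}\cdots D_{T_n^*}x\|^2 = \|D_{V_1^*}\cdots D_{V_n^*}x\|^2, \quad x\in\mathcal{H},\ n\in\mathbb{N},
\end{equation*}
and let $n\to\infty$. Since the $T_k$ (resp. $V_k$) doubly commute, the defect operators $D_{T_k^*}$ mutually commute (each $D_{T_k^*}^2 = I - T_kT_k^*$ commutes with $T_m, T_m^*$ for $m\neq k$), so the product $D_{T_1^*}^2\cdots D_{T_n^*}^2 = (I-T_1T_1^*)\cdots(I-T_nT_n^*)$ expands, after multiplying out, into $\sum_{S\subseteq\{1,\dots,n\}} (-1)^{|S|}\, T_S T_S^*$ where $T_S = \prod_{k\in S} T_k$; and $\|D_{T_1^*}\cdots D_{T_n^*}x\|^2 = \langle D_{T_1^*}^2\cdots D_{T_n^*}^2 x, x\rangle$. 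The key point is that for each subset $S$, because $\mathbf{V}$ is a \emph{regular} — or here more directly, because it is a coextension with $\mathbf{T}^* = \mathbf{V}^*|_{\mathcal{H}}$ and $\mathcal{H}$ is $\mathbf{V}^*$-invariant — we get $T_S^* x = \mathbf{V}^{*\mathbf{1}_S} x$ for $x\in\mathcal{H}$, where $\mathbf{1}_S$ is the indicator, so $\langle T_S T_S^* x, x\rangle = \|T_S^* x\|^2 = \|V_S^* x\|^2 = \langle V_S V_S^* x, x\rangle$ with $V_S = \prod_{k\in S} V_k$. Summing over $S$ with signs gives the claimed equality of the two quadratic forms on $\mathcal{H}$, hence equality of the norms.

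The main obstacle — really the only subtle point — is justifying the convergence and the passage to the limit: that the SOT-limits defining $D_{\mathbf{T}^*}$ and $D_{\mathbf{V}^*}$ exist (this is quoted from \cite[Proposition 43.1]{Con}) and that $\|D_{T_1^*}\cdots D_{T_n^*}x\| \to \|D_{\mathbf{T}^*}x\|$. The latter follows since $D_{T_1^*}\cdots D_{T_n^*}x \to D_{\mathbf{T}^*}x$ in norm (SOT convergence applied to the fixed vector $x$), and norm is continuous; same for $\mathbf{V}$. One should also double-check that $\Phi_\lambda$ preserves the hypothesis, i.e. that $\Phi_\lambda(\mathbf{T})$ is still a doubly commuting sequence of contractions and $\Phi_\lambda(\mathbf{V})$ a doubly commuting isometric coextension of it, which is exactly the first paragraph above and uses only the elementary properties of $\varphi_a$ recorded before the lemma. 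Apart from that, everything reduces to the finite-dimensional combinatorial expansion of $\prod_{k=1}^n(I-\cdot)$, which is routine.
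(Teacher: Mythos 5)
Your proposal is correct and follows essentially the same route as the paper: both reduce the statement to the equality of the finite products $\|D_{T_1^*}\cdots D_{T_n^*}x\|=\|D_{V_1^*}\cdots D_{V_n^*}x\|$ via the SOT definition of the defect operator, and both derive that equality from the coextension property $\mathbf{T}^*=\mathbf{V}^*|_{\mathcal{H}}$ together with double commutativity (the paper by induction on $n$, you by the equivalent inclusion--exclusion expansion of $\prod_k(I-T_kT_k^*)$). The reduction to $\lambda=0$ that you spell out is exactly the paper's unproved remark that $\Phi_\lambda(\mathbf{V})$ is an isometric coextension of $\Phi_\lambda(\mathbf{T})$.
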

\begin{proof} %Suppose that the sequence $\mathbf{V}$ is defined on a Hilbert space $\mathcal{K}$ containing $\mathcal{H}$.
For any fixed $\lambda\in\mathbb{D}^\infty$, set $$\mathbf{S}=(S_1,S_2,\cdots)=\Phi_\lambda(\mathbf{T})$$ and $$\mathbf{W}=(W_1,W_2,\cdots)=\Phi_\lambda(\mathbf{V}).$$
It is clear that
$\mathbf{W}$ is an isometric coextension of $\mathbf{S}$.
  Since for $x\in\mathcal{H}$,
$$\|D_{\Phi_\lambda(\mathbf{T})^*}x\|=\|D_{\mathbf{S}^*}x\|=\lim_{n\rightarrow\infty}
\|D_{S_1^*}\cdots D_{S_n^*}x\|$$ and $$\|D_{\Phi_\lambda(\mathbf{V})^*}x\|=\|D_{\mathbf{W}^*}x\|=\lim_{n\rightarrow\infty}
\|D_{W_1^*}\cdots D_{W_n^*}x\|,$$
it suffices to prove that for  $n\in\mathbb{N}$,
\begin{equation}\label{dilation identity in finite-tuple case}
 \|D_{S_1^*}\cdots D_{S_n^*}x\|=\|D_{W_1^*}\cdots D_{W_n^*}x\|,\quad x\in\mathcal{H}.
\end{equation}
We show this by induction on $n$.
For $n=1$, one has
$$\|D_{S_1^*}x\|^2=\|x\|^2-\|S_1^*x\|^2
=\|x\|^2-\|W_1^*x\|^2=\|D_{W_1^*}x\|^2,\quad x\in\mathcal{H}.$$
Assume that (\ref{dilation identity in finite-tuple case}) holds for $n=k$. Since $\mathbf{S}$ and $\mathbf{W}$ are doubly commuting, we have that for $x\in\mathcal{H}$,
\begin{equation*}
\begin{split}
   \|D_{S_1^*}\cdots D_{S_{k+1}^*}x\|^2
     & = \|D_{S_1^*}\cdots D_{S_k^*}x\|^2
     - \|S_{k+1}^*D_{S_1^*}\cdots D_{S_k^*}x\|^2 \\
     & = \|D_{S_1^*}\cdots D_{S_k^*}x\|^2
     - \|D_{S_1^*}\cdots D_{S_k^*}S_{k+1}^*x\|^2 \\
     & = \|D_{W_1^*}\cdots D_{W_k^*}x\|^2
     - \|D_{W_1^*}\cdots D_{W_k^*}S_{k+1}^*x\|^2 \\
     & = \|D_{W_1^*}\cdots D_{W_k^*}x\|^2
     - \|D_{W_1^*}\cdots D_{W_k^*}W_{k+1}^*x\|^2 \\
     & = \|D_{W_1^*}\cdots D_{W_k^*}x\|^2
     - \|W_{k+1}^*D_{W_1^*}\cdots D_{W_k^*}x\|^2 \\
     & = \|D_{W_1^*}\cdots D_{W_{k+1}^*}x\|^2.
\end{split}
\end{equation*}
Thus, (\ref{dilation identity in finite-tuple case}) also holds for $n=k+1$. This completes the proof.
\end{proof}

\begin{lem}\label{intersection of Ker}
If $(T_1,\cdots,T_n)$ is a doubly commuting $n$-tuple of $C_{.0}$ contractions on a Hilbert space $\mathcal{H}$, then $$\bigcap_{(\lambda_1,\cdots,\lambda_n)\in\mathbb{D}^n} \mathrm{Ker}\ D_{\varphi_{\lambda_1}(T_1)^*}\cdots D_{\varphi_{\lambda_n}(T_n)^*}=\{0\}.$$
Equivalently,
$$\bigvee_{(\lambda_1,\cdots,\lambda_n)\in\mathbb{D}^n} D_{\varphi_{\lambda_1}(T_1)^*}\cdots D_{\varphi_{\lambda_n}(T_n)^*}\mathcal{H}=\mathcal{H}.$$
\end{lem}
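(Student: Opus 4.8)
The plan is to reduce the statement to the norm identity (\ref{key identity}) by means of the elementary operator Möbius identity on the disc. First I would record that, for a single contraction $T$ on $\mathcal{H}$ and $\lambda\in\mathbb{D}$,
$$D_{\varphi_\lambda(T)^*}^2=I-\varphi_\lambda(T)\varphi_\lambda(T)^*=(1-|\lambda|^2)\,(I-\bar\lambda T)^{-1}\,D_{T^*}^2\,(I-\lambda T^*)^{-1}.$$
This is a direct computation from $\varphi_\lambda(T)=(\lambda I-T)(I-\bar\lambda T)^{-1}=(I-\bar\lambda T)^{-1}(\lambda I-T)$: one checks the algebraic identity $(I-\bar\lambda T)(I-\lambda T^*)-(\lambda I-T)(\bar\lambda I-T^*)=(1-|\lambda|^2)(I-TT^*)$ and then conjugates it by $(I-\bar\lambda T)^{-1}$ on the left and $(I-\lambda T^*)^{-1}$ on the right.

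Next, fix $\lambda=(\lambda_1,\dots,\lambda_n)\in\mathbb{D}^n$ and set $D=D_{\varphi_{\lambda_1}(T_1)^*}\cdots D_{\varphi_{\lambda_n}(T_n)^*}$. Since $(T_1,\dots,T_n)$ is doubly commuting, each $\varphi_{\lambda_i}(T_i)$ lies in the norm-closed algebra generated by $T_i$ (use the Neumann series $\varphi_{\lambda_i}(T_i)=(\lambda_iI-T_i)\sum_{k\geq0}\bar\lambda_i^{\,k}T_i^k$), hence $\varphi_{\lambda_i}(T_i)$ doubly commutes with $\varphi_{\lambda_j}(T_j)$, and therefore the self-adjoint operators $D_{\varphi_{\lambda_i}(T_i)^*}$ are pairwise commuting. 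Thus $D\geq0$, $D^2=\prod_{i=1}^nD_{\varphi_{\lambda_i}(T_i)^*}^2$, and substituting the Möbius identity in each variable, together with $D_{T_1^*}\cdots D_{T_n^*}=D_{\mathbf{T}^*}$, gives
$$D^2=\Big(\prod_{i=1}^n(1-|\lambda_i|^2)\Big)\Big(\prod_{i=1}^n(I-\bar\lambda_iT_i)^{-1}\Big)\,D_{\mathbf{T}^*}^2\,\Big(\prod_{i=1}^n(I-\lambda_iT_i^*)^{-1}\Big).$$
Every factor other than $D_{\mathbf{T}^*}^2$ is invertible, so, using $\mathrm{Ker}\,D=\mathrm{Ker}\,D^2$, I obtain
$$x\in\mathrm{Ker}\,D\iff\prod_{i=1}^n(I-\lambda_iT_i^*)^{-1}x\in\mathrm{Ker}\,D_{\mathbf{T}^*}.$$

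Finally, let $x$ belong to the intersection over all $\lambda\in\mathbb{D}^n$. Since the $T_i^*$ commute and are contractions, $\prod_{i=1}^n(I-\lambda_iT_i^*)^{-1}x=\sum_{\alpha\in\mathbb{Z}_+^n}\lambda^\alpha\mathbf{T}^{*\alpha}x$, the series converging absolutely in $\mathcal{H}$; by the equivalence just established this $\mathcal{H}$-valued holomorphic function of $\lambda$ takes all its values in the closed subspace $\mathrm{Ker}\,D_{\mathbf{T}^*}$, hence so do all of its Taylor coefficients (extract them by integrating against characters of $\mathbb{T}^n$, or by testing against functionals annihilating $\mathrm{Ker}\,D_{\mathbf{T}^*}$). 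Thus $D_{\mathbf{T}^*}\mathbf{T}^{*\alpha}x=0$ for every $\alpha\in\mathbb{Z}_+^n$, and (\ref{key identity}) forces $\|x\|^2=\sum_{\alpha\in\mathbb{Z}_+^n}\|D_{\mathbf{T}^*}\mathbf{T}^{*\alpha}x\|^2=0$, i.e. $x=0$. The ``equivalently'' reformulation is immediate because each $D_{\varphi_{\lambda_1}(T_1)^*}\cdots D_{\varphi_{\lambda_n}(T_n)^*}$ is self-adjoint, so the closed linear span of its ranges is the orthogonal complement of the intersection of its kernels.

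I do not anticipate a genuine obstacle once (\ref{key identity}) is available: the two points needing care are the bookkeeping of double commutativity — to legitimize factoring $D^2$ and the identity $\prod_iD_{T_i^*}=D_{\mathbf{T}^*}$ — and the routine fact that a closed subspace containing every value of a vector-valued holomorphic function on $\mathbb{D}^n$ contains all of its Taylor coefficients. An alternative, slightly longer route would be an induction on $n$ bootstrapping from the one-variable case, but the product-formula argument above seems the most economical.
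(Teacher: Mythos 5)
Your proof is correct, but it follows a genuinely different route from the paper's. The paper argues by induction on $n$: the base case $n=1$ embeds $\mathcal{H}$ into $H_{\mathfrak{D}_{T^*}}^2(\mathbb{D})$ via the Sz.-Nagy--Foias coextension, observes that $\|f\|=\|M_{\varphi_a}M_{\varphi_a}^*f\|$ forces $f\in\varphi_aH_{\mathfrak{D}_{T^*}}^2(\mathbb{D})$ and hence $f(a)=0$ for every $a\in\mathbb{D}$, and the inductive step is a rearrangement of closed linear spans. You instead prove the operator M\"obius identity $D_{\varphi_\lambda(T)^*}^2=(1-|\lambda|^2)(I-\bar\lambda T)^{-1}D_{T^*}^2(I-\lambda T^*)^{-1}$, factor $D_{\Phi_\lambda(\mathbf{T})^*}^2$ as an invertible twist of $D_{\mathbf{T}^*}^2$ (all commutation steps are justified since each $\varphi_{\lambda_i}(T_i)$ lies in the norm-closed algebra generated by $T_i$), and then finish with a Taylor-coefficient extraction and the norm identity (\ref{key identity}). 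Both are sound; the trade-offs are worth noting. Your argument is non-inductive and yields the sharper structural fact that $\mathrm{Ker}\,D_{\Phi_\lambda(\mathbf{T})^*}=\{x:\prod_i(I-\lambda_iT_i^*)^{-1}x\in\mathrm{Ker}\,D_{\mathbf{T}^*}\}$, i.e.\ the kernel at $\lambda$ is an invertible image of the kernel at $0$; the resolvent identity you use is essentially the computation the paper performs later via characteristic functions (see (\ref{theta_T(b)theta_T(a)*})). On the other hand, it presupposes the multivariable identity (\ref{key identity}) from \cite{BNS} — which the paper quotes as known, so there is no circularity — whereas the paper's induction needs only the one-variable functional model and is in that sense more self-contained.
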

\begin{proof}
The equivalence is
guaranteed by
the fact that $(T_1,\cdots,T_n)$ is  doubly commuting and then the operator $$D_{\varphi_{\lambda_1}(T_1)^*}\cdots D_{\varphi_{\lambda_n}(T_n)^*}$$ is self-adjoint.
We prove this lemma by induction on $n$. For $n=1$, let $T$ be a  contractions of class $C_{.0}$, and assume $f\in\mathcal{H}$ so that $D_{\varphi_a(T)^*}f=0$ for any $a\in\mathbb{D}$. As in the proof of Lemma \ref{min iso dilation}, $T$ has a coextension to the Hardy shift $M_z$ on the vector-valued Hardy space $H_{\mathfrak{D}_{T^*}}^2(\mathbb{D})$.
Since $M_{\varphi_a}\ (a\in\mathbb{D})$ is isometric on $H_{\mathfrak{D}_{T^*}}^2(\mathbb{D})$,
 we have
$$\|f\|=\|\varphi_a(T)^*f\|=\|\varphi_a(M_z)^*f\|=\|M_{\varphi_a}^*f\|=\|M_{\varphi_a}M_{\varphi_a}^*f\|.$$
Note that $M_{\varphi_a}M_{\varphi_a}^*$ is the orthogonal projection onto $\mathrm{Ran}\ M_{\varphi_a}$ for each $a\in\mathbb{D}$, the above identity yields
$f=M_{\varphi_a}g_a=\varphi_a\cdot g_a$ for some $g_a\in H_{\mathfrak{D}_{T^*}}^2(\mathbb{D})$. This gives  $$f(a)=\varphi_a(a)\cdot g_a(a)=0,\quad a\in\mathbb{D},$$ forcing $f=0$, which proves the case $n=1$. Now assume that the conclusion holds for $n=k$, and let $(T_1,\cdots,T_{k+1})$ be a doubly commuting tuple of $C_{.0}$ contractions. Then
\begin{align*}
    & \bigvee_{(\lambda_1,\cdots,\lambda_{k+1})\in\mathbb{D}^{k+1}} D_{\varphi_{\lambda_1}(T_1)^*}\cdots D_{\varphi_{\lambda_{k+1}}(T_{k+1})^*}\mathcal{H} \\ ={} & \bigvee_{(\lambda_1,\cdots,\lambda_{k})\in\mathbb{D}^{k}}
     \left(\bigvee_{a\in\mathbb{D}} D_{\varphi_{\lambda_1}(T_1)^*}\cdots D_{\varphi_{\lambda_k}(T_k)^*} D_{\varphi_{\lambda_a}(T_{k+1})^*}\mathcal{H}\right) \\
       ={} & \bigvee_{(\lambda_1,\cdots,\lambda_{k})\in\mathbb{D}^{k}}
     D_{\varphi_{\lambda_1}(T_1)^*}\cdots D_{\varphi_{\lambda_k}(T_k)^*}\left(\bigvee_{a\in\mathbb{D}} D_{\varphi_{\lambda_a}(T_{k+1})^*}\mathcal{H}\right) \\
       ={} & \bigvee_{(\lambda_1,\cdots,\lambda_{k})\in\mathbb{D}^{k}}
     D_{\varphi_{\lambda_1}(T_1)^*}\cdots D_{\varphi_{\lambda_k}(T_k)^*}\mathcal{H} \\
       ={} & \mathcal{H}.
\end{align*}
By induction, the proof is complete.
\end{proof}

Suppose that $\mathbf{V}=(V_1,V_2,\cdots)$ is a sequence in class $\mathcal{DP}$. Recall that for any $\lambda=(\lambda_1,\lambda_2,\cdots)\in\mathbb{D}^\infty$, the defect space $\mathfrak{D}_{\Phi_\lambda(\mathbf{V})^*} $ of the sequence $\Phi_\lambda(\mathbf{V})^*$ is the closure of the range of the defect operator $$D_{\Phi_\lambda(\mathbf{V})^*}=
\prod_{n=1}^{\infty}(I-\varphi_{\lambda_n}(V_n)\varphi_{\lambda_n}(V_n)^*).$$
Since $\mathbf{V}$ is doubly commuting, $\{I-\varphi_{\lambda_n}(V_n)\varphi_{\lambda_n}(V_n)^*\}_{n\in\mathbb{N}}$ is a commuting sequence of  orthogonal projections,
and then $D_{\Phi_\lambda(\mathbf{V})^*}$ is also an orthogonal projections onto the subspace $$\bigcap_{n=1}^{\infty}\mathrm{Ran}\
(I-\varphi_{\lambda_n}(V_n)\varphi_{\lambda_n}(V_n)^*)
=\bigcap_{n=1}^{\infty}(\mathrm{Ran}\ \varphi_{\lambda_n}(V_n))^\perp
=\bigcap_{n=1}^{\infty}\mathrm{Ker}\ \varphi_{\lambda_n}(V_n)^*.$$
Note that $x\in\mathrm{Ker}\ \varphi_{\lambda_n}(V_n)^*$ if and only if
$V_n^*x=\overline{\lambda_n}x\ (n\in\mathbb{N})$, nonzero elements in defect space $\mathfrak{D}_{\Phi_\lambda(\mathbf{V})^*} $ exactly coincide with the set of joint eigenvectors of the sequence $\mathbf{V}^*$ corresponding to the joint eigenvalue $\overline{\lambda}=(\overline{\lambda_1},\overline{\lambda_2},\cdots)$.

\begin{lem} \label{local chara of K_gamma}
Suppose $\mathbf{V}\in\mathcal{DP}$. If $x$ is a nonzero element in the defect space $\mathfrak{D}_{\Phi_\lambda(\mathbf{V})^*} $ of the sequence $\Phi_\lambda(\mathbf{V})^*$ for some $\lambda\in\mathbb{D}^\infty$, then
$[x]_{\mathbf{V}}(=[\{x\}]_{\mathbf{V}})$ is $\mathbf{V}$-joint reducing, and $\mathbf{V}|_{[x]_{\mathbf{V}}}$ is jointly unitarily equivalent to the sequence $\Phi_\lambda(\mathbf{M_\zeta})$, where $\mathbf{M_\zeta}$ is the tuple of coordinate multiplication operators on the Hardy space $H^2(\mathbb{D}_2^\infty)$.
\end{lem}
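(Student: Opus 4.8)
The plan is to reduce the statement to the base case $\lambda=0$ by means of the involution $\Phi_\lambda$, and in that base case to build the unitary from $H^2(\mathbb{D}_2^\infty)$ onto $[x]_{\mathbf{V}}$ directly. Throughout we may assume $\|x\|=1$, since $[cx]_{\mathbf{V}}=[x]_{\mathbf{V}}$ for every nonzero scalar $c$. Recall from the discussion preceding the lemma that $x\in\mathfrak{D}_{\Phi_\lambda(\mathbf{V})^*}$ is equivalent to $V_n^*x=\overline{\lambda_n}x$ for every $n$.

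\emph{The case $\lambda=0$.} Here $V_n^*x=0$ for all $n$. Since $\mathbf{V}$ is a doubly commuting family of isometries, $\mathbf{V}^{*\gamma}\mathbf{V}^\gamma=I$ for every $\gamma\in\mathbb{Z}_+^{(\infty)}$ (rearrange the finitely many factors using $V_i^*V_j=V_jV_i^*$ for $i\neq j$, then cancel $V_i^{*\gamma_i}V_i^{\gamma_i}=I$). Hence, writing $\gamma=\alpha\wedge\beta$, $\alpha=\gamma+\alpha'$, $\beta=\gamma+\beta'$ with $\alpha'\wedge\beta'=0$, one gets $\langle\mathbf{V}^\alpha x,\mathbf{V}^\beta x\rangle=\langle\mathbf{V}^{\alpha'}x,\mathbf{V}^{\beta'}x\rangle$. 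If $\alpha\neq\beta$, some coordinate $n$ has $\alpha'_n\geq1$ and $\beta'_n=0$ (or the reverse); pulling $V_n^{*\alpha'_n}$ past the remaining factors by double commutativity leaves a factor $V_n^{*\alpha'_n}x=0$, so this inner product vanishes. Thus $\{\mathbf{V}^\alpha x\}_{\alpha\in\mathbb{Z}_+^{(\infty)}}$ is an orthonormal basis of $[x]_{\mathbf{V}}$, and $U\colon H^2(\mathbb{D}_2^\infty)\to[x]_{\mathbf{V}}$, $U\zeta^\alpha=\mathbf{V}^\alpha x$, is unitary. On the basis $\{\zeta^\alpha\}$ one checks $UM_{\zeta_n}=V_nU$, while $V_n^*\mathbf{V}^\alpha x$ equals $\mathbf{V}^{\alpha-1_n}x$ if $\alpha_n\geq1$ and equals $\mathbf{V}^\alpha V_n^*x=0$ if $\alpha_n=0$; consequently $V_n^*$ maps the dense span of $\{\mathbf{V}^\alpha x\}$ into $[x]_{\mathbf{V}}$, so $[x]_{\mathbf{V}}$ is $\mathbf{V}^*$-joint invariant, hence $\mathbf{V}$-joint reducing, and the same computation gives $UM_{\zeta_n}^*=(V_n|_{[x]_{\mathbf{V}}})^*U$. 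Therefore $\mathbf{V}|_{[x]_{\mathbf{V}}}$ is jointly unitarily equivalent to $\mathbf{M}_\zeta=\Phi_0(\mathbf{M}_\zeta)$.

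\emph{The general case.} Put $\mathbf{W}=(W_1,W_2,\dots)=\Phi_\lambda(\mathbf{V})$; then $\mathbf{W}\in\mathcal{DP}$ and $x\in\mathfrak{D}_{\mathbf{W}^*}$, so the previous paragraph applied to $\mathbf{W}$ yields a unitary $U\colon H^2(\mathbb{D}_2^\infty)\to[x]_{\mathbf{W}}$ with $U^{*}(W_n|_{[x]_{\mathbf{W}}})U=M_{\zeta_n}$ and with $[x]_{\mathbf{W}}$ reducing for $\mathbf{W}$. Because $|\lambda_n|<1$ and $\|V_n\|\leq1$, the Neumann series for $(I-\overline{\lambda_n}V_n)^{-1}$ converges in operator norm, so $W_n=\varphi_{\lambda_n}(V_n)$ belongs to the norm-closed unital algebra generated by $V_n$; by the symmetry $\varphi_{\lambda_n}\circ\varphi_{\lambda_n}=\mathrm{id}$, likewise $V_n=\varphi_{\lambda_n}(W_n)$ belongs to the norm-closed unital algebra generated by $W_n$. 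Hence the closed subspace generated by $x$ is the same whether one uses the family $\{\mathbf{V}^\alpha\}$ or $\{\mathbf{W}^\alpha\}$, i.e. $[x]_{\mathbf{V}}=[x]_{\mathbf{W}}$, and this subspace is invariant for every $V_n$ and every $V_n^*$ as well, hence $\mathbf{V}$-joint reducing. Finally, since $[x]_{\mathbf{V}}$ reduces each $W_n$, functional calculus commutes with restriction, so $V_n|_{[x]_{\mathbf{V}}}=\varphi_{\lambda_n}(W_n)|_{[x]_{\mathbf{V}}}=\varphi_{\lambda_n}(W_n|_{[x]_{\mathbf{V}}})$; conjugating by $U$ and using invariance of functional calculus under unitary equivalence gives $U^{*}(V_n|_{[x]_{\mathbf{V}}})U=\varphi_{\lambda_n}(M_{\zeta_n})$ for all $n$, that is, $\mathbf{V}|_{[x]_{\mathbf{V}}}\cong\Phi_\lambda(\mathbf{M}_\zeta)$.

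The routine work is the multi-index bookkeeping (every $\alpha$ is finitely supported, so no convergence issue arises) and the verification of the intertwining relations on the monomial basis. The step deserving the most care is the orthonormality of $\{\mathbf{V}^\alpha x\}$: it rests on combining double commutativity — to slide $V_n^*$ past factors $V_m$ with $m\neq n$ — with the eigenvector relation $V_n^*x=0$, which together force every cross term to collapse to an inner product containing the factor $V_n^*x=0$.
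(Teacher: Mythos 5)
Your proof is correct and follows essentially the same route as the paper: reduce to $\lambda=0$ (the paper does this by a "without loss of generality," which your final paragraph justifies explicitly via $[x]_{\mathbf V}=[x]_{\Phi_\lambda(\mathbf V)}$ and the compatibility of the functional calculus with reducing subspaces), verify that $[x]_{\mathbf V}$ is $\mathbf V^*$-joint invariant by the same case split on $\alpha_m=0$ versus $\alpha_m\geq 1$, and send $\zeta^\alpha\mapsto\mathbf V^\alpha x$ to get the unitary. The orthonormality computation you spell out is exactly the content of the paper's "it is routine to check," so nothing is missing.
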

\begin{proof} Assume $\lambda=\mathbf{0}=(0,0,\cdots)$ and $\|x\|=1$ without loss of generality.
 Since $x\in\mathfrak{D}_{\mathbf{V}^*}$, we have that for $n, m\in\mathbb{N}$ and $\alpha\in\mathbb{Z}_+^{(\infty)}$,
$$V_m^*\mathbf{V}^\alpha x
=\mathbf{V}^\alpha V_m^*x=0\in[x]_{\mathbf{V}},\quad\mathrm{if}\ \alpha_m=0;$$
$$V_m^*\mathbf{V}^\alpha x
=\mathbf{V}^{\alpha-1_m}x\in[x]_{\mathbf{V}},\quad\mathrm{if}\ \alpha_m\geq1.$$
This implies that $[x]_{\mathbf{V}}$ is $\mathbf{V}$-joint reducing.
The rest of proof is given by defining a linear map $U$ from $\mathcal{P}_\infty$ to $[x]_{\mathbf{V}}$ as follows:
$$Up=p(\mathbf{V})x,\quad p\in\mathcal{P}_\infty,$$
where $\mathcal{P}_\infty=\mathbb{C}[\zeta_1,\zeta_2,\cdots]$, the polynomial ring in countably infinitely many variables, which is dense in $H^2(\mathbb{D}_2^\infty)$ \cite{Ni2}. It is routine to check that $U$ can be extend to a unitary operator from $H^2(\mathbb{D}_2^\infty)$ onto $[x]_{\mathbf{V}}$, and the tuple $\mathbf{M_\zeta}$ of coordinate multiplication operators
is jointly unitarily equivalent to $\mathbf{V}|_{[x]_{\mathbf{V}}}$ via this unitary operator.
\end{proof}

\begin{lem} \label{wand subsp is 0}
Let $\lambda$ be a point in $\mathbb{D}^\infty$ and $\mathbf{M}_\zeta$  the tuple of coordinate multiplication operators on $H^2(\mathbb{D}^\infty_2)$. Then $\mathfrak{D}_{\Phi_\lambda(\mathbf{M}_\zeta)^*}=\{0\}$ if and only if $\lambda\notin\mathbb{D}_2^\infty$.
 %then the wandering subspace $\mathfrak{D}_{\Phi_\lambda(\mathbf{M}_\zeta)^*}$ of the sequence $$\Phi_\lambda(\mathbf{M}_\zeta)^*=(M_{\widetilde{\varphi_{\lambda_1}}}^*,M_{\widetilde{\varphi_{\lambda_2}}}^*,\cdots)$$ on $H^2(\mathbb{D}_2^\infty)$ is $0$, where $\widetilde{\varphi_{\lambda_n}}(\zeta)=\varphi_{\lambda_n}(\zeta_n)\ (n\in\mathbb{N})$.
\end{lem}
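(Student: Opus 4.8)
The plan is to reduce the statement to the scalar case $\mathcal{E}=\mathbb{C}$ of Lemma \ref{K_lambda}. Recall from the discussion preceding the statement that, because $\mathbf{M}_\zeta$ is doubly commuting, $D_{\Phi_\lambda(\mathbf{M}_\zeta)^*}$ is the orthogonal projection onto $\bigcap_{n=1}^\infty \mathrm{Ker}\ \varphi_{\lambda_n}(M_{\zeta_n})^* = \bigcap_{n=1}^\infty \mathrm{Ker}\ (M_{\zeta_n}^* - \overline{\lambda_n}I)$; the last identity uses that $\varphi_{\lambda_n}(M_{\zeta_n})^* = (\overline{\lambda_n}I - M_{\zeta_n}^*)(I-\lambda_n M_{\zeta_n}^*)^{-1}$ with $I - \lambda_n M_{\zeta_n}^*$ invertible. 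Hence $\mathfrak{D}_{\Phi_\lambda(\mathbf{M}_\zeta)^*}$ consists exactly of $0$ together with the nonzero $F\in H^2(\mathbb{D}_2^\infty)$ satisfying $M_{\zeta_n}^*F=\overline{\lambda_n}F$ for every $n\in\mathbb{N}$, so the lemma becomes the assertion that such an $F$ exists if and only if $\lambda\in\mathbb{D}_2^\infty$.

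For the ``only if'' direction I would argue by contraposition. Suppose $\lambda\notin\mathbb{D}_2^\infty$ and let $F\in\mathfrak{D}_{\Phi_\lambda(\mathbf{M}_\zeta)^*}$. If $F\neq0$, then $F$ is a nonzero element of $H^2(\mathbb{D}_2^\infty)=H^2_{\mathbb{C}}(\mathbb{D}_2^\infty)$ with $M_{\zeta_n}^*F=\overline{\lambda_n}F$ for all $n$, so Lemma \ref{K_lambda}(2) applied with $\mathcal{E}=\mathbb{C}$ forces $\lambda\in\mathbb{D}_2^\infty$, a contradiction. Therefore $F=0$, i.e.\ $\mathfrak{D}_{\Phi_\lambda(\mathbf{M}_\zeta)^*}=\{0\}$.

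For the ``if'' direction, suppose $\lambda\in\mathbb{D}_2^\infty$. Then $\mathbf{K}_\lambda(\zeta)=\prod_{n=1}^\infty(1-\overline{\lambda_n}\zeta_n)^{-1}$ is a nonzero element of $H^2(\mathbb{D}_2^\infty)$, and for every $G\in H^2(\mathbb{D}_2^\infty)$ and every $n$ the reproducing property gives $\langle G,M_{\zeta_n}^*\mathbf{K}_\lambda\rangle=\langle \zeta_n G,\mathbf{K}_\lambda\rangle=\lambda_n G(\lambda)=\langle G,\overline{\lambda_n}\mathbf{K}_\lambda\rangle$, whence $M_{\zeta_n}^*\mathbf{K}_\lambda=\overline{\lambda_n}\mathbf{K}_\lambda$ for all $n$. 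Thus $\mathbf{K}_\lambda$ is a nonzero joint eigenvector of $\mathbf{M}_\zeta^*$ with joint eigenvalue $\overline{\lambda}$, so $\mathbf{K}_\lambda\in\mathfrak{D}_{\Phi_\lambda(\mathbf{M}_\zeta)^*}\setminus\{0\}$; in particular $\mathfrak{D}_{\Phi_\lambda(\mathbf{M}_\zeta)^*}\neq\{0\}$. Combining the two directions finishes the argument.

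There is no serious obstacle here: the whole content is carried by Lemma \ref{K_lambda}(2), which identifies the joint eigenvectors of the backward coordinate shifts on $H^2(\mathbb{D}_2^\infty)$ with the kernel functions $\mathbf{K}_\lambda$, $\lambda\in\mathbb{D}_2^\infty$. The only minor points worth spelling out are the passage from $\mathrm{Ker}\ \varphi_{\lambda_n}(M_{\zeta_n})^*$ to $\mathrm{Ker}\ (M_{\zeta_n}^*-\overline{\lambda_n}I)$ noted above, and the remark that $\Phi_\lambda$ is defined only for $\lambda\in\mathbb{D}^\infty$, so the dichotomy in question is genuinely between $\lambda\in\mathbb{D}_2^\infty$ and $\lambda\in\mathbb{D}^\infty\setminus\mathbb{D}_2^\infty$.
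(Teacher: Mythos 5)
Your argument is correct, but it follows a different route from the paper's. Both proofs start from the same observation (made in the paper just before Lemma \ref{local chara of K_gamma}) that $D_{\Phi_\lambda(\mathbf{M}_\zeta)^*}$ is the orthogonal projection onto $\bigcap_{n}\mathrm{Ker}\,\varphi_{\lambda_n}(M_{\zeta_n})^*=\bigcap_n\mathrm{Ker}\,(M_{\zeta_n}^*-\overline{\lambda_n}I)$. From there the paper passes to the orthocomplement, writes $\mathfrak{D}_{\Phi_\lambda(\mathbf{M}_\zeta)^*}^\perp=\bigvee_n\widetilde{\varphi_{\lambda_n}}H^2(\mathbb{D}_2^\infty)=[\{\widetilde{\varphi_{\lambda_n}}\}_{n}]_{\mathbf{M}_\zeta}$, and invokes the external result \cite[Proposition 4.5]{DGH}, which says this invariant subspace is all of $H^2(\mathbb{D}_2^\infty)$ exactly when $\lambda\notin\mathbb{D}_2^\infty$. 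You instead stay on the kernel side and feed the joint eigenvector equation $M_{\zeta_n}^*F=\overline{\lambda_n}F$ into Lemma \ref{K_lambda}(2) with $\mathcal{E}=\mathbb{C}$, which forces $\lambda\in\mathbb{D}_2^\infty$ and $F\in\mathbb{C}\mathbf{K}_\lambda$ whenever a nonzero such $F$ exists; the converse direction via $M_{\zeta_n}^*\mathbf{K}_\lambda=\overline{\lambda_n}\mathbf{K}_\lambda$ is the same in both treatments. Your version has the advantage of being self-contained within the paper (Lemma \ref{K_lambda}(2) is proved there, and it even gives the sharper statement $\mathfrak{D}_{\Phi_\lambda(\mathbf{M}_\zeta)^*}=\mathbb{C}\mathbf{K}_\lambda$ for $\lambda\in\mathbb{D}_2^\infty$), whereas the paper's citation of \cite{DGH} packages the same dichotomy in the language of generated submodules. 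No gaps.
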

\begin{proof} %By definition, the wandering subspace $\mathfrak{D}_{\Phi_\lambda(\mathbf{M}_\zeta)^*}$ is the range of the defect operator
%  $$D_{\Phi_\lambda(\mathbf{M_\zeta})^*}=\prod_{n=1}^{\infty}(I-M_{\widetilde{\varphi_{\lambda_n}}}M_{\widetilde{\varphi_{\lambda_n}}}^*),$$
%of the sequence $\Phi_\lambda(\mathbf{M}_\zeta)^*$, where $\widetilde{\varphi_{\lambda_n}}(\zeta)=\varphi_{\lambda_n}(\zeta_n)\ (n\in\mathbb{N})$. Since $\{I-M_{\widetilde{\varphi_{\lambda_n}}}M_{\widetilde{\varphi_{\lambda_n}}}^*\}_{n\in\mathbb{N}}$
%is a commuting sequence of orthogonal projections on $H^2(\mathbb{D}_2^\infty)$,
By comments before the previous lemma, we have $$\mathfrak{D}_{\Phi_\lambda(\mathbf{M}_\zeta)^*}
=\bigcap_{n=1}^{\infty}
(\widetilde{\varphi_{\lambda_n}}H^2(\mathbb{D}_2^\infty))^\perp,$$
and hence
$$\mathfrak{D}_{\Phi_\lambda(\mathbf{M}_\zeta)^*}^\perp
=\bigvee_{n=1}^\infty\widetilde{\varphi_{\lambda_n}}H^2(\mathbb{D}_2^\infty)
=[\{\widetilde{\varphi_{\lambda_n}}\}_{n\in\mathbb{N}}]_{\mathbf{M}_\zeta},$$
where $\widetilde{\varphi_{\lambda_n}}(\zeta)=\varphi_{\lambda_n}(\zeta_n)\ (\zeta\in\mathbb{D}^\infty_2)$.
 By \cite[Proposition 4.5]{DGH},
 $$[\{\widetilde{\varphi_{\lambda_n}}\}_{n\in\mathbb{N}}]_{\mathbf{M}_\zeta}=H^2(\mathbb{D}_2^\infty)$$
if and only if $\lambda\notin\mathbb{D}_2^\infty$.
This completes the proof.
\end{proof}

By an irreducible family of operators we mean that these operators has no nontrivial joint reducing subspaces.

\begin{lem}\label{M_zeta is irre}
The tuple $\mathbf{M}_\zeta$ of coordinate multiplication operators on $H^2(\mathbb{D}^\infty_2)$ is irreducible.
\end{lem}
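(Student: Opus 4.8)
The plan is to show that any closed subspace $M \subseteq H^2(\mathbb{D}_2^\infty)$ that reduces every $M_{\zeta_n}$ is either $\{0\}$ or the whole space, by analyzing the orthogonal projection $P$ onto $M$. Since $M$ reduces each $M_{\zeta_n}$, the projection $P$ commutes with every $M_{\zeta_n}$ and with every $M_{\zeta_n}^*$. First I would apply Proposition \ref{operatoe interwine CMO} (with $\mathcal{E}=\mathcal{F}=\mathbb{C}$): a bounded operator commuting with all $M_{\zeta_n}$ must be of the form $M_\varphi$ for some $\varphi \in H^\infty(\mathbb{D}_2^\infty)$. So $P = M_\varphi$.

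Next I would pin down $\varphi$ using that $P$ is a self-adjoint idempotent. From $P = P^2 = M_{\varphi^2}$ and the injectivity of the map $\varphi \mapsto M_\varphi$ (two multipliers agreeing on the space agree as functions, since $1 \in H^2(\mathbb{D}_2^\infty)$), we get $\varphi^2 = \varphi$ pointwise on $\mathbb{D}_2^\infty$, hence $\varphi(\zeta) \in \{0,1\}$ for each $\zeta$. Since $\varphi$ is holomorphic (in particular continuous) on the connected set $\mathbb{D}_2^\infty$, it is constant: $\varphi \equiv 0$ or $\varphi \equiv 1$. The first gives $M = \{0\}$, the second gives $M = H^2(\mathbb{D}_2^\infty)$, which is exactly irreducibility.

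An alternative, perhaps cleaner, route avoids invoking Proposition \ref{operatoe interwine CMO} and instead works directly with the reproducing kernels: if $M$ reduces each $M_{\zeta_n}$, then $M^\perp$ also reduces each $M_{\zeta_n}$, and by Lemma \ref{K_lambda}(2) the only joint eigenvectors of $(M_{\zeta_1}^*, M_{\zeta_2}^*, \cdots)$ are scalar multiples of the kernels $\mathbf{K}_\lambda$ ($\lambda \in \mathbb{D}_2^\infty$). One checks that $\mathbf{K}_{\mathbf{0}} = 1$ is (up to scalars) the unique joint eigenvector for eigenvalue $\overline{\mathbf{0}}$; it must lie in $M$ or in $M^\perp$ (since a reducing subspace of a normal-like tuple splits the eigenspace — here the eigenspace is one-dimensional so $1 \in M$ or $1 \in M^\perp$), and whichever subspace contains $1$ contains $[\,1\,]_{\mathbf{M}_\zeta} = H^2(\mathbb{D}_2^\infty)$ because the polynomials $\mathcal{P}_\infty$ are dense. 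Hence that subspace is everything and the other is $\{0\}$.

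I expect the only mild subtlety is the density/connectedness step: one must know that $\mathcal{P}_\infty$ is dense in $H^2(\mathbb{D}_2^\infty)$ (already used in Lemma \ref{local chara of K_gamma}, citing \cite{Ni2}), and, for the first approach, that a holomorphic function on the connected domain $\mathbb{D}_2^\infty \subseteq l^2$ taking only the values $0,1$ is constant — which follows from continuity and connectedness. Neither is a real obstacle; the proof is short. I would present the kernel-based argument since it is self-contained given Lemma \ref{K_lambda}, but either works.
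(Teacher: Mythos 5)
Both of your arguments are correct. Your first route is essentially the paper's own proof: the paper takes a projection $P$ commuting with every $M_{\zeta_n}$, uses $P\zeta^\alpha=\zeta^\alpha\cdot P1$ to get $0=P(Q1)=Q1\cdot P1$ with $Q=I-P$, and concludes from $P1\cdot(1-P1)=0$ — which is exactly your identity $\varphi^2=\varphi$ with $\varphi=P1$, just computed by hand rather than by invoking Proposition \ref{operatoe interwine CMO}. (The paper leaves implicit the final step that a product of two holomorphic functions vanishing identically on the connected domain $\mathbb{D}_2^\infty$ forces one factor to vanish; you make this explicit via continuity and connectedness, which is a small improvement in completeness.) Your second, kernel-based route is a genuine variant: instead of a function identity you use that $P$ commutes with each $M_{\zeta_n}^*$ and hence preserves the one-dimensional joint kernel $\bigcap_n\mathrm{Ker}\,M_{\zeta_n}^*=\mathbb{C}\cdot 1$, so $P1\in\{0,1\}\cdot 1$, and then cyclicity of the constant function finishes the argument. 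That version buys you independence from any holomorphy/connectedness considerations (everything is linear algebra on the wandering subspace plus density of the monomials, which is built into the definition of $H^2(\mathbb{D}_2^\infty)$), at the cost of needing to justify the one sentence "$P$ splits the eigenspace" — which you should state as "$P$ commutes with each $M_{\zeta_n}^*$, hence maps $\bigcap_n\mathrm{Ker}\,M_{\zeta_n}^*$ into itself, and a projection acting on a one-dimensional space is $0$ or the identity." Either proof is acceptable; there is no gap.
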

\begin{proof} It suffices to prove that for any orthogonal projection $P$ that commutes with every coordinate multiplication operator, we have $P1=1$ or $Q1=1$, where
$Q=I-P$. Put $Q1=\sum_{\alpha\in\mathbb{Z}_+^{(\infty)}} c_\alpha\zeta^\alpha$.
Since $$P\zeta^\alpha=PM_{\zeta^\alpha}1=M_{\zeta^\alpha}P1=\zeta^\alpha\cdot P1,$$ we have
 $$0=P(Q1)=P(\sum_{\alpha\in\mathbb{Z}_+^{(\infty)}} c_\alpha\zeta^\alpha)
=\sum_{\alpha\in\mathbb{Z}_+^{(\infty)}} c_\alpha P\zeta^\alpha=\sum_{\alpha\in\mathbb{Z}_+^{(\infty)}} (c_\alpha\zeta^\alpha\cdot P1)=Q1\cdot P1.$$
Note that $Q1=(I-P)1=1-P1$, the proof is complete.
\end{proof}

\begin{lem} \label{redu in tensor} Let $\mathcal{H}$ and $\mathcal{K}$ be two Hilbert spaces, and $\mathcal{T}$  an irreducible family of bounded linear operators on $\mathcal{H}$. Then
any bounded linear operator on $\mathcal{H}\otimes \mathcal{K}$ that doubly commutes with the family $\{T\otimes I_\mathcal{K}:T\in\mathcal{T}\}$ is of form
$I_\mathcal{H}\otimes S$ for some $S\in\mathcal{B}(\mathcal{K})$.

In particular,
any  joint reducing subspace  for the family $\{T\otimes I_\mathcal{K}:T\in\mathcal{T}\}$  is of form $\mathcal{H}\otimes M$ for some closed subspace $M$ of $\mathcal{K}$.
\end{lem}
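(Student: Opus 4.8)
The plan is to prove the first (and stronger) assertion about operators, and then deduce the statement about reducing subspaces by applying it to orthogonal projections. So suppose $R \in \mathcal{B}(\mathcal{H}\otimes\mathcal{K})$ doubly commutes with every $T\otimes I_\mathcal{K}$, $T\in\mathcal{T}$. The goal is to produce $S\in\mathcal{B}(\mathcal{K})$ with $R = I_\mathcal{H}\otimes S$. First I would fix orthonormal bases (or simply work with the tensor decomposition $\mathcal{H}\otimes\mathcal{K} = \bigoplus_{i} \mathcal{H}\otimes \mathbb{C}k_i$ for an orthonormal basis $\{k_i\}$ of $\mathcal{K}$) and write $R$ as an operator matrix $(R_{ij})$ with each $R_{ij}\in\mathcal{B}(\mathcal{H})$, where $R_{ij}$ is the compression of $R$ to the $(i,j)$ block under the natural identification of $\mathcal{H}\otimes\mathbb{C}k_i$ with $\mathcal{H}$. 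The hypothesis that $R$ commutes with $T\otimes I_\mathcal{K}$ for all $T\in\mathcal{T}$ translates blockwise into $R_{ij}T = TR_{ij}$ for all $i,j$ and all $T\in\mathcal{T}$; the hypothesis that $R$ commutes with $(T\otimes I_\mathcal{K})^* = T^*\otimes I_\mathcal{K}$ similarly gives $R_{ij}T^* = T^*R_{ij}$. Hence each block $R_{ij}$ lies in the commutant of the self-adjoint family $\mathcal{T}\cup\mathcal{T}^*$.

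Next I would invoke irreducibility: the family $\mathcal{T}$ has no nontrivial joint reducing subspaces, which is exactly the statement that the von Neumann algebra generated by $\mathcal{T}$ (equivalently, by $\mathcal{T}\cup\mathcal{T}^*$) has trivial commutant, so that commutant is $\mathbb{C}I_\mathcal{H}$. Therefore each $R_{ij} = s_{ij} I_\mathcal{H}$ for a scalar $s_{ij}\in\mathbb{C}$. Setting $S$ to be the operator on $\mathcal{K}$ with matrix $(s_{ij})$ relative to $\{k_i\}$, boundedness of $S$ follows from boundedness of $R$ (indeed $\|S\| = \|R\|$, since $I_\mathcal{H}\otimes S$ and $R$ agree), and by construction $R = I_\mathcal{H}\otimes S$. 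This proves the first assertion.

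For the ``in particular'' clause, let $N\subseteq\mathcal{H}\otimes\mathcal{K}$ be a joint reducing subspace for $\{T\otimes I_\mathcal{K}: T\in\mathcal{T}\}$, and let $P_N$ be the orthogonal projection onto $N$. Reducing means $P_N$ commutes with every $T\otimes I_\mathcal{K}$, and since $P_N$ is self-adjoint it automatically commutes with every $T^*\otimes I_\mathcal{K}$ as well; thus $P_N$ doubly commutes with the family. By the first part, $P_N = I_\mathcal{H}\otimes S$ for some $S\in\mathcal{B}(\mathcal{K})$, and $S$ must itself be an orthogonal projection (being a compression of $P_N$, or directly from $S = S^* = S^2$, which transfers from the corresponding identities for $P_N$). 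Writing $M = \operatorname{Ran} S$, we get $N = \operatorname{Ran} P_N = \mathcal{H}\otimes M$, as desired.

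I expect the only genuinely substantive step to be the appeal to irreducibility in the form ``no nontrivial joint reducing subspaces $\Longrightarrow$ trivial commutant of $\mathcal{T}\cup\mathcal{T}^*$.'' This is a standard fact — a self-adjoint set of operators is irreducible precisely when its commutant consists of scalars, since any nonscalar self-adjoint element of the commutant has a nontrivial spectral projection which would reduce the family — but it is worth stating clearly because it is where the hypothesis is actually used. Everything else is bookkeeping with operator matrices; the double-commuting hypothesis is essential precisely so that the off-diagonal blocks $R_{ij}$ (not just the diagonal ones) are forced to commute with $\mathcal{T}^*$ and hence to be scalar.
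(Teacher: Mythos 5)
Your proof is correct. The heart of both arguments is the same fact — irreducibility of the self-adjoint family $\mathcal{T}\cup\mathcal{T}^*$ forces its commutant in $\mathcal{B}(\mathcal{H})$ to be the scalars — but you and the paper package the remaining step differently. The paper invokes the double commutant theorem to conclude that the von Neumann algebra generated by $\mathcal{T}$ is all of $\mathcal{B}(\mathcal{H})$, so that the doubly commuting operator lies in the commutant of $\mathcal{B}(\mathcal{H})\otimes\mathbb{C}I_{\mathcal{K}}$, and then cites Takesaki for the identification of that commutant with $\mathbb{C}I_{\mathcal{H}}\otimes\mathcal{B}(\mathcal{K})$. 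You instead reprove that tensor-product commutant fact by hand, decomposing $R$ into operator-matrix blocks $R_{ij}\in\mathcal{B}(\mathcal{H})$ over an orthonormal basis of $\mathcal{K}$ and showing each block is scalar; this makes the lemma self-contained at the cost of a little bookkeeping (the cleanest way to see that the scalar matrix $(s_{ij})$ defines a bounded $S$ is to realize $S=V_h^*RV_h$ for the isometry $V_h:k\mapsto h\otimes k$ with $h$ a unit vector, rather than arguing from the matrix directly). Your reduction of the reducing-subspace statement to the operator statement via the projection $P_N$ matches the paper's intent exactly. Both routes are sound; yours trades a textbook citation for an explicit computation.
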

\begin{proof} Since the family $\mathcal{T}$ of operators has no nontrivial joint reducing subspace, the von Neumann algebra generated by $\mathcal{T}$ is the entire $\mathcal{B}(\mathcal{H})$ by the double commutant theorem \cite{Con}. This gives
$$\mathcal{V}^*(\{T\otimes I_\mathcal{K}:T\in\mathcal{T}\})=\mathcal{B}(\mathcal{H})\otimes \mathbb{C}I_\mathcal{K}=\{T\otimes I_\mathcal{K}:T\in\mathcal{B}(\mathcal{H})\},$$
where $\mathcal{V}^*(\{T\otimes I_\mathcal{K}:T\in\mathcal{T}\})$ denotes the von Neumann algebra generated by the family $\{T\otimes I_\mathcal{K}:T\in\mathcal{T}\}$. If a bounded linear operator on $\mathcal{H}\otimes \mathcal{K}$  doubly commutes with the family $\{T\otimes I_\mathcal{K}:T\in\mathcal{T}\}$, then it also commutes with the algebra $\mathcal{B}(\mathcal{H})\otimes \mathbb{C}I_\mathcal{K}$, and therefore has  form $I_\mathcal{H}\otimes S$ for some bounded linear operator $S$ on $\mathcal{K}$ (see \cite[pp. 184]{Ta}).
\end{proof}

\begin{lem} \label{estimate lemma} Suppose $\mathbf{V}\in\mathcal{DP}(\mathcal{H})$. Then for any $\varepsilon>0$ and $x\in\mathcal{H}$, there exists a sequence $(k_1,k_2,\cdots)$ of positive integers such that $$\|D_{\mathbf{W}^*}x\|\geq(1-\varepsilon)\|x\|,$$
where $\mathbf{W}=(V_1^{k_1},V_2^{k_2},\cdots)$.
\end{lem}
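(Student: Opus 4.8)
The plan is to build the exponents $k_n$ one at a time, greedily, exploiting that each $V_n$ is pure and that the relevant defect operators commute. First I would dispose of the trivial cases: if $x=0$ or $\varepsilon\ge 1$ there is nothing to prove, so assume $x\ne 0$ and $0<\varepsilon<1$. The key structural observation is that for \emph{any} choice of positive integers $k_1,k_2,\dots$ the sequence $\mathbf{W}=(V_1^{k_1},V_2^{k_2},\dots)$ again lies in $\mathcal{DP}(\mathcal{H})$: powers of pure isometries are pure isometries, and double commutativity of $\mathbf{V}$ is inherited by $\mathbf{W}$. Consequently each $D_{W_n^*}=I-V_n^{k_n}V_n^{*k_n}$ is the orthogonal projection onto $\mathcal{H}\ominus V_n^{k_n}\mathcal{H}$, these projections pairwise commute, so each partial product $P_n=D_{W_1^*}\cdots D_{W_n^*}$ is an orthogonal projection with $P_1\ge P_2\ge\cdots$, and $D_{\mathbf{W}^*}=\mathrm{(SOT)}\lim_n P_n$, a decreasing limit of projections, so that $\|D_{\mathbf{W}^*}x\|^2=\lim_n\|P_nx\|^2$.

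Next I would fix summable error tolerances, say $\delta_n=\varepsilon\,2^{-n}$, so that $\prod_{n\ge1}(1-\delta_n)\ge 1-\sum_n\delta_n=1-\varepsilon$. Then I construct the $k_n$ recursively. Suppose $k_1,\dots,k_{n-1}$ have been chosen and set $y_{n-1}=P_{n-1}x$ (with $y_0=x$). Since $V_n$ is pure, $V_n^{*k}y_{n-1}\to 0$ as $k\to\infty$, so I may pick $k_n$ with $\|V_n^{*k_n}y_{n-1}\|^2\le\delta_n\|y_{n-1}\|^2$. Because $D_{W_n^*}$ commutes with $P_{n-1}$, we have $y_n=P_nx=D_{W_n^*}y_{n-1}$, and since $D_{W_n^*}=I-W_nW_n^*$ is a projection, $\|y_n\|^2=\|y_{n-1}\|^2-\|V_n^{*k_n}y_{n-1}\|^2\ge(1-\delta_n)\|y_{n-1}\|^2$. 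In particular $y_{n-1}\ne 0$ at every stage (so the choice above is meaningful), and iterating gives $\|P_nx\|^2\ge\prod_{j=1}^n(1-\delta_j)\,\|x\|^2$. Letting $n\to\infty$ yields $\|D_{\mathbf{W}^*}x\|^2\ge(1-\varepsilon)\|x\|^2\ge(1-\varepsilon)^2\|x\|^2$, hence $\|D_{\mathbf{W}^*}x\|\ge(1-\varepsilon)\|x\|$, which is the assertion.

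I expect no genuine obstacle here; the only point requiring care is that the $n$-th exponent $k_n$ must be chosen \emph{after} $k_1,\dots,k_{n-1}$, since it depends on the partial product $y_{n-1}=P_{n-1}x$, so the sequence $(k_n)$ is produced by a true recursion rather than all at once. Commutativity of the defect projections — a consequence of $\mathbf{V}$ being doubly commuting — is what lets the partial products form a decreasing chain of projections and hence telescope, and what identifies $\|D_{\mathbf{W}^*}x\|$ with the decreasing limit $\lim_n\|P_nx\|$; without it the infinite product could not be controlled. The substantive input is simply that purity of each $V_n$ lets one annihilate the $n$-th error term $\|V_n^{*k_n}y_{n-1}\|$ to within any prescribed fraction of $\|y_{n-1}\|$.
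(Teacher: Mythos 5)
Your argument is correct and rests on the same two ingredients as the paper's proof: purity of each $V_n$ is used to make $\|V_n^{*k_n}(\cdot)\|$ small with summable tolerances $\varepsilon 2^{-n}$, and double commutativity makes the defect factors commuting orthogonal projections whose partial products telescope. The only (harmless) differences are cosmetic: the paper picks all $k_n$ at once from the single condition $\|V_n^{*k_n}x\|<\varepsilon 2^{-n}$ and runs an additive triangle-inequality estimate $\|P_nx\|\geq\|x\|-\sum_i\|W_i^*x\|$, whereas you pick the $k_n$ recursively against $P_{n-1}x$ and track a multiplicative bound.
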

\begin{proof} Take an arbitrary positive number $\varepsilon$ and assume $\|x\|=1$ without loss of generality. Since each  $V_n\ (n\in\mathbb{N})$ is pure,  there exists a sequence $(k_1,k_2,\cdots)$ of positive integers, such that \begin{equation}\label{estimate for x}
\|V_n^{*k_n}x\|<\frac{1}{2^n}\varepsilon,\quad n\in\mathbb{N}.
\end{equation}
Rewrite $(W_1,W_2,\cdots)=(V_1^{k_1},V_2^{k_2},\cdots)$.
For each $n\in\mathbb{N}$, we have
\begin{align*}
  & \|(I-W_1W_1^*)\cdots(I-W_nW_n^*)x\| \\ \geq {} & \|(I-W_2W_2^*)\cdots(I-W_nW_n^*)x\|
   -\|W_1W_1^*(I-W_2W_2^*)\cdots(I-W_nW_n^*)x\| \\
     \geq {} & \|(I-W_2W_2^*)\cdots(I-W_nW_n^*)x\|
     -\|W_1(I-W_2W_2^*)\cdots(I-W_nW_n^*)W_1^*x\| \\
     \geq {} & \|(I-W_2W_2^*)\cdots(I-W_nW_n^*)x\|
     -\|W_1^*x\|,
\end{align*}
and then by induction and (\ref{estimate for x}),
$$\|(I-W_1W_1^*)\cdots(I-W_nW_n^*)x\|\geq\|x\|-\sum_{i=1}^{n}\|W_i^*x\|
>1-(1-\frac{1}{2^n})\varepsilon.$$
Setting $n\rightarrow\infty$ in the above inequality, we obtain the desired conclusion.
\end{proof}

\section{Dilation theory}
In this section, we will give some operator-theoretical characterization for sequences in class $\mathcal{DC}$.
Recall that a sequence $\mathbf{T}\in\mathcal{DC}(\mathcal{H})$ is said to has a decomposition
of quasi-Beurling type if there exists an orthogonal decomposition $\mathcal{H}=\bigoplus_\gamma \mathcal{H}_\gamma$ of the Hilbert space $\mathcal{H}$, such that each $\mathcal{H}_\gamma$ is $\mathbf{T}$-joint reducing and $\mathbf{T}|_{\mathcal{H}_\gamma}$ is of quasi-Beurling type.

The main results in this section are restated as follows.
\vskip2mm
\noindent\textbf{Theorem \ref{direct sums of quasi-BT}} \emph{Suppose $\mathbf{T}\in\mathcal{DC}(\mathcal{H})$. The followings are equivalent:
\begin{itemize}
  \item [(1)] $\mathbf{T}$ has a decomposition
of quasi-Beurling type;
  \item [(2)] for each $x\in \mathcal{H}$, there exists a sequence $\lambda^{(1)},\lambda^{(2)},\cdots$ of points in $\mathbb{D}^\infty$, such that
      $$\|x\|^2=\sum_{k=1}^\infty\sum_{\alpha\in\mathbb{Z}_+^{(\infty)}}
      \|D_{\Phi_{\lambda^{(k)}}(\mathbf{T})^*}\Phi_{\lambda^{(k)}}^\alpha(\mathbf{T})^{*}x\|^2;$$
  \item [(3)] $\bigvee_{\lambda\in\mathbb{D}^\infty}
\mathfrak{D}_{\Phi_\lambda(\mathbf{T})^*}=\mathcal{H}$.
\end{itemize}
In fact,
%if in addition the Hilbert space $\mathcal{H}$ is separable, then
the sequence $\lambda^{(1)},\lambda^{(2)},\cdots$ in condition (2) can be chosen to be independent of $x\in \mathcal{H}$.}

\vskip2mm

\noindent\textbf{Corollary \ref{BT}} \emph{Suppose $\mathbf{T}\in\mathcal{DC}(\mathcal{H})$. The followings are equivalent:
\begin{itemize}
  \item [(1)] $\mathbf{T}$ is of Beurling type;
  \item [(2)] the minimal regular isometric dilation of $\mathbf{T}$ is jointly unitarily equivalent to the tuple $\mathbf{M}_\zeta$ of coordinate multiplication operators on a vector-valued Hardy space $H_{\mathcal{E}}^2(\mathbb{D}_2^\infty)$;
  \item [(3)] for each $x\in \mathcal{H}$, $\|x\|^2=\sum_{\alpha\in\mathbb{Z}_+^{(\infty)}}\|D_{\mathbf{T}^*}\mathbf{T}^{*\alpha}x\|^2$;
  \item [(4)] $\bigvee_{\lambda\in\mathbb{D}_2^\infty}
\mathfrak{D}_{\Phi_\lambda(\mathbf{T})^*}=\mathcal{H}$.
\end{itemize}}
\vskip2mm

\noindent\textbf{Proof of Theorem \ref{direct sums of quasi-BT}.}
(1)$\Rightarrow$(2). Suppose that there exists an (countable) orthogonal decomposition $\mathcal{H}=\bigoplus_\gamma \mathcal{H}_\gamma$ of the Hilbert space $\mathcal{H}$, such that each $\mathcal{H}_\gamma$ is $\mathbf{T}$-joint reducing and $\mathbf{T}|_{\mathcal{H}_\gamma}$ is of quasi-Beurling type. Take an arbitrary element $x$ in $\mathcal{H}$, and let $x_\gamma$ be the orthogonal projection of $x$ into the subspace $\mathcal{H}_\gamma$. Then
%%there are at most countably many indexes $\gamma$ such that $x_\gamma\neq0$ and
%$\|x\|^2=\sum_{x_\gamma\neq0}\|x_\gamma\|^2$.
$\|x\|^2=\sum_{\gamma}\|x_\gamma\|^2$.
The implication (1)$\Rightarrow$(2) is proved once we show
that for each $\gamma$, there corresponds a point $\lambda_\gamma\in\mathbb{D}^\infty$ such that
\begin{equation} \label{norm of x_gamma} \|x_\gamma\|^2=\sum_{\alpha\in\mathbb{Z}_+^{(\infty)}}
\|D_{\Phi_{\lambda_\gamma}(\mathbf{T})^*}
\Phi_{\lambda_\gamma}^\alpha(\mathbf{T})^{*}x\|^2.\end{equation}
Since each $\mathcal{H}_\gamma$ is $\mathbf{T}$-joint reducing,  we see $\mathbf{T}|_{\mathcal{H}_\gamma}\in\mathcal{DC}(\mathcal{H}_\gamma)$.
Fix an index $\gamma$ and let $\mathbf{V}\in\mathcal{DP}(\mathcal{K})$ be the minimal regular isometric dilation of $\mathbf{T}|_{\mathcal{H}_\gamma}$.
Then by Lemma \ref{min iso dilation}, $\mathbf{V}$ is a coextension of $\mathbf{T}|_{\mathcal{H}_\gamma}$. Since $\mathbf{T}|_{\mathcal{H}_\gamma}$ is of quasi-Beurling type, there is a point $\lambda\in\mathbb{D}^\infty$ such that $\mathbf{W}=(W_1,W_2,\cdots)=\Phi_\lambda(\mathbf{V})$ is of Beurling type; that is, $[\mathfrak{D}_{\mathbf{W}^*}]_{\mathbf{W}}=\mathcal{K}$.
Rewrite $\mathcal{E}=\mathfrak{D}_{\mathbf{W}^*}$. Since $\mathbf{W}$ is doubly commuting and $\mathcal{E}=\bigcap_{n=1}^\infty\mathrm{Ker}\ W_n^*$, we have that the family $\{\mathbf{W}^\alpha\mathcal{E}\}
_{\alpha\in\mathbb{Z}_+^{(\infty)}}$ of subspaces are pairwise orthogonal, and therefore $$\mathcal{K}=\bigoplus_
{\alpha\in\mathbb{Z}_+^{(\infty)}}
\mathbf{W}^\alpha\mathcal{E}.$$
It is easy to see that for each $\alpha\in\mathbb{Z}_+^{(\infty)}$, the operator $\mathbf{W}^\alpha D_{\mathbf{W}^*}\mathbf{W}^{*\alpha}$ is exactly the orthogonal projection onto $\mathbf{W}^\alpha\mathcal{E}$,
and then
$$
 \|x_\gamma\|^2  =\sum_{\alpha\in\mathbb{Z}_+^{(\infty)}}
   \|\mathbf{W}^\alpha D_{\mathbf{W}^*}\mathbf{W}^{*\alpha}x_\gamma\|^2 = \sum_{\alpha\in\mathbb{Z}_+^{(\infty)}}
   \|D_{\mathbf{W}^*}\mathbf{W}^{*\alpha}x_\gamma\|^2.
$$ Also note that
$\mathbf{W}$ is an isometrical coextension of $\Phi_\lambda(\mathbf{T})$,
  by Lemma \ref{dilation of defect op}  we have
\begin{equation*}
\begin{split}
   \|x_\gamma\|^2 %& =\sum_{\alpha\in\mathbb{Z}_+^{(\infty)}}
%   \|\mathbf{W}^\alpha D_{\mathbf{W}^*}\mathbf{W}^{*\alpha}x_\gamma\|^2 \\
    & = \sum_{\alpha\in\mathbb{Z}_+^{(\infty)}}
   \|D_{\mathbf{W}^*}\mathbf{W}^{*\alpha}x_\gamma\|^2 \\ & =
   \sum_{\alpha\in\mathbb{Z}_+^{(\infty)}}
   \|D_{\mathbf{W}^*}\Phi_\lambda^\alpha(\mathbf{T}|_{\mathcal{H}_\gamma})
   ^{*}x_\gamma\|^2 \\
   &=\sum_{\alpha\in\mathbb{Z}_+^{(\infty)}}\|D_{\Phi_\lambda(\mathbf{T}|_{\mathcal{H}_\gamma})^*}
   \Phi_\lambda^\alpha(\mathbf{T}|_{\mathcal{H}_\gamma})^{*}x_\gamma\|^2\\
     &=\sum_{\alpha\in\mathbb{Z}_+^{(\infty)}}\|D_{\Phi_\lambda(\mathbf{T})^*}|_{\mathcal{H}_\gamma}
     \Phi_\lambda^\alpha(\mathbf{T})^{*}|_{\mathcal{H}_\gamma}x_\gamma\|^2\\
     &=\sum_{\alpha\in\mathbb{Z}_+^{(\infty)}}\|D_{\Phi_\lambda(\mathbf{T})^*}\Phi_\lambda^\alpha(\mathbf{T})^{*}x_\gamma\|^2.
\end{split}
\end{equation*}

%If in addition $\mathcal{H}$ is separable, then
%Since there at most countable  direct summands in the orthogonal decomposition $\mathcal{H}=\oplus_\gamma \mathcal{H}_\gamma$.
It is clear that
 the point $\lambda_\gamma$ that appears in (\ref{norm of x_gamma}) only depend the subspace $\mathcal{H}_\gamma$, and hence not on the choice of $x$.

\vskip2mm

%Since each $\mathcal{H}_\gamma$ is $\mathbf{T}$-joint reducing and $\mathbf{T}|_{\mathcal{H}_\gamma}$ is of quasi-Beurling type for each $\gamma$, by Proposition \ref{DC of quasi-BT},
%\begin{equation}\label{x_gamma}
%\begin{split}
%    \|x_\gamma\|^2 &=\sum_{\alpha\in\mathbb{Z}_+^{(\infty)}}\|D_{\Phi_{\lambda_\gamma}(\mathbf{T}|_{\mathcal{H}_\gamma})^*}\Phi_{\lambda_\gamma}(\mathbf{T}|_{\mathcal{H}_\gamma})^{*\alpha}x\|^2\\
%     &=\sum_{\alpha\in\mathbb{Z}_+^{(\infty)}}\|D_{\Phi_{\lambda_\gamma}(\mathbf{T})^*}|_{\mathcal{H}_\gamma}\Phi_{\lambda_\gamma}^\alpha(\mathbf{T})^{*}|_{\mathcal{H}_\gamma}x\|^2\\
%     &=\sum_{\alpha\in\mathbb{Z}_+^{(\infty)}}\|D_{\Phi_{\lambda_\gamma}(\mathbf{T})^*}\Phi_{\lambda_\gamma}^\alpha(\mathbf{T})^{*}x\|^2.
%\end{split}
%\end{equation}
%      for some ${\lambda_\gamma}\in\mathbb{D}^\infty$.
%Note that there are at most countably many indexes $\gamma$ such that $x_\gamma\neq0$ and $\|x\|^2=\sum_{x_\gamma\neq0}\|x_\gamma\|^2$. This together with (\ref{x_gamma}) proves .
%

(2)$\Rightarrow$(3). Fix $x\in \mathcal{H}\ominus\left(\bigvee_{\lambda\in\mathbb{D}^\infty}
\mathfrak{D}_{\Phi_\lambda(\mathbf{T})^*}\right)$. We will show that for any given $\mu=(\mu_1,\mu_2,\cdots)\in\mathbb{D}^\infty$
and $\alpha=(\alpha_1,\cdots,\alpha_n,0,0,\cdots)\in\mathbb{Z}_+^{(\infty)}\ (n\in\mathbb{N})$, $$D_{\Phi_\mu(\mathbf{T})^*}\Phi_\mu^\alpha(\mathbf{T})^{*}x=0,$$ so that $x=0$ by the assumption in (2).

Put $\widetilde{\mathbf{T}}=(T_{n+1},T_{n+2},\cdots)$, $\tilde{\mu}=(\mu_{n+1},\mu_{n+2},\cdots)$,
and  let $(\lambda,\tilde{\mu})$ denote the sequence $$(\lambda_1,\cdots,\lambda_n,\mu_{n+1},\mu_{n+2},\cdots)$$ for $\lambda=(\lambda_1,\cdots,\lambda_n)\in\mathbb{D}^n$. Since $x$ is orthogonal to $$\mathfrak{D}_{\Phi_{(\lambda,\tilde{\mu})}(\mathbf{T})^*}=\overline{\mathrm{Ran}\ D_{\Phi_{(\lambda,\tilde{\mu})}(\mathbf{T})^*}}$$
for each $\lambda\in\mathbb{D}^n$, we have
\begin{equation} \label{x in Ker}
  x\in\bigcap_{\lambda\in\mathbb{D}^n} \mathrm{Ker}\ D_{\Phi_{(\lambda,\tilde{\mu})}(\mathbf{T})^*}^*
=\bigcap_{\lambda\in\mathbb{D}^n} \mathrm{Ker}\ D_{\Phi_{(\lambda,\tilde{\mu})}(\mathbf{T})^*}.
\end{equation}
%It follows from Lemma \ref{dilation of defect op} that $$\|D_{\Phi_{(\lambda,\tilde{\mu})}(\mathbf{V})^*}x\|=\|D_{\Phi_{(\lambda,\tilde{\mu})}(\mathbf{T})^*}x\|=0,\quad\lambda\in\mathbb{D}^n.$$
Note that for each $\lambda=(\lambda_1,\cdots,\lambda_n)\in\mathbb{D}^n$, $$D_{\Phi_{(\lambda,\tilde{\mu})}(\mathbf{T})^*}=D_{\varphi_{\lambda_1}(T_1)^*}\cdots D_{\varphi_{\lambda_n}(T_n)^*}D_{\Phi_{\tilde{\mu}}(\widetilde{\mathbf{T}})^*},$$ and therefore (\ref{x in Ker}) gives $D_{\Phi_{\tilde{\mu}}(\widetilde{\mathbf{T}})^*}x
\in\mathrm{Ker}\ D_{\varphi_{\lambda_1}(T_1)^*}\cdots D_{\varphi_{\lambda_n}(T_n)^*}$.
This together with Lemma \ref{intersection of Ker} implies  $D_{\Phi_{\tilde{\mu}}(\widetilde{\mathbf{T}})^*}x=0$.
%Again by Lemma \ref{dilation of defect op}, we have
%$$\|D_{\Phi_{\tilde{\mu}}(\widetilde{\mathbf{T}})^*}x\|=\|D_{\Phi_{\tilde{\mu}}(\widetilde{\mathbf{V}})^*}x\|=0.$$
Since the sequence $\mathbf{T}$ is doubly commuting, $D_{\Phi_{\tilde{\mu}}(\widetilde{\mathbf{T}})^*}$ commutes with $\Phi^\alpha_\mu(\mathbf{T})^{*}$ on $\mathcal{H}$, which gives  \begin{equation*}
  \begin{split}
     D_{\Phi_\mu(\mathbf{T})^*}\Phi^\alpha_\mu(\mathbf{T})^{*}x
&=D_{\varphi_{\mu_1}(T_1)^*}\cdots D_{\varphi_{\mu_n}(T_n)^*}D_{\Phi_{\tilde{\mu}}(\widetilde{\mathbf{T}})^*}\Phi_\mu^\alpha(\mathbf{T})^{*}x\\
       &=D_{\varphi_{\mu_1}(T_1)^*}\cdots D_{\varphi_{\mu_n}(T_n)^*}\Phi_\mu^\alpha(\mathbf{T})^{*}D_{\Phi_{\tilde{\mu}}(\widetilde{\mathbf{T}})^*}x \\
       &=0.
  \end{split}
\end{equation*}

\vskip2mm

(3)$\Rightarrow$(1). In order to make this part of the proof more accessible, we divide it into several steps.

\vskip2mm

\textit{Step I. We give the construction of the subspaces $\mathcal{H}_\gamma$ of the Hilbert space $\mathcal{H}$, and prove that each  $\mathcal{H}_\gamma$ is $\mathbf{T}$-joint reducing.}

\vskip2mm

Define a binary relation $\sim$ on $\mathbb{D}^\infty$ as follows: for two points $\lambda=(\lambda_1,\lambda_2,\cdots)$ and $\mu=(\mu_1,\mu_2,\cdots)$ in $\mathbb{D}^\infty$, set $$\lambda\sim\mu\quad\Longleftrightarrow \quad \sum_{n=1}^{\infty}
\left|\frac{\lambda_n-\mu_n}{1-\overline{\lambda_n}\mu_n}\right|^2<\infty.$$
It is straightforward to see that the binary relation $\sim$ is an equivalence relation on $\mathbb{D}^\infty$. The set of $\sim$-equivalence classes is denoted by $\Delta$.
For $\gamma\in\Delta$, put $$\mathcal{H}_\gamma=\bigvee_{\lambda\in\gamma}\mathfrak{D}_{\Phi_\lambda(\mathbf{T})^*}.$$

Now we will show that $\mathcal{H}_\gamma$ is $\mathbf{T}$-joint reducing for any given $\gamma\in\Delta$.
Put $\mathbf{T}'=(T_2,T_3,\cdots)$ and $\gamma'=\{(\lambda_2,\lambda_3,\cdots):\lambda=(\lambda_1,\lambda_2,\cdots)\in\gamma\}$.
Since the sequence $\mathbf{T}$ is doubly commuting, we have that the defect operators $D_{\varphi_a(T_1)^*}$ and $D_{\Phi_\lambda(\mathbf{T}')^*}$ commute with each other for any $a\in\mathbb{D}$ and $\lambda\in\mathbb{D}^\infty$.
Also, it is easy to see  $\gamma=\mathbb{D}\times\gamma'$; that is,
$$\gamma=\{(a,\mu):a\in\mathbb{D}, \mu\in\gamma'\}.$$
It follows from Lemma \ref{intersection of Ker} that
$$\mathcal{H}=\bigvee_{a\in\mathbb{D}}
D_{\varphi_a(T_1)^*}\mathcal{H},$$ and therefore
\begin{equation*}
\begin{split}
   \mathcal{H}_\gamma & =\bigvee_{\lambda\in\gamma}\mathfrak{D}_{\Phi_\lambda(\mathbf{T})^*} \\
     & =\bigvee_{\mu\in\gamma'}\left(\bigvee_{a\in\mathbb{D}}
D_{\varphi_a(T_1)^*}D_{\Phi_\mu(\mathbf{T}')^*}\mathcal{H}\right) \\
     & =\bigvee_{\mu\in\gamma'}D_{\Phi_\mu(\mathbf{T}')^*}\left(\bigvee_{a\in\mathbb{D}}
D_{\varphi_a(T_1)^*}\mathcal{H}\right)
\\
     & =\bigvee_{\mu\in\gamma'}\mathfrak{D}_{\Phi_\mu(\mathbf{T}')^*}.
\end{split}
\end{equation*}
This implies that $\mathcal{H}_\gamma$ is reducing for $T_1$. Similarly, $\mathcal{H}_\gamma$ is $T_n$-reducing for each $n\geq2$.
\vskip2mm

\textit{Step II. We prove  $\mathcal{H}=\bigoplus_{\gamma\in\Delta} \mathcal{H}_\gamma$.}

\vskip2mm

By the assumption in (3), we have $\mathcal{H}=\bigvee_{\gamma\in\Delta} \mathcal{H}_\gamma$. It remains to show that the subspaces $\mathcal{H}_\gamma \ (\gamma\in\Delta)$ are pairwise orthogonal.

Let $\mathbf{V}=(V_1,V_2,\cdots)\in\mathcal{DP}(\mathcal{K})$ be
the minimal regular isometric dilation of $\mathbf{T}$. Put
$$\mathcal{K}_\gamma=\bigvee_{\lambda\in\gamma}\mathfrak{D}_{\Phi_\lambda(\mathbf{V})^*}$$
for $\gamma\in\Delta$.
Applying the argument in step I, we see that each  $\mathcal{K}_\gamma$ is $\mathbf{V}$-joint reducing.

For any closed subspace $M$ of the Hilbert space $\mathcal{K}$, let $P_M$ denote the orthogonal projection from $\mathcal{K}$ onto $M$. We first prove the following claims.
\vskip2mm
\textit{(a) For each $\gamma\in\Delta$, $\mathcal{H}_\gamma\subseteq \overline{P_{\mathcal{H}}\mathcal{K}_\gamma}$.}
\vskip2mm
Assume that $x\in\mathcal{H}$  is orthogonal to $P_{\mathcal{H}}\mathcal{K}_\gamma$. Then $x$ is orthogonal to $\mathcal{K}_\gamma$, which implies  $$x\in\mathrm{Ker}\ D_{\Phi_\lambda(\mathbf{V})^*}=\mathrm{Ker}\ D_{\Phi_\lambda(\mathbf{V})^*}^*,\quad \lambda\in\gamma.$$
It follows from  Lemma \ref{min iso dilation} that $\mathbf{V}$ is a coextension of $\mathbf{T}$. Then by Lemma \ref{dilation of defect op} we have  $$x\in\mathrm{Ker}\ D_{\Phi_\lambda(\mathbf{T})^*}=\mathrm{Ker}\ D_{\Phi_\lambda(\mathbf{T})^*}^*,\quad \lambda\in\gamma.$$
Therefore, $x$ is orthogonal to $\mathcal{H}_\gamma$. This gives $\mathcal{H}\ominus\overline{P_{\mathcal{H}}\mathcal{K}_\gamma}\subseteq\mathcal{H}\ominus\mathcal{H}_\gamma$, and then claim (a) is proved.

\vskip2mm
\textit{(b) The subspaces $\mathcal{K}_\gamma \ (\gamma\in\Delta)$ of $\mathcal{K}$ are pairwise orthogonal.}
\vskip2mm
Assume that $x\in\mathfrak{D}_{\Phi_\lambda(\mathbf{V})^*}\neq\{0\}$ for some $\lambda\in\mathbb{D}^\infty$. It suffices to show that
for any  $\mu\in\mathbb{D}^\infty$ not equivalent to $\lambda$, $$x\in\mathrm{Ker}\ D_{\Phi_\mu(\mathbf{V})^*}=\mathrm{Ker}\ D_{\Phi_\mu(\mathbf{V})^*}^*.$$
We first consider the case $\lambda=\mathbf{0}$. Then we have $\mu\notin\mathbb{D}_2^\infty$ in this case.
By Lemma \ref{local chara of K_gamma},
$[x]_{\mathbf{V}}$ is $\mathbf{V}$-joint reducing, and $\mathbf{V}|_{[x]_{\mathbf{V}}}$ is jointly unitarily equivalent to the  tuple $\mathbf{M_\zeta}$ of coordinate multiplication operators on the Hardy space $H^2(\mathbb{D}_2^\infty)$.
Then $[x]_{\mathbf{V}}$ is reducing for $D_{\Phi_\mu(\mathbf{V})^*}$, and $D_{\Phi_\mu(\mathbf{V})^*}|_{[x]_{\mathbf{V}}}$ is jointly unitarily equivalent to
$D_{\Phi_\mu(\mathbf{M_\zeta})^*}$.
It follows from Lemma \ref{wand subsp is 0} that
$D_{\Phi_\mu(\mathbf{M_\zeta})^*}=0$, and hence
$D_{\Phi_\mu(\mathbf{V})^*}$ vanishes on $[x]_{\mathbf{V}}$. The case for general $\lambda\in\mathbb{D}^\infty$ is proved by replacing the sequence $\mathbf{V}$ with $\Phi_\lambda(\mathbf{V})$ in the previous argument.
\vskip2mm
By claims (a) and (b),
it suffices to show that $P_{\mathcal{H}}P_{\mathcal{K}_\gamma}=P_{\mathcal{K}_\gamma}P_{\mathcal{H}}$
for any   $\gamma\in\Delta$.
Put $$\mathcal{H}_n=\bigvee_{\substack{\alpha\in\mathbb{Z}_+^{(\infty)} \\ \alpha_n=0 }}\mathbf{V}^\alpha\mathcal{H}$$
for $n\in\mathbb{N}$.
We also claim that

\vskip2mm
\textit{(c) for each $n\in\mathbb{N}$ and $\gamma\in\Delta$, $P_{\mathcal{H}_n}P_{\mathcal{K}_\gamma}=P_{\mathcal{K}_\gamma}P_{\mathcal{H}_n}$.}
\vskip2mm

%\begin{itemize}
% \item [(c)] $\mathcal{H}=\bigcap_{n=1}^\infty \mathcal{H}_n$;
%     \item [(d)] for each $n\in\mathbb{N}$ and $\gamma\in\Delta$, $P_{\mathcal{H}_n}P_{\mathcal{K}_\gamma}=P_{\mathcal{K}_\gamma}P_{\mathcal{H}_n}$.
%              \end{itemize}
Assuming this claim, the desired conclusion follows immediately from Lemma \ref{intersection of H_n} (3).
Below we will give its proof.
\vskip2mm

 Without loss of generality, we only prove for $n=1$.
Put $\mathbf{V}'=(V_2,V_3,\cdots)$.
By Lemma \ref{intersection of H_n} (1), $\mathcal{H}_1$ is joint invariant for $\mathbf{V}^*$. This gives that $\mathcal{H}_1$ is $\mathbf{V}'$-joint reducing, and then for each $\lambda\in\mathbb{D}^\infty$, $$P_{\mathcal{H}_1}D_{\Phi_\lambda(\mathbf{V}')^*}
=D_{\Phi_\lambda(\mathbf{V}')^*}P_{\mathcal{H}_1}.$$
Applying a similar argument in step I, one obtains
$$\mathcal{K}_\gamma =\bigvee_{\mu\in\gamma'}\mathfrak{D}_{\Phi_\mu(\mathbf{V}')^*},
$$ where $\gamma'=\{(\lambda_2,\lambda_3,\cdots):\lambda=(\lambda_1,\lambda_2,\cdots)\in\gamma\}$.
Thus, claim (c) follows.
\vskip2mm

%\textit{Step III. We prove that each  $\mathcal{H}_\gamma$ is $\mathbf{T}$-joint reducing.}
%
%\vskip2mm

\textit{Step III. We prove that the restriction $\mathbf{T}|_{\mathcal{H}_\gamma}$ on each nonzero $\mathcal{H}_\gamma\ (\gamma\in\Delta)$ is of quasi-Beurling type, and thus complete the proof.}

\vskip2mm

Suppose that $\mathcal{H}_\gamma$ is nonzero for some $\gamma\in\Delta$.
By the claims in step II of the proof, we see  $$\mathcal{H}_\gamma\subseteq \overline{P_{\mathcal{H}}\mathcal{K}_\gamma}=\mathcal{H}\cap\mathcal{K}_\gamma.$$
Then the minimal regular isometric dilation of the sequence $\mathbf{T}|_{\mathcal{H}_\gamma}$ is $\mathbf{V}|_{\mathcal{K}_\gamma}$.
It remains to prove that $\mathbf{V}|_{\mathcal{K}_\gamma}$ is of quasi-Beurling type; that is, $$[\mathfrak{D}_{\Phi_\lambda(\mathbf{V})^*}]_{\mathbf{V}}=[\mathfrak{D}_{\Phi_\lambda(\mathbf{V})^*}]_{\Phi_\lambda(\mathbf{V})}
=\mathcal{K}_\gamma$$ for some $\lambda\in\gamma$.

Fix $\lambda=(\lambda_1,\lambda_2,\cdots)\in\gamma$ satisfying $\mathfrak{D}_{\Phi_\lambda(\mathbf{V})^*}\neq\{0\}$.
Since $\mathcal{K}_\gamma$ is $\mathbf{V}$-joint reducing (see step II), one has $[\mathfrak{D}_{\Phi_\lambda(\mathbf{V})^*}]_{\mathbf{V}}\subseteq \mathcal{K}_\gamma$. For the converse inclusion, take an arbitrary nonzero element $x\in\mathcal{K}_\gamma$. We will show $x\in[\mathfrak{D}_{\Phi_\lambda(\mathbf{V})^*}]_{\mathbf{V}}$.
Since $x\in\mathfrak{D}_{\Phi_\mu(\mathbf{V})^*}$ for some $\mu=(\mu_1,\mu_2,\cdots)\in\gamma$,
 Lemma \ref{local chara of K_gamma} implies that
$[x]_{\mathbf{V}}$ is $\mathbf{V}$-joint reducing, and $\mathbf{V}|_{[x]_{\mathbf{V}}}$ is jointly unitarily equivalent to the  sequence $\Phi_\mu(\mathbf{M_\zeta})$, where $\mathbf{M_\zeta}$ is the tuple of coordinate multiplication operators on the Hardy space $H^2(\mathbb{D}_2^\infty)$.
Then $[x]_{\mathbf{V}}$ is reducing for $D_{\Phi_\lambda(\mathbf{V})^*}$, and $D_{\Phi_\lambda(\mathbf{V})^*}|_{[x]_{\mathbf{V}}}$ is jointly unitarily equivalent to
$$D_{(\Phi_\lambda\circ\Phi_\mu(\mathbf{M_\zeta}))^*}
=\prod_{n=1}^\infty(I-(\varphi_{\lambda_n}\circ\varphi_{\mu_n}(M_{\zeta_n}))
(\varphi_{\lambda_n}\circ\varphi_{\mu_n}(M_{\zeta_n}))^*).$$
Note that for each $n\in\mathbb{N}$, $\varphi_{\lambda_n}\circ\varphi_{\mu_n}=c_n\varphi_{\eta_n}$
for some unimodular constant $c_n$ and $\eta_n\in\mathbb{D}$, we have
\begin{equation*}\begin{split}
                    D_{(\Phi_\lambda\circ\Phi_\mu(\mathbf{M_\zeta}))^*} & =\prod_{n=1}^\infty(I-(c_n\varphi_{\eta_n}(M_{\zeta_n}))(c_n\varphi_{\eta_n}(M_{\zeta_n}))^*) \\
                      & =\prod_{n=1}^\infty(I-\varphi_{\eta_n}(M_{\zeta_n})\varphi_{\eta_n}(M_{\zeta_n})^*) \\
                      & = D_{\Phi_\eta(\mathbf{M_\zeta})^*}.
                 \end{split}
\end{equation*}
The fact  $\lambda\sim\mu$ yields $\eta=(\eta_1,\eta_2,\cdots)\in\mathbb{D}_2^\infty$,
and then Lemma \ref{wand subsp is 0} gives
$D_{\Phi_\eta(\mathbf{M_\zeta})^*}\neq0$. Thus,
there exists a nonzero element $y\in[x]_{\mathbf{V}}\bigcap\mathfrak{D}_{\Phi_\lambda(\mathbf{V})^*}$.
Again by Lemma \ref{local chara of K_gamma},
$[y]_{\mathbf{V}}$ is a joint reducing subspace of $[x]_{\mathbf{V}}$ for the sequence $\mathbf{V}|_{[x]_{\mathbf{V}}}$.
This together with Lemma \ref{M_zeta is irre} implies $$[x]_{\mathbf{V}}=[y]_{\mathbf{V}}\subseteq[\mathfrak{D}_{\Phi_\lambda(\mathbf{V})^*}]_{\mathbf{V}},$$ and then the proof is complete.
\qed
\vskip2mm

%rewrite $\mathcal{E}=\mathfrak{D}_{\Phi_\lambda(\mathbf{V})^*}$.
%If $x, y\in\mathcal{E}$ are mutually orthogonal, then
%for each $\alpha\in\mathbb{Z}_+^{(\infty)}$,
%$$\langle\mathbf{V}^\alpha x,  y\rangle=\langle x, \mathbf{V}^{\alpha*}y\rangle=\lambda^\alpha\langle x, y\rangle=0.$$
%This gives that $y$ is orthogonal to the $\mathbf{V}$-joint invariant subspace $[x]_{\mathbf{V}}$
From the proof of Theorem \ref{direct sums of quasi-BT}, we actually obtain the following collection of results, which will be used later.

\begin{cor} \label{collection of cor} Let $\mathbf{T}$ be a sequence in class $\mathcal{DC}(\mathcal{H})$, and $\mathbf{V}\in\mathcal{DP}(\mathcal{K})$  the minimal regular isometric dilation of $\mathbf{T}$.
We also let $\sim$ be the equivalence relation on $\mathbb{D}^\infty$ so that for  $\lambda,\mu\in\mathbb{D}^\infty$,  $\lambda\sim\mu$ means that $$\sum_{n=1}^{\infty}
\left|\frac{\lambda_n-\mu_n}{1-\overline{\lambda_n}\mu_n}\right|^2<\infty.$$
\begin{itemize}
  \item [(1)] If $\mathbf{T}$ is of Beurling type, then $\mathcal{K}$ has an orthogonal decomposition $$\mathcal{K}
      =\bigoplus_{\alpha\in\mathbb{Z}_+^{(\infty)}}
\mathbf{V}^\alpha\mathfrak{D}_{\mathbf{V}^*},$$
      %$\mathbf{V}$ is jointly unitarily equivalent to the tuple $\mathbf{M}_\zeta$ of coordinate multiplication operators on the vector-valued Hardy space $H_{\mathfrak{D}_{\mathbf{V}^*}}^2(\mathbb{D}_2^\infty)$,
      and for each $x\in \mathcal{H}$, $$\|x\|^2=\sum_{\alpha\in\mathbb{Z}_+^{(\infty)}}
\|D_{\mathbf{T}^*}\mathbf{T}^{*\alpha}x\|^2.$$
  \item [(2)]
  If $\lambda, \mu\in\mathbb{D}^\infty$ are not $\sim$-equivalent,
then the defect spaces $\mathfrak{D}_{\Phi_\lambda(\mathbf{T})^*}$ and $\mathfrak{D}_{\Phi_\mu(\mathbf{T})^*}$ are mutually orthogonal.
\item [(3)] For any $\sim$-equivalence class $\gamma$,
put $\mathcal{H}_\gamma=\bigvee_{\lambda\in\gamma}\mathfrak{D}_{\Phi_\lambda(\mathbf{T})^*}$,
    $\mathcal{K}_\gamma=\bigvee_{\lambda\in\gamma}\mathfrak{D}_{\Phi_\lambda(\mathbf{V})^*}$.
  Then  $\mathcal{K}_\gamma=[\mathfrak{D}_{\Phi_\mu(\mathbf{V})^*}]_{\mathbf{V}}$ for each $\mu\in\gamma$,
    $\mathbf{V}|_{\mathcal{K}_\gamma}$ is the minimal regular isometric dilation of $\mathbf{T}|_{\mathcal{H}_\gamma}$,
    and  $\mathcal{H}_\gamma,\mathcal{K}_\gamma$
    are joint reducing for $\mathbf{T}$ and $\mathbf{V}$, respectively.
\end{itemize}
\end{cor}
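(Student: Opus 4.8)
The plan is to extract all three assertions directly from the proof of Theorem \ref{direct sums of quasi-BT}; essentially no new argument is needed, only a careful identification of which part of that proof supplies which statement.

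For (1), I would first record that ``$\mathbf{T}$ of Beurling type'' says exactly that its minimal regular isometric dilation $\mathbf{V}$ satisfies $[\mathfrak{D}_{\mathbf{V}^*}]_{\mathbf{V}}=\mathcal{K}$, so $\mathbf{T}$ is of quasi-Beurling type with parameter $\mathbf{0}$. Then the one-block case (take $\mathcal{H}_\gamma=\mathcal{H}$ and $\lambda_\gamma=\mathbf{0}$) of the implication $(1)\Rightarrow(2)$ of Theorem \ref{direct sums of quasi-BT} applies verbatim: double commutativity of $\mathbf{V}$ together with $\mathfrak{D}_{\mathbf{V}^*}=\bigcap_{n}\mathrm{Ker}\,V_n^*$ makes the subspaces $\mathbf{V}^\alpha\mathfrak{D}_{\mathbf{V}^*}$ pairwise orthogonal, so $\mathcal{K}=\bigoplus_\alpha\mathbf{V}^\alpha\mathfrak{D}_{\mathbf{V}^*}$ and $\mathbf{V}^\alpha D_{\mathbf{V}^*}\mathbf{V}^{*\alpha}$ is the orthogonal projection onto $\mathbf{V}^\alpha\mathfrak{D}_{\mathbf{V}^*}$; expanding $\|x\|^2$ in this decomposition gives $\|x\|^2=\sum_\alpha\|D_{\mathbf{V}^*}\mathbf{V}^{*\alpha}x\|^2$ for $x\in\mathcal{H}$, and then Lemma \ref{min iso dilation} (so that $\mathbf{V}^{*\alpha}x=\mathbf{T}^{*\alpha}x\in\mathcal{H}$, since $\mathbf{V}$ coextends $\mathbf{T}$) together with Lemma \ref{dilation of defect op} applied at $\lambda=\mathbf{0}$ to the vector $\mathbf{T}^{*\alpha}x$ converts the right-hand side into $\sum_\alpha\|D_{\mathbf{T}^*}\mathbf{T}^{*\alpha}x\|^2$.

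For (2), the idea is to transfer the orthogonality of the $\mathcal{K}_\gamma$'s, already proved in claim (b) of Step II, down to $\mathcal{H}$. The key intermediate fact is that $\mathfrak{D}_{\Phi_\lambda(\mathbf{T})^*}$ sits inside $\mathcal{K}_{[\lambda]}$, the $\mathcal{K}_\gamma$ for $\gamma$ the $\sim$-class of $\lambda$: from claims (a) and (c) of Step II and Lemma \ref{intersection of H_n}(3) one gets that $P_{\mathcal{H}}$ and $P_{\mathcal{K}_\gamma}$ commute, hence $\mathcal{H}_\gamma\subseteq\overline{P_{\mathcal{H}}\mathcal{K}_\gamma}=\mathcal{H}\cap\mathcal{K}_\gamma\subseteq\mathcal{K}_\gamma$, and $\mathfrak{D}_{\Phi_\lambda(\mathbf{T})^*}\subseteq\mathcal{H}_{[\lambda]}$ by definition. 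So if $\lambda\not\sim\mu$, then $\mathfrak{D}_{\Phi_\lambda(\mathbf{T})^*}$ and $\mathfrak{D}_{\Phi_\mu(\mathbf{T})^*}$ lie in the mutually orthogonal subspaces $\mathcal{K}_{[\lambda]}$ and $\mathcal{K}_{[\mu]}$, which is the claim.

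For (3), that $\mathcal{H}_\gamma$ and $\mathcal{K}_\gamma$ are joint reducing for $\mathbf{T}$ and $\mathbf{V}$ respectively is Step I of the proof, run once for $\mathbf{T}$ and once for $\mathbf{V}$; the equality $\mathcal{K}_\gamma=[\mathfrak{D}_{\Phi_\mu(\mathbf{V})^*}]_{\mathbf{V}}$ for $\mu\in\gamma$ is exactly the pair of inclusions established in Step III: one direction because $\mathcal{K}_\gamma$ is $\mathbf{V}$-reducing and contains each $\mathfrak{D}_{\Phi_\mu(\mathbf{V})^*}$, and the other because Lemmas \ref{local chara of K_gamma}, \ref{wand subsp is 0} and \ref{M_zeta is irre} force every nonzero $x\in\mathcal{K}_\gamma$ into $[\mathfrak{D}_{\Phi_\lambda(\mathbf{V})^*}]_{\mathbf{V}}$. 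Finally, to identify $\mathbf{V}|_{\mathcal{K}_\gamma}$ as the minimal regular isometric dilation of $\mathbf{T}|_{\mathcal{H}_\gamma}$, I would use $\mathcal{H}_\gamma=\mathcal{H}\cap\mathcal{K}_\gamma=P_{\mathcal{K}_\gamma}\mathcal{H}$ together with $[\mathcal{H}]_{\mathbf{V}}=\mathcal{K}$ and the fact that $\mathcal{K}_\gamma$ is $\mathbf{V}$-reducing to conclude $[\mathcal{H}_\gamma]_{\mathbf{V}}=\mathcal{K}_\gamma$ (minimality), while regularity and the coextension property pass to reducing subspaces automatically (again via Lemma \ref{min iso dilation}). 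I do not expect a genuinely hard step here; the only thing to watch is the bookkeeping: matching each clause to the correct claim (a), (b) or (c) and preparatory lemma, and disposing at the outset of the degenerate cases $\mathcal{H}_\gamma=\{0\}$ or $\mathcal{K}_\gamma=\{0\}$, in which everything is trivial.
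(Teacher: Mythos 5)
Your proposal is correct and takes exactly the paper's route: the paper offers no separate argument for this corollary, stating only that it is read off from the proof of Theorem \ref{direct sums of quasi-BT}, and your bookkeeping — the one-block, $\lambda=\mathbf{0}$ case of the implication $(1)\Rightarrow(2)$ for clause (1), claims (a)--(c) of Step II combined with Lemma \ref{intersection of H_n}(3) for clause (2), and Steps I and III for clause (3) — matches that extraction, with your justification of $\mathcal{H}_\gamma=\mathcal{H}\cap\mathcal{K}_\gamma$ and of the minimality $[\mathcal{H}_\gamma]_{\mathbf{V}}=\mathcal{K}_\gamma$ actually being slightly more explicit than what the paper records.
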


We are ready to prove Corollary \ref{BT}.
\vskip2mm
\noindent\textbf{Proof of Corollary \ref{BT}.} (1)$\Rightarrow$(2) and (1)$\Rightarrow$(3). See Corollary \ref{collection of cor} (1).

\vskip2mm

(2)$\Rightarrow$(1). Obvious.

\vskip2mm

(3)$\Rightarrow$(4). This implication follows from the proof of Theorem \ref{direct sums of quasi-BT} and the fact that $\mathbb{D}_2^\infty$ coincides with the  Cartesian product
$$\{(\lambda,\mu):\lambda\in \mathbb{D}^n,\mu\in\mathbb{D}_2^\infty\}.$$

\vskip2mm

(4)$\Rightarrow$(1). Let $\mathbf{V}\in\mathcal{DP}(\mathcal{K})$ be the minimal regular isometric dilation of $\mathbf{T}$. Note that $\mathbb{D}_2^\infty$ is a $\sim$-equivalence class that contains $\mathbf{0}$ and
$\mathcal{H}=\mathcal{H}_{\mathbb{D}_2^\infty}$.
It follows from Corollary \ref{collection of cor} (3) that $\mathcal{K}=\mathcal{K}_{\mathbb{D}_2^\infty}=[\mathfrak{D}_{\mathbf{V}^*}]_{\mathbf{V}}$.
The proof is complete.
\qed
\vskip2mm

%Corollary \ref{BT'} immediately  implies the following.
%
%\begin{cor} \label{BT'} Let  $\mathbf{T}$ be a sequence in class $\mathcal{DC}(\mathcal{H})$. The followings are equivalent:
%\begin{itemize}
%  \item [(1)] $\mathbf{T}$ is of quasi-Beurling type;
%  \item [(2)] the minimal regular isometric dilation of $\mathbf{T}$ is jointly unitarily equivalent to the the sequence $\Phi_\lambda(\mathbf{M}_\zeta)$ on a vector-valued Hardy space $H^2_{\mathcal{E}}(\mathbb{D}_2^\infty)$
%for some $\lambda\in\mathbb{D}^\infty$;
%  \item [(3)] for each $x\in \mathcal{H}$, $\|x\|^2=\sum_{\alpha\in\mathbb{Z}_+^{(\infty)}}
%\|D_{\Phi_{\lambda}(\mathbf{T})^*}
%\Phi_{\lambda}^\alpha(\mathbf{T})^{*}x\|^2$;
%  \item [(4)] $[\mathfrak{D}_{\Phi_{\lambda}(\mathbf{T})^*}]_\mathbf{T}=\mathcal{H}$.
%\end{itemize}
%\end{cor}

It follows immediately  from Corollary \ref{BT} that
a sequence $\mathbf{T}\in\mathcal{DC}$ is of quasi-Beurling type if and only if
  the minimal regular isometric dilation of $\mathbf{T}$ is jointly unitarily equivalent to the sequence $\Phi_\lambda(\mathbf{M}_\zeta)$ on a vector-valued Hardy space $H^2_{\mathcal{E}}(\mathbb{D}_2^\infty)$
for some $\lambda\in\mathbb{D}^\infty$.

Below we give an example to illustrate that  condition (3) in Theorem \ref{direct sums of quasi-BT} is nontrivial for sequences in class $\mathcal{DC}$ (Theorem \ref{example}). By comparing this with Lemma \ref{intersection of Ker}, we see that the infinite-tuple case diverges considerably from the finite-tuple case.

\vskip2mm

\begin{thm}
 There exists a sequence $\mathbf{V}\in\mathcal{DP}$ such that $\mathfrak{D}_{\Phi_\lambda(\mathbf{V})^*}=\{0\}$ for each
$\lambda\in\mathbb{D}^\infty$.
\end{thm}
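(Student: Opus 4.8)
The plan is to construct $\mathbf{V}$ as a sequence $(M_{\widetilde{\eta_1}}, M_{\widetilde{\eta_2}}, \dots)$ of multiplication operators on $H^2(\mathbb{D}_2^\infty)$, where each $\eta_n$ is a nonconstant inner function on $\mathbb{D}$ and $\widetilde{\eta_n}(\zeta) = \eta_n(\zeta_n)$, and then choose the $\eta_n$ carefully so that no shift of the wandering subspace ever becomes nonzero. Each $M_{\widetilde{\eta_n}}$ is a pure isometry (inner), and the $\widetilde{\eta_n}$ involve distinct variables, so $\mathbf{V} \in \mathcal{DP}$. The first step is to compute $\mathfrak{D}_{\Phi_\lambda(\mathbf{V})^*}$. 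Using the remarks preceding Lemma \ref{wand subsp is 0} together with the tensor-product structure, one has $\varphi_{\lambda_n}(M_{\widetilde{\eta_n}}) = M_{\widetilde{\varphi_{\lambda_n}\circ\eta_n}}$, so that
$$\mathfrak{D}_{\Phi_\lambda(\mathbf{V})^*} = \bigcap_{n=1}^\infty \operatorname{Ker} M_{\widetilde{\varphi_{\lambda_n}\circ\eta_n}}^* = \left(\bigvee_{n=1}^\infty \widetilde{(\varphi_{\lambda_n}\circ\eta_n)}\,H^2(\mathbb{D}_2^\infty)\right)^\perp = \left([\{\widetilde{\varphi_{\lambda_n}\circ\eta_n}\}_{n\in\mathbb{N}}]_{\mathbf{M}_\zeta}\right)^\perp.$$
Thus the theorem amounts to: for a suitable choice of $\{\eta_n\}$, the invariant subspace generated by $\{\widetilde{\varphi_{\lambda_n}\circ\eta_n}\}_{n\in\mathbb{N}}$ equals the whole space $H^2(\mathbb{D}_2^\infty)$ for \emph{every} $\lambda\in\mathbb{D}^\infty$.

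Next I would quantify when such a one-variable-generated invariant subspace is everything. Since $\eta_n$ is inner and nonconstant, $g_n := \varphi_{\lambda_n}\circ\eta_n$ is again inner, and $\widetilde{g_n}H^2(\mathbb{D}_2^\infty)$ is a ``slice'' invariant subspace; its orthocomplement is $(H^2(\mathbb{D})\ominus g_n H^2(\mathbb{D}))\otimes(\text{rest})$ in the factorization of $H^2(\mathbb{D}_2^\infty)$ along the $n$-th variable. The key computation is that the projection onto $[\{\widetilde{g_n}\}]_{\mathbf{M}_\zeta}$ relates to the products $\prod_{n}(I - \text{proj onto }\widetilde{g_n}H^2)$, and, by a second-moment / Hilbert-space-of-Dirichlet-series type estimate (in the spirit of \cite{DGH,Ni2}), one gets that $[\{\widetilde{g_n}\}_{n\in\mathbb{N}}]_{\mathbf{M}_\zeta} = H^2(\mathbb{D}_2^\infty)$ as soon as $\sum_n (1 - |g_n(0)|^2)$ or some comparable quantity diverges appropriately — more precisely, the relevant criterion from \cite[Proposition 4.5]{DGH} (used to prove Lemma \ref{wand subsp is 0}) says the generated subspace is everything unless a certain $\ell^2$-condition on the ``center'' parameters holds. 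The plan is to show that for generic $\{\eta_n\}$ this condition fails \emph{simultaneously for all $\lambda\in\mathbb{D}^\infty$}. Note $g_n(0) = \varphi_{\lambda_n}(\eta_n(0))$, and since the automorphism $\varphi_{\lambda_n}$ can map $\eta_n(0)$ anywhere in $\mathbb{D}$, one cannot control $g_n(0)$ through $\eta_n(0)$ alone; the robustness must come from the inner function $\eta_n$ having ``spread out'' mass — e.g. taking $\eta_n$ to be a Blaschke product or singular inner function whose zero/mass distribution forces $H^2(\mathbb{D})\ominus\varphi_{\lambda_n}\circ\eta_n\, H^2(\mathbb{D})$ (a model space) to be large in a way that, aggregated over $n$, always exhausts the full Hardy space regardless of the $\lambda_n$.

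Concretely, I would try $\eta_n(z) = z^{k_n}$ is \emph{not} enough (then $\varphi_{\lambda_n}\circ\eta_n$ at $\lambda_n = 0$ gives $z^{k_n}$, and one needs the model spaces to have dimensions summing suitably — but taking $\lambda_n$ near the boundary shrinks things); instead I expect one must use infinite Blaschke products $\eta_n$ with zeros accumulating so that $\dim(H^2\ominus \varphi_{\lambda_n}\circ\eta_n H^2) = \infty$ for every $\lambda_n$, i.e. $\eta_n$ itself an infinite Blaschke product (the composition with an automorphism preserves being an infinite Blaschke product). Then each slice orthocomplement is already infinite-dimensional, and a straightforward argument — each coordinate $M_{\widetilde{\varphi_{\lambda_n}\circ\eta_n}}^*$ has kernel containing all reproducing kernels $\mathbf{K}_\mu$ with $\mu_n$ a zero of $\varphi_{\lambda_n}\circ\eta_n$, and these zeros, as $n$ varies, can be arranged to be dense/unavoidable — shows the common kernel $\bigcap_n\operatorname{Ker} M_{\widetilde{\varphi_{\lambda_n}\circ\eta_n}}^*$ is $\{0\}$: any $F$ in it must vanish to infinite order along each coordinate slice, hence $F\equiv 0$. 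I would make this rigorous via Lemma \ref{K_lambda}(2): a nonzero $F$ in the intersection would be a joint eigenvector of $\mathbf{V}^*$, hence a scalar multiple of some $\mathbf{K}_\nu$ with $\nu\in\mathbb{D}_2^\infty$ and $M_{\widetilde{\varphi_{\lambda_n}\circ\eta_n}}^*\mathbf{K}_\nu = 0$ for all $n$, i.e. $\varphi_{\lambda_n}(\eta_n(\nu_n)) = 0$, i.e. $\eta_n(\nu_n) = \lambda_n$ for all $n$ — and the obstruction is to choose $\{\eta_n\}$ and argue that for \emph{every} $\lambda$ there is \emph{no} $\nu\in\mathbb{D}_2^\infty$ (i.e. with $\sum|\nu_n|^2 < \infty$, forcing $\nu_n\to 0$) solving $\eta_n(\nu_n) = \lambda_n$ simultaneously.

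The main obstacle is precisely this last diophantine-flavored constraint: I need inner functions $\eta_n$ such that the only way to solve $\eta_n(\nu_n) = \lambda_n$ for all $n$ with $\nu \in \ell^2$ forces a contradiction for every $\lambda\in\mathbb{D}^\infty$. A clean route: arrange $|\eta_n(w)| \geq \delta_n > 0$ for all $|w| \le 1 - 1/n$ (possible by placing all zeros of $\eta_n$ in the annulus $1 - 1/n < |w| < 1$ with appropriate counting function), so that $\eta_n(\nu_n) = \lambda_n$ with $\nu_n \to 0$ forces, for large $n$, $|\nu_n| > 1 - 1/n$ whenever $|\lambda_n| < \delta_n$ — but we may freely choose $\lambda$ with $\lambda_n = 0$ for all $n$ (the case $\lambda = \mathbf{0}$), and then $\eta_n(\nu_n) = 0$ means $\nu_n$ is a zero of $\eta_n$, hence $|\nu_n| > 1 - 1/n$, contradicting $\nu\in\ell^2$ (indeed contradicting $\nu_n\to 0$). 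The same works for arbitrary $\lambda$ by additionally requiring each $\eta_n$ to omit a neighborhood of every point — impossible for a single function, so instead one iterates: one shows $\{w : \eta_n(w) = \lambda_n\} \subset \{|w| > 1 - 1/n\}$ holds for all $\lambda_n\in\mathbb{D}$ simultaneously provided $\eta_n$ restricted to $\{|w|\le 1-1/n\}$ is a proper map onto $\mathbb{D}$ of large degree with all preimages near the boundary — which cannot happen, so the truly correct fix is to note that $\eta_n$ being a \emph{finite} Blaschke product of degree $d_n$ with all zeros in $\{|w| > 1 - 1/n\}$ gives $|\eta_n(w)| > c$ on $\{|w| \le 1 - 2/n\}$ only for $n$ large, and then for any $\lambda\in\mathbb{D}^\infty$ the equations $\eta_n(\nu_n)=\lambda_n$ with $|\lambda_n|$ bounded away from $1$ force $|\nu_n| > 1 - 2/n$ for $n$ large, so $\nu\notin\mathbb{D}_2^\infty$; and if some $|\lambda_n|\to 1$, one instead uses that $\mathbf{K}_\nu$ would have infinite norm. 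This is where the careful bookkeeping lives, and I expect the write-up to require choosing the $\eta_n$ as finite Blaschke products with increasing degrees and zeros marching to the boundary, then invoking Lemma \ref{K_lambda}(2) and Lemma \ref{wand subsp is 0} to finish.
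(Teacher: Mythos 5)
Your approach cannot work, and the obstruction is already contained in the paper's Proposition \ref{sequence of multiplication operators} and Remark \ref{remark for Section 4}: \emph{every} sequence $\mathbf{V}=(M_{\widetilde{\eta_1}},M_{\widetilde{\eta_2}},\cdots)$ of multiplication operators by nonconstant inner functions in separate variables is of quasi-Beurling type, so there is always some $\lambda\in\mathbb{D}^\infty$ with $\mathfrak{D}_{\Phi_\lambda(\mathbf{V})^*}\neq\{0\}$ (indeed with $[\mathfrak{D}_{\Phi_\lambda(\mathbf{V})^*}]_{\Phi_\lambda(\mathbf{V})}$ equal to the whole space). The concrete failure of your ``diophantine'' step is that $\lambda$ is universally quantified \emph{after} the $\eta_n$ are chosen: for any fixed $\nu\in\mathbb{D}_2^\infty$ (say $\nu=0$) one has $M_{\widetilde{\eta_n}}^*\mathbf{K}_\nu=\overline{\eta_n(\nu_n)}\,\mathbf{K}_\nu$ by the identity $M_\theta^*(K_a\cdot x)=K_a\cdot\theta(a)^*x$, and $|\eta_n(\nu_n)|<1$ since $\eta_n$ is a nonconstant inner function; hence taking $\lambda_n=\eta_n(\nu_n)$ gives a point $\lambda\in\mathbb{D}^\infty$ for which $\mathbf{K}_\nu$ is a nonzero element of $\mathfrak{D}_{\Phi_\lambda(\mathbf{V})^*}$. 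Your system $\eta_n(\nu_n)=\lambda_n$ therefore always has a solution with $\nu\in\ell^2$, no matter how the zeros or singular masses of the $\eta_n$ are distributed; no amount of ``marching the zeros to the boundary'' can prevent the adversary from simply setting $\lambda_n=\eta_n(0)$. (A secondary issue: Lemma \ref{K_lambda}~(2) classifies joint eigenvectors of $\mathbf{M}_\zeta^*$, not of $(M_{\widetilde{\eta_n}}^*)$, so even the reduction of the intersection to reproducing kernels is not justified for general inner $\eta_n$ --- but this is moot given the counterexample above.)

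The paper's construction is of a genuinely different nature: it takes a \emph{non-multiplicative} doubly commuting sequence of pure isometries on $H^2(\mathbb{D}_2^\infty)$, defined by reshuffling the powers of each $\zeta_n$ (sending $\zeta_n^kF\mapsto\zeta_n^{k+3}F$ for $k$ even and $\zeta_n^kF\mapsto\zeta_n^{k-1}F$ for $k$ odd, $F\in\mathrm{Ker}\,M_{\zeta_n}^*$), arranged so that $\mathbf{V}^2=\mathbf{M}_\zeta^2$. A hypothetical nonzero joint eigenvector $F$ of $\mathbf{V}^*$ would then be a joint eigenvector of $\mathbf{M}_\zeta^{*2}$, which forces $\lambda^2\in\mathbb{D}_2^\infty$ and in particular $\lambda_n\to0$; the contradiction is then extracted from a limit computation with the projection onto $\mathfrak{D}_{\mathbf{M}_\zeta^{*2}}$, showing that $P_{\mathcal{F}}F$ would have to be simultaneously nonzero and orthogonal to every monomial. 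If you want to salvage your plan, you must leave the class of multiplication operators entirely; any analytic (module-map) model for $\mathbf{V}$ reintroduces reproducing-kernel eigenvectors and hence nonzero defect spaces.
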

\begin{proof}
Let $\mathbf{T}^2$ denote  $(T_1^2,T_2^2,\cdots)$ for
a sequence $\mathbf{T}=(T_1,T_2,\cdots)$ of operators, and $\mathbf{M}_\zeta=(M_{\zeta_1},M_{\zeta_2},\cdots)$ be
the tuple of coordinate multiplication operators on $H^2(\mathbb{D}^\infty_2)$. Put $$\mathcal{E}_n=\mathrm{Ker}\ M_{\zeta_n}^*
=H^2(\mathbb{D}^\infty_2)\ominus\zeta_n H^2(\mathbb{D}^\infty_2),\quad n\in\mathbb{N}.$$
It is clear that $H^2(\mathbb{D}^\infty_2)=\bigoplus_{k=0}^\infty \zeta_n^k\mathcal{E}_n$ for each $n\in\mathbb{N}$.
Define a sequence $\mathbf{V}$ of isometries on $H^2(\mathbb{D}^\infty_2)$ by setting
\begin{equation*}
  V_n(\zeta_n^k F)= \begin{cases}
                      \ \zeta_n^{k+3}F, & \mbox{if } k\ \mathrm{is}\ \mathrm{even}; \\
                      \ \zeta_n^{k-1}F, & \mbox{if } k\ \mathrm{is}\ \mathrm{odd}.
                    \end{cases}
\end{equation*}
 for $n\in\mathbb{N}$ and $F\in\mathcal{E}_n$.
It is routine to check that $\mathbf{V}\in\mathcal{DP}$ and $\mathbf{V}^2=\mathbf{M}_\zeta^2$.

Now we will show $\mathfrak{D}_{\Phi_\lambda(\mathbf{V})^*}=\{0\}$ for each
$\lambda\in\mathbb{D}^\infty$. Assume conversely that there exists a point $\lambda\in\mathbb{D}^\infty$ such that $\mathfrak{D}_{\Phi_\lambda(\mathbf{V})^*}$ contains a  function $F\neq0$. By the comments above Lemma \ref{local chara of K_gamma}, $F$ is exactly an eigenvector of the sequence $\mathbf{V}^*$ corresponding to the joint eigenvalue $\overline{\lambda}=(\overline{\lambda_1},\overline{\lambda_2},\cdots)$, and therefore $$F\in\mathfrak{D}_{\Phi_{\lambda^2}(\mathbf{V}^2)^*}=\mathfrak{D}_{\Phi_{\lambda^2}(\mathbf{M}_\zeta^2)^*},$$
where $\lambda^2=(\lambda_1^2,\lambda_2^2,\cdots)$. Let $\mathbf{K}_{\mu}$ denote the reproducing kernel of $H^2(\mathbb{D}^\infty_2)$ at the point $\mu\in\mathbb{D}^\infty_2$. Then
$$\mathbf{K}_{\mu}\in\mathfrak{D}_{\Phi_{\mu^2}(\mathbf{M}_\zeta^2)^*}
\subseteq\bigvee_{\xi\in\mathbb{D}_2^\infty}
\mathfrak{D}_{\Phi_\xi(\mathbf{M}_\zeta^2)^*},$$
which gives $$\bigvee_{\xi\in\mathbb{D}_2^\infty}
\mathfrak{D}_{\Phi_\xi(\mathbf{M}_\zeta^2)^*}
=H^2(\mathbb{D}^\infty_2)$$
since the set $\{\mathbf{K}_{\mu}:\mu\in\mathbb{D}^\infty_2\}$ is complete in $H^2(\mathbb{D}^\infty_2)$.
Then by Corollary \ref{collection of cor} (2), $\mathfrak{D}_{\Phi_\xi(\mathbf{V}^2)^*}=\{0\}$ for $\xi\notin\mathbb{D}_2^\infty$, forcing $\lambda^2\in\mathbb{D}_2^\infty$. In particular, $\lambda_n\rightarrow0\ (n\rightarrow\infty)$.

Write $\mathcal{F}=\mathfrak{D}_{\mathbf{V}^{*2}}
=\mathfrak{D}_{\mathbf{M}_\zeta^{*2}}$.
Then $$\mathcal{F}=\bigcap_{n=1}^\infty\mathrm{Ker}\ M_{\zeta_n}^{*2}
=\overline{\mathrm{span}\{\zeta^\alpha:\alpha=(\alpha_1,\alpha_2,\cdots)\in\mathbb{Z}_+^{(\infty)}\ \mathrm{with}\ \mathrm{each} \ \alpha_n\leq1\}},$$
and $$P_{\mathcal{F}}=\prod_{n=1}^{\infty}(I-V_n^2V_n^{*2})=\prod_{n=1}^{\infty}(I-M_{\zeta_n}^2M_{\zeta_n}^{*2}),$$
Where $P_{\mathcal{F}}$ is the orthogonal projection from $H^2(\mathbb{D}^\infty_2)$ onto $\mathcal{F}$.
This implies that
$$\prod_{i=1}^{n}(I-M_{\zeta_i}^2M_{\zeta_i}^{*2})F
=\prod_{i=1}^{n}(I-M_{\zeta_i}^2V_i^{*2})F
=F\cdot\prod_{i=1}^{n}(1-\overline{\lambda_n^2}\zeta_n^2)$$
converges to $G=P_{\mathcal{F}}F$ ($n\rightarrow\infty$) in  $H^2(\mathbb{D}^\infty_2)$-norm. Note that the reproducing kernel $\mathbf{K}_{\lambda^2}$ vanishes nowhere on $\mathbb{D}^\infty_2$, and $\prod_{i=1}^{n}(1-\overline{\lambda_n^2}\zeta_n^2)$ converges pointwisely to the function $\frac{1}{\mathbf{K}_{\lambda^2}}$ on $\mathbb{D}^\infty_2$ as $n\rightarrow\infty$, $G$ must coincide with the function $\frac{F}{\mathbf{K}_{\lambda^2}}$ on $\mathbb{D}^\infty_2$, forcing $G\neq0$.
Since $G\in\mathcal{F}$, there exists some $\alpha=(\alpha_1,\alpha_2,\cdots)\in\mathbb{Z}_+^{(\infty)}$ with each $\alpha_n\leq1$
such that $\langle G,\zeta^\alpha\rangle\neq0$.

Below we prove $\langle G,\zeta^\alpha\rangle=0$ to reach a contradiction. Since for each $n\in\mathbb{N}$, $$G\in\mathrm{Ker}\ M_{\zeta_n}^{*2}=\mathrm{Ker}\ V_n^{*2},$$ there corresponds a decomposition
$G=G_n+H_n$ of $G$ such that $G_n\in\mathrm{Ker}\ V_n^*$
and $H_n\in\mathrm{Ker}\ V_n^{*2}\ominus\mathrm{Ker}\ V_n^*$.
Since $\mathbf{V}$ is doubly commuting, for any $n\in\mathbb{N}$, $$G_n=(I-V_nV_n^*)G=(I-V_nV_n^*)\cdot
\prod_{m\neq n}^{\infty}(I-V_m^2V_m^{*2})F,$$
and \begin{equation*}
     \begin{split}
       H_n & = (V_nV_n^*-V_n^2V_n^{*2})G \\
       & =(V_nV_n^*-V_n^2V_n^{*2})\cdot
\prod_{m\neq n}^{\infty}(I-V_m^2V_m^{*2})F\\
          & =V_n(I-V_nV_n^*)V_n^*\cdot
\prod_{m\neq n}^{\infty}(I-V_m^2V_m^{*2})F \\
          & =V_n(I-V_nV_n^*)\cdot
\prod_{m\neq n}^{\infty}(I-V_m^2V_m^{*2})V_n^*F \\
& =\overline{\lambda_n}V_n(I-V_nV_n^*)\cdot
\prod_{m\neq n}^{\infty}(I-V_m^2V_m^{*2})F,
     \end{split}
    \end{equation*}
which gives $$\|H_n\|=|\lambda_n|\|V_nG_n\|=|\lambda_n|\|G_n\|.$$
By  the fact that $\lambda_n\rightarrow0\ (n\rightarrow\infty)$ and $\|G\|^2=\|G_n\|^2+\|H_n\|^2\ (n\in\mathbb{N})$, we see $\|H_n\|\rightarrow0\ (n\rightarrow\infty)$.
This gives that $G_n\rightarrow G\ (n\rightarrow\infty)$ in  norm, and then  $\langle G_n,\zeta^\alpha\rangle\rightarrow\langle G,\zeta^\alpha\rangle\ (n\rightarrow\infty)$.
Since $G_n\in\zeta_nH^2(\mathbb{D}^\infty_2)\ (n\in\mathbb{N})$, $\langle G_n,\zeta^\alpha\rangle=0$ for $n$ large sufficiently, forcing $\langle G,\zeta^\alpha\rangle=0$. This completes the proof.
\end{proof}

We also have the following application of Theorem \ref{direct sums of quasi-BT} and  Corollary \ref{BT}.
Set $\Gamma$ to be the set of points $\lambda=(\lambda_1,\lambda_2,\cdots)\in\mathbb{D}^\infty$ satisfying  $$\lim_{m\rightarrow\infty}\prod_{n=m}^{\infty}\lambda_m=1.$$
One can check that for the equivalence relation $\sim$ on $\mathbb{D}^\infty$ given in Corollary \ref{collection of cor}, $\Gamma$ is the union of some $\sim$-equivalence classes, namely
for a pair $\lambda,\mu$ of $\sim$-equivalent points in $\mathbb{D}^\infty$, $\lambda\in\Gamma$ if and only if $\mu\in\Gamma$.

\begin{cor} \label{application} Suppose $\mathbf{T}\in\mathcal{DC}(\mathcal{H})$ and put $A_n=T_1\cdots T_n\ (n\in\mathbb{N})$.
If
$\bigvee_{\lambda\in\Gamma}\mathfrak{D}_{\Phi_\lambda(\mathbf{T})^*}=\mathcal{H}$, then $\{A_n\}_{n\in\mathbb{N}}$ converges in the strong operator topology.
\end{cor}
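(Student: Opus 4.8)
The plan is to use the structure theory of Section~3 to reduce the statement to an explicit computation in the scalar Hardy space $H^2(\mathbb{D}_2^\infty)$. Since $\Gamma\subseteq\mathbb{D}^\infty$, the hypothesis $\bigvee_{\lambda\in\Gamma}\mathfrak{D}_{\Phi_\lambda(\mathbf{T})^*}=\mathcal{H}$ forces condition (3) of Theorem~\ref{direct sums of quasi-BT}, so $\mathbf{T}$ has a decomposition of quasi-Beurling type; concretely the proof of that theorem produces the orthogonal decomposition $\mathcal{H}=\bigoplus_{\gamma\in\Delta}\mathcal{H}_\gamma$ with $\mathcal{H}_\gamma=\bigvee_{\lambda\in\gamma}\mathfrak{D}_{\Phi_\lambda(\mathbf{T})^*}$, each $\mathcal{H}_\gamma$ being $\mathbf{T}$-joint reducing. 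First I would note that $\mathcal{H}_\gamma=\{0\}$ whenever $\gamma\cap\Gamma=\emptyset$: since $\Gamma$ is a union of $\sim$-equivalence classes, every $\lambda\in\gamma$ is non-equivalent to every $\mu\in\Gamma$, so Corollary~\ref{collection of cor}(2) gives $\mathfrak{D}_{\Phi_\lambda(\mathbf{T})^*}\perp\mathfrak{D}_{\Phi_\mu(\mathbf{T})^*}$ for all such $\mu$, whence $\mathcal{H}_\gamma\perp\bigvee_{\mu\in\Gamma}\mathfrak{D}_{\Phi_\mu(\mathbf{T})^*}=\mathcal{H}$. Thus only classes $\gamma\subseteq\Gamma$ contribute, and for each such $\gamma$ the remark following Theorem~\ref{direct sums of quasi-BT} (via Step~III of its proof) provides a point $\lambda_\gamma\in\gamma\subseteq\Gamma$, a Hilbert space $\mathcal{E}_\gamma$ and an $\mathbf{M}_\zeta^*$-joint invariant subspace $\mathcal{Q}_\gamma\subseteq H_{\mathcal{E}_\gamma}^2(\mathbb{D}_2^\infty)$ with $\mathbf{T}|_{\mathcal{H}_\gamma}$ jointly unitarily equivalent to $P_{\mathcal{Q}_\gamma}\Phi_{\lambda_\gamma}(\mathbf{M}_\zeta)|_{\mathcal{Q}_\gamma}$.

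Under this identification $A_n|_{\mathcal{H}_\gamma}=(T_1\cdots T_n)|_{\mathcal{H}_\gamma}$ corresponds to $P_{\mathcal{Q}_\gamma}M_{f_n^{(\gamma)}}|_{\mathcal{Q}_\gamma}$, where $f_n^{(\gamma)}(\zeta)=\prod_{k=1}^n\varphi_{(\lambda_\gamma)_k}(\zeta_k)$; here I use that by the Riesz calculus $\varphi_a(M_{\zeta_k})$ is multiplication by $\zeta\mapsto\varphi_a(\zeta_k)$, and that, since $\mathcal{Q}_\gamma$ is co-invariant for each $M_{\zeta_k}$ (hence for each $\varphi_{(\lambda_\gamma)_k}(M_{\zeta_k})$), the product of the compressions equals the compression of the product. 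So it suffices to prove that for every $\lambda=(\lambda_1,\lambda_2,\dots)\in\Gamma$ the operators $M_{f_n}$, with $f_n(\zeta)=\prod_{k=1}^n\varphi_{\lambda_k}(\zeta_k)$, converge in the strong operator topology on $H^2(\mathbb{D}_2^\infty)$: tensoring with $I_{\mathcal{E}_\gamma}$ and then compressing to $\mathcal{Q}_\gamma$ preserves SOT convergence, so this yields SOT convergence of each $A_n|_{\mathcal{H}_\gamma}$. Since each $\varphi_{\lambda_k}$ is inner with $\varphi_{\lambda_k}(0)=\lambda_k$, the function $f_n$ is inner, $\|f_n\|_{H^2(\mathbb{D}_2^\infty)}=1$, and the $M_{f_n}$ are isometries of norm $1$; by this uniform boundedness, SOT convergence may be checked on the dense set of monomials $\zeta^\alpha$.

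The core estimate is as follows. Fix $\alpha$ supported in $\{1,\dots,m\}$. For $n'>n\geq m$, writing $f_{n'}\zeta^\alpha$ and $f_n\zeta^\alpha$ as products of functions in pairwise disjoint blocks of variables and using the tensor structure (\ref{representation of vector-valued Hardy space}) together with $\|\varphi_{\lambda_k}(\zeta_k)\zeta_k^{\alpha_k}\|_{H^2(\mathbb{D})}=\|\varphi_{\lambda_k}(\zeta_k)\|_{H^2(\mathbb{D})}=1$, one obtains
$$\|M_{f_{n'}}\zeta^\alpha-M_{f_n}\zeta^\alpha\|^2=\Big\|\,\prod_{k=n+1}^{n'}\varphi_{\lambda_k}(\zeta_k)-1\,\Big\|_{H^2}^2=2-2\,\mathrm{Re}\prod_{k=n+1}^{n'}\lambda_k,$$
since $\prod_{k=n+1}^{n'}\varphi_{\lambda_k}(\zeta_k)$ has $H^2$-norm $1$ and constant coefficient $\prod_{k=n+1}^{n'}\lambda_k$. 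By the defining property of $\Gamma$ the tail products $\prod_{k=m}^{\infty}\lambda_k$ converge to $1$, so $\prod_{k=n+1}^{n'}\lambda_k\to 1$ as $n\to\infty$, uniformly in $n'>n$, and the right-hand side tends to $0$; hence $\{M_{f_n}\zeta^\alpha\}$ is Cauchy for every monomial, and $\{M_{f_n}\}$ converges in SOT. Tracing back, each $A_n|_{\mathcal{H}_\gamma}$ converges in SOT, and finally, since $\|A_n\|\leq 1$ and $A_nx=\bigoplus_\gamma(A_n|_{\mathcal{H}_\gamma})(P_{\mathcal{H}_\gamma}x)$ with $\|(A_n|_{\mathcal{H}_\gamma})(P_{\mathcal{H}_\gamma}x)\|\leq\|P_{\mathcal{H}_\gamma}x\|$, a dominated convergence argument over $\gamma$ shows $\{A_nx\}$ converges for every $x\in\mathcal{H}$. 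The only genuinely substantial point is this core estimate: the somewhat unusual defining condition of $\Gamma$ is precisely what is needed to force the tail factors $\prod_{k=n+1}^{n'}\varphi_{\lambda_k}(\zeta_k)$ to tend to the constant $1$ in $H^2$-norm; the remaining steps are routine bookkeeping with Theorem~\ref{direct sums of quasi-BT}, the tensor structure, and direct sums.
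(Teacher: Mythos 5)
Your proof is correct and follows essentially the same route as the paper's: both reduce, via Theorem \ref{direct sums of quasi-BT} and Corollary \ref{BT}, to the convergence of the multiplication operators by $\prod_{k=1}^{n}\varphi_{\lambda_k}(\zeta_k)$ on $H^2(\mathbb{D}_2^\infty)$ for $\lambda\in\Gamma$, and both rest on the same norm computation $\|\prod_{k=n+1}^{n'}\widetilde{\varphi_{\lambda_k}}-1\|^2=2-2\,\mathrm{Re}\prod_{k=n+1}^{n'}\lambda_k$ together with the defining condition of $\Gamma$. The only difference is that you spell out some bookkeeping the paper leaves implicit (why the witnessing points can be taken in $\Gamma$, the passage from compressions to the product, and the dominated-convergence argument over the direct sum), which is welcome but not a change of method.
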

\begin{proof}
Assume $\bigvee_{\lambda\in\Gamma}\mathfrak{D}_{\Phi_\lambda(\mathbf{T})^*}=\mathcal{H}$.
By Theorem \ref{direct sums of quasi-BT}, $\mathbf{T}$ can be decomposed into the direct sum of $\mathcal{DC}$-sequences of quasi-Beurling type. Without loss of generality, we may further assume that $\mathbf{T}$  itself is of quasi-Beurling type. Take $\lambda\in\Gamma$ so that $\Phi_\lambda(\mathbf{T})$ is of Beurling type. Then Corollary \ref{BT} implies that the minimal regular isometric dilation $\mathbf{V}$ of
$\mathbf{T}$ is jointly unitarily equivalent to  the sequence $(M_{\widetilde{\varphi_{\lambda_1}}},M_{\widetilde{\varphi_{\lambda_2}}},\cdots)$ of multiplication operators on  a vector-valued Hardy space $H_{\mathcal{E}}^2(\mathbb{D}_2^\infty)$, where $\widetilde{\varphi_{\lambda_n}}(\zeta)=\varphi_{\lambda_n}(\zeta_n)\ (\zeta\in\mathbb{D}^\infty_2)$.

Put $F_n=\prod_{i=1}^{n}\widetilde{\varphi_{\lambda_i}}\ (n\in\mathbb{N})$.
It suffices to show that
$\{M_{F_n}\}_{n\in\mathbb{N}}$ converges in the strong operator topology.
Since this sequence has uniformly bounded operator norms and $$\{p\cdot x:p\in\mathcal{P}_\infty,x\in\mathcal{E}\}$$ is complete in $H_{\mathcal{E}}^2(\mathbb{D}_2^\infty)$, we only need to prove for any $p\in\mathcal{P}_\infty$ and any $x\in\mathcal{E}$,
$\{F_np\cdot x\}_{n\in\mathbb{N}}$ converges in $H_{\mathcal{E}}^2(\mathbb{D}_2^\infty)$,
where $\mathcal{P}_\infty$ is the polynomial ring in countably infinitely many variables. This is clearly equivalent to that
$\{F_n\}_{n\in\mathbb{N}}$ converges in $H^2(\mathbb{D}_2^\infty)$.

For $n>m$ a simple calculation gives
$$\|F_n-F_m\|^2=\|\prod_{i=m+1}^{n}\widetilde{\varphi_{\lambda_i}}-1\|^2
=|\prod_{i=m+1}^{n}\lambda_i-1|^2+(1-\prod_{i=m+1}^{n}|\lambda_i|^2).$$
Since $\lim_{m\rightarrow\infty}\prod_{n=m}^{\infty}\lambda_m=1$,
$\{F_n\}_{n\in\mathbb{N}}$ is
a Cauchy sequence, which completes the proof.
\end{proof}

To end this section, we record an example of $\mathcal{DC}$-sequence of quasi-Beurling type.

\begin{prop} \label{sequence of multiplication operators} Let $\{f_n\}_{n\in\mathbb{N}}$ be a sequence of bounded analytic functions on the unit disk $\mathbb{D}$. If for each $n\in\mathbb{N}$, $\|f_n\|_\infty\leq1$ and $f_n$ is not a unimodular constant, then the sequence
$(M_{\widetilde{f_1}},M_{\widetilde{f_2}},\cdots)$ of multiplication operators on
the Hardy space $H^2(\mathbb{D}_2^\infty)$ is a $\mathcal{DC}$-sequence of quasi-Beurling type,
where $\widetilde{f_n}(\zeta)=f_n(\zeta_n)\ (n\in\mathbb{N},\zeta\in\mathbb{D}^\infty_2)$.
\end{prop}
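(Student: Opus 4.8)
The plan is, first, to check that $\mathbf T:=(M_{\widetilde{f_1}},M_{\widetilde{f_2}},\cdots)$ belongs to $\mathcal{DC}$; second, to identify its minimal regular isometric dilation, by iterating the construction in the proof of Lemma \ref{min iso dilation}, as a tensor product of the single-variable Sz.-Nagy--Foias dilations; and third, to show that the substitution $\Phi_c$ with $c=(f_1(0),f_2(0),\cdots)$ turns this dilation into the tuple $\mathbf M_\zeta$ of coordinate multiplications on a vector-valued Hardy space over $\mathbb D_2^\infty$, which is of Beurling type.

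\emph{Step 1: $\mathbf T\in\mathcal{DC}$.} Writing $H^2(\mathbb D_2^\infty)=\bigotimes_n H^2(\mathbb D)$ with stabilizing sequence $\{1\}$, the operator $M_{\widetilde{f_n}}$ acts as $M_{f_n}$ on the $n$-th factor and as the identity on the others; so $\|M_{\widetilde{f_n}}\|=\|f_n\|_\infty\le1$ and $M_{\widetilde{f_n}},M_{\widetilde{f_m}}$ doubly commute for $n\neq m$. For $\mu=(\mu_1,\mu_2,\cdots)\in\mathbb D_2^\infty$ one has $M_{\widetilde{f_n}}^{*k}\mathbf K_\mu=\overline{f_n(\mu_n)}^{\,k}\,\mathbf K_\mu$ by (\ref{M_Psi*}); since $f_n$ is not a unimodular constant the maximum modulus principle gives $|f_n(\mu_n)|<1$, so $M_{\widetilde{f_n}}^{*k}\mathbf K_\mu\to0$, and as $\{\mathbf K_\mu:\mu\in\mathbb D_2^\infty\}$ is complete in $H^2(\mathbb D_2^\infty)$ (Lemma \ref{K_lambda}) while $\|M_{\widetilde{f_n}}^{*k}\|\le1$, it follows that $M_{\widetilde{f_n}}$ is of class $C_{\cdot0}$. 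Hence $\mathbf T\in\mathcal{DC}$.

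\emph{Step 2: the dilation.} Put $c_n=f_n(0)\in\mathbb D$ and $E_n=\mathfrak D_{M_{f_n}^*}$. Since $M_{f_n}$ is a $C_{\cdot0}$-contraction on $H^2(\mathbb D)$, the single-variable model theory yields an isometric embedding $\iota_n\colon H^2(\mathbb D)\to H_{E_n}^2(\mathbb D)$, $h\mapsto\sum_{k\ge0}z^kD_{M_{f_n}^*}M_{f_n}^{*k}h$, which realizes $M_z$ on $H_{E_n}^2(\mathbb D)$ as the minimal isometric dilation (a coextension) of $M_{f_n}$; one computes $\iota_n(1)=K_{c_n}\cdot v_n$ with $v_n=D_{M_{f_n}^*}1\in E_n$. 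Let $\mathbf W=(W_1,W_2,\cdots)$ on $\mathcal K:=\bigotimes_n H_{E_n}^2(\mathbb D)$, with stabilizing sequence $\{\iota_n(1)\}_n$ and $W_n$ acting as $M_z$ on the $n$-th factor, and let $\iota=\bigotimes_n\iota_n$. Just as in the proof of Lemma \ref{min iso dilation} one verifies factor-by-factor that $\mathbf W\in\mathcal{DP}$, that $\iota$ embeds $H^2(\mathbb D_2^\infty)$ isometrically onto a subspace of $\mathcal K$ on which $\mathbf W$ restricts to a regular isometric coextension of $\mathbf T$, and that $\bigvee_\alpha\mathbf W^\alpha\iota\bigl(H^2(\mathbb D_2^\infty)\bigr)=\bigotimes_n\bigl(\bigvee_k z^k\iota_n(H^2(\mathbb D))\bigr)=\mathcal K$ by minimality of each $\iota_n$. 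So $\mathbf W$ is, up to joint unitary equivalence, the minimal regular isometric dilation of $\mathbf T$.

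\emph{Step 3: applying $\Phi_c$.} The $n$-th component of $\Phi_c(\mathbf W)$ is $\varphi_{c_n}(M_z)=M_{\varphi_{c_n}}$ on $H_{E_n}^2(\mathbb D)$, a pure isometry with wandering subspace the model space $\mathcal W_n:=H_{E_n}^2(\mathbb D)\ominus\varphi_{c_n}H_{E_n}^2(\mathbb D)=K_{c_n}\cdot E_n$; crucially $\iota_n(1)=K_{c_n}\cdot v_n\in\mathcal W_n$. The Wold decomposition $H_{E_n}^2(\mathbb D)=\bigoplus_{k\ge0}\varphi_{c_n}^k\mathcal W_n$ identifies $H_{E_n}^2(\mathbb D)$ with $H_{\mathcal W_n}^2(\mathbb D)$, carrying $M_{\varphi_{c_n}}$ to $M_z$ and $\iota_n(1)$ to the corresponding constant $\mathcal W_n$-valued function. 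Taking the tensor product over $n$ — and using that all these stabilizing vectors now sit in the $k=0$ summands — $\Phi_c(\mathbf W)$ becomes jointly unitarily equivalent to $\mathbf M_\zeta$ on $H_{\mathcal E}^2(\mathbb D_2^\infty)=H^2(\mathbb D_2^\infty)\otimes\mathcal E$ with $\mathcal E=\bigotimes_n\mathcal W_n$. Since $\mathbf M_\zeta$ is of Beurling type, $\Phi_c(\mathbf W)$, hence $\mathbf W$, hence $\mathbf T$, is of quasi-Beurling type, and together with Step 1 this proves the Proposition.

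\emph{Main obstacle.} The delicate point is Step 2 together with the bookkeeping of stabilizing sequences in Step 3: one must handle infinite tensor products carefully and track each $\iota_n(1)$ through the Wold decomposition of $M_{\varphi_{c_n}}$. It is precisely the choice $c_n=f_n(0)$ that forces $\iota_n(1)=K_{c_n}v_n$ into the model space $\mathcal W_n$, which is what legitimizes the final reorganization onto $H_{\mathcal E}^2(\mathbb D_2^\infty)$; a different base point would leave the stabilizing sequences incompatible, and the dilation would only be Beurling after some other $\Phi_\lambda$.
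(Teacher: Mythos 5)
Your proposal is correct, but it takes a genuinely different route from the paper's. The paper never constructs the dilation: it sets $g_n=\varphi_{f_n(0)}\circ f_n$ (so $g_n(0)=0$), observes via (\ref{M_Psi*}) that each reproducing kernel $\mathbf{K}_\lambda$ ($\lambda\in\mathbb{D}_2^\infty$) is a joint eigenvector of $(M_{\widetilde{g_1}}^*,M_{\widetilde{g_2}}^*,\cdots)$ with eigenvalue tuple $\mu(\lambda)=(g_1(\lambda_1),g_2(\lambda_2),\cdots)$, invokes the Schwarz lemma to place $\mu(\lambda)$ in $\mathbb{D}_2^\infty$, and concludes $\mathbf{K}_\lambda\in\mathfrak{D}_{\Phi_{\mu(\lambda)}(\mathbf{M})^*}$; completeness of the kernels then verifies condition (4) of Corollary \ref{BT}, so $\Phi_c(\mathbf{T})$ is of Beurling type. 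You instead build the minimal regular isometric dilation explicitly as an infinite tensor product of one-variable Sz.-Nagy--Foias dilations and reorganize it through the Wold decompositions of the operators $M_{\varphi_{c_n}}$. What your route buys is an explicit model of the dilation and of the defect space $\mathcal{E}=\bigotimes_n\mathcal{W}_n$; what it costs is the infinite-tensor-product bookkeeping you flag, and your key observation that $\iota_n(1)=K_{c_n}\cdot v_n$ lands in the $k=0$ Wold summand $\mathcal{W}_n$ is precisely the tensor-product avatar of the paper's Schwarz-lemma step. If you write this up, two points deserve a sentence each: that a doubly commuting isometric coextension is automatically a \emph{regular} isometric dilation (so that minimality of your $\mathbf{W}$ identifies it with the minimal regular dilation, to which Definition \ref{defn2} refers), and that $\|v_n\|^2=1-|c_n|^2>0$, so the stabilizing vectors are genuine unit vectors in nonzero spaces $\mathcal{W}_n$ and the space $\mathcal{E}$ is nontrivial.
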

\begin{proof} It is routine to check that
$(M_{\widetilde{f_1}},M_{\widetilde{f_2}},\cdots)$ is a sequence in class $\mathcal{DC}$. Put $g_n=\varphi_{f_n(0)}\circ f_n\ (n\in\mathbb{N})$. Then for each $n\in\mathbb{N}$,
$g_n(0)=0$ and
$M_{\widetilde{g_n}}=\varphi_{f_n(0)}(M_{\widetilde{f_n}})$, where $\widetilde{g_n}(\zeta)=g_n(\zeta_n)$.
We will show that the sequence $\mathbf{M}=(M_{\widetilde{g_1}},M_{\widetilde{g_2}},\cdots)$  is of Beurling type, which implies the desired conclusion.

Note that each $n\in\mathbb{N}$ and $\lambda=(\lambda_1,\lambda_2,\cdots)\in\mathbb{D}^\infty_2$, $$\varphi_{g_n(\lambda_n)}(M_{\widetilde{g_n}})^*\mathbf{K}_\lambda
=M_{\varphi_{g_n(\lambda_n)}\circ\widetilde{g_n}}^*\mathbf{K}_\lambda
=\varphi_{g_n(\lambda_n)}(\widetilde{g_n}(\lambda))\mathbf{K}_\lambda=0,$$
which gives $$(I-\varphi_{g_n(\lambda_n)}(M_{\widetilde{g_n}})
\varphi_{g_n(\lambda_n)}(M_{\widetilde{g_n}})^*))\mathbf{K}_\lambda=\mathbf{K}_\lambda.$$
It follows that $D_{\Phi_{\mu(\lambda)}(\mathbf{M})^*}\mathbf{K}_\lambda=\mathbf{K}_\lambda$ for
any $\lambda\in\mathbb{D}^\infty_2$, where $$\mu(\lambda)=(g_1(\lambda_1),g_2(\lambda_2),\cdots).$$
Also by Schwarz' Lemma, we have  that for
each $\lambda\in\mathbb{D}^\infty_2$, $$\sum_{n=1}^\infty|g_n(\lambda_n)|^2\leq\sum_{n=1}^\infty|\lambda_n|^2<\infty,$$
and then
$$\mathbf{K}_{\lambda}\in\mathfrak{D}_{\Phi_{\mu(\lambda)}(\mathbf{M})^*}
\subseteq\bigvee_{\mu\in\mathbb{D}_2^\infty}
\mathfrak{D}_{\Phi_\mu(\mathbf{M})^*}.$$
This gives  $\bigvee_{\mu\in\mathbb{D}_2^\infty}
\mathfrak{D}_{\Phi_\mu(\mathbf{M})^*}=H^2(\mathbb{D}_2^\infty)$. It follows from
Corollary \ref{BT} that the sequence $\mathbf{M}=(M_{\widetilde{g_1}},M_{\widetilde{g_2}},\cdots)$  is of Beurling type.
\end{proof}
%\begin{prop} Let $\mathcal{E}$ be a Hilbert space and $\Theta$ be a sequence in $H^\infty_{\mathcal{B}(\mathcal{E})}(\mathbb{D})$.
%If
%$\Theta$ is diagonalizable
%and purely contractive, then $\mathbf{M}_{\Theta}$ is a sequence in class $\mathcal{DC}$ and has a decomposition
%of quasi-Beurling type.
%\end{prop}
\begin{rem} \label{remark for Section 4}
 From the proof of Proposition \ref{sequence of multiplication operators}, we see that the $\sim$-equivalence class of the point $(f_1(0),f_2(0),\cdots)\in\mathbb{D}^\infty$ (see Corollary \ref{collection of cor}) is an invariant for the sequence $(M_{\widetilde{f_1}},M_{\widetilde{f_2}},\cdots)$.
 More precisely, let $\{f_n\}_{n\in\mathbb{N}}$ and $\{g_n\}_{n\in\mathbb{N}}$ be two sequence of functions that satisfy the conditions given in the proposition,
 and put $\widetilde{f_n}(\zeta)=f_n(\zeta_n)$, $\widetilde{g_n}(\zeta)=g_n(\zeta_n)\ (n\in\mathbb{N},\zeta\in\mathbb{D}^\infty_2)$.
 If the multiplication operators $(M_{\widetilde{f_1}},M_{\widetilde{f_2}},\cdots)$, $(M_{\widetilde{g_1}},M_{\widetilde{g_2}},\cdots)$ are jointly unitarily equivalent, then
 $(f_1(0),f_2(0),\cdots)$ and $(g_1(0),g_2(0),\cdots)$
 belong to the same $\sim$-equivalence class; that is,
 $$\sum_{n=1}^{\infty}
\left|\frac{f_n(0)-g_n(0)}{1-\overline{f_n(0)}g_n(0)}\right|^2<\infty.$$

Since the number of $\sim$-equivalence classes is countless,
%Since there are numerous $\sim$-equivalence classes,
%the  equivalence class
$\mathbb{D}_2^\infty$
 %containing $\mathbf{0}$
 is a very ``small" part in $\mathbb{D}^\infty$. Hence,
for ``almost all" choices of the sequence $\{f_n\}_{n\in\mathbb{N}}$ of  functions, the defect space of $(M_{\widetilde{f_1}}^*,M_{\widetilde{f_2}}^*,\cdots)$
is $\{0\}$ by Proposition \ref{sequence of multiplication operators} and Corollary \ref{collection of cor}, and then $(M_{\widetilde{f_1}},M_{\widetilde{f_2}},\cdots)$ is not of Beurling type.
\end{rem}

\section{Analytic Model}
In this section, we will prove that every sequence in class $\mathcal{DP}$ is jointly unitarily equivalent to
a sequence of multiplication operators induced by
operator-valued inner functions each of which involves one different variables. We thus establish an operator-valued analytic functional model for general $\mathcal{DC}$-sequences.

\begin{thm} \label{analytic model} Let $\mathbf{T}$ be a sequence in class $\mathcal{DC}(\mathcal{H})$, and $\mathbf{V}\in\mathcal{DP}(\mathcal{K})$  the minimal regular isometric dilation of $\mathbf{T}$.
Then there exist a Hilbert space $\mathcal{E}$, a unitary operator $U:\mathcal{K}\rightarrow H_{\mathcal{E}}^2(\mathbb{D}_2^\infty)$ and a sequence $\Theta=(\theta_1,
\theta_2,\cdots)$ of inner functions in $H^\infty_{\mathcal{B}(\mathcal{E})}(\mathbb{D})$, such that \begin{itemize}
  \item [(1)] for each $n\in\mathbb{N}$,
$UV_nU^*=M_{\widetilde{\theta_n}}$, where
$\widetilde{\theta_n}(\zeta)=\theta_n(\zeta_n)\ (\zeta\in\mathbb{D}^\infty_2)$;
  \item [(2)] $\mathcal{Q}=U\mathcal{H}$ is a quotient module of $H_{\mathcal{E}}^2(\mathbb{D}_2^\infty)$.
\end{itemize}
\end{thm}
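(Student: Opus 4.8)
The plan is to isolate the following structural statement, which contains all the content: \emph{every $\mathbf{V}=(V_1,V_2,\dots)\in\mathcal{DP}(\mathcal{K})$ is jointly unitarily equivalent to a sequence $(M_{\widetilde{\theta_1}},M_{\widetilde{\theta_2}},\dots)$ of multiplication operators on some vector-valued Hardy space $H^2_{\mathcal{E}}(\mathbb{D}_2^\infty)$, with each $\theta_n\in H^\infty_{\mathcal{B}(\mathcal{E})}(\mathbb{D})$ inner.} Granting this and applying it to the minimal regular isometric dilation $\mathbf{V}\in\mathcal{DP}(\mathcal{K})$ of $\mathbf{T}$ gives the unitary $U$ and the functions $\theta_n$ of part (1). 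For part (2), recall from Lemma~\ref{min iso dilation} that $\mathbf{V}$ coextends $\mathbf{T}$, so $\mathcal{H}$ is invariant for $\mathbf{V}^*$; hence $U(\mathcal{K}\ominus\mathcal{H})$ is invariant for each $M_{\widetilde{\theta_n}}=UV_nU^*$, i.e.\ $\mathcal{Q}=U\mathcal{H}$ is coinvariant for the model operators and is therefore the desired quotient module of $H^2_{\mathcal{E}}(\mathbb{D}_2^\infty)$.

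To build the model I would peel off one variable at a time. Since $\mathbf{V}$ is doubly commuting, each $V_n$ commutes with $V_1V_1^*$, hence with the projection $I-V_1V_1^*$ onto $\mathcal{K}_1:=\ker V_1^*$, so $\mathcal{K}_1$ reduces every $V_n$ with $n\ge2$. The Wold decomposition of the pure isometry $V_1$ furnishes a unitary $\mathcal{K}\cong H^2(\mathbb{D})\otimes\mathcal{K}_1$ carrying $V_1$ to $M_z\otimes I$; as $M_z$ on $H^2(\mathbb{D})$ is irreducible, Lemma~\ref{redu in tensor} forces $V_n\cong I\otimes V_n|_{\mathcal{K}_1}$ for $n\ge2$, and $(V_2|_{\mathcal{K}_1},V_3|_{\mathcal{K}_1},\dots)\in\mathcal{DP}(\mathcal{K}_1)$. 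Iterating $m$ times, $\mathcal{K}\cong H^2(\mathbb{D})^{\otimes m}\otimes\mathcal{K}_m$ with $\mathcal{K}_m=\bigcap_{i=1}^{m}\ker V_i^*$, the first $m$ entries of $\mathbf{V}$ becoming the coordinate shifts and the tail acting as $I^{\otimes m}\otimes(\,\cdot\,)$ on $\mathcal{K}_m$. Equivalently, the commuting family $\{V_n^kV_n^{*k}\}_{n,k}$ produces an orthogonal ``multi-level'' decomposition $\mathcal{K}=\bigoplus_\beta\mathcal{K}^{(\beta)}$ on which each $V_n$ restricts to a unitary from one summand onto another, so all nonzero summands share a common Hilbert-space dimension.

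The main obstacle is the passage $m\to\infty$. One cannot simply set $\mathcal{E}=\bigcap_m\mathcal{K}_m=\mathfrak{D}_{\mathbf{V}^*}$, since Theorem~\ref{example} shows this can be $\{0\}$ with $\mathcal{K}\ne\{0\}$: the naive infinite tensor product taken with the constant stabilizing vectors $1\in H^2(\mathbb{D})$ is in general a \emph{proper} subspace of $\mathcal{K}$. The remedy is to re-coordinatize: fix once and for all a separable infinite-dimensional model space $\mathcal{E}$ (every space in sight is abstractly $\ell^2$), and at stage $n$ identify the carrier of the pure isometry acting in the $n$-th slot with $H^2_{\mathcal{E}}(\mathbb{D})$ in such a way that this isometry becomes multiplication by a $\mathcal{B}(\mathcal{E})$-valued holomorphic function $\theta_n$ of $\zeta_n$ alone — necessarily inner, since $M_{\widetilde{\theta_n}}$ is an isometry, which is checked on reproducing kernels via the identities preceding Lemma~\ref{operator-valued in one variable} and equation~(\ref{M_Psi*}). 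The gap between ``$M_{\zeta_n}$'' and ``$M_{\widetilde{\theta_n}}$'' is precisely what these inner functions record. Two points then demand care and constitute the technical heart of the argument: first, that the multi-level decomposition exhausts $\mathcal{K}$, i.e.\ the common eigenvectors of $\{V_n^kV_n^{*k}\}$ span, which uses the purity of each $V_n$ together with the double commutativity; and second, that when each $V_n$ is recognized as a multiplication operator (via the intertwining mechanism of a one-variable version of Proposition~\ref{operatoe interwine CMO}), its symbol genuinely depends on $\zeta_n$ only, which again rests on double commutativity. Reassembling the stagewise identifications then yields the unitary $U$ intertwining $V_n$ with $M_{\widetilde{\theta_n}}$ for every $n$, and the proof is complete.
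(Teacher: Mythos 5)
Your reduction to the structural statement about $\mathcal{DP}$-sequences and your stagewise Wold analysis are fine as far as they go, and you correctly identify the obstruction (Theorem \ref{example}: $\mathfrak{D}_{\mathbf{V}^*}$ may vanish, so the naive infinite tensor product $H^2(\mathbb{D}_2^\infty)\otimes\mathfrak{D}_{\mathbf{V}^*}$ need not exhaust $\mathcal{K}$). But the step you offer to overcome it --- ``re-coordinatize \dots\ in such a way that this isometry becomes multiplication by a $\mathcal{B}(\mathcal{E})$-valued function of $\zeta_n$ alone'' --- is a restatement of the conclusion, not an argument. Concretely: your multi-level decomposition $\mathcal{K}=\bigoplus_\beta\mathcal{K}^{(\beta)}$ is indexed by \emph{all} sequences $\beta$ of nonnegative integers, whereas $H_{\mathcal{E}}^2(\mathbb{D}_2^\infty)=\bigoplus_{\alpha\in\mathbb{Z}_+^{(\infty)}}\zeta^\alpha\mathcal{E}$ is graded by the \emph{finitely supported} ones; you give no mechanism for matching these gradings, and before a unitary onto $H_{\mathcal{E}}^2(\mathbb{D}_2^\infty)$ has been produced there are no operators $M_{\zeta_m}$ with which $V_n$ could be shown to commute, so the ``one-variable version of Proposition \ref{operatoe interwine CMO}'' has nothing to intertwine with. (Also, your claim that all nonzero summands $\mathcal{K}^{(\beta)}$ share a common dimension only holds within a single translation orbit.)

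The missing idea is the paper's Proposition \ref{power of DC}: using the purity estimate of Lemma \ref{estimate lemma} one chooses exponents $k_n$ so that $(V_1^{k_1},V_2^{k_2},\cdots)$ is of Beurling type, hence by Corollary \ref{BT} jointly unitarily equivalent to $\mathbf{M}_\zeta$ on some $H_{\mathcal{E}}^2(\mathbb{D}_2^\infty)$ via a unitary $U$. This is what installs the coordinate shifts: $UV_nU^*$ is then an isometric $k_n$-th root of $M_{\zeta_n}$ that doubly commutes with the remaining coordinates, so Lemmas \ref{M_zeta is irre} and \ref{redu in tensor} force $UV_nU^*=S_n\otimes I$ with $S_n$ an isometry on $H_{\mathcal{E}}^2(\mathbb{D})$ commuting with $M_z$, i.e.\ $S_n=M_{\theta_n}$ for an inner $\theta_n$. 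The same device is needed to repair your part (2): coinvariance of $U\mathcal{H}$ under the model operators $M_{\widetilde{\theta_n}}$ is weaker than being a quotient module (which requires coinvariance under $\mathbf{M}_\zeta$); it is the identity $M_{\zeta_n}^*=UV_n^{*k_n}U^*$ together with $V_n^*\mathcal{H}\subseteq\mathcal{H}$ that makes $U\mathcal{H}$ a quotient module of $H_{\mathcal{E}}^2(\mathbb{D}_2^\infty)$.
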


The tuple  $(\mathcal{Q},\Theta)$ in Theorem \ref{analytic model} is said to be an \emph{analytic model} for the  $\mathcal{DC}$-sequence $\mathbf{T}$, and the Hilbert space $\mathcal{E}$ is called the \emph{underlying space}
of the analytic model $(\mathcal{Q},\Theta)$.
The sequence $(M_{\widetilde{\theta_1}},M_{\widetilde{\theta_2}},\cdots)$  is denoted by $\mathbf{M}_{\Theta}$ for simplicity. Also, for the trivial case $$\Theta=(z\cdot I_{\mathcal{E}},z\cdot I_{\mathcal{E}},\cdots),$$ we simply write
$\mathcal{Q}$ for  $(\mathcal{Q},\Theta)$.
It is clear that $\mathbf{T}$ is jointly unitarily equivalent to
the sequence $P_{\mathcal{Q}}\mathbf{M}_{\Theta}|_{\mathcal{Q}}$, the compression of the sequence $\mathbf{M}_{\Theta}$ on $\mathcal{Q}$.

To prove Theorem \ref{analytic model}, we need the following.

\begin{prop} \label{power of DC}
 Suppose $\mathbf{T}\in\mathcal{DC}$. Then there exists a sequence $(k_1,k_2,\cdots)$ of positive integers, such that
$(T_1^{k_1},T_2^{k_2},\cdots)$ is of Beurling type.
\end{prop}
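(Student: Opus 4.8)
The plan is to reduce everything to the minimal regular isometric dilation and then to an approximation/diagonalisation argument built on Lemma \ref{estimate lemma}. Let $\mathbf{V}=(V_1,V_2,\dots)\in\mathcal{DP}(\mathcal{K})$ be the minimal regular isometric dilation of $\mathbf{T}$ (Lemma \ref{min iso dilation}); since $\mathcal{K}=\bigvee_{\alpha}\mathbf{V}^{\alpha}\mathcal{H}$ and $\mathcal{H}$ is separable, so is $\mathcal{K}$. It suffices to produce $k_n\in\mathbb{N}$ such that $\mathbf{W}:=(V_1^{k_1},V_2^{k_2},\dots)$ is of Beurling type, i.e. $[\mathfrak{D}_{\mathbf{W}^*}]_{\mathbf{W}}=\mathcal{K}$. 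Indeed, $\mathbf{W}\in\mathcal{DP}(\mathcal{K})$ is a doubly commuting isometric coextension of $\mathbf{S}:=(T_1^{k_1},T_2^{k_2},\dots)\in\mathcal{DC}(\mathcal{H})$, so by Lemma \ref{dilation of defect op} (with $\lambda=0$), $\|D_{\mathbf{S}^*}\mathbf{S}^{*\alpha}x\|=\|D_{\mathbf{W}^*}\mathbf{W}^{*\alpha}x\|$ for $x\in\mathcal{H}$ and $\alpha\in\mathbb{Z}_+^{(\infty)}$; and when $\mathbf{W}$ is of Beurling type the operators $\mathbf{W}^{\alpha}D_{\mathbf{W}^*}\mathbf{W}^{*\alpha}$ are the orthogonal projections onto the mutually orthogonal subspaces $\mathbf{W}^{\alpha}\mathfrak{D}_{\mathbf{W}^*}$ whose sum is $\mathcal{K}$, whence $\sum_{\alpha}\|D_{\mathbf{S}^*}\mathbf{S}^{*\alpha}x\|^2=\sum_{\alpha}\|D_{\mathbf{W}^*}\mathbf{W}^{*\alpha}x\|^2=\|x\|^2$ for all $x\in\mathcal{H}$. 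By Corollary \ref{BT}, $\mathbf{S}$ is then of Beurling type.

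For $\mathbf{a}=(a_n)_n\in\mathbb{N}^{\mathbb{N}}$ write $\mathbf{V}_{\mathbf{a}}=(V_1^{a_1},V_2^{a_2},\dots)$ and $L(\mathbf{a})=[\mathfrak{D}_{\mathbf{V}_{\mathbf{a}}^*}]_{\mathbf{V}_{\mathbf{a}}}\subseteq\mathcal{K}$. Two elementary facts about these subspaces drive the construction. \textbf{(A)} $L(\mathbf{a})$ is unchanged if finitely many coordinates of $\mathbf{a}$ are altered. It suffices to change one coordinate, say $a_1$: putting $\mathcal{L}=\bigcap_{n\ge2}\mathrm{Ker}\,V_n^{*a_n}$, which is $V_1$-reducing and wandering for $(V_2^{a_2},V_3^{a_3},\dots)$, one has $\mathfrak{D}_{\mathbf{V}_{\mathbf{a}}^*}=\mathrm{Ker}\,(V_1|_{\mathcal{L}})^{*a_1}$, and since $(V_1|_{\mathcal{L}})^{a_1}$ is a pure isometry on $\mathcal{L}$ the Wold decomposition gives $[\mathrm{Ker}\,(V_1|_{\mathcal{L}})^{*a_1}]_{(V_1|_{\mathcal{L}})^{a_1}}=\mathcal{L}$; hence $L(\mathbf{a})=\bigvee_{\beta}(V_2^{a_2},V_3^{a_3},\dots)^{\beta}\mathcal{L}=[\mathfrak{D}_{(V_2^{a_2},V_3^{a_3},\dots)^*}]_{(V_2^{a_2},\dots)}$, independent of $a_1$. \textbf{(B)} If $a_n\mid b_n$ for all $n$ then $L(\mathbf{a})\subseteq L(\mathbf{b})$. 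Write $b_n=c_na_n$ and, for $\alpha\in\mathbb{Z}_+^{(\infty)}$, $\alpha_n=c_nq_n+r_n$ with $0\le r_n<c_n$; then $\mathbf{V}_{\mathbf{a}}^{\alpha}=\mathbf{V}_{\mathbf{b}}^{q}\,\mathbf{V}^{\gamma}$ with $\gamma_n=a_nr_n$, and for $\xi\in\mathfrak{D}_{\mathbf{V}_{\mathbf{a}}^*}$ one checks $V_j^{*b_j}\mathbf{V}^{\gamma}\xi=0$ for every $j$ (using $b_j-\gamma_j\ge a_j$ and $\xi\in\mathrm{Ker}\,V_j^{*a_j}$), so $\mathbf{V}^{\gamma}\xi\in\mathfrak{D}_{\mathbf{V}_{\mathbf{b}}^*}$ and $\mathbf{V}_{\mathbf{a}}^{\alpha}\xi\in L(\mathbf{b})$; now take the closed span over $\alpha$.

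Fix a sequence $\{x_j\}_{j\ge1}$ of nonzero vectors in $\mathcal{K}$ such that $\{x_j:j\ge J\}$ is dense for every $J$. By Lemma \ref{estimate lemma} (and the estimate in its proof), for each $j$ we may choose $l_n^{(j)}\in\mathbb{N}$ ($n\in\mathbb{N}$) with $\|V_n^{*l_n^{(j)}}x_j\|<2^{-n}\varepsilon_j\|x_j\|$ for all $n$, where $\varepsilon_j>0$ is chosen so small that, $D_{\mathbf{V}_{\mathbf{l}^{(j)}}^*}$ being the orthogonal projection onto $\mathfrak{D}_{\mathbf{V}_{\mathbf{l}^{(j)}}^*}\subseteq M_j:=L(\mathbf{l}^{(j)})$, we get $\mathrm{dist}(x_j,M_j)\le\|x_j-D_{\mathbf{V}_{\mathbf{l}^{(j)}}^*}x_j\|\le2^{-j}$. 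Put
$$k_n:=\operatorname{lcm}\bigl(l_n^{(1)},\dots,l_n^{(n)}\bigr)\in\mathbb{N},\qquad \mathbf{W}:=(V_1^{k_1},V_2^{k_2},\dots).$$
Fix $j$. Replacing the first $j-1$ coordinates of $\mathbf{l}^{(j)}$ by $1$ changes nothing by (A), and the resulting exponents all divide the corresponding $k_n$ (the first $j-1$ trivially; $l_n^{(j)}\mid k_n$ for $n\ge j$ by construction), so (B) gives $M_j\subseteq L((k_n)_n)=[\mathfrak{D}_{\mathbf{W}^*}]_{\mathbf{W}}$. Thus $[\mathfrak{D}_{\mathbf{W}^*}]_{\mathbf{W}}$ contains every $M_j$, hence lies within $2^{-j}$ of $x_j$ for each $j$; since $\{x_j:j\ge J\}$ is dense for all $J$, this forces $[\mathfrak{D}_{\mathbf{W}^*}]_{\mathbf{W}}=\mathcal{K}$, i.e. $\mathbf{W}$ is of Beurling type, and the proof is complete.

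The crux — and, I expect, the most delicate point — is the interplay making the diagonalisation go through: Lemma \ref{estimate lemma} handles only one vector at a time and forces some exponents to be large, which destroys approximations obtained for other vectors; fact (B) says that enlarging all exponents to common multiples never shrinks $[\mathfrak{D}_{\mathbf{W}^*}]_{\mathbf{W}}$, and fact (A) lets one discard the finitely many leading exponents that are out of control at stage $j$, so that taking the least common multiple only over the first $n$ indices keeps every $k_n$ finite while still making all the $M_j$ available simultaneously. Establishing (A) rigorously — especially identifying $L(\mathbf{a})$ with the invariant subspace generated by the wandering space $\mathcal{L}$ for the shifted sequence — is the step that needs the most care.
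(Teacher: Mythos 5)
Your proof is correct and follows essentially the same route as the paper's: reduce to the minimal regular isometric dilation, use Lemma \ref{estimate lemma} to produce, for each member of a dense sequence, exponents whose defect space nearly captures that vector, and then diagonalise by discarding the finitely many leading exponents that are out of control at each stage (your fact (A) is exactly the paper's identity $M_{\mathbf{k}}=M_{(0,\mathbf{k}')}$, proved there by the same Wold-type observation). The only real difference is that the paper takes $k_n=\max\{k_n^{(1)},\dots,k_n^{(n)}\}$ and needs only the elementary monotonicity of the defect spaces $\mathfrak{D}_{\mathbf{V}_{\mathbf{a}}^*}$ in the exponents, whereas you take least common multiples and establish the stronger (correct, but not actually required, since you only ever use the containment of the defect space itself) inclusion $L(\mathbf{a})\subseteq L(\mathbf{b})$ of the full generated subspaces under coordinatewise divisibility.
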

We  thank Dr. Y. Wang for discussion on the proof of Proposition
\ref{power of DC}.
\begin{proof}
Assume that $\mathbf{T}\in\mathcal{DC}(\mathcal{H})$ and $\mathbf{V}\in\mathcal{DP}(\mathcal{K})$ is the minimal regular isometric dilation of $\mathbf{T}$.
For a sequence $\mathbf{k}=(k_1,k_2,\cdots)$ of positive integers, put
$$\mathbf{V}_{\mathbf{k}}=(V_1^{k_1},V_2^{k_2},\cdots)$$
and $M_{\mathbf{k}}=[\mathfrak{D}_{\mathbf{V}_{\mathbf{k}}^*}]_{\mathbf{V}_{\mathbf{k}}}$.
It suffices to prove that there exists
a sequence $\mathbf{k}$ of positive integers such that $\mathcal{K}=M_{\mathbf{k}}$.

Note that we have made the
convention that $\mathcal{H}$ is a separable Hilbert space in Subsection 1.2. Then $\mathcal{K}=[\mathcal{H}]_{\mathbf{V}}$ is also separable.
Take a sequence $\{x_n\}_{n\in\mathbb{N}}$ in $\mathcal{K}$ such that all its elements  constitute
a dense subset of $\mathcal{K}$ and each element appears in infinitely many times.
It follows from Lemma \ref{estimate lemma} that
for each $n\in\mathbb{N}$, there exists a sequence
$\mathbf{k}^{(n)}=(k_{1}^{(n)},k_{2}^{(n)},\cdots)$ of positive integers, such that
\begin{equation}\label{estimate for x_n}
 \|D_{\mathbf{V}_{\mathbf{k}^{(n)}}^*}x_n\|\geq(1-\frac{1}{2^n})\|x_n\|.
\end{equation}
Set $k_n=\max\{k_{n}^{(1)},\cdots,k_{n}^{(n)}\}\ (n\in\mathbb{N})$ and put $\mathbf{k}=(k_1,k_2,\cdots)$.
 %Then for each $n\in\mathbb{N}$,
%$$\mathfrak{D}_{\mathbf{V}_{\mathbf{k}^{(n)}}^*}\subseteq
%\mathfrak{D}_{\mathbf{V}_{(0,\cdots,0,k_{n}^{(n)},k_{n+1}^{(n)},\cdots)}^*}\subseteq
%\mathfrak{D}_{\mathbf{V}_{(0,\cdots,0,k_n,k_{n+1},\cdots)}^*}.$$
Note that $$[\mathfrak{D}_{\mathbf{V}_{\mathbf{k}}^*}]_{V_1^{k_1}}
=[(I-V_1^{k_1}V_1^{*k_1})\mathfrak{D}_{\mathbf{V}_{(0,\mathbf{k}')}^*}]_{V_1^{k_1}}
=\mathfrak{D}_{\mathbf{V}_{(0,\mathbf{k}')}^*},$$
where $\mathbf{k}'=(k_2,k_3,\cdots)$. It follows that
$$M_{\mathbf{k}}
=[\mathfrak{D}_{\mathbf{V}_{\mathbf{k}}^*}]_{\mathbf{V}_{\mathbf{k}}}
=[\mathfrak{D}_{\mathbf{V}_{(0,\mathbf{k}')}^*}]_{\mathbf{V}_{(0,\mathbf{k}')}}
=M_{(0,\mathbf{k}')},$$
and thus by induction,
\begin{align*}
    & M_{\mathbf{k}}=M_{(0,\mathbf{k}')}=\cdots=M_{(0,\cdots,0,k_n,k_{n+1},\cdots)}
 \\ \supseteq{} & \mathfrak{D}_{\mathbf{V}_{(0,\cdots,0,k_n,k_{n+1},\cdots)}^*}\supseteq
\mathfrak{D}_{\mathbf{V}_{(0,\cdots,0,k_{n}^{(n)},k_{n+1}^{(n)},\cdots)}^*}
\supseteq\mathfrak{D}_{\mathbf{V}_{\mathbf{k}^{(n)}}^*}.
\end{align*}
for each $n\in\mathbb{N}$.
By (\ref{estimate for x_n}), one obtains
$$\|P_{M_{\mathbf{k}}}x_n\|\geq\|D_{\mathbf{V}_{\mathbf{k}^{(n)}}^*}x_n\|\geq(1-\frac{1}{2^n})\|x_n\|,\quad n\in\mathbb{N},$$
forcing $\|P_{M_{\mathbf{k}}}x\|=\|x\|$ for any $x\in\mathcal{K}$, where $P_{M_{\mathbf{k}}}$ is the orthogonal projection from $\mathcal{K}$ onto $M_{\mathbf{k}}$. This completes the proof.
\end{proof}

\noindent\textbf{Proof of Theorem \ref{analytic model}.}
Assume that $\mathbf{T}\in\mathcal{DC}(\mathcal{H})$ and $\mathbf{V}\in\mathcal{DP}(\mathcal{K})$ is the minimal regular isometric dilation of $\mathbf{T}$. It follows from  Proposition \ref{power of DC} that there exists a sequence $(k_1,k_2,\cdots)$ of positive integers, such that
$(T_1^{k_1},T_2^{k_2},\cdots)$ is of Beurling type. Then  by Corollary \ref{BT}, the sequence $(V_1^{k_1},V_2^{k_2},\cdots)$ is
jointly unitarily equivalent to the tuple $\mathbf{M}_\zeta=(M_{\zeta_1},M_{\zeta_2},\cdots)$ of coordinate multiplication operators on a vector-valued Hardy space $H_{\mathcal{E}}^2(\mathbb{D}_2^\infty)$
 via a unitary operator $U:\mathcal{K}\rightarrow H_{\mathcal{E}}^2(\mathbb{D}_2^\infty)$.
 This implies that for each $n\in\mathbb{N}$, $$M_{\zeta_n}=UV_n^{k_n}U^*=(UV_nU^*)^{k_n},$$
and hence $\widetilde{V_n}=UV_nU^*$ commutes with
$M_{\zeta_n}$ and doubly commutes with $M_{\zeta_m}$ for any $m\neq n$.

It remains to show that $\widetilde{V_n}\ (n\in\mathbb{N})$ is a multiplication operator induced by an operator-valued inner function $\widetilde{\theta_n}\in H^\infty_{\mathcal{B}(\mathcal{E})}(\mathbb{D}_2^\infty)$, which depends only on the $n$-th variable $\zeta_n$.
We first prove for $n=1$.
Put $\mathbf{M}_\zeta'=(M_{\zeta_2},M_{\zeta_3},\cdots)$ and set
$$\mathcal{L}=\overline{\mathrm{span}\{\zeta^\alpha:\alpha=(\alpha_1,\alpha_2,\cdots)\in\mathbb{Z}_+^{(\infty)}\ \mathrm{with} \ \alpha_1=0\}}.$$
Then we have $H_{\mathcal{E}}^2(\mathbb{D}_2^\infty)=H_{\mathcal{E}}^2(\mathbb{D})\otimes\mathcal{L}$ (see Subsection 2.2),  $M_{\zeta_1}=M_z\otimes I_{\mathcal{L}}$
and $\mathbf{M}_\zeta'$ has the form $(I_{H_{\mathcal{E}}^2(\mathbb{D})}\otimes T_1,I_{H_{\mathcal{E}}^2(\mathbb{D})}\otimes T_2,\cdots)$ for a sequence $(T_1,T_2,\cdots)$  of operators on $\mathcal{L}$ jointly unitarily equivalent to the tuple of coordinate multiplication operators on $H^2(\mathbb{D}^\infty_2)$. Since
$\widetilde{V_1}$ doubly commutes with $\mathbf{M}_\zeta'$,
it follows from
Lemma \ref{M_zeta is irre} and Lemma \ref{redu in tensor} that $\widetilde{V_1}=S\otimes I_{\mathcal{L}}$ for some isometry
$S$ on $H_{\mathcal{E}}^2(\mathbb{D})$.
Therefore,
$$M_z\otimes I_{\mathcal{L}}=M_{\zeta_1}=\widetilde{V_1}^{k_1}=S^{k_1}\otimes I_{\mathcal{L}},$$ forcing $M_z=S^{k_1}$.
In particular, $S$ commutes with $M_z$. Then there is  a $\mathcal{B}(\mathcal{E})$-valued inner function $\theta_1$ in single variable $z\in\mathbb{D}$, such that $S=M_{\theta_1}$ (see \cite[pp. 200-201]{SNFBK} for instance), which gives
$$\widetilde{V_1}=M_{\theta_1}\otimes I_{\mathcal{L}}=M_{\widetilde{\theta_1}},$$ where
$\widetilde{\theta_1}(\zeta)=\theta_1(\zeta_1)\ (\zeta\in\mathbb{D}^\infty_2)$.
Similarly, for each $n\geq2$,
$\widetilde{V_n}=M_{\widetilde{\theta_n}}$, where
$\widetilde{\theta_n}(\zeta)=\theta_n(\zeta_n)\ (\zeta\in\mathbb{D}^\infty_2)$ for some inner function $\theta_n\in H^\infty_{\mathcal{B}(\mathcal{E})}(\mathbb{D})$. The proof is complete.
\qed
\vskip2mm

Combining Proposition
\ref{power of DC} with Corollary \ref{application}, we have the following corollary (Corollary \ref{application 2}).
\begin{cor}
  Suppose $\mathbf{T}\in\mathcal{DC}$. Then there exists a sequence
  $\{B_n\}_{n\in\mathbb{N}}$ of finite Blaschke products, such that
  $\{\prod_{i=1}^{n}B_i(T_i)\}_{n\in\mathbb{N}}$ converges in the strong operator topology.
\end{cor}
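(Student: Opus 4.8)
The plan is to deduce the statement from Corollary \ref{application}, after replacing $\mathbf{T}$ by an auxiliary sequence built from it by first taking powers and then twisting each coordinate by a disk automorphism. First, Proposition \ref{power of DC} provides positive integers $k_1,k_2,\dots$ for which $\mathbf{R}:=(T_1^{k_1},T_2^{k_2},\dots)$ is of Beurling type; here $\mathbf{R}\in\mathcal{DC}(\mathcal{H})$ because powers of doubly commuting $C_{\cdot0}$-contractions are again doubly commuting $C_{\cdot0}$-contractions. Next fix $c=(c_1,c_2,\dots)$ with each $c_n\in[0,1)$ and $\sum_n(1-c_n)<\infty$, for instance $c_n=1-2^{-n}$, so that $c\in\Gamma$, and set
$$B_n(z)=\varphi_{c_n}\bigl(z^{k_n}\bigr),\qquad n\in\mathbb{N}.$$
Being the composition of the finite Blaschke product $z\mapsto z^{k_n}$ with the automorphism $\varphi_{c_n}$, each $B_n$ is again a finite Blaschke product (of degree $k_n$), and the composition rule for the Riesz functional calculus gives $B_n(T_n)=\varphi_{c_n}(T_n^{k_n})$. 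Hence
$$\mathbf{S}:=(B_1(T_1),B_2(T_2),\dots)=\Phi_c(\mathbf{R})\in\mathcal{DC}(\mathcal{H}),\qquad\prod_{i=1}^{n}B_i(T_i)=S_1\cdots S_n\quad(n\in\mathbb{N}).$$

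It remains to verify the hypothesis of Corollary \ref{application} for $\mathbf{S}$, namely $\bigvee_{\lambda\in\Gamma}\mathfrak{D}_{\Phi_\lambda(\mathbf{S})^*}=\mathcal{H}$; granting this, Corollary \ref{application} applied to $\mathbf{S}$ says exactly that $\{S_1\cdots S_n\}_{n\in\mathbb{N}}=\{\prod_{i=1}^{n}B_i(T_i)\}_{n\in\mathbb{N}}$ converges in the strong operator topology, which is the assertion. Fix $\lambda\in\mathbb{D}^\infty$. Since $\varphi_{c_n}$ is an involutive automorphism of $\mathbb{D}$, the automorphism $\varphi_{\lambda_n}\circ\varphi_{c_n}$ equals $d_n\varphi_{\eta_n}$ for a unimodular constant $d_n$ and $\eta_n:=\varphi_{c_n}(\lambda_n)\in\mathbb{D}$; therefore $\varphi_{\lambda_n}(S_n)=(\varphi_{\lambda_n}\circ\varphi_{c_n})(R_n)=d_n\varphi_{\eta_n}(R_n)$, so that $\varphi_{\lambda_n}(S_n)\varphi_{\lambda_n}(S_n)^*=\varphi_{\eta_n}(R_n)\varphi_{\eta_n}(R_n)^*$ and hence $D_{\varphi_{\lambda_n}(S_n)^*}=D_{\varphi_{\eta_n}(R_n)^*}$ for every $n$, exactly as the unimodular constants dropped out in the proof of Theorem \ref{direct sums of quasi-BT}. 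Taking products over $n$ yields $D_{\Phi_\lambda(\mathbf{S})^*}=D_{\Phi_\eta(\mathbf{R})^*}$ with $\eta=(\varphi_{c_1}(\lambda_1),\varphi_{c_2}(\lambda_2),\dots)$, and so $\mathfrak{D}_{\Phi_\lambda(\mathbf{S})^*}=\mathfrak{D}_{\Phi_\eta(\mathbf{R})^*}$. Now, given any $\mu\in\mathbb{D}_2^\infty$, choose $\lambda_n:=\varphi_{c_n}(\mu_n)$, so that $\eta=\mu$ and $\mathfrak{D}_{\Phi_\lambda(\mathbf{S})^*}=\mathfrak{D}_{\Phi_\mu(\mathbf{R})^*}$; moreover $\lambda\in\Gamma$, since the identity $1-\varphi_{c_n}(\mu_n)=(1-c_n)(1+\mu_n)(1-c_n\mu_n)^{-1}$ together with $\mu_n\to0$ gives $|1-c_n\mu_n|\ge\frac12$ for all large $n$, whence $\sum_n|1-\varphi_{c_n}(\mu_n)|<\infty$ and the tail products $\prod_{n\ge m}\varphi_{c_n}(\mu_n)$ tend to $1$. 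Thus $\mathfrak{D}_{\Phi_\mu(\mathbf{R})^*}\subseteq\bigvee_{\lambda\in\Gamma}\mathfrak{D}_{\Phi_\lambda(\mathbf{S})^*}$ for every $\mu\in\mathbb{D}_2^\infty$. Since $\mathbf{R}$ is of Beurling type, Corollary \ref{BT} gives $\bigvee_{\mu\in\mathbb{D}_2^\infty}\mathfrak{D}_{\Phi_\mu(\mathbf{R})^*}=\mathcal{H}$, whence $\bigvee_{\lambda\in\Gamma}\mathfrak{D}_{\Phi_\lambda(\mathbf{S})^*}=\mathcal{H}$, as desired.

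The point that must be got right is the choice of the $B_n$. Taking merely $B_n(z)=z^{k_n}$ (so $\mathbf{S}=\mathbf{R}$) is not enough: the associated parameter points then lie in $\mathbb{D}_2^\infty$ rather than in $\Gamma$, and the corresponding partial products need not converge --- indeed for $\mathbf{R}=\mathbf{M}_\zeta$ one has $M_{\zeta_1}\cdots M_{\zeta_n}=M_{\zeta_1\cdots\zeta_n}$, which has no strong limit. Composing each power with an automorphism $\varphi_{c_n}$, where $c=(c_n)\in\Gamma$, is precisely what moves $\mathbb{D}_2^\infty$ inside $\Gamma$ via the elementary estimate above; apart from this, the argument is a straightforward combination of Proposition \ref{power of DC}, Corollary \ref{BT}, and Corollary \ref{application}.
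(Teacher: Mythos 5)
Your proposal is correct and is exactly the combination the paper intends: it invokes Proposition \ref{power of DC} to pass to a Beurling-type sequence of powers and then twists by a coordinatewise automorphism with parameter in $\Gamma$ so that Corollary \ref{application} applies (the paper states the corollary follows by "combining" these two results and omits the details you supply). Your observation that the twist is genuinely needed --- since the defect spaces of the untwisted power sequence are indexed by $\mathbb{D}_2^\infty$, which is disjoint from $\Gamma$ --- and your verification that $\varphi_{c_n}$ carries $\mathbb{D}_2^\infty$ into $\Gamma$ are the right way to fill the gap the paper leaves implicit.
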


\vskip2mm

Now we will establish an analytic model for sequences in class
$\mathcal{DC}$ with a decomposition of quasi-Beurling type.

Suppose that $\mathbf{T}\in\mathcal{DC}(\mathcal{H})$ is of Beurling type, and $\mathbf{V}\in\mathcal{DP}(\mathcal{K})$ is the minimal regular isometric dilation of $\mathbf{T}$.  By Corollary \ref{collection of cor} (1), $\mathbf{V}$ is jointly unitarily equivalent to the tuple $\mathbf{M}_\zeta$ of coordinate multiplication operators on a vector-valued Hardy space $H_{\mathfrak{D}_{\mathbf{V}^*}}^2(\mathbb{D}_2^\infty)$.
We  claim that the map
\begin{equation}\label{map can be extended}
 D_{\mathbf{V}^*}x\mapsto D_{\mathbf{T}^*}x,\quad x\in\mathcal{H}
\end{equation}
can be extended to a unitary operator  from $\mathfrak{D}_{\mathbf{V}^*}$ onto
$\mathfrak{D}_{\mathbf{T}^*}$. By Lemma \ref{dilation of defect op},
it remains to prove that $D_{\mathbf{V}^*}\mathcal{H}$ is dense in $\mathfrak{D}_{\mathbf{V}^*}$.
Assume that $x\in\mathfrak{D}_{\mathbf{V}^*}$ is orthogonal to $D_{\mathbf{V}^*}\mathcal{H}$. Then $x$ is orthogonal to $\mathcal{H}$ and
$\mathrm{Ran}\ V_n$ for all $n\in\mathbb{N}$. In particular, $x$ is orthogonal to $[\mathcal{H}]_\mathbf{V}=\mathcal{K}$, forcing $x=0$. This proves the claim.
Therefore, the above unitary operator  from $\mathfrak{D}_{\mathbf{V}^*}$ onto
$\mathfrak{D}_{\mathbf{T}^*}$ naturally induced a unitary operator $U_\mathbf{T}$ from $\mathcal{K}$ onto $H_{\mathfrak{D}_{\mathbf{T}^*}}^2(\mathbb{D}_2^\infty)$.
 It is easy to see that $\mathcal{Q}_\mathbf{T}=U_\mathbf{T}\mathcal{H}$ is an analytic model for the sequence $\mathbf{T}$. We call $\mathcal{Q}_\mathbf{T}$ the
\textit{canonical analytic model}  for  $\mathbf{T}$.

As a consequence, for a sequence $\mathbf{T}\in\mathcal{DC}(\mathcal{H})$ which is of quasi-Beurling type,  one can find some $\lambda=(\lambda_1,\lambda_2,\cdots)\in\mathbb{D}^\infty$ so that
$\Phi_\lambda(\mathbf{T})$ is of Beurling type, and then
$(\mathcal{Q}_{\Phi_\lambda(\mathbf{T})},(\varphi_{\lambda_1}\cdot I,\varphi_{\lambda_2}\cdot I,\cdots))$ is an analytic model for the sequence $\mathbf{T}$.

Suppose that $\{\mathcal{E}_\gamma\}_\gamma$ is a family of Hilbert spaces, and for each index $\gamma$,
$\Theta_\gamma$ is a sequence in $H^\infty_{\mathcal{B}(\mathcal{E}_\gamma)}(\mathbb{D})$. In a natural way, we can define the direct sum $\bigoplus_\gamma\Theta_\gamma$ of  $\{\Theta_\gamma\}_\gamma$, which is then a sequence in $H^\infty_{\mathcal{B}(\bigoplus_\gamma\mathcal{E}_\gamma)}(\mathbb{D})$.
Now let $\mathbf{T}=\bigoplus_\gamma\mathbf{T}_\gamma$ be direct sum of $\mathcal{DC}$-sequences  of quasi-Beurling type. For each index $\gamma$, we have previously established an analytic model $(\mathcal{Q}_\gamma,\Theta_\gamma)$ for $\mathbf{T}_\gamma$. Put $\mathcal{Q}=\bigoplus_\gamma\mathcal{Q}_\gamma$ and $\Theta=\bigoplus_\gamma\Theta_\gamma$. Then
$(\mathcal{Q},\Theta)$ is an analytic model
for $\mathbf{T}$.

In fact, we have
\begin{thm}
  Suppose $\mathbf{T}\in\mathcal{DC}$. Then $\mathbf{T}$ has a decomposition
of quasi-Beurling type if and only if $\mathbf{T}$ has an analytic model $(\mathcal{Q},\Theta)$ so that all $\theta_n(z)\ (n\in\mathbb{N},z\in\mathbb{D})$ are simultaneously diagonalizable
with respect to some orthonormal basis of the underlying space $\mathcal{E}$ of $(\mathcal{Q},\Theta)$.
\end{thm}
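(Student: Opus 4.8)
The plan is to prove both implications, reducing the ``if'' direction to condition (3) of Theorem \ref{direct sums of quasi-BT} and the ``only if'' direction to the explicit direct-sum construction of analytic models described before Theorem \ref{analytic model}.

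\textbf{The ``only if'' direction.} Suppose $\mathbf{T}=\bigoplus_\gamma\mathbf{T}_\gamma$ is a decomposition of quasi-Beurling type. For each $\gamma$ choose $\lambda_\gamma=(\lambda_{\gamma,1},\lambda_{\gamma,2},\dots)\in\mathbb{D}^\infty$ with $\Phi_{\lambda_\gamma}(\mathbf{T}_\gamma)$ of Beurling type; the canonical analytic model construction then yields an analytic model $(\mathcal{Q}_\gamma,\Theta_\gamma)$ for $\mathbf{T}_\gamma$ whose underlying space is $\mathcal{E}_\gamma=\mathfrak{D}_{\Phi_{\lambda_\gamma}(\mathbf{T}_\gamma)^*}$ and with $\Theta_\gamma=(\varphi_{\lambda_{\gamma,1}}\cdot I_{\mathcal{E}_\gamma},\varphi_{\lambda_{\gamma,2}}\cdot I_{\mathcal{E}_\gamma},\dots)$. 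Forming $(\mathcal{Q},\Theta)=\bigl(\bigoplus_\gamma\mathcal{Q}_\gamma,\bigoplus_\gamma\Theta_\gamma\bigr)$ gives an analytic model for $\mathbf{T}$ with underlying space $\mathcal{E}=\bigoplus_\gamma\mathcal{E}_\gamma$, and here $\theta_n=\bigoplus_\gamma\varphi_{\lambda_{\gamma,n}}\cdot I_{\mathcal{E}_\gamma}$, so $\theta_n(z)$ acts as the scalar $\varphi_{\lambda_{\gamma,n}}(z)$ on each $\mathcal{E}_\gamma$. Concatenating orthonormal bases of the $\mathcal{E}_\gamma$ produces an orthonormal basis of $\mathcal{E}$ with respect to which every $\theta_n(z)$ is diagonal; this part is routine.

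\textbf{The ``if'' direction.} Suppose $(\mathcal{Q},\Theta)$ is an analytic model for $\mathbf{T}$ and $\{e_k\}_k$ is an orthonormal basis of the underlying space $\mathcal{E}$ with $\theta_n(z)e_k=d_{n,k}(z)e_k$ for all $n,k,z$. Writing $H_{\mathcal{E}}^2(\mathbb{D}_2^\infty)=\bigoplus_k H^2(\mathbb{D}_2^\infty)\cdot e_k$, a direct computation (as in the proof of Lemma \ref{operator-valued in one variable}, using $M_{\widetilde{\theta_n}}(F\cdot e_k)=(M_{\widetilde{d_{n,k}}}F)\cdot e_k$ and $M_{\widetilde{\theta_n}}^*(F\cdot e_k)=(M_{\widetilde{d_{n,k}}}^*F)\cdot e_k$) shows each summand $H^2(\mathbb{D}_2^\infty)\cdot e_k$ is joint reducing for $\mathbf{M}_\Theta$, and $\mathbf{M}_\Theta$ restricted there is unitarily equivalent to $(M_{\widetilde{d_{1,k}}},M_{\widetilde{d_{2,k}}},\dots)$ on $H^2(\mathbb{D}_2^\infty)$. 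Since $\theta_n$ is inner, $M_{\widetilde{d_{n,k}}}$ is an isometry, so $d_{n,k}$ is a scalar inner function with $|d_{n,k}|\le 1$; and since $M_{\widetilde{d_{n,k}}}$ is the restriction of the pure isometry $M_{\widetilde{\theta_n}}$ to a reducing subspace it is itself pure, which forbids $d_{n,k}$ from being a (necessarily unimodular) constant. Hence Proposition \ref{sequence of multiplication operators} applies: $(M_{\widetilde{d_{1,k}}},M_{\widetilde{d_{2,k}}},\dots)$ is a $\mathcal{DC}$-sequence of quasi-Beurling type, so by Theorem \ref{direct sums of quasi-BT} one has $\bigvee_{\nu\in\mathbb{D}^\infty}\mathfrak{D}_{\Phi_\nu((M_{\widetilde{d_{\cdot,k}}}))^*}=H^2(\mathbb{D}_2^\infty)\cdot e_k$. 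As the defect space of a restriction to a reducing subspace equals the intersection, $\mathfrak{D}_{\Phi_\nu((M_{\widetilde{d_{\cdot,k}}}))^*}=\mathfrak{D}_{\Phi_\nu(\mathbf{M}_\Theta)^*}\cap\bigl(H^2(\mathbb{D}_2^\infty)\cdot e_k\bigr)$, and summing over $k$ gives $\bigvee_{\nu\in\mathbb{D}^\infty}\mathfrak{D}_{\Phi_\nu(\mathbf{M}_\Theta)^*}=H_{\mathcal{E}}^2(\mathbb{D}_2^\infty)$.

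To finish, I transfer this to $\mathbf{T}$. Since $\mathbf{M}_\Theta$ is jointly unitarily equivalent to the minimal regular isometric dilation $\mathbf{V}$ of $\mathbf{T}$, which by Lemma \ref{min iso dilation} coextends $\mathbf{T}$, Lemma \ref{dilation of defect op} yields $\bigvee_{\nu\in\mathbb{D}^\infty}\mathfrak{D}_{\Phi_\nu(\mathbf{T})^*}=\mathcal{H}$: if $x\in\mathcal{H}$ is orthogonal to every $\mathfrak{D}_{\Phi_\nu(\mathbf{T})^*}$ then $D_{\Phi_\nu(\mathbf{T})^*}x=0$, hence $\|D_{\Phi_\nu(\mathbf{V})^*}x\|=\|D_{\Phi_\nu(\mathbf{T})^*}x\|=0$ for all $\nu$, forcing $x$ orthogonal to $\bigvee_\nu\mathfrak{D}_{\Phi_\nu(\mathbf{V})^*}=\mathcal{K}\supseteq\mathcal{H}$, so $x=0$. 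Then (3)$\Rightarrow$(1) of Theorem \ref{direct sums of quasi-BT} gives the desired decomposition of quasi-Beurling type. The one point requiring care, and the only real obstacle, is the non-constancy of the diagonal entries $d_{n,k}$ (extracted from purity of the dilation) together with checking that simultaneous diagonalizability genuinely produces a joint reducing coordinate decomposition of $H_{\mathcal{E}}^2(\mathbb{D}_2^\infty)$; once these are in hand, everything is assembled from results already established.
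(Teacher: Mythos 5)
Your proposal is correct and follows essentially the same route as the paper: the necessity via the direct sum of canonical analytic models of the quasi-Beurling summands, and the sufficiency by splitting $H_{\mathcal{E}}^2(\mathbb{D}_2^\infty)$ along the diagonalizing basis into joint reducing one-dimensional-fiber summands, invoking Proposition \ref{sequence of multiplication operators}, and transferring condition (3) of Theorem \ref{direct sums of quasi-BT} from the dilation back to $\mathbf{T}$. Your explicit verification that each diagonal entry $d_{n,k}$ is a non-constant inner function (via purity of the restricted isometry) is a detail the paper leaves implicit, but the argument is the same.
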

\begin{proof}
The necessity follows from the construction in previous paragraphs.
  Now assume that $\mathbf{T}\in\mathcal{DC}(\mathcal{H})$ has an analytic model $(\mathcal{Q},\Theta)$ so that all $\theta_n(z)\ (n\in\mathbb{N},z\in\mathbb{D})$ are simultaneously diagonalizable
with respect to some orthonormal basis $\{e_i\}_{i\in\Lambda}$ of the underlying space $\mathcal{E}$ of $(\mathcal{Q},\Theta)$. We will prove that $\mathbf{T}$ has a decomposition
of quasi-Beurling type.
By the assumption, we have
$$\theta_n=\sum_{i\in\Lambda} \eta_{n i}\cdot e_ i\widehat{\otimes} e_ i,\quad n\in\mathbb{N},$$
where each $\eta_{n i}$ is an $H^\infty(\mathbb{D})$-inner function, $e_ i\widehat{\otimes} e_ i$ denotes the $1$-rank projection $$e_ i\widehat{\otimes} e_ i(x)=\langle x,e_ i\rangle e_ i,\quad x\in\mathcal{E}.$$
Then for each $i\in\Lambda$,
$H_{\mathbb{C}e_ i}^2(\mathbb{D}^\infty_2)$, as a subspace of $H_\mathcal{E}^2(\mathbb{D}_2^\infty)$, is joint reducing for $\mathbf{M}_\Theta$, and the restriction of $\mathbf{M}_\Theta$ on $H_{\mathbb{C}e_ i}^2(\mathbb{D}^\infty_2)$ is jointly unitarily equivalent to
the sequence
$(M_{\widetilde{\eta_{1 i}}},M_{\widetilde{\eta_{2 i}}},\cdots)$ of multiplication operators on
the Hardy space $H^2(\mathbb{D}_2^\infty)$, where $\widetilde{\eta_{n i}}(\zeta)=\eta_{n i}(\zeta_n)\ (n\in\mathbb{N},\zeta\in\mathbb{D}^\infty_2)$.
It follows from Proposition \ref{sequence of multiplication operators} that
the sequence $\mathbf{M}_\Theta$  has a decomposition
of quasi-Beurling type, and therefore so does  the minimal isometric dilation $\mathbf{V}$ of $\mathbf{T}$ by the definition of the analytic model. Then Theorem \ref{direct sums of quasi-BT} together with  Corollary \ref{collection of cor} (3) implies that $\mathbf{T}$ also has a decomposition
of quasi-Beurling type.
\end{proof}

\vskip2mm
To conclude this section, we give a characterization of the
canonical analytic model $\mathcal{Q}_{\mathbf{T}}$ for a sequence $\mathbf{T}\in\mathcal{DC}$ which is of Beurling type. One approach, inspired by the single $C_{.0}$-contraction case, is  utilizing
the characteristic functions of contractions.

Recall that the characteristic function $\theta_T$ of a contraction $T\in\mathcal{B}(\mathcal{H})$ is defined by
$$\theta_T(z)=[-T+zD_{T^*}(1-zT^*)^{-1}D_T]|_{\mathfrak{D}_T},\quad z\in\mathbb{D},$$
which is a $\mathcal{B}(\mathfrak{D}_T,\mathfrak{D}_{T^*})$-valued inner function (see \cite{SNFBK}). Then the multiplication operator $M_{\theta_T}$ is an isometry from $H_{\mathfrak{D}_T}^2(\mathbb{D})$ to $H_{\mathfrak{D}_{T^*}}^2(\mathbb{D})$, and thus
$$M_{\theta_T}^*H_{\mathfrak{D}_{T^*}}^2(\mathbb{D})
=H_{\mathfrak{D}_T}^2(\mathbb{D}).$$
Suppose that in addition $T\in C_{.0}$ and $V\in\mathcal{B}(\mathcal{K})$ is the minimal isometric dilation of $T$. Since $V$ is pure, $\mathcal{K}$ has a decomposition as $\mathcal{K}=\bigoplus_{k=0}^\infty V^k\mathfrak{D}_{V^*}$, and therefore can be identified with the vector-valued Hardy space $H_{\mathfrak{D}_{V^*}}^2(\mathbb{D})$.   Similar to (\ref{map can be extended}), the map
$$D_{V^*}x\mapsto D_{T^*}x,\quad x\in\mathcal{H}$$
can be extended to a  unitary operator from $\mathfrak{D}_{V^*}$ onto
$\mathfrak{D}_{T^*}$, and then induces a unitary operator $U_T$ from $\mathcal{K}$ onto $H_{\mathfrak{D}_{T^*}}^2(\mathbb{D})$. It is easy to see that
$U_T$ is actually an extension of the isometry
\begin{eqnarray*}V:\quad \mathcal{H}& \rightarrow &
  H_{\mathfrak{D}_{T^*}}^2(\mathbb{D}),\\
  x & \mapsto & \sum_{k=1}^\infty z^k\cdot D_{T^*}T^{*k}x.\end{eqnarray*}
 %given by Sz.-Nagy and Foias' functional model for $C_{.0}$-contractions.
%Below we will show that $\mathcal{J}=W\mathcal{H}=U_T\mathcal{H}$ is completely determined by
%the characteristic function $\theta_T$ of $T$.
%For $x\in\mathcal{H}$, $y\in\mathfrak{D}_{T^*}$ and $a\in\mathbb{D}$, $$U_Tx=U_T(\sum_{k=1}^{\infty}V^kD_{V^*}V^{*k}x)=\sum_{k=1}^{\infty}z^kD_{T^*}T^{*k}x,$$
%%=D_{T^*}(1-zT^*)^{-1}x
%and then
%\begin{equation*}
%  \begin{split}
%\langle U_Tx,K_a\cdot y \rangle & =\left\langle \sum_{k=1}^{\infty}z^kD_{T^*}T^{*k}x,K_a\cdot y \right\rangle \\
%     &= \sum_{k=1}^{\infty}\langle z^kD_{T^*}T^{*k}x,K_a\cdot y \rangle \\
%       & =\sum_{k=1}^{\infty}a^k\langle D_{T^*}T^{*k}x, y \rangle \\
%       &= \left\langle x,\sum_{k=1}^{\infty}(\bar{a}T)^kD_{T^*}y \right\rangle\\
%       & =\langle x,(1-\bar{a}T)^{-1}D_{T^*}y \rangle,
%  \end{split}
% \end{equation*} where $K_a(z)=\frac{1}{1-\bar{a}z}$.
% This forces $P_{\mathcal{H}}U_T^*(K_a\cdot y)=(1-\bar{a}T)^{-1}D_{T^*}y$,
%here $P_M$ denotes the orthogonal projection from $\mathcal{K}$ onto a closed subspace $M$.
It follows from \cite[pp. 244-245]{SNFBK} that for $a,b\in\mathbb{D}$,
\begin{equation}\label{theta_T(b)theta_T(a)*}
  I_{\mathfrak{D}_{T^*}}-\theta_T(b)\theta_T(a)^*
=(1-\bar{a}b)[D_{T^*}(1-bT^*)^{-1}(1-\bar{a}T)^{-1}D_{T^*}]|_{\mathfrak{D}_{T^*}}.
\end{equation}
%Then
%for $x,y\in\mathfrak{D}_{T^*}$ and $a,b\in\mathbb{D}$,
%\begin{equation*}
%  \begin{split}
%     \langle P_{\mathcal{H}}U_T^*K_a\cdot x,P_{\mathcal{H}}U_T^*K_b\cdot y \rangle & =\langle (1-\bar{a}T)^{-1}D_{T^*}x,(1-\bar{b}T)^{-1}D_{T^*}y \rangle \\
%       & =\langle D_{T^*}(1-bT^*)^{-1}(1-\bar{a}T)^{-1}D_{T^*}x,y \rangle \\
%       & =\frac{1}{1-\bar{a}b}\langle (I_{\mathfrak{D}_{T^*}}-\theta_T(b)\theta_T(a)^*x,y \rangle \\
%      & =\langle K_a,K_b\rangle(\langle x,y\rangle- \langle \theta_T(a)^*x,\theta_T(b)^*y \rangle) \\
%      & =\langle K_a\cdot x,K_b\cdot y\rangle- \langle M_{\theta_T}^*K_a\cdot x,M_{\theta_T}^*K_b\cdot y \rangle \\
%      & =\langle (I-M_{\theta_T}M_{\theta_T}^*)K_a\cdot x,K_b\cdot y \rangle.
%  \end{split}
% \end{equation*}
%Since the set $\{K_a\cdot x:x\in \mathfrak{D}_{T^*},a\in\mathbb{D}\}$ is complete in $H_{\mathfrak{D}_{T^*}}^2(\mathbb{D})$,
This gives  \begin{equation}\label{projection identity}
 U_TP_{\mathcal{H}}U_T^*=VV^*=I-M_{\theta_T}M_{\theta_T}^*
\end{equation}
(see \cite{BES} or \cite{BNS});
that is, $$H_{\mathfrak{D}_{T^*}}^2(\mathbb{D})\ominus U_T\mathcal{H}
=M_{\theta_T}M_{\theta_T}^*H_{\mathfrak{D}_{T^*}}^2(\mathbb{D}).$$

%\begin{lem} \label{density lemma}
%If $\mathbf{T}$ is a sequence in class $\mathcal{DC}(\mathcal{H})$, and $\mathbf{V}$ is the minimal regular isometric dilation of $\mathbf{T}$, then $D_{\mathbf{V}^*}\mathcal{H}$ is dense in $\mathfrak{D}_{\mathbf{V}^*}$.
%\end{lem}

Let us return to the characterization of the
canonical analytic model $\mathcal{Q}_{\mathbf{T}}$ for the sequence $\mathbf{T}=(T_1,T_2,\cdots)$. Now consider the following spaces
$$M_{\theta_{T_n}}M_{\theta_{T_n}}^*
H_{\mathfrak{D}_{\mathbf{T}^*}}^2(\mathbb{D}),\quad n\in\mathbb{N}.$$
By (\ref{theta_T(b)theta_T(a)*}), for $a,b\in\mathbb{D}$,
\begin{equation*}
 \begin{split}
    \theta_{T_1}(b)\theta_{T_1}(a)^*D_{\mathbf{T}^*}x & = D_{\mathbf{T}'^*}\theta_{T_1}(b)\theta_{T_1}(a)^*D_{T_1^*}x\\
      & = D_{\mathbf{T}^*}x-(1-\bar{a}b)[D_{\mathbf{T}^*}(1-bT^*)^{-1}(1-\bar{a}T)^{-1}D_{T_1^*}^2]x\\
      & =
D_{\mathbf{T}^*}[1-(1-\bar{a}b)(1-bT_1^*)^{-1}(1-\bar{a}T_1)^{-1}D_{T_1^*}^2]x,
 \end{split}
\end{equation*} where $\mathbf{T}'=(T_2,T_3,\cdots)$.
Similarly,
for any $n\in\mathbb{N}$ and $a,b\in\mathbb{D}$,
\begin{equation} \label{reducing for theta_T(b)theta_T(a)*}
    \theta_{T_n}(b)\theta_{T_n}(a)^*D_{\mathbf{T}^*}x  =
D_{\mathbf{T}^*}[1-(1-\bar{a}b)(1-bT_n^*)^{-1}(1-\bar{a}T_n)^{-1}D_{T_n^*}^2]x,
\end{equation}
 and then by Lemma \ref{operator-valued in one variable} (1), $H_{\mathfrak{D}_{\mathbf{T}^*}}^2(\mathbb{D})$ is reducing for
$M_{\theta_{T_n}}M_{\theta_{T_n}}^*$, and
\begin{equation} \label{range of M_theta_T*}
 M_{\theta_{T_n}}^*H_{\mathfrak{D}_{\mathbf{T}^*}}^2(\mathbb{D})
=H_{\mathcal{F}_n}^2(\mathbb{D}),
\end{equation}
 where $\mathcal{F}_n$ is the defect space of the sequence $(T_1^*,\cdots,T_{n-1}^*,T_n,T_{n+1}^*,\cdots)$.
 In particular, $$M_{\theta_{T_n}}M_{\theta_{T_n}}^*
H_{\mathfrak{D}_{\mathbf{T}^*}}^2(\mathbb{D})=\theta_{T_n}H_{\mathcal{F}_n}^2(\mathbb{D}),\quad  n\in\mathbb{N}$$ is a $M_z$-invariant subspace of
$H_{\mathfrak{D}_{\mathbf{T}^*}}^2(\mathbb{D})$.

 Put  $\widetilde{\theta_{T_n}}(\zeta)=\theta_{T_n}(\zeta_n)\ (n\in\mathbb{N},\zeta\in\mathbb{D}^\infty_2)$.
 Then $M_{\widetilde{\theta_{T_n}}}M_{\widetilde{\theta_{T_n}}}^*
H_{\mathfrak{D}_{\mathbf{T}^*}}^2(\mathbb{D}^\infty_2)$  $(n\in\mathbb{N})$ is of form
\begin{equation} \label{H_D_T*^2}
  \underbrace{H^2(\mathbb{D})\otimes\cdots\otimes H^2(\mathbb{D})}_{n-1\ \mathrm{times}}\otimes M_{\theta_{T_n}}M_{\theta_{T_n}}^*H_{\mathfrak{D}_{\mathbf{T}^*}}^2(\mathbb{D}) \otimes H^2(\mathbb{D})\otimes H^2(\mathbb{D})\otimes\cdots,
\end{equation}
forcing it to be a joint invariant subspace of $H_{\mathfrak{D}_{\mathbf{T}^*}}^2(\mathbb{D}^\infty_2)$
for the tuple of coordinate multiplication operators.

Our result  presented below looks somehow similar to the above single $C_{.0}$-contraction case, or the finite-tuple case considered in \cite{BNS}. However, instead of following the proof in \cite{BNS}, we will give an  original proof.

\begin{thm} \label{analytic model for BT} Let $\mathbf{T}=(T_1,T_2,\cdots)\in\mathcal{DC}$ be  of Beurling type, and $\mathcal{Q}_{\mathbf{T}}$  the canonical analytic model for $\mathbf{T}$. Then
$$H_{\mathfrak{D}_{\mathbf{T}^*}}^2(\mathbb{D}^\infty_2)\ominus\mathcal{Q}_{\mathbf{T}} = \bigvee_{n=1}^\infty M_{\widetilde{\theta_{T_n}}}M_{\widetilde{\theta_{T_n}}}^*
H_{\mathfrak{D}_{\mathbf{T}^*}}^2(\mathbb{D}^\infty_2),$$
where $\widetilde{\theta_{T_n}}(\zeta)=\theta_{T_n}(\zeta_n)\ (n\in\mathbb{N},\zeta\in\mathbb{D}^\infty_2)$.
\end{thm}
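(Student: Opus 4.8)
The plan is to reduce the claim, via the tensor-decomposition \eqref{H_D_T*^2}, to a statement that only involves finitely many variables at a time, and then to exploit the minimality of the dilation together with the projection identity \eqref{projection identity} applied variable-by-variable. Write $\mathcal{E}=\mathfrak{D}_{\mathbf{T}^*}$ and let $\mathbf{V}\in\mathcal{DP}(\mathcal{K})$ be the minimal regular isometric dilation of $\mathbf{T}$. By Corollary \ref{collection of cor}~(1), $\mathcal K$ splits as $\bigoplus_{\alpha}\mathbf V^\alpha\mathcal E$, and under the canonical identification $U_{\mathbf T}:\mathcal K\to H^2_{\mathcal E}(\mathbb D_2^\infty)$ the operator $V_n$ becomes $M_{\widetilde{\theta_{T_n}}}$ for the characteristic-function inner factors; here I would first check (using \eqref{range of M_theta_T*} and that the $\theta_{T_n}$ involve disjoint variables, so their ranges intersect in the ``obvious'' way) that this identification is consistent with Theorem \ref{analytic model}, i.e.\ that $\widetilde{\theta_{T_n}}$ really is the $n$-th inner function in the model. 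The goal reduces to showing
$$
H^2_{\mathcal E}(\mathbb D_2^\infty)\ominus U_{\mathbf T}\mathcal H=\bigvee_{n=1}^\infty M_{\widetilde{\theta_{T_n}}}M_{\widetilde{\theta_{T_n}}}^*H^2_{\mathcal E}(\mathbb D_2^\infty)=\bigvee_{n=1}^\infty\operatorname{Ran} M_{\widetilde{\theta_{T_n}}}.
$$

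For the inclusion ``$\supseteq$'': since $\mathcal H$ is invariant for $\mathbf V^*$, it suffices to show each $\operatorname{Ran}M_{\widetilde{\theta_{T_n}}}$ is orthogonal to $U_{\mathbf T}\mathcal H$. Fixing $n$, decompose $H^2_{\mathcal E}(\mathbb D_2^\infty)=H^2_{\mathcal E}(\mathbb D)\otimes\mathcal L_n'$ where the first factor carries the $n$-th variable (as in Subsection 2.2), so $M_{\widetilde{\theta_{T_n}}}=M_{\theta_{T_n}}\otimes I$. The single-variable identity \eqref{projection identity}, applied to $T_n$ in the slot of the $n$-th variable, shows $I-M_{\theta_{T_n}}M_{\theta_{T_n}}^*$ is the projection onto the image of the Sz.-Nagy--Foias embedding $x\mapsto\sum z^k D_{T_n^*}T_n^{*k}x$; composing with the remaining variables and using that $U_{\mathbf T}$ restricted to $\mathcal H$ factors through all these embeddings simultaneously (this is exactly how the canonical model was built in \eqref{map can be extended}), we get $U_{\mathbf T}\mathcal H\subseteq\operatorname{Ran}(I-M_{\widetilde{\theta_{T_n}}}M_{\widetilde{\theta_{T_n}}}^*)=(\operatorname{Ran}M_{\widetilde{\theta_{T_n}}})^\perp$ for every $n$.

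For the reverse inclusion ``$\subseteq$'', which is the crux, I would argue that the minimality of $\mathbf V$ forces the complement to be no larger than the stated span. Let $\mathcal N=\bigvee_n\operatorname{Ran}M_{\widetilde{\theta_{T_n}}}$; by the previous paragraph $U_{\mathbf T}\mathcal H\perp\mathcal N$, so it remains to see $U_{\mathbf T}\mathcal H\oplus\mathcal N=H^2_{\mathcal E}(\mathbb D_2^\infty)$, equivalently that the only vector orthogonal to both $U_{\mathbf T}\mathcal H$ and every $\operatorname{Ran}M_{\widetilde{\theta_{T_n}}}$ is $0$. Transporting back via $U_{\mathbf T}$: a vector $y\in\mathcal K$ with $y\perp\mathcal H$ and $y\perp V_n H^2$-pieces for all $n$ must in fact satisfy $V_n^*y\perp\mathcal H$ and $V_n^*y\perp\operatorname{Ran}M_{\widetilde{\theta_{T_n}}}$ again (using doubly commuting, the $\mathbf V^*$-invariance of $\mathcal H$, and that each $\operatorname{Ran}M_{\widetilde{\theta_{T_n}}}$ is coinvariant-complemented appropriately with respect to the other isometries); iterating gives $\mathbf V^{*\alpha}y\perp\mathcal H$ for all $\alpha$, hence $y\perp[\mathcal H]_{\mathbf V}=\mathcal K$, so $y=0$. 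The main obstacle I anticipate is making this iteration rigorous: one must verify that $V_n^*$ maps $\mathcal N^\perp$ into $\mathcal N^\perp$, which amounts to checking that each $\operatorname{Ran}M_{\widetilde{\theta_{T_m}}}$ is invariant under all the $V_n$ (clear, since $V_n$ commutes with $M_{\widetilde{\theta_{T_m}}}=V_m$) and, more delicately, that $V_n^*\big(\mathcal N^\perp\cap\mathcal H^\perp\big)\subseteq\mathcal N^\perp\cap\mathcal H^\perp$; the $\mathcal H^\perp$ part is where $\mathcal H$ being only coinvariant (not reducing) for $\mathbf V$ bites, and I would handle it exactly as in the proof of Lemma \ref{intersection of H_n}(3), using $P_{\mathcal H}V_n^*=V_n^*P_{\mathcal H}$ on the relevant subspaces. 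Once that closure-under-$V_n^*$ is established, minimality of the dilation finishes the argument cleanly.
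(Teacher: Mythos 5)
Your overall shape (reduce to one variable at a time, use the single-variable projection identity, invoke minimality of the dilation) points in the right direction, but the proposal rests on a false identification and leaves the genuinely hard step unproved. First, it is not true that $U_{\mathbf T}V_nU_{\mathbf T}^*=M_{\widetilde{\theta_{T_n}}}$: since $\mathbf T$ is of Beurling type, the minimal regular isometric dilation is jointly unitarily equivalent to $\mathbf M_\zeta$, so under the canonical identification $V_n$ becomes $M_{\zeta_n}$, while $\theta_{T_n}$ is $\mathcal B(\mathfrak D_{T_n},\mathfrak D_{T_n^*})$-valued and in general is not even an endomorphism-valued function on $\mathcal E=\mathfrak D_{\mathbf T^*}$. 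For the same reason the equality $M_{\widetilde{\theta_{T_n}}}M_{\widetilde{\theta_{T_n}}}^*H^2_{\mathcal E}(\mathbb D_2^\infty)=\operatorname{Ran}M_{\widetilde{\theta_{T_n}}}$ is wrong: by \eqref{range of M_theta_T*} the left-hand side is $\widetilde{\theta_{T_n}}H^2_{\mathcal F_n}(\mathbb D_2^\infty)$ for the smaller defect space $\mathcal F_n$ of $(T_1^*,\dots,T_{n-1}^*,T_n,T_{n+1}^*,\dots)$, and even the preliminary fact that $H^2_{\mathcal E}$ is reducing for $M_{\widetilde{\theta_{T_n}}}M_{\widetilde{\theta_{T_n}}}^*$ (so that the statement makes sense inside $H^2_{\mathcal E}$) requires the computation \eqref{reducing for theta_T(b)theta_T(a)*} together with Lemma \ref{operator-valued in one variable}(1), which your sketch never addresses.

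Second, the reverse inclusion is where the content lies, and your iteration is circular: verifying that $V_n^*$ maps $\mathcal H^\perp\cap\mathcal N^\perp$ into itself amounts to showing $V_n\mathcal H\subseteq\mathcal H\oplus\mathcal N_n$ with $\mathcal N_n=M_{\widetilde{\theta_{T_n}}}M_{\widetilde{\theta_{T_n}}}^*H^2_{\mathcal E}$, which is essentially the per-variable identity $H^2_{\mathcal E}\ominus U_{\mathbf T}\mathcal H_n=\mathcal N_n$ that the theorem reduces to; Lemma \ref{intersection of H_n}(3) only gives $\mathcal H=\bigcap_n\mathcal H_n$ and does not supply this. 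The missing ingredient is the paper's detour through the compression $S=P_{\mathcal H_1}V_1|_{\mathcal H_1}$: the projection identity \eqref{projection identity} is applied to $S$ (not to $T_1$, whose Sz.-Nagy--Foias embedding lives on the wrong space), the resulting projection is shown via the claim \eqref{two orthogonal projections} to have the tensor form $M_{\theta_S}M_{\theta_S}^*|_{H^2_{\mathfrak D_{\mathbf S^*}}(\mathbb D)}\otimes I_{\mathcal L}$, and finally $\theta_S$ is traded for $\theta_{T_1}$ by checking $\Pi\theta_S(b)\theta_S(a)^*\Pi^*=\theta_{T_1}(b)\theta_{T_1}(a)^*$ on $\mathfrak D_{\mathbf T^*}$ and invoking Lemma \ref{operator-valued in one variable}(2). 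Without some substitute for these steps the argument does not close.
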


%Note that $M_{\widetilde{\theta_{T_n}}}M_{\widetilde{\theta_{T_n}}}^*
%H_{\mathfrak{D}_{\mathbf{T}^*}}^2(\mathbb{D}^\infty_2)$ is of form
%$$\underbrace{H^2(\mathbb{D})\otimes\cdots\otimes H^2(\mathbb{D})}_{n-1}\otimes M_{\theta_{T_n}}M_{\theta_{T_n}}^*H_{\mathfrak{D}_{\mathbf{T}^*}}^2(\mathbb{D}) \otimes H^2(\mathbb{D})\otimes\cdots,$$
%here $M_{\theta_{T_n}}M_{\theta_{T_n}}^*H_{\mathfrak{D}_{\mathbf{T}^*}}^2(\mathbb{D})$  is a $M_z$-invariant subspace of $H_{\mathfrak{D}_{\mathbf{T}^*}}^2(\mathbb{D})$
%because $H_{\mathfrak{D}_{\mathbf{T}^*}}^2(\mathbb{D})$ is reducing for
%$M_{\theta_{T_n}}M_{\theta_{T_n}}^*$.

\begin{proof} Let $\mathbf{V}\in\mathcal{DP}(\mathcal{K})$ be the minimal regular isometric dilation of $\mathbf{T}\in\mathcal{DC}(\mathcal{H})$, and put
$$\mathcal{H}_n=\bigvee_{\substack{\alpha\in\mathbb{Z}_+^{(\infty)} \\ \alpha_n=0 }}\mathbf{V}^\alpha\mathcal{H},\quad n\in\mathbb{N}.$$
By  Lemma \ref{intersection of H_n} (3), one has $\mathcal{Q}_{\mathbf{T}}=U_{\mathbf{T}}\mathcal{H}=\bigcap_{n=1}^\infty U_{\mathbf{T}}\mathcal{H}_n$. It thus suffices to prove that for each $n\in\mathbb{N}$,
$$H_{\mathfrak{D}_{\mathbf{T}^*}}^2(\mathbb{D}^\infty_2)\ominus U_{\mathbf{T}}\mathcal{H}_n=M_{\widetilde{\theta_{T_n}}}M_{\widetilde{\theta_{T_n}}}^*
H_{\mathfrak{D}_{\mathbf{T}^*}}^2(\mathbb{D}^\infty_2);$$
equivalently,
\begin{equation} \label{identity want to show}
  I-U_{\mathbf{T}}P_{\mathcal{H}_n}U_{\mathbf{T}}^*
=M_{\widetilde{\theta_{T_n}}}M_{\widetilde{\theta_{T_n}}}^*|_{
H_{\mathfrak{D}_{\mathbf{T}^*}}^2(\mathbb{D}^\infty_2)}.
\end{equation}
Assume $n=1$ without loss of generality, and
put $\mathbf{S}=(S_1,S_2,\cdots)=P_{\mathcal{H}_1}\mathbf{V}|_{\mathcal{H}_1}$, the compression of the sequence $\mathbf{V}$ on the subspace $\mathcal{H}_1$.  Rewrite
$T=T_1$ and $S=S_1$ for simplicity.
One
can define the unitary operators $U_{\mathbf{S}}:\mathcal{K}\rightarrow H_{\mathfrak{D}_{\mathbf{S}^*}}^2(\mathbb{D}^\infty_2)$ and $U_S:\mathcal{K}\rightarrow H_{\mathfrak{D}_{S^*}}^2(\mathbb{D})$ as done previously in this section.
%Also, it follows from (\ref{projection identity}) that
%$$I-U_{\mathbf{T}}P_{\mathcal{H}_1}U_{\mathbf{T}}^*
%=I-U_{\mathbf{T}}(I-U_S^*M_{\theta_S}M_{\theta_S}^*U_S)U_{\mathbf{T}}^*
%=U_{\mathbf{T}}U_S^*M_{\theta_S}M_{\theta_S}^*U_SU_{\mathbf{T}}^*.$$
Since (\ref{projection identity}) remain valid for $S$, (\ref{reducing for theta_T(b)theta_T(a)*}) and (\ref{range of M_theta_T*}) remain valid for  $\mathbf{S}$, it follows that
\begin{equation}\label{P_H_1}
 P_{\mathcal{H}_1}
 =I-U_S^*M_{\theta_S}M_{\theta_S}^*U_S,
\end{equation}
$H_{\mathfrak{D}_{\mathbf{S}^*}}^2(\mathbb{D})$ is reducing for
$M_{\theta_{S}}M_{\theta_{S}}^*$ and
\begin{equation} \label{range of M_theta_S*}
 M_{\theta_{S}}^*H_{\mathfrak{D}_{\mathbf{S}^*}}^2(\mathbb{D})
=H_{\mathcal{F}}^2(\mathbb{D}),
\end{equation}
where $\mathcal{F}$ is the defect space of the sequence
$(S,S_2^*,S_3^*,\cdots)$.

We now claim that
\begin{equation}\label{two orthogonal projections}
U_S^*M_{\theta_S}M_{\theta_S}^*U_S=U_\mathbf{S}^*(M_{\theta_S}M_{\theta_S}^*
|_{H_{\mathfrak{D}_{\mathbf{S}^*}}^2(\mathbb{D})}\otimes I_{\mathcal{L}})U_{\mathbf{S}},
\end{equation}
where
$$\mathcal{L}=\overline{\mathrm{span}\{\zeta^\alpha:\alpha=(\alpha_1,\alpha_2,\cdots)\in\mathbb{Z}_+^{(\infty)}\ \mathrm{with} \ \alpha_1=0\}}.$$
%and hence \begin{equation*}
%            \begin{split}
%               I-U_{\mathbf{T}}P_{\mathcal{H}_1}U_{\mathbf{T}}^*
% & =U_{\mathbf{T}}U_\mathbf{S}^*(M_{\theta_S}M_{\theta_S}^*
%|_{H_{\mathfrak{D}_{\mathbf{S}^*}}^2(\mathbb{D})}\otimes I_{\mathcal{L}})U_\mathbf{S}U_{\mathbf{T}}^* \\
%                 & =(I_{H^2(\mathbb{D})}\otimes\Pi\otimes I_{\mathcal{L}})(M_{\theta_S}M_{\theta_S}^*
%|_{H_{\mathfrak{D}_{\mathbf{S}^*}}^2(\mathbb{D})}\otimes I_{\mathcal{L}})
%(I_{H^2(\mathbb{D})}\otimes\Pi^*\otimes I_{\mathcal{L}}) \\
%                 & =(M_{\Pi\cdot\theta_S}M_{\Pi\cdot\theta_S}^*
%|_{H_{\mathfrak{D}_{\mathbf{T}^*}}^2(\mathbb{D})})\otimes I_{\mathcal{L}},
%            \end{split}
%          \end{equation*}
Since operators on both sides of (\ref{two orthogonal projections}) are orthogonal projections, the claim is equivalent to
\begin{equation} \label{equivalent form of claim}
U_S^*M_{\theta_S}M_{\theta_S}^*H_{\mathfrak{D}_{S^*}}^2(\mathbb{D})
=U_{\mathbf{S}}^*(M_{\theta_S}M_{\theta_S}^*
|_{H_{\mathfrak{D}_{\mathbf{S}^*}}^2(\mathbb{D})}\otimes I_{\mathcal{L}})(H_{\mathfrak{D}_{\mathbf{S}^*}}^2(\mathbb{D})\otimes\mathcal{L}).
\end{equation}
The left side of (\ref{equivalent form of claim}) is
$$U_S^*M_{\theta_S}H_{\mathfrak{D}_S}^2(\mathbb{D}),$$
and by (\ref{range of M_theta_S*}) the right side of (\ref{equivalent form of claim}) is
$$U_{\mathbf{S}}^*(M_{\theta_S}H_{\mathcal{F}}^2
(\mathbb{D})\otimes\mathcal{L}).$$
A calculation gives that for any $\alpha=(\alpha_1,\alpha_2,\cdots)\in\mathbb{Z}_+^{(\infty)}\ \mathrm{with} \ \alpha_1=0$ and $f\in H_{\mathfrak{D}_{\mathbf{S}^*}}^2
(\mathbb{D})$, $$U_SU_{\mathbf{S}}^*(f\otimes\zeta^\alpha)=(I_{H^2(\mathbb{D})}\otimes \mathbf{S}^\alpha)f.$$
Since $S$ doubly commutes with $S_n\ (n\geq2)$, for such $\alpha$ and $g\in H_{\mathcal{F}}^2
(\mathbb{D})$, the above identity gives
\begin{equation} \label{U_SU_S*}
 U_SU_{\mathbf{S}}^*(\theta_Sg\otimes\zeta^\alpha)=(I_{H^2(\mathbb{D})}\otimes \mathbf{S}^\alpha)\theta_Sg=M_{\theta_S}(I_{H^2(\mathbb{D})}\otimes \mathbf{S}^\alpha)g.
\end{equation}
Also by Lemma \ref{intersection of H_n} (1),
$\mathcal{H}_1$ is reducing for $V_n\ (n\geq2)$.
Then $\mathbf{S}'=(S_2,S_3,\cdots)$ is a $\mathcal{DP}$-sequence of Beurling type, and so is
$\mathbf{S}'|_{\mathfrak{D}_S}$, the restriction of $\mathbf{S}'$ on the joint reducing subspace $\mathfrak{D}_S$. This together with Corollary \ref{collection of cor} (1) gives
\begin{equation} \label{decomposition of D_s}
\mathfrak{D}_S=\bigoplus_{\alpha\in\mathbb{Z}_+^{(\infty)}}
\mathbf{S}'^\alpha\mathfrak{D}_{(\mathbf{S}'|_{\mathfrak{D}_S})^*}
=\bigoplus_{\alpha\in\mathbb{Z}_+^{(\infty)}}
\mathbf{S}'^\alpha \overline{D_S\mathfrak{D}_{\mathbf{S}'^*}}
=\bigoplus_{\alpha\in\mathbb{Z}_+^{(\infty)}}
\mathbf{S}'^\alpha\mathcal{F}.
\end{equation}
Therefore, we have
\begin{equation*}
  \begin{split}
   U_SU_{\mathbf{S}}^*(M_{\theta_S}H_{\mathcal{F}}^2
(\mathbb{D})\otimes\mathcal{L})
   & = \bigoplus_{\alpha_1=0}U_SU_{\mathbf{S}}^* (M_{\theta_S}H_{\mathcal{F}}^2
(\mathbb{D})\otimes\mathbb{C}\zeta^\alpha) \\
%& =\bigoplus_{\alpha_1=0}(I_{H^2(\mathbb{D})}\otimes \mathbf{S}^\alpha)M_{\theta_S}H_{\mathcal{F}}^2(\mathbb{D}) \\
       & =\bigoplus_{\alpha\in\mathbb{Z}_+^{(\infty)}}M_{\theta_S}(I_{H^2(\mathbb{D})}\otimes \mathbf{S}'^\alpha)H_{\mathcal{F}}^2(\mathbb{D})\\
       & =\bigoplus_{\alpha\in\mathbb{Z}_+^{(\infty)}}M_{\theta_S}H_{\mathbf{S}'^\alpha\mathcal{F}}^2(\mathbb{D}) \\ &=M_{\theta_S}(\bigoplus_{\alpha\in\mathbb{Z}_+^{(\infty)}}H_{\mathbf{S}'^\alpha\mathcal{F}}^2(\mathbb{D})) \\ &=M_{\theta_S}H_{\mathfrak{D}_S}^2(\mathbb{D}),
  \end{split}
\end{equation*}
where the second identity follows from (\ref{U_SU_S*}) and the last identity follows from (\ref{decomposition of D_s}).
This proves the claim.

The  operator $U_{\mathbf{T}}U_\mathbf{S}^*$ has the form $I_{H^2(\mathbb{D})}\otimes\Pi\otimes I_{\mathcal{L}}$
with respect to the representation
$$H_{\mathfrak{D}_{\mathbf{S}^*}}^2(\mathbb{D}^\infty_2)
=H^2(\mathbb{D})\otimes\mathfrak{D}_{\mathbf{S}^*}\otimes\mathcal{L},$$ where $\Pi:\mathfrak{D}_{\mathbf{S}^*}\rightarrow\mathfrak{D}_{\mathbf{T}^*}$ is a unitary operator satisfying \begin{equation} \label{PiD_S*=D_T*}
  \Pi(D_{\mathbf{S}^*}x)=D_{\mathbf{T}^*}x,\quad x\in\mathcal{H}.
\end{equation}
Let $\Pi\cdot\theta_S$ denotes the operator-valued function defined by $z\mapsto\Pi\theta_S(z)$ $(z\in\mathbb{D})$. Then
 $$M_{\Pi\cdot\theta_S}^*
|_{H_{\mathfrak{D}_{\mathbf{T}^*}}^2(\mathbb{D})}
=M_{\theta_S}^*
|_{H_{\mathfrak{D}_{\mathbf{S}^*}}^2(\mathbb{D})}
(I_{H^2(\mathbb{D})}\otimes\Pi^*)$$
since two operators coincide on the set $\{K_a\cdot x:a\in\mathbb{D},x\in\mathfrak{D}_{\mathbf{T}^*}\}$,
and thus
\begin{equation} \label{Pi dot theta}
\begin{split}
M_{\Pi\cdot\theta_S}M_{\Pi\cdot\theta_S}^*
|_{H_{\mathfrak{D}_{\mathbf{T}^*}}^2(\mathbb{D})}
& = (M_{\Pi\cdot\theta_S}^*
|_{H_{\mathfrak{D}_{\mathbf{T}^*}}^2(\mathbb{D})})^*
M_{\Pi\cdot\theta_S}^*
|_{H_{\mathfrak{D}_{\mathbf{T}^*}}^2(\mathbb{D})} \\
%& = (M_{\theta_S}^*
%|_{H_{\mathfrak{D}_{\mathbf{S}^*}}^2(\mathbb{D})}
%(I_{H^2(\mathbb{D})}\otimes\Pi^*))^*M_{\theta_S}^*
%|_{H_{\mathfrak{D}_{\mathbf{S}^*}}^2(\mathbb{D})}
%(I_{H^2(\mathbb{D})}\otimes\Pi^*) \\
&=  (I_{H^2(\mathbb{D})}\otimes\Pi)
(M_{\theta_S}^*
|_{H_{\mathfrak{D}_{\mathbf{S}^*}}^2(\mathbb{D})})^*
M_{\theta_S}^*
|_{H_{\mathfrak{D}_{\mathbf{S}^*}}^2(\mathbb{D})}
(I_{H^2(\mathbb{D})}\otimes\Pi^*) \\
& =(I_{H^2(\mathbb{D})}\otimes\Pi)M_{\theta_S}M_{\theta_S}^*
|_{H_{\mathfrak{D}_{\mathbf{S}^*}}^2(\mathbb{D})}
(I_{H^2(\mathbb{D})}\otimes\Pi^*).
  \end{split}
\end{equation}
Therefore, we have
\begin{equation*}
            \begin{split}
               I-U_{\mathbf{T}}P_{\mathcal{H}_1}U_{\mathbf{T}}^*
 & =I-U_{\mathbf{T}}(I-U_S^*M_{\theta_S}M_{\theta_S}^*U_S)U_{\mathbf{T}}^* \\ & =U_{\mathbf{T}}U_S^*M_{\theta_S}M_{\theta_S}^*U_SU_{\mathbf{T}}^* \\
 & =U_{\mathbf{T}}U_\mathbf{S}^*(M_{\theta_S}M_{\theta_S}^*
|_{H_{\mathfrak{D}_{\mathbf{S}^*}}^2(\mathbb{D})}\otimes I_{\mathcal{L}})U_\mathbf{S}U_{\mathbf{T}}^* \\
                 & =(I_{H^2(\mathbb{D})}\otimes\Pi\otimes I_{\mathcal{L}})(M_{\theta_S}M_{\theta_S}^*
|_{H_{\mathfrak{D}_{\mathbf{S}^*}}^2(\mathbb{D})}\otimes I_{\mathcal{L}})
(I_{H^2(\mathbb{D})}\otimes\Pi^*\otimes I_{\mathcal{L}}) \\
& =((I_{H^2(\mathbb{D})}\otimes\Pi)M_{\theta_S}M_{\theta_S}^*
|_{H_{\mathfrak{D}_{\mathbf{S}^*}}^2(\mathbb{D})}
(I_{H^2(\mathbb{D})}\otimes\Pi^*))\otimes I_{\mathcal{L}}\\
                 & =(M_{\Pi\cdot\theta_S}M_{\Pi\cdot\theta_S}^*
|_{H_{\mathfrak{D}_{\mathbf{T}^*}}^2(\mathbb{D})})\otimes I_{\mathcal{L}},
            \end{split}
          \end{equation*}
where the first identity follows from (\ref{P_H_1}),
the third identity follows from (\ref{two orthogonal projections}), and
the fifth identity follows from (\ref{Pi dot theta}).
On the other hand, (\ref{H_D_T*^2}) gives $$M_{\widetilde{\theta_T}}M_{\widetilde{\theta_T}}^*|_{
H_{\mathfrak{D}_{\mathbf{T}^*}}^2(\mathbb{D}^\infty_2)}=M_{\theta_T}M_{\theta_T}^*
|_{H_{\mathfrak{D}_{\mathbf{T}^*}}^2(\mathbb{D})}\otimes I_{\mathcal{L}},$$
which reduces (\ref{identity want to show})  to
$$M_{\Pi\cdot\theta_S}M_{\Pi\cdot\theta_S}^*
|_{H_{\mathfrak{D}_{\mathbf{T}^*}}^2(\mathbb{D})}=M_{\theta_T}M_{\theta_T}^*
|_{H_{\mathfrak{D}_{\mathbf{T}^*}}^2(\mathbb{D})}.$$
By Lemma \ref{operator-valued in one variable} (2), it remains to prove that for any fixed $a,b\in\mathbb{D}$ and any fixed $x\in\mathcal{H}\subseteq\mathcal{H}_1$,
$$(\Pi\cdot\theta_S)(b)(\Pi\cdot\theta_S)(a)^*
D_{\mathbf{T}^*}x=\Pi\theta_S(b)\theta_S(a)^*\Pi^*D_{\mathbf{T}^*}x
=\theta_T(b)\theta_T(a)^*D_{\mathbf{T}^*}x.$$
Note that  Lemma \ref{intersection of H_n} (2) implies that $\mathcal{H}$ is reducing for $S$ and $T=S|_\mathcal{H}$. By (\ref{reducing for theta_T(b)theta_T(a)*}), we have $$\theta_S(b)\theta_S(a)^*D_{\mathbf{S}^*}x=D_{\mathbf{S}^*}y$$ and $$\theta_T(b)\theta_T(a)^*D_{\mathbf{T}^*}x=D_{\mathbf{T}^*}y,$$ where
%$$y  = [1-(1-\bar{a}b)(1-bS^*)^{-1}(1-\bar{a}S)^{-1}D_{S^*}^2]x
%      =[1-(1-\bar{a}b)(1-bT^*)^{-1}(1-\bar{a}T)^{-1}D_{T^*}^2]x\in\mathcal{H}.$$
\begin{equation*}
  \begin{split}
    y & = [1-(1-\bar{a}b)(1-bS^*)^{-1}(1-\bar{a}S)^{-1}D_{S^*}^2]x\\
     & =[1-(1-\bar{a}b)(1-bT^*)^{-1}(1-\bar{a}T)^{-1}D_{T^*}^2]x\in\mathcal{H}.
  \end{split}
\end{equation*}
It follows from (\ref{PiD_S*=D_T*}) that
$$\Pi\theta_S(b)\theta_S(a)^*\Pi^*D_{\mathbf{T}^*}x
=\Pi\theta_S(b)\theta_S(a)^*D_{\mathbf{S}^*}x
=\Pi D_{\mathbf{S}^*}y= D_{\mathbf{T}^*}y.$$
This completes the proof.
%$$\theta_T(b)\theta_T(a)^*D_{\mathbf{T}^*}x=
%D_{\mathbf{T}^*}[1-(1-\bar{a}b)(1-bT^*)^{-1}(1-\bar{a}T)^{-1}D_{T^*}^2]x$$
%and
%\begin{equation*}
%  \begin{split}
%     \Pi\theta_S(b)\theta_S(a)^*\Pi^*D_{\mathbf{T}^*}x & = \Pi\theta_S(b)\theta_S(a)^*D_{\mathbf{S}^*}x \\
%     & =
%\Pi D_{\mathbf{S}^*}y \\
%& = D_{\mathbf{T}^*}y \\
%       & = D_{\mathbf{T}^*}[1-(1-\bar{a}b)(1-bT^*)^{-1}(1-\bar{a}T)^{-1}D_{T^*}^2]x\\
%       & = \theta_T(b)\theta_T(a)^*D_{\mathbf{T}^*}x.
%  \end{split}
%\end{equation*}
%The proof is complete.
\end{proof}

Now we are ready to refine the representation (\ref{representation of direct sum of quasi-BT})  in Subsection 1.2.
%Following \cite{SNFBK}, say that two operator-valued functions
%$\theta\in H_{\mathcal{B}(\mathcal{F},\mathcal{E})}^\infty(\mathbb{D})$ and $\vartheta\in H_{\mathcal{B}(\mathcal{H},\mathcal{G})}^\infty(\mathbb{D})$
%coincide if there exist unitary operators $U:\mathcal{F}\rightarrow\mathcal{H}$ and $V:\mathcal{E}\rightarrow\mathcal{G}$ such that
%$$\vartheta(z)=V\theta(z)U^*,\quad z\in\mathbb{D}.$$
%Suppose that $\mathbf{T}\in\mathcal{DC}$ with a decomposition $\mathbf{T}=\bigoplus_\gamma\mathbf{T}_\gamma$ of quasi-Beurling type. Then
%there corresponds a point $\lambda_\gamma\in\mathbb{D}^\infty$ to each index $\gamma$, such that
Note that for each index $\gamma$, $\mathbf{S}_\gamma=(S_{\gamma1},S_{\gamma2},\cdots)=\Phi_{\lambda_\gamma}(\mathbf{T}_\gamma)$ is of Beurling type. Without loss of generality, we may assume that $\mathcal{E}_\gamma=\mathfrak{D}_{\mathbf{S}_\gamma^*}$ and $\mathcal{Q}_\gamma$ is
the canonical analytic model for $\mathbf{S}_\gamma$
in (\ref{representation of direct sum of quasi-BT}).
%Let $\widetilde{\mathcal{Q}_\gamma}$  be
 %Therefore, $\mathbf{T}_\gamma$ is jointly unitarily equivalent to $P_{\widetilde{\mathcal{Q}_\gamma}}\Phi_{\lambda_\gamma}(\mathbf{M}_\zeta)|_{\widetilde{\mathcal{Q}_\gamma}}$, and thus
% $$\mathbf{T}\cong\bigoplus_\gamma P_{\widetilde{\mathcal{Q}_\gamma}}\Phi_{\lambda_\gamma}(\mathbf{M}_\zeta)|_{\widetilde{\mathcal{Q}_\gamma}}. $$
%where each $\widetilde{\mathcal{Q}_\gamma}\subseteq H^2_{\mathfrak{D}_{\mathbf{S}_\gamma^*}}(\mathbb{D}_2^\infty)$  satisfies that any nonzero element $x\in\mathfrak{D}_{\mathbf{S}_\gamma^*}$ is not orthogonal to $\widetilde{\mathcal{Q}_\gamma}$.
%Define a sequence $(\theta_1,\theta_2,\cdots)$ of operator-valued inner functions on $\mathbb{D}$ as $\theta_n(z)=\oplus_\gamma\theta_{S_{\gamma n}}(z)$, where $\theta_{S_{\gamma n}}$ is the characteristic function of $S_{\gamma n}$ ($n\in\mathbb{N}$). It is routine to check that
%\begin{itemize}
%  \item [(1)] $\theta_n$ coincide with the characteristic function $\theta_{T_n}$ of $T_n$ for each $n\in\mathbb{N}$;
%  \item [(2)] $H^2_{\mathcal{E}_\gamma}(\mathbb{D})$ is joint reducing for the sequence $(M_{\theta_1}M_{\theta_1}^*,M_{\theta_2}M_{\theta_2}^*,
%      \cdots)$ for each index $\gamma$.
%\end{itemize}
Put $\lambda_\gamma=(\lambda_{\gamma 1},\lambda_{\gamma 2},\cdots)$ and
let $\theta_{\gamma n}$, $\vartheta_{\gamma n}$ ($n\in\mathbb{N}$) denote the characteristic functions of $T_{\gamma n}$ and $S_{\gamma n}$, respectively. Then for each $\gamma$, $\vartheta_{\gamma n}$ coincides with $\theta_{\gamma n}\circ
\varphi_{\lambda_{\gamma n}}$ (see \cite[pp. 246-247]{SNFBK}), and
it follows from Theorem \ref{analytic model for BT} that $$H^2_{\mathcal{E}_\gamma}(\mathbb{D}_2^\infty)\ominus\mathcal{Q}_\gamma
      =\bigvee_{n=1}^\infty M_{\widetilde{\vartheta_{\gamma n}}}M_{\widetilde{\vartheta_{\gamma n}}}^*H^2_{\mathcal{E}_\gamma}(\mathbb{D}_2^\infty),$$
     where $\widetilde{\vartheta_{\gamma n}}(\zeta)=\vartheta_{\gamma n}(\zeta_n)\ (n\in\mathbb{N},\zeta\in\mathbb{D}^\infty_2)$.

\section{Doubly commuting submodules and quotient modules of $H_{\mathcal{E}}^2(\mathbb{D}^\infty_2)$}

For submodules and quotient modules of the  Hardy module, we are interested in the module actions on them; that is, the restrictions of  the tuple  of coordinate multiplication operators on submodules and the compressions of  the tuple  of coordinate multiplication operators on quotient modules. In this section, we mainly consider such restrictions and  compressions  that are doubly commuting.

Recall that
   a submodule $\mathcal{S}$ of $H_{\mathcal{E}}^2(\mathbb{D}_2^\infty)$ is said to be doubly commuting if the restriction $$(M_{\zeta_1}|_\mathcal{S},M_{\zeta_2}|_\mathcal{S},\cdots)$$ of $\mathbf{M}_\zeta$ on $\mathcal{S}$ is doubly commuting.

\begin{thm} \label{restriction on doubly commuting submodule}
Let $\mathbf{M}_\zeta$ be the tuple of coordinate multiplication operators on the vector-valued Hardy space $H_{\mathcal{E}}^2(\mathbb{D}_2^\infty)$.
 Then the  restriction of $\mathbf{M}_\zeta$ on a doubly commuting submodule of
 $H_{\mathcal{E}}^2(\mathbb{D}_2^\infty)$ is  of Beurling type.
\end{thm}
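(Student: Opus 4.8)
The plan is to show that a doubly commuting submodule $\mathcal{S}\subseteq H_{\mathcal{E}}^2(\mathbb{D}_2^\infty)$, viewed as the Hilbert space on which the isometries $\mathbf{V}=(M_{\zeta_1}|_{\mathcal{S}},M_{\zeta_2}|_{\mathcal{S}},\cdots)$ act, is a $\mathcal{DP}$-sequence of Beurling type, i.e. $[\mathfrak{D}_{\mathbf{V}^*}]_{\mathbf{V}}=\mathcal{S}$. Since the $M_{\zeta_n}$ are pure isometries and the restriction to a joint invariant subspace of a doubly commuting tuple remains doubly commuting and consists of pure isometries, we have $\mathbf{V}\in\mathcal{DP}(\mathcal{S})$ automatically; the only issue is the Beurling-type condition. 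By Corollary \ref{BT}, this is equivalent to showing $\bigvee_{\lambda\in\mathbb{D}_2^\infty}\mathfrak{D}_{\Phi_\lambda(\mathbf{V})^*}=\mathcal{S}$, or to the norm identity $\|x\|^2=\sum_{\alpha}\|D_{\mathbf{V}^*}\mathbf{V}^{*\alpha}x\|^2$ for all $x\in\mathcal{S}$.

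The most efficient route I would take is through condition (4) of Corollary \ref{BT}. Recall from the comments preceding Lemma \ref{local chara of K_gamma} that for $\mathbf{V}\in\mathcal{DP}$, the nonzero elements of $\mathfrak{D}_{\Phi_\lambda(\mathbf{V})^*}$ are precisely the joint eigenvectors of $\mathbf{V}^*$ with joint eigenvalue $\overline{\lambda}$. For the ambient space, $\mathbf{M}_\zeta^*$ has joint eigenvector $\mathbf{K}_\lambda\cdot x$ at $\overline{\lambda}$ for every $\lambda\in\mathbb{D}_2^\infty$, $x\in\mathcal{E}$ (this is formula (\ref{M_Psi*}) with $\Psi=\mathrm{id}$, or Lemma \ref{K_lambda}(2)). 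The key point is: if $\mathbf{K}_\lambda\cdot x\in\mathcal{S}$ then it lies in $\mathfrak{D}_{\Phi_\lambda(\mathbf{V})^*}$, because $\mathcal{S}$ is joint invariant for $\mathbf{M}_\zeta$ hence joint coinvariant is irrelevant — rather, since $\mathbf{K}_\lambda\cdot x$ is already a joint eigenvector of $\mathbf{M}_\zeta^*$ and lies in $\mathcal{S}$, and $V_n^* = P_{\mathcal{S}}M_{\zeta_n}^*|_{\mathcal{S}}$, we get $V_n^*(\mathbf{K}_\lambda\cdot x)=P_{\mathcal{S}}\overline{\lambda_n}\mathbf{K}_\lambda\cdot x=\overline{\lambda_n}\mathbf{K}_\lambda\cdot x$. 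So every reproducing-kernel vector of $\mathcal{S}$ sits in some $\mathfrak{D}_{\Phi_\lambda(\mathbf{V})^*}$ with $\lambda\in\mathbb{D}_2^\infty$. The remaining step is to show $\mathcal{S}$ is spanned by its reproducing-kernel vectors $\{P_{\mathcal{S}}(\mathbf{K}_\lambda\cdot x):\lambda\in\mathbb{D}_2^\infty, x\in\mathcal{E}\}$ — but these span a dense subspace of $\mathcal{S}$ since $\{\mathbf{K}_\lambda\cdot x\}$ spans $H_{\mathcal{E}}^2(\mathbb{D}_2^\infty)$ densely (Lemma \ref{K_lambda}(1)) and $P_{\mathcal{S}}$ is onto $\mathcal{S}$. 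The subtlety is that $P_{\mathcal{S}}(\mathbf{K}_\lambda\cdot x)$ is generally \emph{not} itself an eigenvector of $\mathbf{V}^*$, only the kernel vectors that already lie in $\mathcal{S}$ are. So I need a better argument for why $\mathcal{S}$ has enough genuine eigenvectors.

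Here the hypothesis "doubly commuting" does real work, and I would exploit Theorem \ref{analytic model} (or its underlying mechanism). Since $\mathbf{V}\in\mathcal{DP}(\mathcal{S})$, Proposition \ref{power of DC} gives a sequence $(k_1,k_2,\cdots)$ so that $(V_1^{k_1},V_2^{k_2},\cdots)$ is of Beurling type, and Theorem \ref{analytic model} then realizes $\mathbf{V}$ (or rather the whole dilation, which here equals $\mathbf{V}$ itself since $\mathbf{V}$ is already a $\mathcal{DP}$-sequence and its own minimal regular isometric dilation) as $\mathbf{M}_{\Theta}=(M_{\widetilde{\theta_1}},M_{\widetilde{\theta_2}},\cdots)$ on some $H_{\mathcal{F}}^2(\mathbb{D}_2^\infty)$ for inner $\theta_n\in H^\infty_{\mathcal{B}(\mathcal{F})}(\mathbb{D})$. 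Then I must show such a $\mathbf{M}_\Theta$ is always of Beurling type. For this, compute $M_{\widetilde{\theta_n}}^*(\mathbf{K}_\lambda\cdot y) = \mathbf{K}_\lambda\cdot\theta_n(\lambda_n)^*y$ by (\ref{M_Psi*}); so $\mathbf{K}_\lambda\cdot y\in\mathfrak{D}_{\mathbf{M}_\Theta^*}$ iff $\theta_n(\lambda_n)^*y=0$ for all $n$. Running over $\lambda\in\mathbb{D}_2^\infty$ this may be too restrictive, so instead use $\Phi_\lambda$: for each point $\lambda\in\mathbb{D}_2^\infty$ and each $y$ in the (nonzero) common kernel $\bigcap_n\ker\big(\varphi_{a_n}(\theta_n(\lambda_n))\big)^* $ — where $a_n$ are chosen so that $\varphi_{a_n}(\theta_n(\zeta_n))$ annihilates the kernel vector at the relevant point — one produces eigenvectors of $\Phi_\mu(\mathbf{M}_\Theta)^*$ for suitable $\mu$, exactly as in the proof of Proposition \ref{sequence of multiplication operators}. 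The cleanest version: since each $\theta_n$ is inner and $M_{\theta_n}$ is a proper isometry (unless $\theta_n$ is unitary-constant, the trivial case), one checks $\bigvee_{\lambda\in\mathbb{D}_2^\infty}\mathfrak{D}_{\Phi_\lambda(\mathbf{M}_\Theta)^*}$ contains all $\mathbf{K}_\lambda\cdot y$ with $y$ ranging over a dense-enough set, hence equals $H_{\mathcal{F}}^2(\mathbb{D}_2^\infty)$, and Corollary \ref{BT}(4)$\Rightarrow$(1) finishes it. I expect the main obstacle to be precisely this last reduction — verifying that a tuple of one-variable operator-valued inner multipliers $\mathbf{M}_\Theta$ satisfies $\bigvee_{\lambda\in\mathbb{D}_2^\infty}\mathfrak{D}_{\Phi_\lambda(\mathbf{M}_\Theta)^*}=H_{\mathcal{F}}^2(\mathbb{D}_2^\infty)$ — which requires carefully choosing, for each prescribed kernel point, a Möbius shift $a_n$ of each $\theta_n$ so that the product of the shifted inner functions still has a nonzero common cokernel \emph{and} the resulting parameter lies in $\mathbb{D}_2^\infty$; the Schwarz-lemma estimate $\sum|g_n(\lambda_n)|^2\le\sum|\lambda_n|^2<\infty$ from Proposition \ref{sequence of multiplication operators} is the template, but in the operator-valued setting one needs the analogous bound $\|\theta_n(\lambda_n)^*\xi_n-\xi_n'\|$-type control, which I would handle by passing to the scalar inner functions arising from the restriction of $M_{\theta_n}$ to the cyclic subspace generated by an eigenvector, reducing to the scalar case already treated.
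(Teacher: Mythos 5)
Your first idea (kernel vectors of the ambient space that happen to lie in $\mathcal{S}$) fails for the reason you yourself identify, so the weight of the proof rests entirely on the second route, and that is where the argument breaks. Passing to the abstract analytic model $\mathbf{M}_\Theta$ via Theorem \ref{analytic model} discards precisely the hypothesis that does the work: by that theorem \emph{every} sequence in $\mathcal{DP}$ is jointly unitarily equivalent to some tuple $\mathbf{M}_\Theta$ of one-variable operator-valued inner multipliers, so the statement you propose to verify --- that any such $\mathbf{M}_\Theta$ satisfies $\bigvee_{\lambda\in\mathbb{D}_2^\infty}\mathfrak{D}_{\Phi_\lambda(\mathbf{M}_\Theta)^*}=H_{\mathcal{F}}^2(\mathbb{D}_2^\infty)$ --- would say that every $\mathcal{DP}$-sequence is of Beurling type. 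This is false: Theorem \ref{example} exhibits $\mathbf{V}\in\mathcal{DP}$ with $\mathfrak{D}_{\Phi_\lambda(\mathbf{V})^*}=\{0\}$ for \emph{every} $\lambda\in\mathbb{D}^\infty$, and by Theorem \ref{analytic model} that $\mathbf{V}$ is itself of the form $\mathbf{M}_\Theta$. Your proposed fallback, ``reducing to the scalar case already treated,'' reduces to a statement that is also not available: Proposition \ref{sequence of multiplication operators} and Remark \ref{remark for Section 4} only give \emph{quasi}-Beurling type for scalar one-variable inner multipliers, and show that for ``almost all'' choices the wandering subspace is $\{0\}$, so these tuples are generically not of Beurling type. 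The restriction to a joint invariant subspace of the coordinate shifts is a very special $\mathcal{DP}$-sequence, and no step of your argument exploits that specialness once you have moved to the model.

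For comparison, the paper's proof is short and elementary and uses the submodule structure directly. By Corollary \ref{collection of cor} (3) one may split off the part of $\mathcal{S}$ generated by $\mathfrak{D}_{\mathbf{R}^*}$ (where $\mathbf{R}=\mathbf{M}_\zeta|_{\mathcal{S}}$) and reduce to showing that $\mathfrak{D}_{\mathbf{R}^*}=\{0\}$ forces $\mathcal{S}=\{0\}$. If $\mathfrak{D}_{\mathbf{R}^*}=\{0\}$ then $\mathcal{S}=\bigvee_{n}\zeta_n\mathcal{S}$, and one gets a contradiction from the grading of $H_{\mathcal{E}}^2(\mathbb{D}_2^\infty)$ by total degree: take $k_0$ minimal such that some $F\in\mathcal{S}$ has a nonzero $k_0$-th homogeneous component of norm $1$; any finite sum $\sum_i\zeta_iF_i$ with $F_i\in\mathcal{S}$ has vanishing $k_0$-th homogeneous component, so $\|F-\sum_i\zeta_iF_i\|\geq1$, contradicting $\mathcal{S}=\bigvee_n\zeta_n\mathcal{S}$. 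If you want to salvage your approach, you must feed in the fact that each $V_n$ is the restriction of $M_{\zeta_n}$ to an $\mathbf{M}_\zeta$-invariant subspace (equivalently, some analogue of the degree filtration); without that input the desired conclusion is simply not true of the class of operators you end up considering.
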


Before giving the proof, we introduce the notion of homogeneous components of functions in $H_{\mathcal{E}}^2(\mathbb{D}_2^\infty)$.
Suppose $F\in H_{\mathcal{E}}^2(\mathbb{D}_2^\infty)$, and let $F=\sum_{\alpha\in\mathbb{Z}_+^{(\infty)}}\zeta^\alpha\cdot x_\alpha$ be the power series expansion of $F$. The sum $\sum_{|\alpha|=k}\zeta^\alpha\cdot x_\alpha$  ($k=0,1,2,\cdots$) is called the \textit{$k$-th homogeneous component} of $F$,
 where $|\alpha|=\alpha_1+\alpha_2+\cdots$. It is clear that $\|F\|^2$ is equal to the
 quadratic sum of norms of all homogeneous components of $F$.

\begin{proof}
  Let $\mathcal{S}$ be a doubly commuting  quotient module of
 $H_{\mathcal{E}}^2(\mathbb{D}_2^\infty)$, and set $\mathbf{R}=\mathbf{M}_\zeta|_{\mathcal{S}}$, the restriction of $\mathbf{M}_\zeta$ on $\mathcal{S}$. Assume conversely that $\mathbf{R}$ is not of Beurling type to reach a contradiction. Then  by Corollary \ref{collection of cor} (3), there exists an $\mathbf{R}$-joint reducing subspace $\widetilde{\mathcal{S}}$ of $\mathcal{S}$ that is
 orthogonal to the defect space $\mathfrak{D}_{\mathbf{R}^*}$ of $\mathbf{R}^*$;
 equivalently, $\mathfrak{D}_{\widetilde{\mathbf{R}}^*}=\{0\}$, where $\widetilde{\mathbf{R}}=\mathbf{R}|_{\widetilde{\mathcal{S}}}$.
 So without loss of generality, we may assume $\mathfrak{D}_{\mathbf{R}^*}=\{0\}$.

 Set $k_0$ to be the minimal non-negative integer among those $k's$ that let nonzero $k$-th homogeneous components appear in some functions belonging to $\mathcal{S}$.
Now choose a function $F\in\mathcal{S}$ so that the norm of  the $k_0$-th homogeneous component of $F$ is $1$.
Since $\mathfrak{D}_{\mathbf{R}^*}=\{0\}$, one has
$$\mathcal{S}=\bigvee_{n=1}^\infty\mathrm{Ran}\ R_n=\bigvee_{n=1}^\infty\zeta_n\mathcal{S},$$
and then there exist $n\in\mathbb{N}$ and $n$ functions $F_1,\cdots,F_n\in\mathcal{S}$, such that
$$\|F-\sum_{i=1}^{n}\zeta_i F_i\|<1.$$
This implies that the norm of  the $k_0$-th homogeneous component of the function $F-\sum_{i=1}^{n}\zeta_i F_i$ is less than $1$. However, it is clear that
the $k_0$-th homogeneous component of $\sum_{i=1}^{n}\zeta_i F_i$ is $0$,
which contradicts with the choice of $F$. The proof is complete.
\end{proof}

We also recall that a function $\Psi\in H_{\mathcal{B}(\mathcal{F},\mathcal{E})}^\infty(\mathbb{D}_2^\infty)$
is inner if the multiplication operator $M_\Psi$ induced by $\Psi$ is an isometry.
  It is clear that for any  inner function $\Psi\in H_{\mathcal{B}(\mathcal{F},\mathcal{E})}^\infty(\mathbb{D}_2^\infty)$,
 $\Psi H_{\mathcal{F}}^2(\mathbb{D}_2^\infty)$ is a doubly commuting submodule of
 $H_{\mathcal{E}}^2(\mathbb{D}_2^\infty)$.
The following (Theorem \ref{doubly commuting submodule}) is a Beurling-Lax type Theorem for the vector-valued Hardy space in infinitely many variables.

\begin{cor} \label{doubly commuting Hardy submodule} Let   $\mathcal{S}$ be a  submodule of the vector-valued Hardy module
 $H_{\mathcal{E}}^2(\mathbb{D}_2^\infty)$. Then $\mathcal{S}$ is doubly commuting if and only if there exist a Hilbert space $\mathcal{F}$ and an inner function $\Psi\in H_{\mathcal{B}(\mathcal{F},\mathcal{E})}^\infty(\mathbb{D}_2^\infty)$, such that
 \begin{equation}\label{rep of doubly commuting Hardy submodule}
  \mathcal{S}=\Psi H_{\mathcal{F}}^2(\mathbb{D}_2^\infty).
 \end{equation}
\end{cor}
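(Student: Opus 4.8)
The plan is to prove the two implications of the equivalence separately, the nontrivial one resting on Theorem~\ref{restriction on doubly commuting submodule} together with the structure theory of Beurling type sequences from Corollary~\ref{BT}.

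The direction ``$\Leftarrow$'' is straightforward: if $\mathcal{S}=\Psi H_{\mathcal{F}}^2(\mathbb{D}_2^\infty)$ with $\Psi$ inner, then $M_\Psi$ is an isometry of $H_{\mathcal{F}}^2(\mathbb{D}_2^\infty)$ onto $\mathcal{S}$ intertwining the two families of coordinate multiplication operators, so the restriction $(M_{\zeta_1}|_{\mathcal{S}},M_{\zeta_2}|_{\mathcal{S}},\cdots)$ is jointly unitarily equivalent to $\mathbf{M}_\zeta$ on $H_{\mathcal{F}}^2(\mathbb{D}_2^\infty)$, which is doubly commuting; hence $\mathcal{S}$ is doubly commuting. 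I would dispose of this in one or two lines.

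For ``$\Rightarrow$'', assume $\mathcal{S}$ is doubly commuting and set $\mathbf{R}=(M_{\zeta_1}|_{\mathcal{S}},M_{\zeta_2}|_{\mathcal{S}},\cdots)$. First I would observe that each $M_{\zeta_n}|_{\mathcal{S}}$ is the restriction of a pure isometry to an invariant subspace, hence is again a pure isometry, so $\mathbf{R}\in\mathcal{DP}(\mathcal{S})$. Theorem~\ref{restriction on doubly commuting submodule} then says $\mathbf{R}$ is of Beurling type. Since a doubly commuting sequence of pure isometries is its own minimal regular isometric dilation (the trivial dilation is regular and minimal, and such a dilation is unique up to joint unitary equivalence), Corollary~\ref{BT} applies to $\mathbf{T}=\mathbf{R}$ and produces a Hilbert space $\mathcal{F}$ (one may take $\mathcal{F}=\mathfrak{D}_{\mathbf{R}^*}$) together with a unitary $U:\mathcal{S}\rightarrow H_{\mathcal{F}}^2(\mathbb{D}_2^\infty)$ satisfying $U(M_{\zeta_n}|_{\mathcal{S}})U^*=M_{\zeta_n}$ on $H_{\mathcal{F}}^2(\mathbb{D}_2^\infty)$ for all $n$.

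The last step turns the inclusion of $\mathcal{S}$ into $H_{\mathcal{E}}^2(\mathbb{D}_2^\infty)$ into a multiplication operator. Writing $\iota:\mathcal{S}\hookrightarrow H_{\mathcal{E}}^2(\mathbb{D}_2^\infty)$ for the inclusion, the operator $T=\iota U^{-1}:H_{\mathcal{F}}^2(\mathbb{D}_2^\infty)\rightarrow H_{\mathcal{E}}^2(\mathbb{D}_2^\infty)$ is an isometry, and a short computation using $\iota\,(M_{\zeta_n}|_{\mathcal{S}})=M_{\zeta_n}\iota$ shows $TM_{\zeta_n}=M_{\zeta_n}T$ for every $n$. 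Proposition~\ref{operatoe interwine CMO} then gives $\Psi\in H_{\mathcal{B}(\mathcal{F},\mathcal{E})}^\infty(\mathbb{D}_2^\infty)$ with $T=M_\Psi$, and since $T$ is an isometry $\Psi$ is inner by definition; finally $\mathcal{S}=\mathrm{Ran}\,\iota=\mathrm{Ran}\,T=\Psi H_{\mathcal{F}}^2(\mathbb{D}_2^\infty)$. The bulk of the work is hidden in Theorem~\ref{restriction on doubly commuting submodule} and Corollary~\ref{BT}; granting those, the only delicate points are checking that $\mathbf{R}$ is a sequence of \emph{pure} isometries and identifying $\mathbf{R}$ with its own minimal regular isometric dilation so that Corollary~\ref{BT} may be invoked directly, after which no further obstacle arises.
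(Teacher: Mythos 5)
Your proposal is correct and follows essentially the same route as the paper: invoke Theorem \ref{restriction on doubly commuting submodule} and Corollary \ref{BT} to identify $\mathbf{R}$ with $\mathbf{M}_\zeta$ on some $H_{\mathcal{F}}^2(\mathbb{D}_2^\infty)$, then apply Proposition \ref{operatoe interwine CMO} to the resulting intertwining isometry to obtain the inner multiplier $\Psi$. The only difference is that you spell out the details the paper leaves implicit (that $\mathbf{R}\in\mathcal{DP}(\mathcal{S})$ and is its own minimal regular isometric dilation, and the easy converse direction), all of which are handled correctly.
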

\begin{proof} Let  $\mathbf{M}_\zeta$ be the tuple of coordinate multiplication operators on $H_{\mathcal{E}}^2(\mathbb{D}_2^\infty)$, and $\mathbf{R}$ be the  restriction of $\mathbf{M}_\zeta$ on $\mathcal{S}$. It follows from Theorem \ref{restriction on doubly commuting submodule} and Corollary \ref{BT} that
 $\mathbf{R}$  is jointly unitarily equivalent to the tuple $\mathbf{M}_\xi$ of coordinate multiplication operators on a vector-valued Hardy space $H_{\mathcal{F}}^2(\mathbb{D}_2^\infty)$. This naturally induces an isometry $V:H_{\mathcal{F}}^2(\mathbb{D}_2^\infty)\rightarrow H_{\mathcal{E}}^2(\mathbb{D}_2^\infty)$ satisfying $\mathrm{Ran}\ V=\mathcal{S}$ and
 $$VM_{\xi_n}=R_nV=M_{\zeta_n}V,\quad n\in\mathbb{N}.$$
 Then by Proposition \ref{operatoe interwine CMO}, $V=M_{\Psi}$ for some operator-valued inner function $\Psi\in H_{\mathcal{B}(\mathcal{F},\mathcal{E})}^\infty(\mathbb{D}_2^\infty)$,
 where $M_{\Psi}$ is the multiplication operator induced by $\Psi$. This completes the proof.
 \end{proof}

In particular, we reprove the known result that every  doubly commuting submodule of
 $H^2(\mathbb{D}_2^\infty)$ is principle (see \cite{O}).

 \begin{cor} Let  $\mathcal{S}$ be a  nonzero submodule of
 $H^2(\mathbb{D}_2^\infty)$. Then the followings are equivalent.
 \begin{itemize}
   \item [(1)] $\mathcal{S}$ is doubly commuting;
   \item [(2)] $\mathcal{S}$ is generated by a single inner function in $H^\infty(\mathbb{D}_2^\infty)$;
   \item [(3)]  $\mathcal{S}$ as a $\mathcal{P}_\infty$-module is unitarily equivalent to $H^2(\mathbb{D}_2^\infty)$.
 \end{itemize}
 \end{cor}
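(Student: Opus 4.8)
The plan is to prove the equivalence of (1), (2) and (3) for a nonzero submodule $\mathcal{S}$ of $H^2(\mathbb{D}_2^\infty)$ by establishing the cycle $(1)\Rightarrow(2)\Rightarrow(3)\Rightarrow(1)$, relying heavily on Corollary \ref{doubly commuting Hardy submodule} applied in the scalar case $\mathcal{E}=\mathbb{C}$. For $(1)\Rightarrow(2)$: by Corollary \ref{doubly commuting Hardy submodule} there are a Hilbert space $\mathcal{F}$ and an inner function $\Psi\in H_{\mathcal{B}(\mathcal{F},\mathbb{C})}^\infty(\mathbb{D}_2^\infty)$ with $\mathcal{S}=\Psi H_{\mathcal{F}}^2(\mathbb{D}_2^\infty)$. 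Since $M_\Psi$ is an isometry into the scalar-valued space $H^2(\mathbb{D}_2^\infty)$, a dimension-counting argument on the wandering spaces (the defect space of $(\mathbf{M}_\xi)^*$ on $H_{\mathcal{F}}^2(\mathbb{D}_2^\infty)$ is $\mathcal{F}$, mapped isometrically into the one-dimensional $\mathrm{Ker}\,\mathbf{M}_\zeta^*\cap\mathcal{S}$-type object) forces $\dim\mathcal{F}=1$; so $\mathcal{F}\cong\mathbb{C}$ and $\Psi$ reduces to a scalar inner function $\psi\in H^\infty(\mathbb{D}_2^\infty)$ generating $\mathcal{S}$.

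For $(2)\Rightarrow(3)$: if $\mathcal{S}=\psi H^2(\mathbb{D}_2^\infty)$ for a scalar inner $\psi$, then $M_\psi\colon H^2(\mathbb{D}_2^\infty)\to\mathcal{S}$ is a unitary onto $\mathcal{S}$ which intertwines each $M_{\zeta_n}$ with $M_{\zeta_n}|_{\mathcal{S}}$ (since multiplication operators commute), hence $\mathcal{S}$ as a $\mathcal{P}_\infty$-module is unitarily equivalent to $H^2(\mathbb{D}_2^\infty)$. For $(3)\Rightarrow(1)$: the module $H^2(\mathbb{D}_2^\infty)$ is doubly commuting because $\mathbf{M}_\zeta$ is a doubly commuting sequence of pure isometries (each $M_{\zeta_n}$ is an isometry, and $M_{\zeta_n}^*M_{\zeta_m}=M_{\zeta_m}M_{\zeta_n}^*$ for $n\neq m$ is a direct computation on monomials); a $\mathcal{P}_\infty$-module unitarily equivalent to it inherits this property, so $\mathcal{S}$ is doubly commuting.

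I expect the main obstacle to be the dimension argument in $(1)\Rightarrow(2)$, i.e. showing rigorously that the coefficient space $\mathcal{F}$ in the Beurling–Lax representation must be one-dimensional when the ambient space is scalar-valued. The cleanest route is to note that the wandering subspace $\mathcal{S}\ominus\bigvee_n\zeta_n\mathcal{S}$ equals $M_\Psi(\mathfrak{D}_{\mathbf{M}_\xi^*})=M_\Psi(\mathbb{C}\otimes\mathcal{F})$, which is isometrically isomorphic to $\mathcal{F}$; on the other hand, this wandering subspace sits inside $\mathcal{S}\cap\bigcap_n\mathrm{Ker}\,M_{\zeta_n}^*$, and since the constants $\mathbb{C}1=\bigcap_n\mathrm{Ker}\,M_{\zeta_n}^*$ in $H^2(\mathbb{D}_2^\infty)$ form a one-dimensional space by Lemma \ref{K_lambda}(2) (or directly), we get $\dim\mathcal{F}\le 1$; as $\mathcal{S}\neq\{0\}$ we get $\dim\mathcal{F}=1$. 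Alternatively one may cite that any isometry from a vector-valued Hardy space into a scalar one forces the coefficient space to be one-dimensional. The remaining implications are essentially formal and require only that unitary module equivalence transports the double commutativity property, which is immediate from $S_n=UR_nU^*$.
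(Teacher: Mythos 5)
Your overall architecture is sound and matches what the paper intends: the paper states this corollary without a written proof, as an immediate consequence of Corollary \ref{doubly commuting Hardy submodule} (applied with $\mathcal{E}=\mathbb{C}$), and your implications $(2)\Rightarrow(3)\Rightarrow(1)$ are correct and essentially formal. The problem is in $(1)\Rightarrow(2)$, which is the only nontrivial step. Your ``cleanest route'' asserts that the wandering subspace $\mathcal{S}\ominus\bigvee_n\zeta_n\mathcal{S}$ sits inside $\mathcal{S}\cap\bigcap_n\mathrm{Ker}\,M_{\zeta_n}^*$. This is false: the wandering subspace equals $\bigcap_n\mathrm{Ker}\,R_n^*$ where $R_n=M_{\zeta_n}|_{\mathcal{S}}$ and $R_n^*=P_{\mathcal{S}}M_{\zeta_n}^*|_{\mathcal{S}}$, and $P_{\mathcal{S}}M_{\zeta_n}^*h=0$ only says $M_{\zeta_n}^*h\perp\mathcal{S}$, not $M_{\zeta_n}^*h=0$. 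For $\mathcal{S}=\zeta_1H^2(\mathbb{D}_2^\infty)$ the wandering subspace is $\mathbb{C}\zeta_1$, which is not contained in $\mathbb{C}1=\bigcap_n\mathrm{Ker}\,M_{\zeta_n}^*$. Worse, since $1\notin\mathcal{S}$ for every proper submodule, your inclusion would give $\mathcal{S}\cap\mathbb{C}1=\{0\}$ and hence $\dim\mathcal{F}=0$, contradicting $\mathcal{S}\neq\{0\}$; so the argument cannot be repaired by a small fix. Your fallback --- ``cite that any isometric module map from a vector-valued Hardy space into a scalar one forces the coefficient space to be one-dimensional'' --- is exactly the statement that needs proving and is nowhere available in the paper, so it cannot simply be cited.

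A correct route to $\dim\mathcal{F}=1$ does exist but needs a genuinely different idea. If $f,g$ are orthonormal vectors in the wandering subspace of the doubly commuting sequence $\mathbf{R}$, then using that each $R_n$ is an isometry and that $R_m^*R_n=R_nR_m^*$ for $m\neq n$, one computes $\langle\zeta^\alpha f,\zeta^\beta g\rangle=R^{*(\alpha\wedge\beta)}$-reductions which give $\langle\zeta^\alpha f,\zeta^\beta g\rangle=0$ for all $\alpha,\beta\in\mathbb{Z}_+^{(\infty)}$ and $\langle\zeta^\alpha f,\zeta^\beta f\rangle=\delta_{\alpha\beta}$. Identifying $H^2(\mathbb{D}_2^\infty)$ with $H^2(\mathbb{T}^\infty)\subset L^2(\mathbb{T}^\infty)$, the second family of identities says every Fourier coefficient of $|f|^2-1$ vanishes, so $|f|=1$ a.e.\ (and likewise $|g|=1$ a.e.), while the first says every Fourier coefficient of $f\bar g$ vanishes, so $f\bar g=0$ a.e.; these are incompatible. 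This boundary-value argument (in the spirit of Mandrekar and of \cite{O}) is the missing content of your step $(1)\Rightarrow(2)$.
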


We say that  two $\mathcal{P}_\infty$-modules $(\mathcal{H},\mathbf{T})$, $(\mathcal{K},\mathbf{S})$ are  \textit{unitarily equivalent} if  $\mathbf{T}$, $\mathbf{S}$ are jointly unitarily equivalent, and the unitary operator $U:\mathcal{H}\rightarrow\mathcal{K}$ interwinding $\mathbf{T}$ and $\mathbf{S}$ is called  a \textit{unitary module map}.

The classification of doubly commuting Hardy submodules
up to unitary equivalence of $\mathcal{P}_\infty$-modules
is trivial, since by Corollary \ref{doubly commuting Hardy submodule}, it is  completely determined by the dimension of
 $\mathcal{F}$ in
 the representation (\ref{rep of doubly commuting Hardy submodule}).

\vskip2mm

Now we turn to the situation of quotient modules.
Recall that a quotient module $\mathcal{Q}$ of $H_{\mathcal{E}}^2(\mathbb{D}_2^\infty)$ is said to be doubly commuting if the compression $$(P_\mathcal{Q}M_{\zeta_1}|_\mathcal{Q},P_\mathcal{Q}M_{\zeta_2}|_\mathcal{Q},\cdots)$$ of $\mathbf{M}_\zeta$ on $\mathcal{Q}$ is doubly commuting.
  %If $(\mathcal{Q},\Theta)$ is an analytic model for a sequence $\mathbf{T}\in\mathcal{DC}$ with the underlying space $\mathcal{E}$, then $\mathcal{Q}$ is a doubly commuting quotient module of $H_{\mathcal{E}}^2(\mathbb{D}_2^\infty)$.

\begin{thm} \label{compression  on  doubly commuting  quotient module}
Let   $\mathbf{M}_\zeta$ be the tuple of coordinate multiplication operators on the vector-valued Hardy module $H_{\mathcal{E}}^2(\mathbb{D}_2^\infty)$.
 Then  the compression of $\mathbf{M}_\zeta$ on a doubly commuting  quotient module of
 $H_{\mathcal{E}}^2(\mathbb{D}_2^\infty)$ is of Beurling type.
\end{thm}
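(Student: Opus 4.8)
The plan is to exhibit the minimal regular isometric dilation of $\mathbf{T}:=P_{\mathcal{Q}}\mathbf{M}_\zeta|_{\mathcal{Q}}$ concretely, as the tuple of coordinate multiplication operators on a vector-valued Hardy space lying inside $H_{\mathcal{E}}^2(\mathbb{D}_2^\infty)$; by Corollary \ref{BT} this is exactly what it means for $\mathbf{T}$ to be of Beurling type. First I would record that $\mathbf{T}\in\mathcal{DC}(\mathcal{Q})$: since $\mathcal{Q}$ is co-invariant one has $\mathbf{T}^{*}=\mathbf{M}_\zeta^{*}|_{\mathcal{Q}}$, and, each $M_{\zeta_n}$ being a pure isometry, $T_n^{*k}=M_{\zeta_n}^{*k}|_{\mathcal{Q}}\to 0$ strongly.

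Then I would look at the $\mathbf{M}_\zeta$-joint invariant subspace $[\mathcal{Q}]_{\mathbf{M}_\zeta}$ of $H_{\mathcal{E}}^2(\mathbb{D}_2^\infty)$ generated by $\mathcal{Q}$, and show that it is in fact $\mathbf{M}_\zeta$-joint \emph{reducing}. Indeed, by a computation like that in the proof of Lemma \ref{intersection of H_n}(1) — splitting into the cases $\alpha_m\geq1$ and $\alpha_m=0$ and using the co-invariance of $\mathcal{Q}$ together with the mutual double commutativity of the coordinate operators — one gets $M_{\zeta_m}^{*}\,\mathbf{M}_\zeta^{\alpha}\mathcal{Q}\subseteq[\mathcal{Q}]_{\mathbf{M}_\zeta}$ for every $m$ and every $\alpha\in\mathbb{Z}_+^{(\infty)}$, so $[\mathcal{Q}]_{\mathbf{M}_\zeta}$ is $\mathbf{M}_\zeta^{*}$-invariant. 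By Lemma \ref{M_zeta is irre} and Lemma \ref{redu in tensor} (applied to $H_{\mathcal{E}}^2(\mathbb{D}_2^\infty)=H^2(\mathbb{D}_2^\infty)\otimes\mathcal{E}$), every $\mathbf{M}_\zeta$-joint reducing subspace equals $H^2(\mathbb{D}_2^\infty)\otimes M=H_{M}^2(\mathbb{D}_2^\infty)$ for some closed subspace $M\subseteq\mathcal{E}$; hence $[\mathcal{Q}]_{\mathbf{M}_\zeta}=H_{M}^2(\mathbb{D}_2^\infty)$, and $\mathbf{M}_\zeta|_{H_{M}^2(\mathbb{D}_2^\infty)}$ is a minimal isometric coextension, hence a minimal isometric dilation, of $\mathbf{T}$.

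The remaining — and main — point is that this dilation is \emph{regular}, and this is precisely where the hypothesis that $\mathbf{T}$ is doubly commuting is used. For $\alpha\wedge\beta=0$ and $x\in\mathcal{Q}$, the double commutativity of the $M_{\zeta_n}$ (together with $\operatorname{supp}\alpha\cap\operatorname{supp}\beta=\emptyset$) gives $\mathbf{M}_\zeta^{*\alpha}\mathbf{M}_\zeta^{\beta}x=\mathbf{M}_\zeta^{\beta}\mathbf{M}_\zeta^{*\alpha}x=\mathbf{M}_\zeta^{\beta}\mathbf{T}^{*\alpha}x$ (using $\mathbf{M}_\zeta^{*\alpha}x=\mathbf{T}^{*\alpha}x\in\mathcal{Q}$), whence $P_{\mathcal{Q}}\mathbf{M}_\zeta^{*\alpha}\mathbf{M}_\zeta^{\beta}x=\mathbf{T}^{\beta}\mathbf{T}^{*\alpha}x$; and since $\mathbf{T}$ is doubly commuting and the supports are disjoint, $\mathbf{T}^{\beta}\mathbf{T}^{*\alpha}=\mathbf{T}^{*\alpha}\mathbf{T}^{\beta}$. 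So $\mathbf{T}^{*\alpha}\mathbf{T}^{\beta}=P_{\mathcal{Q}}\mathbf{M}_\zeta^{*\alpha}\mathbf{M}_\zeta^{\beta}|_{\mathcal{Q}}$, i.e. $\mathbf{M}_\zeta|_{H_{M}^2(\mathbb{D}_2^\infty)}$ is a minimal regular isometric dilation of $\mathbf{T}$. By Lemma \ref{min iso dilation} and the uniqueness of the minimal regular isometric dilation recalled in Subsection 2.1, the minimal regular isometric dilation of $\mathbf{T}$ is therefore jointly unitarily equivalent to $\mathbf{M}_\zeta$ on the vector-valued Hardy space $H_{M}^2(\mathbb{D}_2^\infty)$, and Corollary \ref{BT} finishes the proof.

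I expect the two structural facts above — that $[\mathcal{Q}]_{\mathbf{M}_\zeta}$ is reducing, and that the resulting ambient coextension is regular exactly because $\mathbf{T}$ is doubly commuting — to be where essentially all of the work lies; once they are in place everything else is formal, and (as remarked after Theorem \ref{doubly commuting quotient of H^2(D^infty)} in the introduction) this result together with Theorem \ref{restriction on doubly commuting submodule} gives the tensor-product decomposition of doubly commuting quotient modules of $H^2(\mathbb{D}_2^\infty)$.
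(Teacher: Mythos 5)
Your proposal is correct and follows essentially the same route as the paper's proof: both verify that $\mathbf{M}_\zeta$ is a regular isometric dilation of the compression (using double commutativity and co-invariance of $\mathcal{Q}$), identify the minimal one as the restriction to $[\mathcal{Q}]_{\mathbf{M}_\zeta}$, show this subspace is $\mathbf{M}_\zeta$-joint reducing via Lemma \ref{intersection of H_n}(1), identify it as $H_{\mathcal{E}_0}^2(\mathbb{D}_2^\infty)$ via Lemma \ref{redu in tensor}, and conclude by Corollary \ref{BT}. The only differences are cosmetic (order of the steps and the exact bookkeeping in the regularity computation).
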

\begin{proof} Let $\mathcal{Q}$ be a doubly commuting  quotient module of
 $H_{\mathcal{E}}^2(\mathbb{D}_2^\infty)$, and set $\mathbf{C}=P_{\mathcal{Q}}\mathbf{M}_\zeta|_{\mathcal{Q}}$, the compression of $\mathbf{M}_\zeta$ on $\mathcal{Q}$. Since both two sequences $\mathbf{M}_\zeta$ and $\mathbf{C}$ are doubly commuting,
 for $\alpha,\beta\in\mathbb{Z}_+^{(\infty)}$ satisfying
$\alpha\wedge\beta=(0,0,\cdots)$, we have
$$P_{\mathcal{Q}}\mathbf{M}_\zeta^{*\alpha}\mathbf{M}_\zeta^\beta|_{\mathcal{Q}}
=P_{\mathcal{Q}}\mathbf{M}_\zeta^\beta\mathbf{M}_\zeta^{*\alpha}|_{\mathcal{Q}}
=P_{\mathcal{Q}}\mathbf{M}_\zeta^\beta P_{\mathcal{Q}}
\mathbf{M}_\zeta^{*\alpha}|_{\mathcal{Q}}
=\mathbf{C}^\beta\mathbf{C}^{*\alpha}=\mathbf{C}^{*\alpha}\mathbf{C}^\beta,$$
where $\alpha\wedge\beta=(\min\{\alpha_1,\beta_1\},\min\{\alpha_2,\beta_2\},\cdots)$.
That is to say, $\mathbf{M}_\zeta$ is a regular isometric dilation of $\mathbf{C}$.
So the minimal regular isometric dilation of $\mathbf{C}$ is the restriction of $\mathbf{M}_\zeta$ on the subspace $[\mathcal{Q}]_{\mathbf{M}_\zeta}$.
It follows from Lemma \ref{intersection of H_n} (1) that $[\mathcal{Q}]_{\mathbf{M}_\zeta}$ is joint reducing for $\mathbf{M}_\zeta$,
and then by Lemma \ref{redu in tensor}, one has $$[\mathcal{Q}]_{\mathbf{M}_\zeta}=H_{\mathcal{E}_0}^2(\mathbb{D}_2^\infty)$$ for some closed subspace $\mathcal{E}_0$ of $\mathcal{E}$. Therefore, Corollary \ref{BT} gives that $\mathbf{C}$ is of Beurling type.
\end{proof}

Follow the notations in the proof of  Theorem \ref{compression  on  doubly commuting  quotient module}.
Now we can use the characterization of the canonical analytic model $\mathcal{Q}_{\mathbf{C}}$ for $\mathbf{C}$ (see Theorem \ref{analytic model for BT}) to study the structure of the doubly commuting quotient module $\mathcal{Q}$ since
as $\mathcal{P}_\infty$-modules, $\mathcal{Q}$ and $\mathcal{Q}_{\mathbf{C}}$  are unitarily equivalent. The unitary module map is of form $I_{H^2(\mathbb{D}_2^\infty)}\otimes U$, where
$U:\mathcal{E}_0\rightarrow\mathfrak{D}_{\mathbf{C}^*}$ is given as in (\ref{map can be extended}). Then there exists a sequence of quotient modules $\{\mathcal{J}_n\}_{n\in\mathbb{N}}$ of $H_{\mathcal{E}_0}^2(\mathbb{D})$, such that
\begin{equation} \label{doubly commuting quotient module}
 \mathcal{Q}=\bigcap_{n=1}^\infty\underbrace{H^2(\mathbb{D})\otimes\cdots\otimes H^2(\mathbb{D})}_{n-1\ \mathrm{times}} \otimes \mathcal{J}_n \otimes H^2(\mathbb{D})\otimes H^2(\mathbb{D})\otimes\cdots.
\end{equation}

Note  that the joint unitary equivalence is a equivalence relation on class $\mathcal{DC}$.
 We can establish a one-to-one corresponding between the equivalence classes of doubly commuting quotient Hardy modules and  the equivalence classes of $\mathcal{DC}$-sequences  of Beurling type as illustrated below.
 $$\left\{ \begin{gathered}
\mathrm{equivalence}\ \mathrm{classes}\ \mathrm{of}\\
\mathrm{doubly}\ \mathrm{commuting}\\
\mathrm{quotient}\ \mathrm{Hardy} \ \mathrm{modules}
\end{gathered}\right\} \autorightleftharpoons{\small module actions}{\footnotesize canonical  analytic models}
\left\{ \begin{gathered}
\mathrm{equivalence}\ \mathrm{classes}\ \mathrm{of}\\
\mathcal{DC}\mathrm{-sequences}\\
\mathrm{of}\ \mathrm{Beurling}\ \mathrm{type}
\end{gathered}\right\}  $$
In another word,
the classification of doubly commuting Hardy quotient modules  is equivalent to the classification of $\mathcal{DC}$-sequences of Beurling type.

\vskip2mm

Finally, we will consider the particular case $\mathcal{E}=\mathbb{C}$. By (\ref{doubly commuting quotient module}), we have
\begin{equation}\label{Q can be viewed as infinite tensor product}
 \mathcal{Q}=\bigcap_{n=1}^\infty\mathcal{J}_1 \otimes\cdots\otimes \mathcal{J}_n \otimes H^2(\mathbb{D})\otimes H^2(\mathbb{D})\otimes\cdots.
\end{equation}
It follows that $\mathcal{Q}=\mathcal{J}_1\otimes\mathcal{Q}'$
for some closed subspace $\mathcal{Q}'$ of
$$\mathcal{L}=\overline{\mathrm{span}\{\zeta^\alpha:\alpha=(\alpha_1,\alpha_2,\cdots)\in\mathbb{Z}_+^{(\infty)}\ \mathrm{with} \ \alpha_1=0\}},$$
and thus $P_\mathcal{Q}M_{\zeta_1}|_\mathcal{Q}=P_{\mathcal{J}_1}M_z|_{\mathcal{J}_1}\otimes I_{\mathcal{Q}'}$. Similarly, $P_\mathcal{Q}M_{\zeta_n}|_\mathcal{Q}$ is the tensor product of $P_{\mathcal{J}_n}M_z|_{\mathcal{J}_n}$ and an identity operator for each $n\in\mathbb{N}$.
Recall that a model space is a  of $H^2(\mathbb{D})$, and
the compression of the Hardy shift $M_z$ on a  model space is called a Jordan block.
Then for any $n\in\mathbb{N}$ so that $\mathcal{J}_n\neq H^2(\mathbb{D})$, the compression $P_\mathcal{Q}M_{\zeta_n}|_\mathcal{Q}$ is the tensor product of a Jordan block and an identity operator.
So the compression of the tuple $\mathbf{M}_\zeta$ on $\mathcal{Q}$ can be considered as a Jordan block in the infinite-variable
setting.

Recall that $C_0$ is the class of thoes completely nonunitary contractions $T$ for which there exists a nonzero function $f\in H^\infty(\mathbb{D})$ such that $f(T)=0$ \cite{SNFBK}.
The following    application to  operator theory is motivated by \cite[Proposition 4.1]{DY2}.
\begin{cor} Suppose $\mathbf{T}\in\mathcal{DC}$.
If there exists a cyclic vector $x$ for  $\mathbf{T}$ such that
$T_n^*x=\overline{\lambda_n}x\ (n\in\mathbb{N})$ for some $\lambda\in\mathbb{D}^\infty$, then for each $n\in\mathbb{N}$, $T_n$ is either a $C_0$-contraction or a pure isometry.
\end{cor}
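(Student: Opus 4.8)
The plan is to normalise the eigenvalue via the automorphisms $\Phi_\lambda$, identify $\mathbf{T}$ with the compression of $\mathbf{M}_\zeta$ on a doubly commuting quotient module of the \emph{scalar} Hardy module $H^2(\mathbb{D}_2^\infty)$, and then read off the conclusion from Theorem~\ref{doubly commuting quotient of H^2(D^infty)} and the tensor‑product description \eqref{Q can be viewed as infinite tensor product}. Assume $\mathcal{H}\neq\{0\}$, so $x\neq0$, and normalise $\|x\|=1$. I would first reduce to $\lambda=\mathbf{0}$: replacing $\mathbf{T}$ by $\Phi_\lambda(\mathbf{T})$ changes nothing essential, since $\Phi_\lambda(\mathbf{T})\in\mathcal{DC}$; $x$ stays cyclic, because each $\varphi_{\lambda_n}(T_n)$ is a norm limit of polynomials in $T_n$ and $T_n=\varphi_{\lambda_n}(\varphi_{\lambda_n}(T_n))$ is a norm limit of polynomials in $\varphi_{\lambda_n}(T_n)$, so $[x]_{\Phi_\lambda(\mathbf{T})}=[x]_{\mathbf{T}}=\mathcal{H}$; $\varphi_{\lambda_n}(T_n)^*x=\varphi_{\overline{\lambda_n}}(T_n^*)x=\varphi_{\overline{\lambda_n}}(\overline{\lambda_n})x=0$, so $\Phi_\lambda(\mathbf{T})^*x=0$; and, $\varphi_{\lambda_n}$ being an involutive automorphism of $\mathbb{D}$, $T_n$ is a pure isometry iff $\varphi_{\lambda_n}(T_n)$ is, and $T_n$ is a $C_0$-contraction iff $\varphi_{\lambda_n}(T_n)$ is (every $C_{\cdot0}$-contraction is completely non‑unitary, and $(f\circ\varphi_{\lambda_n})(\varphi_{\lambda_n}(T_n))=f(T_n)$ for $f\in H^\infty(\mathbb{D})$). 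So assume $T_n^*x=0$ for all $n$, i.e. $x\in\mathfrak{D}_{\mathbf{T}^*}$. Since $\mathbb{D}_2^\infty$ is the $\sim$-class of $\mathbf{0}$, Corollary~\ref{collection of cor}(3) gives a $\mathbf{T}$-joint reducing subspace $\mathcal{H}_{\mathbb{D}_2^\infty}=\bigvee_{\mu\in\mathbb{D}_2^\infty}\mathfrak{D}_{\Phi_\mu(\mathbf{T})^*}$ containing $x\in\mathfrak{D}_{\mathbf{T}^*}=\mathfrak{D}_{\Phi_{\mathbf{0}}(\mathbf{T})^*}$; cyclicity forces $\mathcal{H}=[x]_{\mathbf{T}}\subseteq\mathcal{H}_{\mathbb{D}_2^\infty}$, so by Corollary~\ref{BT} ((4)$\Rightarrow$(1)) $\mathbf{T}$ is of Beurling type. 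Passing to the canonical analytic model, its minimal regular isometric dilation becomes $\mathbf{M}_\zeta$ on $H^2_{\mathfrak{D}_{\mathbf{T}^*}}(\mathbb{D}_2^\infty)$, $\mathbf{T}\cong P_{\mathcal{Q}}\mathbf{M}_\zeta|_{\mathcal{Q}}$ on a quotient module $\mathcal{Q}$ which is doubly commuting (since $\mathbf{T}$ is), and $x$ is carried to the constant function $1\otimes x$ with $x\in\mathfrak{D}_{\mathbf{T}^*}$.

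The crux is to show that the underlying space may be taken one‑dimensional. Minimality of the dilation gives $[\mathcal{Q}]_{\mathbf{M}_\zeta}=H^2_{\mathfrak{D}_{\mathbf{T}^*}}(\mathbb{D}_2^\infty)$, while cyclicity of $1\otimes x$ for $P_{\mathcal{Q}}\mathbf{M}_\zeta|_{\mathcal{Q}}$ gives $\mathcal{Q}=\bigvee_{\alpha}P_{\mathcal{Q}}(\zeta^\alpha\otimes x)=\overline{P_{\mathcal{Q}}\bigl(H^2(\mathbb{D}_2^\infty)\otimes\mathbb{C}x\bigr)}$. Using the description $\mathcal{Q}^\perp=\bigvee_n M_{\widetilde{\theta_{T_n}}}M_{\widetilde{\theta_{T_n}}}^*H^2_{\mathfrak{D}_{\mathbf{T}^*}}(\mathbb{D}_2^\infty)$ from Theorem~\ref{analytic model for BT}, together with \eqref{reducing for theta_T(b)theta_T(a)*}, \eqref{H_D_T*^2} and the relation $T_n^*x=0$, one verifies that $P_{\mathcal{Q}}$ leaves $H^2(\mathbb{D}_2^\infty)\otimes\mathbb{C}x$ invariant; hence $\mathcal{Q}\subseteq H^2(\mathbb{D}_2^\infty)\otimes\mathbb{C}x$, and then $H^2_{\mathfrak{D}_{\mathbf{T}^*}}(\mathbb{D}_2^\infty)=[\mathcal{Q}]_{\mathbf{M}_\zeta}\subseteq H^2(\mathbb{D}_2^\infty)\otimes\mathbb{C}x$, forcing $\mathfrak{D}_{\mathbf{T}^*}=\mathbb{C}x$. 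I expect this invariance verification to be the only delicate point — it is exactly where the cyclicity and the eigenvector hypothesis interact. Consequently $\mathcal{Q}$ is (unitarily equivalent to) a doubly commuting quotient module of the scalar Hardy module $H^2(\mathbb{D}_2^\infty)$.

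Finally, by Theorem~\ref{doubly commuting quotient of H^2(D^infty)} and \eqref{Q can be viewed as infinite tensor product} there are quotient modules $\mathcal{J}_n$ of $H^2(\mathbb{D})$, each either a model space or all of $H^2(\mathbb{D})$, with $\mathcal{Q}=\bigcap_{n\ge1}\mathcal{J}_1\otimes\cdots\otimes\mathcal{J}_n\otimes H^2(\mathbb{D})\otimes H^2(\mathbb{D})\otimes\cdots$ and $T_n\cong(P_{\mathcal{J}_n}M_z|_{\mathcal{J}_n})\otimes I$ for every $n$. If $\mathcal{J}_n=H^2(\mathbb{D})$, then $T_n\cong M_z\otimes I$ is a pure isometry. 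If $\mathcal{J}_n=H^2(\mathbb{D})\ominus\eta_nH^2(\mathbb{D})$ for a nonconstant inner $\eta_n$, then $B_n:=P_{\mathcal{J}_n}M_z|_{\mathcal{J}_n}$ is a Jordan block, hence a $C_{00}$-contraction with $\eta_n(B_n)=0$; therefore $T_n\cong B_n\otimes I$ is completely non‑unitary and $\eta_n(T_n)=\eta_n(B_n)\otimes I=0$, so $T_n$ is a $C_0$-contraction. This completes the proof.
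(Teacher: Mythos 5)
Your proposal follows the paper's route step for step: reduce to $\lambda=\mathbf{0}$, deduce Beurling type from Corollary~\ref{collection of cor}(3) and Corollary~\ref{BT}, pass to the canonical analytic model, argue that the underlying space is one-dimensional so that $\mathcal{Q}$ becomes a doubly commuting quotient module of the \emph{scalar} Hardy module, and finish with the tensor decomposition. The reduction, the Beurling-type step and the final step are all correct and essentially identical to the paper's.

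The gap is exactly the step you flagged, and it is worse than delicate: the invariance you propose to verify is false, because its intended consequence $\mathfrak{D}_{\mathbf{T}^*}=\mathbb{C}x$ does not follow from the hypotheses. What cyclicity plus $T_n^*x=0$ actually give is $\bigvee_n\overline{T_n\mathcal{H}}=\mathcal{H}\ominus\mathbb{C}x$, hence $\bigcap_n\mathrm{Ker}\,T_n^*=\mathbb{C}x$; but for non-isometries $\mathfrak{D}_{\mathbf{T}^*}=\overline{\mathrm{Ran}}\,D_{\mathbf{T}^*}$ is in general strictly larger than $\bigcap_n\mathrm{Ker}\,T_n^*$. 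Concretely, let $T$ on $\mathbb{C}^3$ be given by $Te_1=\tfrac12e_2$, $Te_2=e_3$, $Te_3=0$, and put $\mathbf{T}=(T,0,0,\dots)\in\mathcal{DC}$. Then $e_1$ is cyclic, $T^*e_1=0$, and $\mathbf{T}$ is of Beurling type, but $D_{\mathbf{T}^*}=(I-TT^*)^{1/2}=\mathrm{diag}(1,\sqrt3/2,0)$ has rank $2$, so $\mathfrak{D}_{\mathbf{T}^*}=\mathrm{span}\{e_1,e_2\}$ and the canonical model contains $U_{\mathbf{T}}e_2=\tfrac{\sqrt3}{2}(1\otimes e_2)+\tfrac12(\zeta_1\otimes e_1)\notin H^2(\mathbb{D}_2^\infty)\otimes\mathbb{C}e_1$. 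Equivalently, via Lemma~\ref{operator-valued in one variable}(1) and (\ref{reducing for theta_T(b)theta_T(a)*}), the required invariance amounts to $D_{\mathbf{T}^*}\bigl[1-(1-\bar ab)(1-bT_n^*)^{-1}(1-\bar aT_n)^{-1}D_{T_n^*}^2\bigr]x\in\mathbb{C}x$ for all $a,b$, and the hypothesis $T_n^*x=0$ gives no control over the vectors $D_{\mathbf{T}^*}T_n^{*j}T_n^kx$ with $j,k\geq1$ that appear on expanding the resolvents.

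You should also know that the paper's own proof carries the identical defect: it asserts ``$\dim\mathfrak{D}_{\mathbf{T}^*}=1$ (otherwise $x$ is not cyclic for $\mathbf{T}$)'' with no further argument, and the example above contradicts that assertion (there the corollary's conclusion still holds, since a nilpotent is $C_0$). Worse, the statement itself appears to fail: for the weighted shift $Te_0=\tfrac12e_1$, $Te_k=e_{k+1}$ ($k\geq1$) on $\ell^2(\mathbb{Z}_+)$, the sequence $(T,0,0,\dots)$ lies in $\mathcal{DC}$ and $e_0$ is a cyclic vector with $T^*e_0=0$, yet $T$ is neither an isometry nor a $C_0$-contraction (any $f\in H^\infty(\mathbb{D})$ with $f(T)=0$ satisfies $f(T)e_1=\sum_k\hat f(k)e_{k+1}=0$, forcing $f=0$). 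So this step cannot be repaired without strengthening the hypotheses — for instance to $\operatorname{rank}D_{\mathbf{T}^*}=1$, which is what the quotient-module setting of \cite{DY2} actually supplies.
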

\begin{proof} Assume $\mathbf{T}\in\mathcal{DC}(\mathcal{H})$
  Since for each $n\in\mathbb{N}$, $T_n$ is a $C_0$-contraction (pure isometry) if and only if $\varphi_{\lambda_n}(T_n)$ is a $C_0$-contraction (pure isometry), we may assume $\lambda=\mathbf{0}$ without loss of generality.

  Put $\widetilde{\mathcal{H}}=\bigvee_{\mu\in\mathbb{D}_2^\infty}
\mathfrak{D}_{\Phi_\mu(\mathbf{T})^*}$. Then by Corollary \ref{collection of cor} (3), $\widetilde{\mathcal{H}}$ is joint reducing for $\mathbf{T}$. Therefore,
$$\widetilde{\mathcal{H}}=[\widetilde{\mathcal{H}}]_{\mathbf{T}}
\supseteq[\mathfrak{D}_{\mathbf{T}^*}]_{\mathbf{T}}\supseteq[x]_{\mathbf{T}}=\mathcal{H},$$
forcing $\widetilde{\mathcal{H}}=\mathcal{H}$.
This together with Corollary \ref{BT} imples that $\mathbf{T}$ is of Beurling type. Since $\dim\mathfrak{D}_{\mathbf{T}^*}=1$ (otherwise $x$ is not cyclic for $\mathbf{T}$), the canonical analytic model $\mathcal{Q}_{\mathbf{T}}$ for $\mathbf{T}$ can be viewed as a doubly commuting quotient module $\mathcal{Q}$ of $H^2(\mathbb{D}_2^\infty)$, and then $\mathbf{T}$ is jointly unitarily equivalent to
the compression $P_\mathcal{Q}\mathbf{M}_\zeta|_\mathcal{Q}$ of  $\mathbf{M}_\zeta$ on $\mathcal{Q}$, where
$\mathbf{M}_\zeta$ is the tuple
 of coordinate multiplication operators on $H^2(\mathbb{D}_2^\infty)$. This completes the proof.
\end{proof}

Also from (\ref{Q can be viewed as infinite tensor product}), it seems plausible to view every doubly commuting quotient module of $H^2(\mathbb{D}_2^\infty)$ as the  tensor product of infinitely many model spaces or $H^2(\mathbb{D})$'s.  This can be realized after giving an appropriate definition for the infinite tensor product.

Let $\{M_n\}_{n\in\mathbb{N}}$ be a sequence of closed subspaces of $H^2(\mathbb{D})$, \textit{the tensor product of} $\{M_n\}_{n\in\mathbb{N}}$ in $H^2(\mathbb{D}_2^\infty)$, denoted by $\bigotimes_{n=1}^\infty M_n$, is definied to be the closed subspace of $H^2(\mathbb{D}_2^\infty)$ spanned by the functions in $H^2(\mathbb{D}_2^\infty)$ of form $\prod_{n=1}^\infty f_n(\zeta_n)$ (in pointwise convergence for $\zeta\in\mathbb{D}_2^\infty$)
with $f_n\in M_n$ for each $n\in\mathbb{N}$. Note that for  infinite tensor products of form $$M_1\otimes\cdots\otimes M_n\otimes H^2(\mathbb{D})\otimes H^2(\mathbb{D})\otimes\cdots,$$
this new definition coincides with the original one that the
$(M_{\zeta_{n+1}},M_{\zeta_{n+2}},\cdots)$-joint invariant subspace
 generated by $M_1\otimes\cdots\otimes M_n$ (see Subsection 2.2).

\begin{cor} \label{doubly commuting quotient of H^2(D^infty)'} Every  doubly commuting quotient module of
 $H^2(\mathbb{D}_2^\infty)$ is the tensor product of some sequence of quotient modules of $H^2(\mathbb{D})$.
\end{cor}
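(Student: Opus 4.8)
\textbf{Proof proposal for Corollary \ref{doubly commuting quotient of H^2(D^infty)'}.}
The plan is to reduce the statement to the structural description \eqref{Q can be viewed as infinite tensor product} already derived in the preceding paragraphs, and then reconcile that description with the newly introduced notion of the tensor product $\bigotimes_{n=1}^\infty \mathcal{J}_n$. Let $\mathcal{Q}$ be a doubly commuting quotient module of $H^2(\mathbb{D}_2^\infty)$. First I would invoke Theorem \ref{compression on doubly commuting quotient module} together with Corollary \ref{BT} to conclude that the compression $\mathbf{C}=P_\mathcal{Q}\mathbf{M}_\zeta|_\mathcal{Q}$ is of Beurling type; then, applying Theorem \ref{analytic model for BT} (with $\mathcal{E}=\mathbb{C}$, so $\mathcal{E}_0=\mathbb{C}$) and passing through the unitary module map $I_{H^2(\mathbb{D}_2^\infty)}\otimes U$, I obtain a sequence $\{\mathcal{J}_n\}_{n\in\mathbb{N}}$ of quotient modules of $H^2(\mathbb{D})$ with
$$\mathcal{Q}=\bigcap_{n=1}^\infty \mathcal{J}_1\otimes\cdots\otimes\mathcal{J}_n\otimes H^2(\mathbb{D})\otimes H^2(\mathbb{D})\otimes\cdots,$$
exactly \eqref{Q can be viewed as infinite tensor product}. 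This part is essentially a citation of what has just been proved.

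The remaining work is to show that the intersection on the right-hand side equals $\bigotimes_{n=1}^\infty\mathcal{J}_n$ in the sense of the definition introduced just before the corollary. For the inclusion $\bigotimes_{n=1}^\infty\mathcal{J}_n\subseteq\mathcal{Q}$ I would argue that any convergent product $F(\zeta)=\prod_{n=1}^\infty f_n(\zeta_n)$ with $f_n\in\mathcal{J}_n$ lies in each finite-level space $\mathcal{J}_1\otimes\cdots\otimes\mathcal{J}_n\otimes H^2(\mathbb{D})\otimes\cdots$: writing $F=\big(\prod_{i=1}^n f_i(\zeta_i)\big)\cdot\big(\prod_{i>n}f_i(\zeta_i)\big)$ and noting the tail factor lies in $\mathcal{L}_n$ (the span of monomials in the variables $\zeta_{n+1},\zeta_{n+2},\dots$), while $\prod_{i=1}^n f_i(\zeta_i)\in\mathcal{J}_1\otimes\cdots\otimes\mathcal{J}_n$, so $F$ lies in the $n$-th level space; hence $F\in\mathcal{Q}$, and taking closed span gives the inclusion. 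For the reverse inclusion I would use that each $\mathcal{J}_n$, being a model space or $H^2(\mathbb{D})$, is $M_z^*$-invariant, so its orthocomplement is $M_z$-invariant, i.e. of Beurling form $\eta_n H^2(\mathbb{D})$; writing the orthogonal projection onto the $n$-th level space as $I\otimes\cdots\otimes P_{\mathcal{J}_n}\otimes\cdots$ and using that these projections commute (this is exactly the double-commutativity, transported through the model), the projection onto $\mathcal{Q}$ is the product $\prod_{n=1}^\infty (I\otimes\cdots\otimes P_{\mathcal{J}_n}\otimes\cdots)$ in the strong operator topology; applying this to monomials $\zeta^\alpha$ and taking products over the finitely many active coordinates shows $P_\mathcal{Q}\zeta^\alpha$ is a (finite) product of one-variable pieces each landing in the appropriate $\mathcal{J}_n$, hence in $\bigotimes_{n=1}^\infty\mathcal{J}_n$; since the $\zeta^\alpha$ span $H^2(\mathbb{D}_2^\infty)$, this gives $\mathcal{Q}\subseteq\bigotimes_{n=1}^\infty\mathcal{J}_n$.

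The main obstacle I anticipate is the convergence bookkeeping for the infinite tensor product: one must check that the infinite product $\prod_{n=1}^\infty f_n(\zeta_n)$ actually defines an element of $H^2(\mathbb{D}_2^\infty)$ (which is where the stabilizing-sequence structure from \cite{Be} and the reproducing kernel estimate $\mathbf{K}_\lambda(\zeta)=\prod_n(1-\overline{\lambda_n}\zeta_n)^{-1}$ enter), and that the strong-operator-topology infinite product of the projections $I\otimes\cdots\otimes P_{\mathcal{J}_n}\otimes\cdots$ converges to $P_\mathcal{Q}$; this last point follows from Lemma \ref{intersection of H_n}(3) applied to $\mathbf{M}_\zeta$ restricted to $[\mathcal{Q}]_{\mathbf{M}_\zeta}=H^2(\mathbb{D}_2^\infty)$, since the decreasing intersection $\bigcap_n \mathcal{H}_n$ there is precisely $\mathcal{Q}$. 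The rest is routine once the finite-level description \eqref{Q can be viewed as infinite tensor product} is in hand, and I would remark that the identification of $\bigcap_n(\text{finite levels})$ with $\bigotimes_n\mathcal{J}_n$ is the content of the parenthetical remark in the definition of the tensor product, so the corollary is really a repackaging of Theorem \ref{analytic model for BT}.
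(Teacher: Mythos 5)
Your proposal is correct and follows essentially the same route as the paper's proof: quote the intersection representation \eqref{Q can be viewed as infinite tensor product}, note the easy inclusion $\bigotimes_{n=1}^\infty\mathcal{J}_n\subseteq\mathcal{Q}$, and get the reverse inclusion by applying the decreasing, SOT-convergent projections $P_{\mathcal{Q}_n}$ to the monomials $\zeta^\alpha$ and passing to pointwise limits. One small correction: the limit $P_{\mathcal{Q}}\zeta^\alpha$ is a genuinely \emph{infinite} product, because the tail factors $f_n=P_{\mathcal{J}_n}1$ need not equal $1$ when $\mathcal{J}_n$ is a proper model space, so your parenthetical ``(finite) product'' should read ``infinite product with only finitely many factors differing from $P_{\mathcal{J}_n}1$'' --- each factor still lies in $\mathcal{J}_n$, so the membership in $\bigotimes_{n=1}^\infty\mathcal{J}_n$ holds exactly as you claim.
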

\begin{proof} Let $\mathcal{Q}$ be a doubly commuting quotient module of
 $H^2(\mathbb{D}_2^\infty)$.  Then there exists a sequence
 $\{\mathcal{J}_n\}_{n\in\mathbb{N}}$ of  quotient modules of $H^2(\mathbb{D})$, such that
 $$\mathcal{Q}=\bigcap_{n=1}^\infty\mathcal{J}_1 \otimes\cdots\otimes \mathcal{J}_n \otimes H^2(\mathbb{D})\otimes H^2(\mathbb{D})\otimes\cdots.$$
 Now we will prove $\mathcal{Q}=\bigotimes_{n=1}^\infty \mathcal{J}_n$.

 For simplicity, rewrite $$\mathcal{Q}_n =\mathcal{J}_1 \otimes\cdots\otimes \mathcal{J}_n \otimes H^2(\mathbb{D})\otimes H^2(\mathbb{D})\otimes\cdots.$$
 The inclusion $\bigotimes_{n=1}^\infty \mathcal{J}_n\subseteq\bigcap_{n=1}^\infty \mathcal{Q}_n =\mathcal{Q}$ is trivial to see. For the reverse inclusion, note that
 the set $\{P_{\mathcal{Q}}\zeta^\alpha:\alpha\in\mathbb{Z}_+^{(\infty)}\}$ is complete in $\mathcal{Q}$, where $P_{\mathcal{Q}}$ is the orthogonal projection from  $H^2(\mathbb{D}_2^\infty)$ onto  $\mathcal{Q}$. It suffices to show that
 for any fixed $\alpha=(\alpha_1,\alpha_2,\cdots)\in\mathbb{Z}_+^{(\infty)}$, $P_{\mathcal{Q}}\zeta^\alpha$ belongs to the infinite tensor product $\bigotimes_{n=1}^\infty \mathcal{J}_n$. Let $P_{\mathcal{Q}_n }\ (n\in\mathbb{N})$ denote the orthogonal projection from  $H^2(\mathbb{D}_2^\infty)$ onto  $\mathcal{Q}_n $, and
 $P_{\mathcal{J}_n}$ denote the orthogonal projection from $H^2(\mathbb{D})$ onto $\mathcal{J}_n$.
 Then for each $n\in\mathbb{N}$,
 $$P_{\mathcal{Q}_n }=P_{\mathcal{J}_1}\otimes\cdots\otimes P_{\mathcal{J}_n}\otimes I_{H^2(\mathbb{D})}\otimes I_{H^2(\mathbb{D})}\otimes\cdots.$$
Taking $m\in\mathbb{N}$ so that $\alpha_{m+1}=\alpha_{m+2}=\cdots=0$ and setting $f_n=P_{\mathcal{J}_n}1\ (n\in\mathbb{N})$, we further have
  $$(P_{\mathcal{Q}_n }\zeta^\alpha)(\zeta)=(P_{\mathcal{J}_1}z^{\alpha_1})(\zeta_1)\cdots (P_{\mathcal{J}_m}z^{\alpha_m})(\zeta_m) f_{m+1}(\zeta_{m+1})\cdots f_n(\zeta_n)$$
  for any $n\geq m+1$. On the other hand,
  since $\{P_{\mathcal{Q}_n }\}_{n\in\mathbb{N}}$ converges to $P_{\mathcal{Q}}$ $(n\rightarrow\infty)$ in the strong operator topology, $P_{\mathcal{Q}_n }\zeta^\alpha$ converges to $P_{\mathcal{Q}}\zeta^\alpha$ $(n\rightarrow\infty)$ in $H^2(\mathbb{D}^\infty_2)$-norm.
  In particular, $P_{\mathcal{Q}_n}\zeta^\alpha$ converges pointwisely to $P_{\mathcal{Q}}\zeta^\alpha$ as $n\rightarrow\infty$, and then
$P_{\mathcal{Q}}\zeta^\alpha$ is of form $$(P_{\mathcal{J}_1}z^{\alpha_1})(\zeta_1)\cdots (P_{\mathcal{J}_m}z^{\alpha_m})(\zeta_m)\prod_{n=m+1}^{\infty} f_n(\zeta_n),\quad \zeta\in\mathbb{D}_2^\infty.$$
This gives $P_{\mathcal{Q}}\zeta^\alpha\in\bigotimes_{n=1}^\infty \mathcal{J}_n$,
and the proof is complete.

 %So it remains to prove $$\bigcap_{n=1}^\infty\mathcal{J}_1 \otimes\cdots\otimes \mathcal{J}_n \otimes H^2(\mathbb{D})\otimes\cdots=\bigotimes_{n=1}^\infty \mathcal{J}_n.$$
% For simplicity, rewrite $\mathcal{Q}_n =\mathcal{J}_1 \otimes\cdots\otimes \mathcal{J}_n \otimes H^2(\mathbb{D})\otimes\cdots$.
% The inclusion $\bigotimes_{n=1}^\infty \mathcal{J}_n\subseteq\bigcap_{n=1}^\infty \mathcal{Q}_n $ is trivial to see. For the reverse inclusion, take an arbitrary function $F\in\bigcap_{n=1}^\infty \mathcal{Q}_n $. We will show $F\in\bigotimes_{n=1}^\infty \mathcal{J}_n$.
%
% Let $P_{\mathcal{Q}}$, $P_{\mathcal{Q}_n }\ (n\in\mathbb{N})$ denote the orthogonal projection from  $H^2(\mathbb{D}_2^\infty)$ onto $\mathcal{Q}$ and $\mathcal{Q}_n $ respectively, and
% $P_{\mathcal{J}_n}$ denote the orthogonal projection from $H^2(\mathbb{D})$ onto $\mathcal{J}_n$. Then for each $n\in\mathbb{N}$,
% $$P_{\mathcal{Q}_n }=P_{\mathcal{J}_1}\otimes\cdots\otimes P_{\mathcal{J}_n}\otimes I_{H^2(\mathbb{D})}\otimes\cdots.$$
% Setting $f_n=P_{\mathcal{J}_n}1$, we furthermore have
%  $$P_{\mathcal{Q}_n }1(\zeta)=f_1(\zeta_1)\cdots f_n(\zeta_n),\quad n\in\mathbb{N}.$$
%  Since $\mathcal{Q}$ is nonzero, $P_{\mathcal{Q}}1$ is also nonzero, and thus
%  $$\|P_{\mathcal{Q}}1\|=\lim_{n\rightarrow\infty}\|P_{\mathcal{Q}_n }1\|
%  =\lim_{n\rightarrow\infty}\|f_1\|_{H^2(\mathbb{D})}\cdots\|f_n\|_{H^2(\mathbb{D})}.$$
\end{proof}
%\bigskip \noindent{\bf Acknowledgement}.
%The authors with to express their sincere gratitude to the anonymous referee,
%whose extremely careful reading of the manuscript led to many clarifications and
%improvements in the text. We are also grateful to Prof. Xiantao Wang for his help.

%------------------------------------------------------------------------------

%    Bibliographies can be prepared with BibTeX using amsplain,
%    amsalpha, or (for "historical" overviews) natbib style.
\bibliographystyle{amsplain}
%    Insert the bibliography data here.

\end{document}